\theoremstyle{plain}
\newtheorem{thm}{\indent\bf Theorem}[section]
\newtheorem{lem}[thm]{\indent\bf Lemma}
\newtheorem{prop}[thm]{\indent\bf Proposition}
\theoremstyle{definition}
\newtheorem{rem}{\indent\it Remark}[section]
\numberwithin{equation}{section}
\def \d {\mathrm{d}}
\def \A {\mathbb{A}}
\def \C {\mathbb{C}}
\def \R {\mathbb{R}}
\def \Q {\mathbb{Q}}
\def \Z {\mathbb{Z}}
\def \N {\mathbb{N}}
\def \re {\mathrm{Re\,}}
\def \im {\mathrm{Im\,}}
\def \diag {\mathrm{diag}}
\def \dist {\mathrm{dist}}
\begin{document}
\title[The fifth Painlev\'e equation]{Critical behaviours of the 
fifth Painlev\'e transcendents and the monodromy data} 
\author[Shun Shimomura]{Shun Shimomura} 
\address{Department of Mathematics, 
Keio University, 
3-14-1, Hiyoshi, Kohoku-ku,
Yokohama 223-8522 
Japan}
\date{}
\maketitle
\begin{abstract}
For the fifth Painlev\'e equation, we present families of convergent series 
solutions near the origin and the corresponding
monodromy data for the associated isomonodromy linear system. 
These solutions are of complex power type, of inverse logarithmic type
and of Taylor series type. It is also possible to compute the monodromy data 
in non-generic cases. 
Solutions of logarithmic type are
derived from those of inverse logarithmic type through
a B\"{a}cklund transformation found by Gromak.
In a special case the complex power type of solutions
have relatively simple oscillatory expressions. 
For the complex power type of solutions in the generic case, we clarify the
structure of the analytic continuation on the universal covering around the
origin, and examine the distribution of zeros, poles and $1$-points. It is
shown that two kinds of spiral domains including a sector as a special
case are alternately arrayed; the domains
of one kind contain sequences both of zeros and of poles, and those of
the other kind sequences of $1$-points.
\vskip0.2cm
\par
2010 {\it Mathematics Subject Classification.} {34M55, 34M56, 34M35.}
\par
{\it Key words and phrases.} {Fifth Painlev\'{e} equation; critical behaviour;
isomonodromy deformation; monodromy data; Schlesinger equation.}
\end{abstract}
\allowdisplaybreaks
\section{Introduction}\label{sc1}
For the solutions of the sixth Painlev\'e equation
Guzzetti \cite{G-Table} provided the tables of their critical behaviours 
and parametric connection formulas. The solutions near each critical point
are classified as follows: complex power type, logarithmic type, 
inverse oscillatory type, inverse logarithmic type, and Taylor series
type. All of them are derived through birational transformations from four
basic solutions, two of which are of complex power type and two are 
of logarithmic type.
These tables consist of important formulas as nonlinear special functions
and are expected to be of great use in applications to a variety of problems
in mathematics and mathematical physics.
\par
The fifth Painlev\'e equation normalised in the form 
\begin{align*}
\tag*{(V)}
&\frac{d^2y}{d x^2}=  \Bigl(\frac 1{2y} + \frac 1{y-1}
 \Bigr) \Bigl(\frac{d y}{d x} \Bigr)^{\!\! 2}
- \frac 1x \frac{d y}{d x}
\\[0.2cm]
&  +\frac{(y-1)^2}{8x^2} \Bigl((\theta_0-\theta_x+\theta_{\infty}
)^2 y - \frac{(\theta_0 -\theta_x - \theta_{\infty})^2 }
{y}\Bigr) + (1-\theta_0-\theta_x) \frac{y}{x} -\frac{y(y+1)}{2(y-1)}
\end{align*}
with $\theta_0, \theta_x, \theta_{\infty} \in \C$ follows from the isomonodromy
deformation of a $2$-dimensional linear system of the form
\begin{equation}
\label{1.1}
\frac{dY}{d\lambda} =\Bigl( \frac{A_0(x)}{\lambda} +\frac{A_x(x)}{\lambda -
x} + \frac{J} 2 \Bigr) Y,
\end{equation}
$J=\diag [1, -1],$ where $A_0(x)$ and $A_x(x)$ satisfy the following:
\par
(a) the eigenvalues of $A_0(x)$ and $A_x(x)$ are $\pm \theta_0/2$ and
$\pm \theta_x/2,$ respectively;
\par
(b) $(A_0(x) +A_x(x))_{11} = -(A_0(x) + A_x(x))_{22} \equiv -\theta_{\infty}/2$
\par\noindent
(cf. \cite[\S 3]{Jimbo}, \cite[Appendix C]{JM}). Jimbo \cite{Jimbo} studied 
the asymptotic behaviour of the $\tau$-function for (V)
near $x=0$ parametrised by its corresponding monodromy data for \eqref{1.1}. 
Applying WKB analysis to \eqref{1.1}, Andreev and Kitaev
\cite{Andreev-Kitaev} obtained asymptotic solutions of (V) along the real axis
near $x=0$ and
$x=\infty$ together with their monodromy data that yield the connection formulas
between them. For more general integration constants, a family of solutions
near $x=0$ expanded into convergent series in spiral domains or sectors 
was given by the present author
\cite{S}. Kaneko and Ohyama \cite{K-O} presented certain Taylor series
solutions around $x=0$ such that each corresponding linear system \eqref{1.1} is 
solvable in terms of
hypergeometric functions and that the monodromy may be explicitly calculated. 
\par
For the fifth Painlev\'e transcendents as well, it
is preferable to give tables of critical behaviours
like those of Guzzetti \cite{G-Table}.
Toward this goal, in this paper, near the critical point $x=0$ of (V), we 
present families of solutions 
expanded into convergent series of three types and the respective
monodromy data parametrised by integration constants yielding analogues of
the parametric connection formulas in \cite{G-Table}.
The monodromy data are given under more general conditions on $\theta_0,$
$\theta_x,$ $\theta_{\infty}$ and the integration constants than those of 
\cite{Jimbo} or \cite{Andreev-Kitaev}.
These solutions, including degenerate cases, are of complex power type, 
of inverse logarithmic type and of Taylor series type (cf. Theorems
\ref{thm2.1}, and \ref{thm2.2} through \ref{thm2.4}). 
All of them are derived by a unified
method. The key is finding suitable matrix solutions of the Schlesinger
equation equivalent to (V) controlling the isomonodromy deformation of
\eqref{1.1}. 
Solutions of logarithmic type \cite{S-Log} are also obtained from those of
inverse logarithmic type through a B\"{a}cklund transformation 
found by Gromak \cite{Gromak} (cf. Remark \ref{rem2.6} and Section \ref{sc6}).  
In such a sense these inverse logarithmic solutions play the same role
as that of the basic 
logarithmic solutions of the sixth Painlev\'e equation. 
If $\theta_0-\theta_x=\theta_{\infty}=0,$ the complex power type of solutions 
have relatively simple oscillatory expressions although, 
in general, such expressions are complicated 
(cf. Theorem \ref{thm2.1.a} and Remark \ref{rem2.3.a}).
For the complex power type of solutions in the generic case, 
we clarify the structure of the analytic continuation on the 
universal covering around $x=0,$ and examine the distribution of zeros, poles
and $1$-points (note that $y=0,$ $1$ and $\infty$ are singular values of
equation (V)). It is shown that the domains where each solution behaves
like a complex power are separated by two kinds of spiral domains
including a sector as a special case, which are
alternately arrayed; the separating domains of one kind contain 
sequences both of zeros and of poles, and those of the other kind 
sequences of $1$-points (cf. Remark \ref{rem2.8}). 
This situation is different from that of the sixth
Painlev\'e transcendents, in which the separating domain with zeros 
and that with poles 
alternately appear (cf. \cite{G-Critical}, \cite{G-Elliptic}, \cite{G-Pole},
\cite{G-Rev}, \cite{S2}). 
\par
All the results above are described in Section \ref{sc2}. In Section \ref{sc5}
the families of solutions of three types are derived from the matrix solutions
of the Schlesinger equation given in Section \ref{sc3} by using the lemmas in
Section \ref{sc4}. In Sections \ref{sc6} and \ref{sc7} we prove the result on
the analytic continuation, those on the distribution of zeros, poles and 
$1$-points, and Theorem \ref{thm2.1.a} on the special oscillatory expressions. 
In proving them, the B\"{a}cklund transformation referred to
above is crucial. Section \ref{sc8} is devoted to
the summary of the argument in \cite[\S 2]{Jimbo} concerning limiting 
procedure applied to \eqref{1.1} and its monodromy data, and Section \ref{sc9} 
to the computation of the
connection formulas for the related Whittaker and hypergeometric systems. 
In non-generic cases of these linear systems it is also possible to compute 
the monodromy data 
(cf. Remark \ref{rem2.11} and Section \ref{ssc9.3}).
In the final section, 
using the material above, we derive the results on the monodromy data for 
our solutions.  
\par
Throughout this paper we use the following symbols:
\par
(1) for a ring $\A$, $M_2(\A)$ is the ring of $2\times 2$ matrices whose
entries are in $\A$, and $GL_2(\A):=\{C\in M_2(\A);\, C^{-1}\in M_2(\A) \};$
\par
(2) $I,$ $J,$ $\Delta,$ $\Delta_-$ denote the matrices
$$
I= \begin{pmatrix}
1 & 0 \\  0 & 1 \\
\end{pmatrix},
\quad
J= \begin{pmatrix}
1 & 0 \\  0 & -1 \\
\end{pmatrix},
\quad
\Delta = \begin{pmatrix}
0 & 1 \\  0 & 0 \\
\end{pmatrix},
\quad
\Delta_- = \begin{pmatrix}
0 & 0 \\  1 & 0 \\
\end{pmatrix};
$$
\par
(3) $\mathcal{R}(\C\setminus \{0\} )$ denotes the universal covering of
$\C \setminus \{0 \}$;
\par
(4) $\Q_{\theta}:= \Q[\theta_0, \theta_x, \theta_{\infty}];$
\par
(5) for $A, B \subset \C$, $\mathrm{cl}(A)$ denotes the closure of $A$, 
$\dist(A,B)$ the distance between $A$ and $B$;
\par
(6) $\psi(x):=\Gamma'(x)/\Gamma(x)$ is the di-Gamma function.
\section{Main results}\label{sc2}
\subsection{Solutions near $x=0$}\label{ssc2.1}
Set $\Q_{\theta}:= \Q[\theta_0, \theta_x, \theta_{\infty}].$ For each
$(\theta_0, \theta_x,\theta_{\infty})\in \C^3$, we have solutions of complex
power type near $x=0$.
\begin{thm}\label{thm2.1}
Let $\Sigma_0$ be a bounded domain satisfying
$$
\Sigma_0 \subset \C \setminus \Sigma_* \quad \text{with}
\quad \Sigma_*= \{\sigma \le -1 \} \cup \{0 \} \cup \{\sigma \ge 1 \}
\subset \R
$$ 
and $\dist (\Sigma_0, \Sigma_*)>0.$ Suppose that $(\sigma^2-(\theta_0 \pm
\theta_x)^2 )(\sigma^2-\theta_{\infty}^2) \not= 0$ for every
$\sigma\in\mathrm{cl}(\Sigma_0).$ Then $(\mathrm{V})$ admits a two-parameter 
family of solutions
$
\{ y(\sigma, \rho, x); \, (\sigma, \rho)\in \Sigma_0 \times (\C\setminus
\{0\} ) \}
$
with the following properties.
\par
$(\mathrm{i})$ $y(\sigma, \rho, x)$ is holomorphic in $(\sigma,\rho,x) \in
\Omega^+(\Sigma_0,\varepsilon_0) \cup \Omega^-(\Sigma_0, \varepsilon_0)$
$\subset \Sigma_0 \times (\C \setminus \{0\}) \times \mathcal{R}(\C\setminus
\{0\}),$ where
\begin{align*}
& \Omega^{\pm}(\Sigma_0, \varepsilon_0):= \bigcup_{(\sigma,\rho) \in 
\Sigma_0 \times (\C\setminus \{0\})} \{(\sigma,\rho) \}\times \Omega^{\pm}
_{\sigma,\rho}(\varepsilon_0),
\\
& \Omega^{+}_{\sigma,\rho}(\varepsilon_0):=  
\{ x\in \mathcal{R}(\C\setminus\{ 0\}); \,\, 
|\rho x^{\sigma}| < \varepsilon_0, \,\, 
|x (\rho x^{\sigma})^{-1}| < \varepsilon_0 \}, 
\\
& \Omega^{-}_{\sigma,\rho}(\varepsilon_0):=  
\{ x\in \mathcal{R}(\C\setminus\{ 0\}); \,\, 
|x(\rho x^{\sigma})| < \varepsilon_0, \,\, 
| (\rho x^{\sigma})^{-1}| < \varepsilon_0 \}, 
\end{align*} 
$\varepsilon_0=\varepsilon_0(\Sigma_0, \theta_0, \theta_x, \theta_{\infty})$
being a sufficiently small number depending only on $\Sigma_0$ and $(\theta_0,
\theta_x, \theta_{\infty}).$
\par
$(\mathrm{ii})$ $y(\sigma, \rho, x)$ is represented by the convergent series
as follows$:$
\begin{align*}
y_+(\sigma, \rho, x):= &
 1+ \frac{4\sigma^2 (\theta_0  +\theta_x -\sigma)} {(\sigma+\theta_{\infty}
)(\theta_0^2 -(\sigma+ \theta_x)^2 )} \rho x^{\sigma} 
\\
& +\sum_{j\ge 2} c^+_j
(\sigma) (\rho x^{\sigma})^j + \sum_{n=1}^{\infty} x^n \sum_{j\ge 0} c^+_{jn}
(\sigma) (\rho x^{\sigma})^{-n+j}
\end{align*}
in $\Omega^+(\Sigma_0,\varepsilon_0),$ and
\begin{align*}
y_-(\sigma, \rho, x):= &
 1- \frac{4\sigma^2 (\theta_0  +\theta_x +\sigma)} {(\sigma-\theta_{\infty}
)(\theta_0^2 -(\sigma- \theta_x)^2 )}( \rho x^{\sigma})^{-1}
\\
& +\sum_{j\ge 2} c^-_j
(\sigma)(\rho x^{\sigma})^{-j} + \sum_{n=1}^{\infty} x^n \sum_{j\ge 0} c^-_{jn}
(\sigma) (\rho x^{\sigma})^{n-j} 
\end{align*}
in $\Omega^-(\Sigma_0,\varepsilon_0)$,
where $c^{\pm}_j(\sigma), c^{\pm}_{jn}(\sigma)\in \Q_{\theta}(\sigma).$
\end{thm}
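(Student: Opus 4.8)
The plan is to obtain the family $\{y(\sigma,\rho,x)\}$ from the isomonodromy deformation of \eqref{1.1} rather than by inserting an ansatz into $(\mathrm{V})$ directly. First I would write down the Schlesinger system governing $A_0(x),A_x(x)$ — a first order nonlinear system in $x$, equivalent to $(\mathrm{V})$, whose leading term at $\lambda=\infty$ is $J/2$ — together with the standard rational expression recovering $y(x)$ from the entries of $A_0(x),A_x(x)$. As $x\to0$ the poles $\lambda=0,x$ merge and the limit system $\d Y/\d\lambda=(\tfrac J2+(A_0+A_x)/\lambda)Y$ is of confluent hypergeometric (Whittaker) type; its exponent at the merged singularity is the first parameter, which I normalise to $\sigma$, and the size of the leading deformation term, $\rho x^{\sigma}$, is the second parameter $\rho$. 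The residues $B_0,B_x$ of this limit system (a solution of the \emph{reduced}, $x$-independent Schlesinger equation obtained by discarding the lower order terms and fixing $\sigma$) serve as the zeroth approximation.

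The heart of the construction is a recursion. Look for $A_0(x),A_x(x)$ as matrix valued series in the two quantities $x$ and $z:=\rho x^{\sigma}$ about $B_0,B_x$. Since $x\tfrac{\d}{\d x}(x^nz^m)=(n+m\sigma)x^nz^m$, substitution into the Schlesinger system turns it into a recursion in which the coefficient of $x^nz^m$ is obtained from the lower ones by inverting a linear operator of the form $(n+m\sigma)-\mathrm{ad}$, where $\mathrm{ad}$ is built from commutators with $B_0$, $B_x$ and $J/2$. Its eigenvalues are $n+m\sigma$ minus the differences of the eigenvalues $\pm\tfrac{\theta_0}{2},\pm\tfrac{\theta_x}{2},\pm\tfrac12$ of those matrices; working with the matrices rather than with $y$ directly is precisely what makes this spectrum transparent, the eigenvalue $\tfrac12$ of $J/2$ being responsible for the exclusion of $\Sigma_*$. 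The hypotheses $(\sigma^2-(\theta_0\pm\theta_x)^2)(\sigma^2-\theta_{\infty}^2)\neq0$ on $\mathrm{cl}(\Sigma_0)$ and $\sigma\notin\Sigma_*$ are exactly what keeps every operator in this recursion invertible; since the matrices entering it have entries in $\Q_{\theta}(\sigma)$, so do the coefficients, which gives $c^{\pm}_j(\sigma),c^{\pm}_{jn}(\sigma)\in\Q_{\theta}(\sigma)$, and the first correction, read off from the rational formula for $y$, comes out equal to the coefficient of $\rho x^{\sigma}$ displayed in the statement (its denominators $\sigma\pm\theta_{\infty}$ and $\theta_0^2-(\sigma\pm\theta_x)^2$ being precisely those controlled by the non-vanishing condition). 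The distinction between $\Omega^{+}$ and $\Omega^{-}$ is which of $z$ and $xz^{-1}$ is taken to be genuinely small; rewriting the monomials $x^nz^m$ (with $m\ge-n$, which is the range that occurs) in terms of $u=z$ and $v=xz^{-1}$, so that $x=uv$, displays $y_{\pm}$ as honest power series in $(u,v)$ — this is what the one-sided inner sums $\sum_{j\ge0}(\rho x^{\sigma})^{-n+j}$ encode. Both $\Sigma_*$ and the non-vanishing condition being even in $\sigma$, the $\Omega^{-}$ family is obtained from the $\Omega^{+}$ one by $\sigma\mapsto-\sigma$, so it suffices to treat $y_{+}$.

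The substantive step — and the one I expect to be the main obstacle — is convergence. I would control the recursion by majorants: bound the coefficient at level $(n,m)$ by that of an explicit scalar series, using a lower bound for $|(n+m\sigma)-(\text{eigenvalue})|$ which, under the hypotheses, stays away from $0$ uniformly on $\mathrm{cl}(\Sigma_0)$ (here $\dist(\Sigma_0,\Sigma_*)>0$ and the non-vanishing condition on all of $\mathrm{cl}(\Sigma_0)$ are used) and grows with $n+m$; then the majorant converges for $|u|,|v|<\varepsilon_0$ with $\varepsilon_0=\varepsilon_0(\Sigma_0,\theta_0,\theta_x,\theta_{\infty})$, which also yields the holomorphy in $(\sigma,\rho,x)$ on $\Omega^{\pm}(\Sigma_0,\varepsilon_0)$ asserted in (i). Finally I would substitute the constructed $A_0(x),A_x(x)$ into the formula for $y$ and expand to get the series of (ii); one must check that no logarithmic term is forced along the way, but for $\sigma$ in the admitted range the resonances that would obstruct the construction or produce a logarithm are exactly those excluded by $\Sigma_*$ and by the non-vanishing condition, so the expansion remains of the stated log-free shape.
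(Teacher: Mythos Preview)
Your overall strategy --- construct matrix series solutions of the Schlesinger system \eqref{2.9} and then recover $y$ from the rational formula \eqref{2.10} --- is exactly the paper's. Two points of your mechanism, however, are not right and would cause trouble if you tried to carry out the recursion as you describe it.

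First, the small divisors. You assert that the operator to invert at level $(n,m)$ has eigenvalues $n+m\sigma$ shifted by differences of the eigenvalues $\pm\theta_0/2,\pm\theta_x/2,\pm1/2$ of $B_0,B_x,J/2$. A direct substitution into \eqref{2.9} couples the two matrix unknowns through $[B_x,\,\cdot\,]$ and $[B_0,\,\cdot\,]$ simultaneously, and the spectrum of the resulting block operator is not what you write. The paper sidesteps this entirely by conjugating first: with $\Lambda=\Lambda_0+\Lambda_x$ chosen as in Lemma~\ref{lem4.1} (so $T^{-1}\Lambda T=(\sigma/2)J$), one sets $A_0=t^{\Lambda}(\Lambda_0+U_0)t^{-\Lambda}$, $A_0+A_x=\Lambda+U_\infty$ (see \eqref{3.1}), and the transformed system \eqref{3.2} is solved by the Picard iteration \eqref{3.3} in which the integration operator $\mathcal I$ acts on $(\kappa t)^n t^{\sigma m}$ simply by division by $n+m\sigma$. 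No $\theta$-dependent quantity enters the small divisors; the hypothesis $\mathrm{dist}(\Sigma_0,\Sigma_*)>0$ bounds $|n+m\sigma|$ away from zero uniformly for $n\ge1$, $|m|\le n$, and this is the only spectral input the convergence argument needs.

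Second, the role of $(\sigma^2-(\theta_0\pm\theta_x)^2)(\sigma^2-\theta_\infty^2)\neq0$. It is \emph{not} used in the construction of $(A_0,A_x)$ --- Proposition~\ref{prop3.1} requires only $\sigma\in\Sigma_0$. It enters at the very last step (Section~\ref{ssc5.1}): after the reparametrisation \eqref{5.4} (which already needs $\sigma+\theta_\infty\neq0$) the numerators and denominators \eqref{5.5}, \eqref{5.6} have leading coefficients in $(\rho x^{\sigma})^{\pm1}$ equal, up to sign and powers, to $(\sigma\pm\theta_\infty)(\theta_0^2-(\sigma\pm\theta_x)^2)$; the non-vanishing hypothesis is precisely what lets you divide these out and expand the ratio $y=Y_{11}Y_{12}$ into the convergent series $y_{\pm}$ of the statement.
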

\begin{rem}\label{rem2.2}
Note that $|x|<\varepsilon_0^2$ in $\Omega^{\pm}_{\sigma, \rho}(\varepsilon_0)$.
For each $(\sigma, \rho),$ $\Omega^+_{\sigma,\rho}(\varepsilon_0)$ is given by 
\begin{align*}
\re \sigma\cdot \log|x| &  + \log |\rho| + \log(\varepsilon^{-1}_0) <\im \sigma
\cdot \arg x
\\
 & <(\re \sigma -1) \log|x| + \log |\rho| - \log(\varepsilon^{-1}_0), 
\end{align*}
and $\Omega^-_{\sigma,\rho}(\varepsilon_0)$ by
\begin{align*}
(\re \sigma +1) \log|x| & + \log |\rho| + \log(\varepsilon^{-1}_0) <\im \sigma
\cdot \arg x
\\
& <\re \sigma\cdot \log|x| + \log |\rho| - \log(\varepsilon^{-1}_0). 
\end{align*}
\end{rem}
\begin{rem}\label{rem2.3}
The asymptotic solution with $0< \re \sigma < 1$  
in \cite[Theorem 6.1]{Andreev-Kitaev} coincides with
$y_+(\sigma,\rho,x)$.
\end{rem}
\begin{rem}\label{rem2.1.a}
For each $(\sigma, \rho),$ in the domain
\begin{align*}
\Omega_{\sigma,\rho}(\varepsilon_0):
& =\{ x\in \mathcal{R}(\C\setminus\{ 0\}); \,\,
|x(\rho x^{\sigma})| < \varepsilon_0, \,\, 
|x (\rho x^{\sigma})^{-1}| < \varepsilon_0 \}
\\
& \supset \Omega^{+}_{\sigma,\rho}
(\varepsilon_0) \cup \Omega^{-}_{\sigma,\rho}(\varepsilon_0), 
\end{align*}
$y(\sigma,\rho,x)$ is meromorphic and represented by the ratio of convergent 
series (see Sections \ref{ssc5.1}
and \ref{sc7}). In a special case, such expressions of $y(\sigma,\rho, x)$ 
are relatively simple formulas as in the following theorem, which may be 
regarded as counterparts of an elliptic representation for the sixth
Painlev\'e transcendents \cite{G-Critical}, \cite{G-Elliptic}.
\end{rem}
\begin{thm}\label{thm2.1.a}
Suppose that $\theta_0-\theta_x=\theta_{\infty}=0$ and $\sigma \in \Sigma_0.$
\par
$(1)$ If $\sigma^2-4\theta_0^2 \not= 0$ for every $\sigma\in \mathrm{cl}
(\Sigma_0),$ then
\begin{equation}\label{2.4}
y(\sigma,\rho, x)= \tanh^2 \biggl( \frac 12  \log (\tilde{\rho} x^{\sigma})
+  \sum_{n=1}^{\infty} x^n \sum_{j=-n}^n c_{jn}(\sigma) (\tilde
{\rho} x^{\sigma})^j \biggr)
\end{equation}
with $\tilde{\rho} 
=(2\theta_0 -\sigma) (2\theta_0 +\sigma)^{-1}\rho,$ in which the
series with $c_{jn}(\sigma)\in \Q[\theta_0](\sigma)$ converges in $\Omega
_{\sigma,\tilde{\rho}}(\varepsilon_0).$
\par
$(2)$ If $(\sigma +1)^2 -(2\theta_0-1)^2 \not=0$ for every $\sigma
\in \mathrm{cl}(\Sigma_0),$ then
\begin{equation}\label{2.3.a}
y(\sigma,\rho, x) = 1-\frac{2x \sinh (
\log (\breve{\rho} x^{\sigma+1})  +\Sigma(x) )}
{ 2(\sigma+1) + 2x\Sigma'(x) + x \sinh (
\log (\breve{\rho} x^{\sigma+1})  +\Sigma(x) )}
\end{equation}
with
$$
{\breve{\rho}} = \frac{(\sigma+2)(1-\sigma)(2\theta_0 -\sigma)}
{8\sigma (\sigma+1)^2(\sigma+3) } \rho,
$$
where the series
$$
\Sigma(x)=  \sum_{n=1}^{\infty} x^n \sum_{j=-n}^n \tilde{c}_{jn}(\sigma)
 ({\breve{\rho} } x^{\sigma +1})^j, \qquad \tilde{c}_{jn}(\sigma)\in
\Q[\theta_0](\sigma)
$$
and $\Sigma'(x)=(d/dx)\Sigma(x)$ converge in $\Omega_{\sigma+1, {\breve
{\rho}} }(\varepsilon_0).$
\end{thm}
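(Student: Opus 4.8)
The plan is to deduce both identities from the convergent series representation of $y(\sigma,\rho,x)$ established in Theorem \ref{thm2.1} (and its meromorphic continuation on $\Omega_{\sigma,\rho}(\varepsilon_0)$ from Remark \ref{rem2.1.a}), by specialising the parameters to $\theta_0-\theta_x=\theta_{\infty}=0$ and checking that the ratio of series collapses to the closed forms \eqref{2.4} and \eqref{2.3.a}. First I would treat part (1). Under $\theta_x=\theta_0$, $\theta_\infty=0$, equation (V) is invariant under $y\mapsto 1/y$, so the associated Schlesinger system acquires an extra symmetry; I would exploit this to show that the meromorphic solution $y(\sigma,\rho,x)$ admits a ``square-root'' variable. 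Concretely, set $u$ so that $y=\tanh^2(u/2)=\bigl(\frac{\e^{u}-1}{\e^{u}+1}\bigr)^{2}$; then $\sqrt{y}=(1-\e^{-u})/(1+\e^{-u})$ up to a branch, and the claim is exactly that $u=\log(\tilde\rho x^{\sigma})+\sum_{n\ge1}x^{n}\sum_{|j|\le n}c_{jn}(\sigma)(\tilde\rho x^{\sigma})^{j}$. To see this I would substitute this ansatz for $u$ into (V) rewritten in terms of $u$, collect powers of $x$ and of $\zeta:=\tilde\rho x^{\sigma}$, and verify that the resulting recursion determines each $c_{jn}(\sigma)\in\Q[\theta_0](\sigma)$ uniquely; the leading term is fixed by matching against the known first two terms of $y_{+}(\sigma,\rho,x)$ in Theorem \ref{thm2.1}(ii), which under the specialisation becomes $y_{+}=1+\frac{4\sigma^{2}(2\theta_0-\sigma)}{\sigma(\theta_0^{2}-(\sigma+\theta_0)^{2})}\rho x^{\sigma}+\cdots = 1 - \frac{4\sigma(2\theta_0-\sigma)}{2\theta_0+\sigma}\,\rho x^{\sigma}+\cdots$. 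Expanding $\tanh^{2}(\tfrac12\log\zeta)$ about $\zeta=0$ gives $\zeta/(\,\cdot\,)^{2}\sim \zeta$ scaled by a constant, and matching forces precisely $\zeta=\tilde\rho x^{\sigma}$ with $\tilde\rho=(2\theta_0-\sigma)(2\theta_0+\sigma)^{-1}\rho$. Convergence of $\sum x^{n}\sum c_{jn}\zeta^{j}$ in $\Omega_{\sigma,\tilde\rho}(\varepsilon_0)$ then follows from the convergence already proved for the ratio-of-series form of $y$, since the map $y\mapsto u$ is biholomorphic near $u=\log\zeta$ on that domain and the structure of the exponents $x^{n}\zeta^{j}$, $|j|\le n$, is inherited termwise.

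For part (2) the strategy is the same, but now applied to the solution with shifted parameter $\sigma+1$. Under $\theta_0-\theta_x=\theta_\infty=0$ one checks that $y(\sigma,\rho,x)$ and the auxiliary quantity appearing on the right of \eqref{2.3.a} are linked by a Bäcklund transformation (of the type attributed to Gromak in the introduction) that shifts $(\theta_0,\theta_x,\theta_\infty)$ and, at the level of the complex-power exponent, sends $\sigma\mapsto\sigma+1$. Thus I would start from the series for $y_{+}(\sigma+1,\breve\rho,x)$ supplied by Theorem \ref{thm2.1} with the appropriate shifted thetas, introduce $v:=\log(\breve\rho x^{\sigma+1})+\Sigma(x)$ with $\Sigma(x)=\sum_{n\ge1}x^{n}\sum_{|j|\le n}\tilde c_{jn}(\sigma)(\breve\rho x^{\sigma+1})^{j}$, and show that
\begin{equation*}
y = 1 - \frac{2x\sinh v}{2(\sigma+1)+2x\Sigma'(x)+x\sinh v}
\end{equation*}
solves (V). Substituting into (V) and separating by powers of $x$, the $O(x)$ equation becomes an inhomogeneous linear equation for $\Sigma$ in the variable $\log x$ whose integration constant is $\breve\rho$; matching the $x^{1}(\breve\rho x^{\sigma+1})^{\pm1}$ coefficients against the explicit first correction in $y_{+}$ (computed after the parameter shift) pins down $\breve\rho=\dfrac{(\sigma+2)(1-\sigma)(2\theta_0-\sigma)}{8\sigma(\sigma+1)^{2}(\sigma+3)}\,\rho$, and the higher-order equations recursively yield $\tilde c_{jn}(\sigma)\in\Q[\theta_0](\sigma)$. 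Non-vanishing of the denominators in these recursions is guaranteed exactly by the hypothesis $(\sigma+1)^{2}-(2\theta_0-1)^{2}\ne0$ on $\mathrm{cl}(\Sigma_0)$ (the shifted analogue of the genericity condition $(\sigma^{2}-(\theta_0\pm\theta_x)^{2})(\sigma^{2}-\theta_\infty^{2})\ne0$), together with $\sigma^{2}-4\theta_0^{2}\ne0$ inherited from part (1). Convergence of $\Sigma$ and $\Sigma'$ in $\Omega_{\sigma+1,\breve\rho}(\varepsilon_0)$ again follows from Theorem \ref{thm2.1} applied at exponent $\sigma+1$, since $\Omega^{+}_{\sigma+1,\breve\rho}\cup\Omega^{-}_{\sigma+1,\breve\rho}\subset\Omega_{\sigma+1,\breve\rho}(\varepsilon_0)$ and the closed-form expression is a rational function of $\e^{\pm v}$, $x$, $\Sigma'(x)$ with nonvanishing denominator there.

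The main obstacle I anticipate is bookkeeping the two coupled gradings — powers of $x$ and powers of the complex-power monomial $\zeta=\rho x^{\sigma}$ (resp. $\breve\rho x^{\sigma+1}$) — while verifying that the closed forms actually satisfy (V) and that the recursions for $c_{jn}$, $\tilde c_{jn}$ close within $\Q[\theta_0](\sigma)$ with the stated denominator conditions, rather than requiring new small divisors. A clean way to organise this is to not expand (V) blindly but to first rewrite (V) under the specialisation $\theta_0-\theta_x=\theta_\infty=0$ in the ``$u$-form'': since then the $1/y$ and $y$ terms in (V) combine, the equation for $u$ (with $y=\tanh^{2}(u/2)$) should reduce to something like $u''+\frac{1}{x}u' = \text{(bounded in }x\text{)}$, exhibiting $u=\log\zeta$ as the exact leading solution and making the perturbative structure transparent; an analogous reduction underlies the $\sinh$-expression in part (2). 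Once that normal form is in hand, the remaining work is the routine but lengthy verification of the recursions and of termwise convergence via the already-available estimates from Sections \ref{ssc5.1} and \ref{sc7}.
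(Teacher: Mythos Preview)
Your outline for part~(1) is on the right track --- the substitution $y=\tanh^2(u/2)$ does reduce (V) with $\theta_0-\theta_x=\theta_\infty=0$ to the normal form $x(xu')'=f(x,e^{-u},xe^u)$ (this is exactly the computation appearing in the proof of Lemma~\ref{lem6.2}), and a recursion for the $c_{jn}$ can be set up. The paper, however, does not redo this from scratch: it invokes \cite[Theorem~5.6]{S}, which already supplies a convergent family $\tanh^2\bigl((\tfrac12-\tilde\sigma)\log x+\tfrac12\log\tilde\rho+s_V(\tilde\sigma,x,\cdot,\cdot)\bigr)$ with $s_V$ a convergent double series, rewrites it with $\sigma=1-2\tilde\sigma$, and then matches leading asymptotics against $y_\pm(\sigma,\rho,x)$ (using $c(\sigma)=-4(2\theta_0-\sigma)/(2\theta_0+\sigma)$) together with the uniqueness Lemma~\ref{lem6.2} to identify $\tilde\rho$. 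Your convergence argument (``inherited from the ratio-of-series form via $y\mapsto u$ biholomorphic'') has a gap: on $\Omega_{\sigma,\tilde\rho}(\varepsilon_0)$ the solution $y$ has zeros and poles (Theorems~\ref{thm2.6}--\ref{thm2.8}), and near those the inverse $y\mapsto u$ is not single-valued, so convergence of the $u$-series does not follow directly from Theorem~\ref{thm2.1}. You would need an independent estimate on the recursion, which is precisely what \cite{S} provides.

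For part~(2) you correctly identify the mechanism (Gromak's B\"acklund transformation shifting $\sigma\mapsto\sigma+1$), but the route you propose --- verifying \eqref{2.3.a} by direct substitution into (V) and running a fresh recursion for $\tilde c_{jn}$ --- is much harder than what the paper does. The paper applies $\hat B$ of Lemma~\ref{lem6.1} \emph{to the explicit $\tanh^2$ form} \eqref{2.4}: under $\theta_0-\theta_x=\theta_\infty=0$ one has simply $\hat B(y)=1-2y/(y'+y)$, and substituting $y=\tanh^2(u/2)$ with $u=\log(\breve\rho_- x^{\hat\sigma})+\Sigma_V(x)$ gives mechanically $\hat y=1-2\sinh u/(2u'+\sinh u)=1-2x\sinh u/(2\hat\sigma+2x\Sigma_V'(x)+x\sinh u)$. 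Thus the $\sinh$ structure drops out of the transformation, not out of (V). One then applies $\pi$ and the matching relation \eqref{6.1} with $\nu=0$ to identify the result with $y(\sigma,\rho,x)$ for $\hat\sigma=\sigma+1$; this simultaneously yields the formula for $\breve\rho$ without any separate coefficient-matching. Convergence of $\Sigma$ is inherited from part~(1) rather than re-proved.
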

\begin{rem}\label{rem2.3.a}
The expressions above describe oscillatory behaviours. Indeed, 
if $|\tilde{\rho} x^{\sigma}|
^{\pm 1}$ are bounded, then \eqref{2.4} is
$$
- \tan^2 ( - (i/2) \log (\tilde{\rho} x^{\sigma}) +O(x) )
= - \tan^2 (  (1/2) \arg (\tilde{\rho} x^{\sigma}) 
 - (i/2) \log |\tilde{\rho} x^{\sigma}| +O(x) ),
$$
and, if $|\breve{\rho} x^{\sigma +1}|^{\pm 1}$ are bounded, then 
\eqref{2.3.a} is
$$
1 - ((\sigma +1)^{-1} +O(x) ) x \sin ( \arg (\breve{\rho} x^{\sigma +1}) 
 - i \log |\breve{\rho} x^{\sigma + 1}| +O(x) ).
$$
In the case where
$\theta_0-\theta_x \not= 0$ or $\theta_{\infty}\not=0$ as well, $y(\sigma,
\rho, x)$ admits oscillatory expressions as follows 
(cf. Section \ref{sc7}):
\par
(i) under the suppositions of Theorem \ref{thm2.1},
if $|\rho x^{\sigma} |^{\pm 1} $ are bounded,
$$
y(\sigma, \rho, x) = \frac{\Phi_1(x) \Phi_2(x) F(x) G(x) + O(x)} 
{\Psi_1(x) \Psi_2(x) F(x) G(x) + O(x)}; 
$$
\par
(ii) under certain generic conditions added to the suppositions
of Theorem \ref{thm2.1},
if $|\hat{\rho} x^{\sigma+1} |^{\pm 1} $ are bounded,
$$
1 - \frac 1{y(\sigma, \rho, x)} = \frac{x({\Phi}^{\pi}_1(x)
{ \Phi}^{\pi}_2(x) {\Psi}^{\pi}_1(x) { \Psi}^{\pi}_2(x) 
 {F}^{\pi}(x)^2 {G}^{\pi}(x)^2 + O(x) ) } 
{2(\sigma +1)^2 ( (\sigma+1)^2 -(1-\theta_0+\theta_x)^2) 
{ \Phi}^{\pi}_2(x) {\Psi}^{\pi}_1(x) {F}^{\pi}(x)^2 {G}^{\pi}(x)^2 + O(x)} . 
$$
\par\noindent
Here $\Phi_1(x),$ $\Phi_2(x),$ $\Psi_1(x),$ $\Psi_2(x),$ $F(x),$ $G(x)$ are
as in Section \ref{ssc7.1}, and, say ${\Phi}_1^{\pi}(x)$, the result
of the substitution
$$
(\sigma, \rho, \theta_0-\theta_x, \theta_0+\theta_x, \theta_{\infty})
\mapsto 
({\sigma}+1, \hat{\rho}, 1-\theta_{\infty},1- \theta_0+\theta_x, 
\theta_0+\theta_x-1 )
$$
in $\Phi_1(x)$ (cf. \eqref{2.3}), $\hat{\rho}$ being such that
$$
\hat{\rho} =\frac{(\sigma-\theta_{\infty})(2-\theta_0+\theta_x +\sigma)
(\theta_0+\theta_x-\sigma)}{8\sigma^2(\sigma+1)^2} \rho
$$
(cf. the proof of Theorem \ref{thm2.9}). In the expressions above it seems
that the cancellations of the factors $F(x)G(x),$ $\Phi^{\pi}_2(x) 
\Psi^{\pi}_1(x) F^{\pi}(x)^2 G^{\pi}(x)^2$ occur, but
we have not succeeded in proving them.
\end{rem}
For each $(\theta_0, \theta_x, \theta_{\infty})$ we have solutions of special 
complex power type.
\begin{thm}\label{thm2.2}
Suppose that $\theta_0\theta_x \not=0.$ Let $\sigma_0=\theta_0\pm \theta_x$
or $\theta_x-\theta_0$ be such that $\sigma_0 \in \Sigma_+ := \C \setminus
(\{\sigma \le -1 \} \cup \Z)$ and $\sigma_0^2 -\theta_{\infty}^2 \not=0.$
Then $(\mathrm{V})$ admits a one-parameter family 
of solutions
$
\{ y_{\sigma_0}( \rho, x); \,  \rho\in \C \}
$
with the following properties.
\par
$(\mathrm{i})$ $y_{\sigma_0}( \rho, x)$ is holomorphic in $(\rho,x) \in
\Omega^0(\varepsilon_0) \cup \Omega^-(\sigma_0, \varepsilon_0)$
$\subset \C \times \mathcal{R}(\C\setminus \{0\}),$ where
\begin{align*}
& \Omega^{0}( \varepsilon_0):= \bigcup_{\rho \in 
\C} \{\rho \}\times \Omega^{0}_{\rho}(\varepsilon_0),
\quad  \Omega^-(\sigma_0, \varepsilon_0):= \bigcup_{\rho \in 
\C\setminus\{0\}} \{\rho \}\times \Omega^{-}_{\sigma_0,\rho}(\varepsilon_0),
\\
& \Omega^{0}_{\rho}(\varepsilon_0):=  
\{ x\in \mathcal{R}(\C\setminus\{ 0\}); \, |x|<\varepsilon_0, \,\,
|\rho x^{\sigma_0}| < \varepsilon_0 \}, 
\\
& \Omega^{-}_{\sigma_0, \rho}(\varepsilon_0):=  
\{ x\in \mathcal{R}(\C\setminus\{ 0\}); \, 
|x(\rho x^{\sigma_0})| < \varepsilon_0, \,\, 
| (\rho x^{\sigma_0})^{-1}| < \varepsilon_0 \}, 
\end{align*} 
$\varepsilon_0=\varepsilon_0( \theta_0, \theta_x, \theta_{\infty})$
being a sufficiently small number depending only on $(\theta_0,
\theta_x, \theta_{\infty}).$
\par
$(\mathrm{ii})$ $y_{\sigma_0}(\rho, x)$ is represented by the convergent series
as follows$:$
\par
$(\mathrm{ii.a})$ if $\sigma_0=\theta_0+\theta_x,$ then
$$
 1- \frac{\sigma_0^2 } { \theta_0 \theta_x} \rho x^{\sigma_0} 
 +\sum_{j\ge 2} c^0_j(\sigma_0) (\rho x^{\sigma_0})^j
 + \sum_{n=1}^{\infty} x^n \sum_{j\ge 0} c^0_{jn}
(\sigma_0) (\rho x^{\sigma_0})^j
$$
in $\Omega^0(\varepsilon_0),$ and
$$
 1- \frac{4\sigma_0^2 } { \sigma_0^2- \theta_{\infty}^2}(\rho x^{\sigma_0})^{-1} 
 +\sum_{j\ge 2} \tilde{c}^0_j(\sigma_0) (\rho x^{\sigma_0})^{-j}
 + \sum_{n=1}^{\infty} x^n \sum_{j\ge 0} \tilde{c}^0_{jn}
(\sigma_0) (\rho x^{\sigma_0})^{n-j}
$$
in $\Omega^-(\sigma_0,\varepsilon_0);$
\par
$(\mathrm{ii.b})$ if $\sigma_0=\theta_0-\theta_x,$ then
$$
- \frac{\sigma_0-\theta_{\infty}}{\sigma_0+\theta_{\infty}} \biggl(
 1- \frac{\sigma_0} { \theta_0 } \rho x^{\sigma_0} 
 +\sum_{j\ge 2} c^0_j(\sigma_0) (\rho x^{\sigma_0})^j
 + \sum_{n=1}^{\infty} x^n \sum_{j\ge 0} c^0_{jn}
(\sigma_0) (\rho x^{\sigma_0})^j \biggr)
$$
in $\Omega^0(\varepsilon_0),$ and
$$
 1- \frac{4\theta_0\sigma_0}{\sigma_0^2- \theta_{\infty}^2}(\rho x^{\sigma_0})^{-1} 
 +\sum_{j\ge 2} \tilde{c}^0_j(\sigma_0) (\rho x^{\sigma_0})^{-j}
 + \sum_{n=1}^{\infty} x^n \sum_{j\ge 0} \tilde{c}^0_{jn}
(\sigma_0) (\rho x^{\sigma_0})^{n-j} 
$$
in $\Omega^-(\sigma_0,\varepsilon_0);$
\par
$(\mathrm{ii.c})$ if $\sigma_0=\theta_x-\theta_0,$ then
$$
- \frac{\sigma_0+\theta_{\infty}}{\sigma_0-\theta_{\infty}} \biggl(
 1- \frac{\sigma_0} { \theta_x } \rho x^{\sigma_0} 
 +\sum_{j\ge 2} c^0_j(\sigma_0) (\rho x^{\sigma_0})^j
 + \sum_{n=1}^{\infty} x^n \sum_{j\ge 0} c^0_{jn}
(\sigma_0) (\rho x^{\sigma_0})^j \biggr)
$$
in $\Omega^0(\varepsilon_0),$ and
$$
 1- \frac{4\theta_x\sigma_0}{\sigma_0^2- \theta_{\infty}^2}(\rho x^{\sigma_0})^{-1} 
 +\sum_{j\ge 2} \tilde{c}^0_j(\sigma_0) (\rho x^{\sigma_0})^{-j}
 + \sum_{n=1}^{\infty} x^n \sum_{j\ge 0} \tilde{c}^0_{jn}
(\sigma_0) (\rho x^{\sigma_0})^{n-j} 
$$
in $\Omega^-(\sigma_0,\varepsilon_0)$.
\par
Here $c^0_j(\sigma_0),$ $c^0_{jn}(\sigma_0)$ $  \in \Q_{\theta}[\theta_0^{-1},
\theta_x^{-1}] (\sigma_0) $ and
$\tilde{c}^0_j(\sigma_0),$ $\tilde{c}^0_{jn}(\sigma_0)$ 
$ \in \Q_{\theta} (\sigma_0)$. 
\end{thm}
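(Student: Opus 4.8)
The plan is to prove Theorem~\ref{thm2.2} by the device underlying Theorem~\ref{thm2.1}: construct a suitable matrix solution $(A_0(x),A_x(x))$ of the Schlesinger system governing the isomonodromy deformation of \eqref{1.1}, with the prescribed spectral data (a)--(b), as a convergent double series, and then recover $y_{\sigma_0}(\rho,x)$ from its entries exactly as in Section~\ref{sc5}. What changes compared with Theorem~\ref{thm2.1} is that $\sigma_0$ is now frozen at one of the resonant values $\theta_0\pm\theta_x,\ \theta_x-\theta_0$, so one of the two integration constants of the generic family disappears and the local structure of the series is modified accordingly.

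First I would set $z:=\rho x^{\sigma_0}$ and seek $A_0,A_x$ as power series in the two small quantities $x$ and $z$ on the region $\Omega^0_{\rho}(\varepsilon_0)$ (respectively in $x$ and $z^{-1}$ on $\Omega^-_{\sigma_0,\rho}(\varepsilon_0)$). Using $x\,d/dx=x\,\partial_x+\sigma_0 z\,\partial_z$, the Schlesinger system becomes a recursion of linear equations for the coefficient matrices, whose leading ($x$-independent) part is an autonomous model system in $\log z$. The operator acting on the order-$(j,n)$ block is, schematically, $(n+j\sigma_0)$ minus an adjoint term built from $J$ and from the constant parts of $A_0,A_x$; it fails to be invertible only at finitely many orders, determined by the eigenvalue data, and the hypotheses $\theta_0\theta_x\ne0$, $\sigma_0^2-\theta_\infty^2\ne0$, $\sigma_0\in\Sigma_+$ are precisely what controls these. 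For the chosen resonant $\sigma_0$ the bad orders include the one carrying the second integration constant of the generic family, which is why the series structure degenerates there.

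The crucial point --- and the step I expect to be the main obstacle --- is to verify at each resonant order that the obstruction, i.e.\ the component of the right-hand side along the cokernel of the singular operator, vanishes, so that no logarithm is forced. This is what makes the resonance apparent, forces the replacement of the generic coefficient $4\sigma^2(\theta_0+\theta_x-\sigma)\big/\bigl((\sigma+\theta_\infty)(\theta_0^2-(\sigma+\theta_x)^2)\bigr)$ by the simpler ratios displayed in (ii.a)--(ii.c), and suppresses the negative-power branch on $\Omega^0$. The same bookkeeping yields the stated rings of coefficients: every step is rational in $(\theta_0,\theta_x,\theta_\infty,\sigma_0)$, and the only denominators that appear are $\theta_0,\theta_x$ (propagating from the leading coefficients of (ii.a)--(ii.c)) and the unit $\sigma_0^2-\theta_\infty^2$, whence $c^0_j,c^0_{jn}\in\Q_\theta[\theta_0^{-1},\theta_x^{-1}](\sigma_0)$ on $\Omega^0$ and $\tilde c^0_j,\tilde c^0_{jn}\in\Q_\theta(\sigma_0)$ on $\Omega^-$.

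Next I would establish convergence with the majorant argument of Section~\ref{sc4}, bounding the order-$(j,n)$ coefficient ($j+n\ge1$) by $MR^{-(j+n)}(j+n)^{-2}$ by induction: off the finitely many resonant orders $|n+j\sigma_0|$ grows linearly in $j+n$, uniformly for $\sigma_0$ in a compact subset of $\Sigma_+$ with $\dist(\sigma_0,\Z)>0$, and the nonlinearity of the Schlesinger system is polynomial, so the induction closes once $\varepsilon_0$ is small enough depending only on $(\theta_0,\theta_x,\theta_\infty)$. This gives holomorphy on $\Omega^0(\varepsilon_0)\cup\Omega^-(\sigma_0,\varepsilon_0)$ together with the claimed expansions of $y_{\sigma_0}(\rho,x)$. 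Finally, since $(\mathrm{V})$ is invariant under $\theta_0\leftrightarrow\theta_x$, the case $\sigma_0=\theta_x-\theta_0$ reduces to $\sigma_0=\theta_0-\theta_x$, so only $\sigma_0=\theta_0+\theta_x$ and $\sigma_0=\theta_0-\theta_x$ need be carried out in detail. (One could equally substitute the double-series ansatz directly into $(\mathrm{V})$ after clearing the denominators $y$ and $y-1$; the recursion and the resonance analysis are then formulated for a scalar second-order equation but are otherwise identical.)
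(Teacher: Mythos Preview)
Your overall plan---build the matrix solution of the Schlesinger system and read off $y$ via \eqref{2.10}---is exactly the paper's strategy, but your execution diverges from the paper's in a way worth noting. You propose to rebuild the recursion at the fixed resonant value $\sigma_0$ and check by hand that the obstructions to a log-free series vanish. The paper avoids this entirely: it observes that the special values $\sigma_0=\theta_0\pm\theta_x,\ \theta_x-\theta_0$ are precisely the zeros of $(T^{-1}\Lambda_0 T)_{21}=(\Lambda'_0)_{21}$ from Lemma~\ref{lem4.1}, and then invokes Proposition~\ref{prop3.1}(2), already proved by the triangular ring $\widehat{\mathfrak S}^{+}(\sigma_0)$ argument sketched in Section~\ref{sc3}. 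That proposition says directly that when $(T^{-1}\Lambda_0 T)_{21}$ vanishes the conjugated inner sums carry only nonnegative powers of $\rho x^{\sigma_0}$, so there is no obstruction computation to perform. The explicit leading coefficients are then obtained not from a fresh recursion but by specialising the generic formulas \eqref{5.2}--\eqref{5.3} to $\sigma=\sigma_0$, where the $(\rho x^{\sigma})^{-1}$ term drops out because its coefficient contains the factor $((\theta_0+\theta_x)^2-\sigma^2)((\theta_0-\theta_x)^2-\sigma^2)$; this gives \eqref{5.7}, and expanding $Y_{11}Y_{12}$ in each regime yields (ii.a)--(ii.c). Your route would work, but it reproves part of Proposition~\ref{prop3.1}(2) in an ad hoc way; the paper's route is shorter because the structural reason for the simplification (vanishing of one off-diagonal entry of $T^{-1}\Lambda_0 T$) is isolated once and for all.

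One small correction: equation~(V) is \emph{not} invariant under $\theta_0\leftrightarrow\theta_x$ alone (that swap exchanges the coefficients of $y$ and $1/y$ in the $(y-1)^2/x^2$ term), so your proposed reduction of (ii.c) to (ii.b) by symmetry does not go through as stated. The paper simply treats the three cases in parallel from \eqref{5.7}.
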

\begin{rem}\label{rem2.4}
For each $\rho\not=0$, $\Omega^0_{\rho}(\varepsilon_0)$ is given by 
$$
|x|<\varepsilon_0, \quad
\re \sigma_0\cdot \log|x|  + \log |\rho| + \log(\varepsilon^{-1}_0)<\im \sigma_0
\cdot \arg x,
$$
and $\Omega^-_{\sigma_0, \rho}(\varepsilon_0)$ by 
\begin{align*}
(1+\re \sigma_0) \log|x| & + \log |\rho| +\log(\varepsilon^{-1}_0)<\im \sigma_0
\cdot \arg x
\\
& <\re \sigma_0\cdot \log|x| + \log |\rho| - \log(\varepsilon^{-1}_0). 
\end{align*}
\end{rem}
\begin{rem}\label{rem2.5}
In $\Omega^0_0(\varepsilon_0),$
$y_{\sigma_0}(0, x)$ in each case is a Taylor series solution. If $\sigma_0
=\pm (\theta_0-\theta_x)$ then $y_{\sigma_0}(0, x)= - (\theta_0 -\theta_x
-\theta_{\infty})/(\theta_0 -\theta_x + \theta_{\infty} )+O(x),$ and if
$\sigma_0 =\theta_0 +\theta_x,$ direct substitution into (V) yields 
$y_{\sigma_0}(0, x)= 1+ (1- \theta_0 -\theta_x)^{-1} x +O(x^2).$ 
Since $\sigma_0\not\in \Z,$ the coefficients $c_{0n}^0(\sigma_0)$ 
of both solutions are uniquely determined,
and they coincide with the solutions (II) and (III) in \cite[Theorem 2]{K-O}, 
respectively.
\end{rem}
The following are solutions of inverse logarithmic type, which correspond to
the Chazy solutions of the sixth Painlev\'e equation \cite{M}.
\begin{thm}\label{thm2.3}
$(1)$ Suppose that $\theta_{\infty}\not=0$ and $\theta_0^2-\theta_x^2\not=
0.$ Then $(\mathrm{V})$ admits a 
one-parameter family of solutions $\{y_{\mathrm{ilog}}(\rho, x);\;
\rho \in \mathcal{R}(\C\setminus \{0\}) \}$ such that
$y_{\mathrm{ilog}}(\rho, x)$ is holomorphic in
$(\rho, x)\in \Omega^*(\varepsilon_0, \Theta_0) \subset \mathcal{R}(\C\setminus
\{0\})^2$ and is represented by the convergent series 
$$
y_{\mathrm{ilog}}(\rho,x)= 1+ \frac{4 } {\theta_{\infty}( \theta_0
- \theta_x)} \log^{-2}( \rho x) 
 +\sum_{j\ge 3} c_j\log^{-j} (\rho x)
 + \sum_{n=1}^{\infty} x^n \sum_{j\ge 0} c_{jn} \log^{2n-j} (\rho x)
$$
with $c_j,$ $c_{jn} \in \Q_{\theta}[\theta^{-1}_{\infty},
(\theta_0^2 - \theta_x^2)^{-1} ],$
where
\begin{align*}
& \Omega^{*}( \varepsilon_0,\Theta_0):=  \bigcup_{\rho \in \mathcal{R}( 
\C\setminus\{0\}) } \{\rho \}\times \Omega^{*}_{\rho}(\varepsilon_0,\Theta_0),
\\
& \Omega^{*}_{\rho}(\varepsilon_0, \Theta_0):=  
\{ x\in \mathcal{R}(\C\setminus\{ 0\}); \, |\rho x|<\varepsilon_0, \,\,
|x(\rho x)^{-1/2}| < \varepsilon_0, \,\, |\arg(\rho x)|<\Theta_0  \}, 
\end{align*} 
$\Theta_0$ being a given positive number and 
$\varepsilon_0=\varepsilon_0(\Theta_0, \theta_0, \theta_x, \theta_{\infty})$
a sufficiently small number depending only on $\Theta_0$ and $(\theta_0,
\theta_x, \theta_{\infty}).$
\par
$(2)$ Suppose that $\theta_{\infty}\not=0$ and $\theta_0^2-\theta_x^2=
0.$ If $\theta_0=\theta_x\not=0$ or if $\theta_0=- \theta_x\not=0$,
then $(\mathrm{V})$ admits 
one-parameter families of solutions $\{y^{\pm}_{\mathrm{ilog}}(\rho, x);\;
\rho \in \mathcal{R}(\C\setminus \{0\}) \}$ or 
$\{y^{+}_{\mathrm{ilog}}(\rho, x);\;
\rho \in \mathcal{R}(\C\setminus \{0\}) \}$, respectively,  
such that each solution is holomorphic in
$(\rho, x)\in \Omega^*(\varepsilon_0, \Theta_0)$ 
and is represented by the convergent series as follows$:$
\par
$\mathrm{(i)}$
if $\theta_0=\theta_x \not=0,$ 
$$
y^{\pm}_{\mathrm{ilog}}(\rho,x)= 1 \mp \frac{2 } {\theta_{\infty}}
 \log^{-1}( \rho x) 
 +\sum_{j\ge 2} c^{\pm}_j\log^{-j} (\rho x)
 + \sum_{n=1}^{\infty} x^n \sum_{j\ge 0} c^{\pm}_{jn} \log^{n-j} (\rho x);
$$
\par
$\mathrm{(ii)}$
if $\theta_0= -\theta_x \not=0,$
$$
y^{+}_{\mathrm{ilog}}(\rho,x)= 1 + \frac{2 } {\theta_0 \theta_{\infty}}
 \log^{-2}( \rho x) 
 +\sum_{j\ge 3} c^{+}_j\log^{-j} (\rho x)
 + \sum_{n=1}^{\infty} x^n \sum_{j\ge 0} c^{+}_{jn} \log^{n-j} (\rho x).
$$
Here
$c_j^{\pm},$ $  c_{jn}^{\pm}$ $ \in \Q[\theta_0, \theta_0^{-1},\theta_{\infty},
\theta^{-1}_{\infty}],$ in particular, 
in case $\theta_0=\theta_x$, $c^+_j=0$ for $j\ge 2.$ 
\par
$(3)$ Suppose that $\theta_{\infty}=0.$ 
If $\theta_0^2-\theta_x^2 \not=0$ or if $\theta_0 = - \theta_x \not= 0$, 
then $(\mathrm{V})$ admits 
one-parameter families of solutions $\{y^{(l)}_{\mathrm{ilog}}(\rho, x);\;
\rho \in \mathcal{R}(\C\setminus \{0\}) \}$ $(l=1,2)$ or
$\{y^{(1)}_{\mathrm{ilog}}(\rho, x);\;
\rho \in \mathcal{R}(\C\setminus \{0\}) \}$, respectively,
such that each solution
is holomorphic in $(\rho, x)\in \Omega^*(\varepsilon_0,\Theta_0)$ and is 
represented by the convergent series as follows$:$
\par
$\mathrm{(i)}$
if $\theta_0^2 -\theta_x^2 \not=0,$ 
$$
y^{(l)}_{\mathrm{ilog}}(\rho,x)= 1+ \frac{(-1)^{l+1}2 } {\theta_0 - \theta_x}
 \log^{-1}( \rho x) 
 +\sum_{j\ge 2} c^{(l)}_j\log^{-j} (\rho x)
 + \sum_{n=1}^{\infty} x^n \sum_{j\ge 0} c^{(l)}_{jn} \log^{2n-j} (\rho x);
$$
\par
$\mathrm{(ii)}$
if $\theta_0= -\theta_x \not=0,$
$$
y^{(1)}_{\mathrm{ilog}}(\rho,x)= 1 + \frac{1 } {\theta_0 }\log^{-1}( \rho x) 
 +\sum_{j\ge 2} c^{(1)}_j\log^{-j} (\rho x)
 + \sum_{n=1}^{\infty} x^n \sum_{j\ge 0} c^{(1)}_{jn} \log^{n-j} (\rho x).
$$
Here $c_j^{(l)},$ $ c_{jn}^{(l)} \in \Q[{\theta_0},\theta_{x},
(\theta_0^2 - \theta_x^2)^{-1} ]$ $(l=1,2)$ and $c_j^{(1)},$ $ c_{jn}^{(1)}
 \in \Q[{\theta_0},\theta_{0}^{-1}]$, respectively.
\end{thm}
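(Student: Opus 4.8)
The plan is to realise every family in the statement through a matrix solution $(A_{0}(x),A_{x}(x))$ of the Schlesinger system associated with \eqref{1.1}, following the unified scheme outlined in the introduction (Sections \ref{sc3}--\ref{sc5}). Under (a) and (b), equation $(\mathrm V)$ is equivalent to the isomonodromy deformation equations for $A_{0}(x),A_{x}(x)$, and $y(x)$ is a fixed $\Q_{\theta}$-rational function of their entries --- essentially the $\lambda$-coordinate of the zero of $\bigl(A_{0}/\lambda+A_{x}/(\lambda-x)+J/2\bigr)_{12}$. So it suffices to produce a solution of the Schlesinger system whose entries are convergent double series in $x$ and $w:=(\log(\rho x))^{-1}$, with the leading term fixed by the condition $y\to 1$, and then read off $y$. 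The key point is that $y\to 1$ selects the resonant configuration of the constant ``seed'' matrix pair in which the exponent that creates the factor $x^{\sigma}$ in Theorem \ref{thm2.1} collapses to $\sigma=0$, so that $x^{\sigma}\equiv 1$ is replaced by powers of $w$; this is the $(\mathrm V)$-analogue of the Chazy-type logarithmic degeneration of the sixth equation, and the hypotheses of part (1), $\theta_{\infty}\ne 0$ and $\theta_{0}^{2}-\theta_{x}^{2}\ne 0$, are exactly the $\sigma=0$ specialisation of the non-resonance conditions $(\sigma^{2}-(\theta_{0}\pm\theta_{x})^{2})(\sigma^{2}-\theta_{\infty}^{2})\ne 0$ of Theorem \ref{thm2.1}.

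Next I would build the formal solution. Since $x\,dw/dx=-w^{2}$, the operator $x\,d/dx$ maps $x^{n}w^{j}$ to $n\,x^{n}w^{j}-j\,x^{n}w^{j+1}$; inserting $A_{0}=\sum_{n\ge0}x^{n}\sum_{j}A_{0,nj}w^{j}$ and $A_{x}=\sum_{n\ge0}x^{n}\sum_{j}A_{x,nj}w^{j}$ into the Schlesinger system gives a recursion triangular in $(n,j)$. At the bottom ($n=0$) the $x$- and $w$-independent part is a constant matrix pair carrying the eigenvalue data $\pm\theta_{0}/2$, $\pm\theta_{x}/2$, $(A_{0}+A_{x})_{11}=-\theta_{\infty}/2$, normalised for $y\to1$, and the remaining pure-$w$ coefficients are obtained from quadratic relations among lower ones; the first nontrivial coefficient is forced, giving the leading term $\tfrac{4}{\theta_{\infty}(\theta_{0}-\theta_{x})}\log^{-2}(\rho x)$. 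For $n\ge1$ one solves for $A_{0,nj},A_{x,nj}$ by inverting finite-dimensional linear maps whose determinants are products of factors among $\theta_{\infty}$, $\theta_{0}^{2}-\theta_{x}^{2}$ and nonzero integers, so under the part-(1) hypotheses all $c_{j},c_{jn}$ are uniquely determined in $\Q_{\theta}[\theta_{\infty}^{-1},(\theta_{0}^{2}-\theta_{x}^{2})^{-1}]$, with the $\log$-powers of the $x^{n}$-corrections those recorded in the statement. In the degenerate configurations $\theta_{0}=\pm\theta_{x}$ (part (2)) and $\theta_{\infty}=0$ (part (3)) one or both of these factors vanishes, the seed takes a more degenerate normal form, the leading power of $w$ and the $w$-versus-$x$ degree relation change as in the statement (leading $w^{1}$ with $\log^{n-j}$ corrections, except for $\theta_{0}=-\theta_{x}$, which keeps $w^{2}$), and for $\theta_{0}=\theta_{x}$ the pure-$w$ part collapses, forcing $c^{+}_{j}=0$ for $j\ge2$; each sub-case requires its own bottom-order normalisation and admissible denominators but is run through the same recursion.

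For convergence I would invoke the estimates of Section \ref{sc4}: recast the recursion as a fixed-point equation in a Banach space of double series $\sum a_{nj}x^{n}w^{j}$ with a weighted norm adapted to $\Omega^{*}_{\rho}(\varepsilon_{0},\Theta_{0})$, and show the associated operator is a contraction there for $\varepsilon_{0}$ small, uniformly in $\rho$. The three inequalities defining $\Omega^{*}_{\rho}$ play complementary roles: $|\arg(\rho x)|<\Theta_{0}$ forces $|\log(\rho x)|\asymp\bigl|\log|\rho x|\bigr|$, so that $|\rho x|<\varepsilon_{0}$ genuinely makes $w$ small and keeps the pure-$w$ series $\sum c_{j}w^{j}$ inside its disc; while $|x(\rho x)^{-1/2}|<\varepsilon_{0}$ absorbs the growth of the $x^{n}$-corrections because $|x|\,|\log(\rho x)|^{2}\le C\,|x(\rho x)^{-1/2}|$ (as $(\log s)^{2}=o(s^{-1/2})$ when $s\to0^{+}$), which is exactly the scale $x\log^{2}(\rho x)$ at which those corrections live --- so this one region serves all the cases, even those whose corrections only reach $\log^{n}$. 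Uniform absolute convergence on $\Omega^{*}(\varepsilon_{0},\Theta_{0})$ gives holomorphy in $(\rho,x)$, and substituting the resulting $A_{0}(x),A_{x}(x)$ into the rational expression for $y$ produces the series displayed in the theorem with the stated leading coefficients and coefficient rings.

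The main obstacle is the degenerate part. In cases (2) and (3) one must identify precisely which of the linear maps above lose invertibility and what replaces them, fix the correct bottom-order seed in each sub-case (including the choice between the branches $y^{\pm}_{\mathrm{ilog}}$, $y^{(1)}_{\mathrm{ilog}}$, $y^{(2)}_{\mathrm{ilog}}$), and verify that the modified $w$-degree versus $x$-degree relation --- the ``$2n$'' versus ``$n$'' in the exponent of $\log(\rho x)$ --- is genuinely forced; checking that the collapse $c^{+}_{j}=0$ ($j\ge2$) for $\theta_{0}=\theta_{x}$ is real and not an artefact of truncation is the most delicate of these. Once the correct scaling is pinned down in each case, the convergence step is a routine application of the Section \ref{sc4} majorant lemmas, uniform in $\rho$ over the sector $|\arg(\rho x)|<\Theta_{0}$.
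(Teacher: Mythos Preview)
Your overall strategy --- build a matrix solution $(A_0,A_x)$ of the Schlesinger system \eqref{2.9} and then read off $y$ via \eqref{2.10} --- is exactly the paper's route (Section \ref{ssc5.3}). But there is a genuine gap in how you parametrise the matrix solution.

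You propose to expand $A_0,A_x$ as double series in $x$ and $w=(\log(\rho x))^{-1}$. The actual matrix solution does \emph{not} live in that ring: its entries blow up like $\log^2(\rho x)$ as $x\to 0$. The correct picture, which the paper makes explicit, is that the seed $\Lambda=\Lambda_0+\Lambda_x$ is \emph{nilpotent} (Lemma \ref{lem4.2}: $T^{-1}\Lambda T=\Delta$, or $\Delta_-$ when $\theta_\infty=0$), so $(\rho x)^{\Lambda}=T(I+\Delta\log(\rho x))T^{-1}$ is a polynomial of degree one in $\log(\rho x)$, and conjugation of $\Lambda_0$ by it gives a degree-two polynomial. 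Proposition \ref{prop3.2} then produces $A_\iota(\rho,x)=(\rho x)^{\Lambda}\bigl(\Lambda_\iota+\sum_{n\ge1}x^n\Pi^{*n}_\iota(\log(\rho x))\bigr)(\rho x)^{-\Lambda}$ where each $\Pi^{*n}_\iota$ is a \emph{polynomial} in $\log(\rho x)$ of degree $\le 2n$ (or $\le n$ in the degenerate cases, coming from the extra vanishing $(T^{-1}\Lambda_0 T)_{21}=0$). This is why the $x^n$-correction to $y$ carries $\log^{2n-j}$ versus $\log^{n-j}$: it is inherited from the matrix solution, not forced by a recursion in $w$. Your recursion $x\,d/dx:\,x^nw^j\mapsto nx^nw^j-jx^nw^{j+1}$ is not triangular in the right direction and, more importantly, would try to pin down an $n=0$ limit for $A_0,A_x$ that does not exist.

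The inverse-logarithmic form of $y$ only appears \emph{after} forming the ratio \eqref{2.10}: $y=Y_{11}Y_{12}$ with $-Y_{11}=1+\dfrac{\theta_0+\theta_x-\theta_\infty+(\cdots)}{2(A_0)_{11}-\theta_x+\theta_\infty+(\cdots)}$ and $-Y_{12}=1+\dfrac{\theta_\infty+(\cdots)}{2(A_0)_{12}+(\cdots)}$, whose denominators are dominated by the $\log^2(\rho x)$ term; inverting and multiplying gives the leading $4/(\theta_\infty(\theta_0-\theta_x))\log^{-2}(\rho x)$ and the full series. The hypotheses $\theta_\infty\ne 0$, $\theta_0^2-\theta_x^2\ne 0$ enter not as invertibility of your linear maps but as nonvanishing of the leading $\log^2$-coefficient in those denominators. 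Two further points you are missing: in case (1) the two sign choices in Lemma \ref{lem4.2} a priori give two families, and one needs Proposition \ref{prop5.0} to show they coincide after the shift $\rho\mapsto\rho\exp(\mp 2\theta_x/(\theta_0^2-\theta_x^2))$; and your reference to ``Section \ref{sc4} majorant lemmas'' is off --- Section \ref{sc4} contains the algebraic seed lemmas, while convergence is the content of Proposition \ref{prop3.2} via the ring $\mathfrak L(D)$ described in Section \ref{sc3}.
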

\begin{rem}\label{rem2.6}
Applying the B\"{a}cklund transformation with $\pi$
in Lemma \ref{lem6.1} to the inverse logarithmic solutions above 
except for
$y^{+}_{\mathrm{ilog}}(\rho,x)$ with $\theta_0=\theta_x$,
we derive solutions of logarithmic type as follows: 
under the condition $\theta_0 +\theta_x\not=1$, solutions satisfying 
$$
y_{\mathrm{log}}(\rho,x) \sim 1- \frac 12 (1-\theta_0-\theta_x)
 x\log^2(\rho x) 
$$
with $(1-\theta_{\infty})(1-\theta_0+\theta_x)\not=0,$ with
$\theta_{\infty}=1,$ $\theta_0-\theta_x\not=1$ and with
$\theta_{\infty}\not=1,$ $\theta_0-\theta_x=1$ follow from
$y_{\mathrm{ilog}}(\rho,x)$, from
$y^-_{\mathrm{ilog}}(\rho,x)$ with $\theta_0=\theta_x\not=0$ and
from $y^+_{\mathrm{ilog}}(\rho,x)$ with $\theta_0=-\theta_x\not=0,$
respectively; and under the condition $\theta_0+\theta_x=1$, those satisfying
$$
y^{(l)}_{\mathrm{log}}(\rho,x) \sim 1 +(-1)^{l+1} x\log(\rho x) 
\quad \,\, (l=1,2)
$$
with $(1-\theta_{\infty})(1-\theta_0+\theta_x)\not=0$ and with
$\theta_{\infty}\not=1,$ $\theta_0-\theta_x=1$ follow from
$y^{(l)}_{\mathrm{ilog}}(\rho,x)$ $(l=1,2)$ with $\theta_0^2-\theta_x^2
\not=0$ and from
$y^{(1)}_{\mathrm{ilog}}(\rho,x)$ with $\theta_0=-\theta_x\not=0,$
respectively. These logarithmic solutions are studied in \cite{S-Log}. 
For the exceptional case of
$y^{+}_{\mathrm{ilog}}(\rho,x)$ the denominator of the 
B\"{a}cklund transformation is of
the form $x(\cdots)$, and to compute the resultant solution we need to
know some of the coefficients $c_{jn}^+.$
\end{rem}
\begin{thm}\label{thm2.4}
Suppose that $\theta_{\infty}=0.$ 
If $\theta_0=\theta_x $ or if $\theta_0 = - \theta_x ,$ then 
$(\mathrm{V})$ has a one-parameter
family of solutions $\{ y^+_{\mathrm{Taylor}}(a,x); a\in \C \setminus \{0\}
 \}$ or
$\{ y^-_{\mathrm{Taylor}}(a,x); a\in \C \}$, respectively, represented by
the convergent series 
$$
y^{\pm}_{\mathrm{Taylor}}(a,x)= \sum_{n=0}^{\infty} c^{\pm}_n(a) x^n
$$
with $c^+_0(a)=(a+\theta_0)/a,$ $c^+_1(a)=(a+\theta_0)(1-2\theta_0)/a,$
$c_n^+(a) \in \Q[a, a^{-1}, \theta_0 ]$, or
$c^-_0(a)=c^-_1(a)=1,$ $c^-_2(a)=(1-\theta_0 -2a)/2,$
$c_n^-(a) \in \Q[a, \theta_0 ]$. If $\theta_0=\theta_x=0,$ then 
$c_n^+(a)=c_n^-(a)$
for every $n \ge 0.$ 
\end{thm}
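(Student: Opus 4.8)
The plan is to establish both families directly: substitute a formal Taylor series into a cleared, polynomial form of $(\mathrm{V})$, analyse the resulting recursion for the coefficients, and prove convergence by a majorant estimate. (An alternative, in the spirit of the rest of the paper, would be to exhibit the corresponding holomorphic matrix solution of the Schlesinger system; the direct route is, however, self-contained.) First I would clear denominators: multiplying $(\mathrm{V})$ by $2x^2y(y-1)$ and setting $\theta_\infty=0$ gives the polynomial identity
\begin{align*}
2x^2y(y-1)y''={}&(3y-1)x^2(y')^2-2xy(y-1)y'+\tfrac{1}{4}(\theta_0-\theta_x)^2(y-1)^4(y+1)\\
&{}+2(1-\theta_0-\theta_x)xy^2(y-1)-x^2y^2(y+1).
\end{align*}
The key remark is that when $\theta_0=\theta_x$ the term carrying $(\theta_0-\theta_x)^2$ vanishes identically, so the right-hand side has no term of $x$-order below $1$, while when $\theta_0=-\theta_x$ that term equals $\theta_0^2(y-1)^4(y+1)$, whose factor $(y-1)^4$ will be forced to vanish to order $x^4$; in either case the $x^{-2}$-singularity of $(\mathrm{V})$ is effectively absent.

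Substituting $y=\sum_{n\ge0}c_nx^n$ and equating powers of $x$, one finds the following. In the $+$ case (take $\theta_0=\theta_x$ and set $c_0=(a+\theta_0)/a$): the $x^0$-relation is vacuous, the $x^1$-relation forces $c_1=(1-2\theta_0)c_0$ (using $c_0\ne0,1$ when $\theta_0\ne0$; the value $\theta_0=0$, where $c_0\equiv1$, is covered by the degenerate case below), and for $n\ge2$ the $x^n$-relation has the shape $2c_0(c_0-1)n^2\,c_n=R_n$ with $R_n$ a polynomial in $c_0,\dots,c_{n-1},\theta_0$ over $\Q$, so $c_n$ is determined uniquely and $c_n^+(a)\in\Q[a,a^{-1},\theta_0]$; evaluating $R_2$ reproduces the stated $c_1^+(a),c_2^+(a)$. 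In the $-$ case, with $\theta_0=-\theta_x\ne0$: the $x^0$-relation forces $c_0=1$ (the root $c_0=-1$ giving an unrelated family), the $x^2$-relation forces $c_1=1$, the $x^3$-relation is identically satisfied---the unique resonance---so $c_2$ is free, and for $n\ge4$ the $x^n$-relation reads $2(n-1)(n-3)\,c_{n-1}=\widetilde R_n$ with $2(n-1)(n-3)\ne0$, determining $c_{n-1}$; normalising the free parameter by $c_2=(1-\theta_0-2a)/2$ gives $c_n^-(a)\in\Q[a,\theta_0]$. When $\theta_0=\theta_x=0$ both hypotheses hold, the two polynomial equations coincide, both sequences start $1,1,(1-2a)/2$, and since the resonance then lies at level $2$ the recursions agree from $c_3$ on; hence $c_n^+=c_n^-$ for every $n$.

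It remains to see that these formal series converge in a disc $|x|<\varepsilon_0$ and so yield genuine holomorphic solutions of $(\mathrm{V})$. I would pass to a majorant: writing each recursion as $\kappa(n)c_{\ast}=\Phi_n$ with $|\kappa(n)|\ge\gamma n$ for large $n$ and $\Phi_n$ a polynomial in the earlier coefficients whose combinatorics are dominated by a single power series, comparison with the holomorphic solution of the associated dominant equation yields $|c_n|\le CM^n$; equivalently one clears the apparent singularity at $x=0$, rewrites $(\mathrm{V})$ as a first-order holomorphic system and invokes the Cauchy existence theorem. I expect the main obstacle to be the bookkeeping of the previous paragraph: in each of the $+$, $-$, and overlapping degenerate cases one must track through every monomial of the cleared equation which coefficient carries the top index in the $x^n$-relation, and check that the resonant relation is a true identity rather than a hidden constraint (this also accounts for the divisibility making the $c_n^+$ lie in $\Q[a,a^{-1},\theta_0]$). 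Once that is settled, the convergence estimate and the ring statements for the coefficients are routine.
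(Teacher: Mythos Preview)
Your approach is sound but genuinely different from the paper's. The paper never touches $(\mathrm{V})$ directly here: it works on the Schlesinger side. With $\theta_\infty=0$ and $\theta_0=\pm\theta_x$ one has $\Lambda=\Lambda_0+\Lambda_x=0$, and Lemma~4.3 supplies the explicit one-parameter family $\Lambda_0=-\Lambda_x$ depending on $a$ (via the diagonalising matrix $T$). The reduced system \eqref{5.8} for $(U_0,U_\infty)$ then has $t=0$ as an \emph{ordinary} point, and Proposition~5.1 builds a holomorphic matrix solution by a Picard-type iteration, with convergence read off from a general result in \cite{GLS}. Finally $y$ is extracted from $(A_0,A_x)$ via $y=Y_{11}Y_{12}$ in \eqref{5.1}; the stated leading coefficients drop out of the first two iterates.

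Your direct substitution is more elementary and your leading-coefficient computations are correct: in the $+$ case the $x^n$-relation indeed has pivot $2c_0(c_0-1)n^2$, and in the $-$ case the pivot for $c_{n-1}$ is $2(n-1)(n-3)$ with the genuine resonance at $n=3$. Where the two routes diverge in cost is precisely the point you flag as the ``main obstacle''. In the matrix approach the coefficients of $(A_0,A_x)$ lie in $M_2(\Q[\theta_0,a])$ by construction (Proposition~5.1), and the only denominators introduced when forming $Y_{11}$ are powers of $(A_x)_{11}+\theta_x/2=-a+O(x)$ and $(A_0)_{12}=-1+O(x)$; hence $c_n^+\in\Q[a,a^{-1},\theta_0]$ is automatic, with no hidden $\theta_0^{-1}$. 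In your approach that ring statement is equivalent to showing that $R_n$ is divisible by $c_0(c_0-1)=\theta_0(a+\theta_0)/a^2$ for every $n$, which is a nontrivial inductive claim you would still have to prove.

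This also bears on the last assertion. Your argument shows that at $\theta_0=\theta_x=0$ the two \emph{families} of Taylor solutions coincide as sets, but the theorem asserts the finer statement $c_n^+(a)=c_n^-(a)$ as elements of $\Q[a,a^{-1}]$. In your set-up the parameter $a$ is introduced through $c_0$ in the $+$ case and through $c_2$ in the $-$ case; at $\theta_0=0$ the map $a\mapsto c_0^+(a)$ collapses to the constant $1$, so your normalisation no longer pins down $a$. To recover the equality of coefficients \emph{as functions of $a$} you must compute $c_2^+(a)$ for generic $\theta_0$, verify the divisibility above, and then check that its value at $\theta_0=0$ is $(1-2a)/2$. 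The paper sidesteps this entirely because the same $a$ (coming from $T$ in Lemma~4.3) parametrises both cases from the outset; the overlap is then a tautology. A further dividend of the matrix route is that it feeds directly into the monodromy computation of Theorem~2.15, which your method would not supply.
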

\subsection{Analytic continuation}\label{ssc2.2}
Suppose that
$( \sigma^2-(\theta_0\pm \theta_x)^2 )
( \sigma^2-\theta_{\infty}^2 ) \not=0$ for every
$\sigma\in \mathrm{cl} (\Sigma_0).$ Let us discuss the analytic continuation
of $y(\sigma, \rho, x)$ on $\mathcal{R}(\C\setminus \{0 \})$ around $x=0.$  
If $\sigma \in \Sigma_0$ satisfies $0<\sigma <1$ (respectively, $-1<\sigma<0$), 
then the solution $y_+(\sigma,
\rho, x)$ (respectively, $y_-(\sigma,\rho, x)$) in Theorem \ref{thm2.1} 
converges in $\{ x\in \mathcal{R}(\C\setminus\{0\} ); \, |x|<\varepsilon_0' \}$
for some $\varepsilon_0'>0$ (in fact $y_+(\sigma, \rho, x) \equiv
y_-(-\sigma, \rho^{-1}, x) $).
\par
In what follows suppose that $(\sigma,\rho) \in \Sigma_0 \times (\C \setminus
\{ 0 \})$ satisfies $\im \sigma\not=0.$ 
For $\nu\in \Z$ let $D_{\pm}
(\sigma, \rho, \nu) \subset \mathcal{R}(\C\setminus\{0\})$ be domains given by
\begin{align*}
&D_+(\sigma, \rho, \nu):\,\, 
\quad (\re \sigma -2\nu)\log|x| +\log |\rho|+\log(\varepsilon_0^{-1})
< \im \sigma \cdot \arg x
\\
&\phantom{----} <(\re \sigma -2\nu -1) \log|x| +\log |\rho| -\log
(\varepsilon_0^{-1}),
\\
&D_-(\sigma, \rho, \nu):\,\, 
\quad (\re \sigma -2\nu+1)\log|x| +\log |\rho|+\log(\varepsilon_0^{-1})
< \im \sigma \cdot \arg x
\\
&\phantom{----} <(\re \sigma -2\nu ) \log|x| +\log |\rho| -\log
(\varepsilon_0^{-1}).
\end{align*}
Furthermore set
\begin{align*}
&D_{\mathrm{even}}(\sigma, \rho, \nu):\,\,\, |x|<\varepsilon_0,
\\
&\phantom{--} -\log(\varepsilon_0^{-1}) <
 (\re \sigma -2\nu)\log|x| -  \im \sigma \cdot \arg x   +\log |\rho|
< \log(\varepsilon_0^{-1}),
\\
&D_{\mathrm{odd}}(\sigma, \rho, \nu):\,\,\, |x|<\varepsilon_0,
\\
&\phantom{--} -\log(\varepsilon_0^{-1})  <
 (\re \sigma -2\nu+1)\log|x| -  \im \sigma \cdot \arg x +\log |\rho|
< \log(\varepsilon_0^{-1}).
\end{align*}
{\tiny
\begin{center}
\unitlength=0.8mm
\begin{picture}(165,116)(-125,-30)
\put(0,0){\circle*{1}}
\put(-20,0){\circle*{1}}
\put(0,24){\circle*{1}}
\put(0,28){\circle*{1}}
\put(0,-26){\line(0,1){105}}
\put(-20,-26){\line(0,1){105}}
\put(20,0){\line(-1,0){97}}
\put(0,24){\line(-5,3){77}}
\put(0,28){\line(-5,3){77}}
\put(0,24){\line(-5,2){77}}
\put(0,28){\line(-5,2){77}}
\put(0,24){\line(-5,1){77}}
\put(0,28){\line(-5,1){77}}
\put(0,24){\line(-5,0){77}}
\put(0,28){\line(-5,0){77}}
\put(0,24){\line(-5,-1){77}}
\put(0,28){\line(-5,-1){77}}
\put(0,24){\line(-5,-2){77}}
\put(0,28){\line(-5,-2){77}}
\put(0,24){\line(-5,-3){77}}
\put(0,28){\line(-5,-3){77}}
\put(-111,72){\makebox{$D_{\mathrm{odd}}(\sigma,\rho, \nu+2)$}}
\put(-104,64){\makebox{$D_{\mathrm{+}}(\sigma,\rho, \nu+1)$}}
\put(-112,57){\makebox{$D_{\mathrm{even}}(\sigma,\rho, \nu+1)$}}
\put(-104,49){\makebox{$D_{\mathrm{-}}(\sigma,\rho, \nu+1)$}}
\put(-111,41){\makebox{$D_{\mathrm{odd}}(\sigma,\rho, \nu+1)$}}
\put(-97,33){\makebox{$D_{\mathrm{+}}(\sigma,\rho, \nu)$}}
\put(-105,25){\makebox{$D_{\mathrm{even}}(\sigma,\rho, \nu)$}}
\put(-97,17){\makebox{$D_{\mathrm{-}}(\sigma,\rho, \nu)$}}
\put(-104,9){\makebox{$D_{\mathrm{odd}}(\sigma,\rho, \nu)$}}
\put(-104,1){\makebox{$D_{\mathrm{+}}(\sigma,\rho, \nu-1)$}}
\put(-112,-6){\makebox{$D_{\mathrm{even}}(\sigma,\rho, \nu-1)$}}
\put(-104,-13){\makebox{$D_{\mathrm{-}}(\sigma,\rho, \nu-1)$}}
\put(-111,-22){\makebox{$D_{\mathrm{odd}}(\sigma,\rho, \nu-1)$}}
\put(3,1){\makebox{$0$}}
\put(3,77){\makebox{$\im \sigma \cdot \arg x$}}
\put(15,-5){\makebox{$\log|x|$}}
\put(-18,-5){\makebox{$\log(\varepsilon_0^2)$}}
\put(3,28){\makebox{$\log(|\rho|\varepsilon_0^{-1})$}}
\put(3,21){\makebox{$\log(|\rho|\varepsilon_0)$}}
\end{picture}
\end{center}
}
{\small \begin{center} Figure 2.1. \end{center}}  
\vskip0.3cm
In general these are spiral domains, and 
$$
\bigcup_{\nu\in \Z}\Bigl( \mathrm{cl}( D_{\mathrm{odd}}(\sigma,\rho,\nu)) \cup
 D_{-}(\sigma,\rho,\nu) \cup \mathrm{cl}( D_{\mathrm{even}}(\sigma,\rho,\nu) ) 
\cup D_{+}(\sigma,\rho,\nu) \Bigr)  
$$
contains
$ \{ x\in \mathcal{R}(\C\setminus\{0\} ); \, |x|<\varepsilon_0^2 \}.$
For every $\nu\in \Z,$ by Theorem \ref{thm2.1}, 
$y(\sigma -2\nu,\rho,x)$ with $\sigma \in \Sigma_0$ is represented by
$y_+(\sigma -2\nu, \rho, x)$ in $D_+(\sigma, \rho, \nu)$ and by 
$y_-(\sigma -2\nu, \rho, x)$ in $D_-(\sigma, \rho, \nu)$,
as long as $ ( (\sigma-2\nu)^2 -(\theta_0 \pm \theta_x)^2 ) 
 ( (\sigma-2\nu)^2 -\theta_{\infty}^2 ) \not= 0$ for $\sigma\in \mathrm{cl}
(\Sigma_0).$ 
Set
\begin{equation}\label{2.2}
c(\sigma):= \frac{4\sigma^2 (\theta_0+\theta_x -\sigma) }{(\sigma+
\theta_{\infty} )( \theta_0^2 -(\sigma+\theta_x)^2) }.
\end{equation}
To $c(\sigma)$ apply the substitution 
\begin{equation}\label{2.3}
\pi :\,\,\, \, (\theta_0-\theta_x, \theta_0 +\theta_x, \theta_{\infty})
\mapsto (1-\theta_{\infty}, 1-\theta_0+\theta_x, \theta_0+\theta_x -1)
\end{equation}
and denote the result by $c^{\pi}(\sigma)= \tilde{c}(\sigma)$. 
Then we have the following
relation, which gives the analytic continuation of $y(\sigma,\rho,x)$
on $ \{ x\in \mathcal{R}(\C\setminus\{0\} ); \, |x|<\varepsilon_0^2 \}.$
\begin{thm}\label{thm2.5}
For every $\nu\in \Z$,
$$
y(\sigma-2\nu +2, \rho, x) \equiv y(\sigma-2\nu, \gamma(\sigma,\nu)\rho, x)
$$
with
\begin{align*}
&\gamma(\sigma,\nu) = \frac 14 (2\nu -\theta_{\infty} -\sigma)(\sigma -
\theta_{\infty} -2\nu +2)
\\
&\phantom{-----} \times c(2\nu -\sigma) c(\sigma -2\nu+2) \tilde{c}
(\sigma-2\nu +1) \tilde{c}(2\nu-1-\sigma)
\\
&= \frac{64(\sigma-2\nu)^2 (\sigma -2\nu +1)^4(\sigma-2\nu +2)^2}
{(\theta_{\infty}-\sigma+2\nu)(\theta_{\infty}+\sigma -2\nu +2)
((\sigma-2\nu -\theta_x)^2-\theta_0^2)((\sigma-2\nu +2+\theta_x)^2-\theta_0^2)}
\end{align*}
as long as $\gamma(\sigma,\nu)\not=0,\infty$ for $\sigma \in \mathrm{cl}
(\Sigma_0).$
\end{thm}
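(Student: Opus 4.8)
The plan is to exploit the overlap of the two series representations in Theorem \ref{thm2.1} on the intersection of their domains, combined with the shift $\sigma\mapsto\sigma-2$ that relabels the solution family. First I would observe that the domain $D_+(\sigma,\rho,\nu)$ (which is $\Omega^+_{\sigma-2\nu,\rho}(\varepsilon_0)$ after unwinding Remark \ref{rem2.2}) and the domain $D_-(\sigma,\rho,\nu)$ overlap in a nonempty spiral strip $D_{\mathrm{even}}(\sigma,\rho,\nu)$, and similarly $D_-(\sigma,\rho,\nu)$ and $D_+(\sigma,\rho,\nu-1)$ overlap in $D_{\mathrm{odd}}(\sigma,\rho,\nu)$; this is exactly the content of the display just before the theorem. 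On each overlap both the $+$ and $-$ series in Theorem \ref{thm2.1} converge and represent holomorphic functions, and by the uniqueness of solutions to the initial value problem for (V) (or equivalently by analytic continuation of a single solution $y(\sigma-2\nu,\rho,x)$), $y_+(\sigma-2\nu,\rho,x)\equiv y_-(\sigma-2\nu,\rho,x)$ there. So the whole punctured-disc object $y(\sigma-2\nu,\rho,x)$ is a well-defined meromorphic function on $\{|x|<\varepsilon_0^2\}$, and analytic continuation through consecutive spiral strips is governed purely by matching leading terms.

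Next I would compare $y(\sigma-2\nu+2,\rho,x)$ with $y(\sigma-2\nu,\rho',x)$ for a $\rho'$ to be determined. The idea is that both, after relabelling, are complex-power-type solutions with the \emph{same} exponent $\sigma-2\nu$ appearing through the combinations $\rho x^{\sigma-2\nu}$ and $x(\rho x^{\sigma-2\nu})^{-1}$; indeed shifting $\sigma\mapsto\sigma-2$ changes $\rho x^{\sigma}$ into $x^{-2}\rho x^{\sigma}$, i.e. the roles of $\rho x^{\sigma}$-type and $x(\rho x^{\sigma})^{-1}$-type variables in the expansion get permuted. Concretely, I would take the $y_-$ representation of $y(\sigma-2\nu+2,\rho,x)$ in $D_-(\sigma,\rho,\nu-1)$, rewrite its small parameter $(\rho x^{\sigma-2\nu+2})^{-1}=x^{-2}(\rho x^{\sigma-2\nu})^{-1}$ so that it becomes a $y_+$-type series in the variable $\rho'' x^{\sigma-2\nu}$ for an appropriate $\rho''$, and then identify the leading coefficient. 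The leading coefficient of $y_+(\tau,\rho',x)$ is $c(\tau)\rho'$ with $c$ as in \eqref{2.2}, and the leading coefficient of $y_-(\tau,\rho,x)$ is $-\,4\tau^2(\theta_0+\theta_x+\tau)/((\tau-\theta_{\infty})(\theta_0^2-(\tau-\theta_x)^2))\cdot(\rho x^\tau)^{-1}$. Matching these forces the multiplicative relation between $\rho$ and $\rho'$, and this is precisely $\gamma(\sigma,\nu)$; the Bäcklund-type substitution $\pi$ in \eqref{2.3} is what converts the $y_-$ leading coefficient (with its shifted arguments) into the product of $c$'s and $\tilde c$'s displayed in the statement. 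The second equality in the theorem is then just an algebraic simplification of that product of four rational factors, which I would verify by a direct computation (grouping the numerators $4(\sigma-2\nu)^2$, $4(\sigma-2\nu+2)^2$, and the two factors $(\sigma-2\nu+1)^2$ from $\tilde c$, against the denominators).

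The main obstacle I anticipate is not the leading-order matching but showing that matching the \emph{leading} terms suffices to conclude the identity $y(\sigma-2\nu+2,\rho,x)\equiv y(\sigma-2\nu,\gamma\rho,x)$ for the full series. For this I would argue as follows: both sides are solutions of (V) holomorphic on the overlap region, and a complex-power-type solution in a spiral strip is uniquely determined by its two-term leading asymptotics $1+c(\tau)\rho' x^\tau+\cdots$ together with the exponent $\tau$ — this is essentially the uniqueness statement underlying the construction in Theorem \ref{thm2.1} (the coefficients $c_j^\pm,c_{jn}^\pm$ are rational functions of $\sigma$ determined recursively once the leading datum is fixed). Hence once the exponents agree (both $\sigma-2\nu$) and the leading coefficients agree (which fixes $\gamma$), the two series coincide termwise. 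A point requiring care is the range of validity: the identity holds "as long as $\gamma(\sigma,\nu)\neq 0,\infty$", which is needed so that $\gamma(\sigma,\nu)\rho$ stays in $\C\setminus\{0\}$ and so that none of the four rational factors degenerates; equivalently $((\sigma-2\nu)^2-(\theta_0\pm\theta_x)^2)((\sigma-2\nu)^2-\theta_\infty^2)\neq 0$ and the analogous non-vanishing at $\sigma-2\nu+2$, which are exactly the hypotheses ensuring Theorem \ref{thm2.1} applies to the shifted solutions. Finally, iterating the one-step relation in $\nu$ and noting that both sides extend meromorphically gives the stated analytic continuation on all of $\{x\in\mathcal R(\C\setminus\{0\});\,|x|<\varepsilon_0^2\}$.
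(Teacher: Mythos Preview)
Your approach has a genuine gap: the two solutions $y(\sigma-2\nu+2,\rho,x)$ and $y(\sigma-2\nu,\rho',x)$ do not have overlapping domains on which their complex-power series both converge, so there is no strip in which you can compare leading terms directly. Concretely, the meromorphic representation of $y(\sigma-2\nu+2,\rho,x)$ from Remark~\ref{rem2.1.a} lives in $\Omega_{\sigma-2\nu+2,\rho}(\varepsilon_0)$, which is $D_-(\sigma,\rho,\nu-1)\cup \mathrm{cl}(D_{\mathrm{even}}(\sigma,\rho,\nu-1))\cup D_+(\sigma,\rho,\nu-1)$, while $y(\sigma-2\nu,\rho',x)$ lives in $\Omega_{\sigma-2\nu,\rho'}(\varepsilon_0)$, covering the analogous strips at level $\nu$. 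These two regions are separated by $D_{\mathrm{odd}}(\sigma,\rho,\nu)$ (cf.\ Figure~2.1), and replacing $\rho$ by $\gamma(\sigma,\nu)\rho$ only translates the second region by the fixed amount $\log|\gamma(\sigma,\nu)|$, which does not bridge the gap of width $2\log(\varepsilon_0^{-1})$. Your rewriting $(\rho x^{\sigma-2\nu+2})^{-1}=x^{-2}(\rho x^{\sigma-2\nu})^{-1}$ also cannot turn the $y_-$-series at $\sigma-2\nu+2$ into a $y_+$-series at $\sigma-2\nu$: the general term $x^n(\rho x^{\sigma-2\nu+2})^{n-j}$ acquires a factor $x^{2(n-j)}$ with $n-j$ ranging over negative integers, so one does not obtain a series of the required shape $\sum_{n\ge 0} x^n\sum_{j} c_{jn}(\rho' x^{\sigma-2\nu})^{j}$.

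What the paper does instead is to use Gromak's B\"acklund transformation (Lemma~\ref{lem6.1}) as an actual map on solutions, not merely the parameter substitution $\pi$ of~\eqref{2.3} that you invoke to rewrite $\gamma(\sigma,\nu)$. Applying $B$ after $\pi$ produces an auxiliary solution $y^*(\hat\sigma,\hat\rho,x)=B(y^{\pi}(\hat\sigma,\hat\rho,x))$ whose leading behaviour is $1+\mathrm{const}\cdot x^{1\pm\hat\sigma}$; with $\hat\sigma=\sigma-2\nu+1$ this is precisely the half-integer shift needed to straddle $D_{\mathrm{odd}}(\sigma,\rho,\nu)$. The uniqueness Lemma~\ref{lem6.2} then matches $y^*(\sigma-2\nu+1,\hat\rho,x)$ on one side of the odd strip with $y(\sigma-2\nu,\rho,x)$ (relation~\eqref{6.1}) and on the other side with $y(\sigma-2\nu+2,\rho,x)$ (relation~\eqref{6.2} with $\nu$ shifted). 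Eliminating $\hat\rho$ between these two identities gives the factor $\gamma(\sigma,\nu)$ and hence the theorem. The appearance of $\tilde c=c^{\pi}$ in the formula for $\gamma$ is a trace of this two-step passage through the B\"acklund image, not a device for reorganising a single series expansion.
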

\subsection{Distribution of poles, zeros and $1$-points}\label{ssc2.21}
If $\theta_0-\theta_x=\theta_{\infty}=0$ and 
$\im \sigma\not=0,$ then, by Remark \ref{rem2.3.a}, \eqref{2.4} 
in $D_{\mathrm{even}} (\sigma,\tilde{\rho}, 0)$ has sequences of zeros
and of poles lying asymptotically along the curve $\log|\tilde{\rho} x^{\sigma}|
=\re \sigma\cdot \log |x| - \im \sigma \cdot \arg x + 
 \log|\tilde{\rho} | =0.$ 
Similarly, \eqref{2.3.a} in $D_{\mathrm{odd}}(\sigma, \breve{\rho}, 0)$ has 
sequences of $1$-points lying asymptotically
along the curve $ \log |\breve{\rho} x^{\sigma+1}| =
(\re \sigma +1) \cdot \log |x| - \im \sigma \cdot \arg x + 
\log|{\breve{\rho}} | =0.$ 
These facts are generalised 
by the following theorems, in which 
\begin{equation}\label{2.5}
L(r_0, \omega)_{\sigma} : \,\,\, (1+\re \sigma -\omega)\log |x| -\im \sigma
\cdot\arg x =r_0,\quad r_0, \,\omega \in \R
\end{equation} 
is a spiral curve if $\re\sigma\not=\omega-1$ or a ray if $\re\sigma=\omega-1$. 
\begin{thm}\label{thm2.6}
In addition to $(\sigma^2-\theta_{\infty}^2 ) ( \sigma^2- (\theta_0
\pm \theta_x)^2 ) \not=0,$ suppose that
$$
\theta_x ( \theta_0+\theta_x-\theta_{\infty})
( \theta_0^2-\theta_x^2+\sigma^2-2\theta_0\theta_{\infty} ) \not=0
$$
for $\sigma\in \mathrm{cl}(\Sigma_0)$ and
\begin{equation}\label{2.6}
 \theta_0 -\theta_x-\theta_{\infty} \not=0.
\end{equation}
Set 
$$
r_0:=\log|\xi_0|, \quad \mu_0:=\arg \xi_0 \quad \text{with \,\, $\xi_0:=
- \displaystyle\frac{\sigma+\theta_0 +\theta_x}{\sigma-\theta_0- \theta_x}
 \rho^{-1}$}.
$$
Then $y(\sigma,\rho, x)$ admits a sequence of simple zeros $\{x^0_{n} \}
_{n\in \N} \subset D_{\mathrm{even}}(\sigma,\rho, 0)$ such that
$$
|\sigma|^2 \log|x^0_{n} | -r_0 \re \sigma -\mu_0\im \sigma \sim -2\pi n
|\im \sigma |
$$
and $\dist(x^0_{n}, L(r_0,1)_{\sigma}) = O(|x^{0}_{n} |^2)$. Furthermore
for
$$
\hat{\xi}_0 := -\frac{(\sigma+\theta_{\infty})( (\sigma+\theta_x)^2 -\theta_0^2)
}{(\sigma-\theta_{\infty})( (\sigma-\theta_x)^2 -\theta_0^2)} \rho^{-1}
$$
there exists another sequence of simple zeros $\{\hat{x}^0_{n} \}_{n\in \N}
 \subset D_{\mathrm{even}}(\sigma, \rho, 0)$ with similar properties, that is,
$$
|\sigma|^2 \log|\hat{x}^0_{n} | -\hat{r}_0 \re \sigma -\hat{\mu}_0\im \sigma
 \sim -2\pi n |\im \sigma |
$$
and $\dist(\hat{x}^0_{n}, L(\hat{r}_0,1)_{\sigma}) = O(|\hat{x}^{0}_{n}
|^2)$, where $\hat{r}_0:=\log|\hat{\xi}_0|,$ $\hat{\mu}_0:= \arg \hat{\xi}_0.$
\end{thm}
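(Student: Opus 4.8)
The plan is to work in the transition region $D_{\mathrm{even}}(\sigma,\rho,0)$, where $|\rho x^{\sigma}|$ stays bounded above and below, and to use the ratio-of-convergent-series representation of $y(\sigma,\rho,x)$ valid there, established in Sections \ref{ssc5.1} and \ref{sc7} (cf. Remark \ref{rem2.3.a}(i)); the one-sided expansions $y_{\pm}$ of Theorem \ref{thm2.1} cannot be used directly, being convergent only for $|\rho x^{\sigma}|$ small or large. With $w:=\rho x^{\sigma}$ this representation has the form
$$
y(\sigma,\rho,x)=\frac{N_0(w)+x\,P(w,x)}{D_0(w)+x\,Q(w,x)}
$$
on a set $\{c_1\le|w|\le c_2,\ |x|<\varepsilon_0'\}\subset D_{\mathrm{even}}(\sigma,\rho,0)$, where $N_0,D_0$ are the leading (``pure power'') parts, each a quadratic polynomial in $w$ with coefficients in $\Q_{\theta}(\sigma)$, and $P,Q$ are holomorphic and uniformly bounded for $c_1\le|w|\le c_2$, $|x|<\varepsilon_0'$. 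Here the factors $\Phi_1,\Phi_2,\Psi_1,\Psi_2,F,G$ of Remark \ref{rem2.3.a}(i) enter: at $x=0$ the numerator and denominator reduce to $(FG)|_{x=0}$ times products of two affine functions of $w$ (namely $\Phi_1\Phi_2$ and $\Psi_1\Psi_2$), so that $N_0(w)=\kappa\,(w-\rho\xi_0)(w-\rho\hat\xi_0)$ for a suitable $\kappa\in\Q_{\theta}(\sigma)$.

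First I would pin down $N_0$ explicitly. Using $N_0(0)=D_0(0)\neq 0$ (the value $y=1$ at $w=0$), $N_0/D_0\to 1$ as $w\to\infty$, and the known first Taylor coefficient $c(\sigma)$ of $y_+$ in $w$, I expect the two roots of $N_0$ to come out exactly as $w=\rho\xi_0$ and $w=\rho\hat\xi_0$, with $\xi_0,\hat\xi_0$ the rational functions of $(\sigma,\rho,\theta_0,\theta_x,\theta_{\infty})$ in the statement (equivalently, $\Phi_1$ vanishes at $w=\rho\xi_0$ and $\Phi_2$ at $w=\rho\hat\xi_0$). The side conditions $\theta_x(\theta_0+\theta_x-\theta_{\infty})(\theta_0^2-\theta_x^2+\sigma^2-2\theta_0\theta_{\infty})\neq 0$ and \eqref{2.6}, on top of $(\sigma^2-\theta_{\infty}^2)(\sigma^2-(\theta_0\pm\theta_x)^2)\neq 0$, are then consumed in verifying the non-degeneracy needed below: $\rho\xi_0,\rho\hat\xi_0$ are finite, nonzero and simple roots of $N_0$; $D_0$ does not vanish at them; and $\kappa\,\Psi_1(w)\Psi_2(w)+xQ(w,x)\neq 0$ near them, so $y$ is holomorphic (not $\infty$) there. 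Shrinking $\varepsilon_0$ — whence its dependence on $\Sigma_0$ and the $\theta$'s — both roots lie in $\{\varepsilon_0<|w|<\varepsilon_0^{-1}\}$, so the zeros to be found lie inside $D_{\mathrm{even}}(\sigma,\rho,0)$, where $y=0$ becomes $N_0(w)+x\,P(w,x)=0$.

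Next I would locate the zeros by a Rouch\'e argument on the universal cover. Fix the root $w=\rho\xi_0$, put $u:=\log x$ and $u_n:=\sigma^{-1}(\log\xi_0-2\pi\i\,\mathrm{sgn}(\im\sigma)\,n)$, so $\rho e^{\sigma u_n}=\rho\xi_0$ and $\re u_n\to-\infty$ as $n\to\infty$. On the disc $|u-u_n|\le C|e^{u_n}|$ the holomorphic function $u\mapsto N_0(\rho e^{\sigma u})$ has a single simple zero, at $u_n$, with $|N_0(\rho e^{\sigma u})|\asymp|u-u_n|$ there, while $|e^{u}P(\rho e^{\sigma u},e^{u})|=O(|e^{u_n}|)$; choosing $C$ large, Rouch\'e yields a unique zero $u_n^0$ of $N_0(\rho e^{\sigma u})+e^uP(\rho e^{\sigma u},e^u)$ in that disc, with $u_n^0-u_n=O(|e^{u_n}|)=O(|x_n^0|)$, where $x_n^0:=e^{u_n^0}$. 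The analogous construction with $\rho\hat\xi_0$ and $\hat u_n:=\sigma^{-1}(\log\hat\xi_0-2\pi\i\,\mathrm{sgn}(\im\sigma)\,n)$ gives $\{\hat x_n^0\}$. Writing $\log\xi_0=r_0+\i\mu_0$ and taking real parts in $u_n^0=u_n+O(|x_n^0|)$ gives $|\sigma|^2\log|x_n^0|=r_0\re\sigma+\mu_0\im\sigma-2\pi n|\im\sigma|+O(|x_n^0|)$, which is the asserted asymptotics. Since $L(r_0,1)_{\sigma}$ is the locus $\re(\sigma\log x)=r_0$ and $\re(\sigma u_n^0)=r_0+O(|x_n^0|)$, moving from $x_n^0$ a distance $t|x_n^0|$ in the direction $\bar\sigma/|\sigma|$ changes $\re(\sigma\log x)$ by $t|\sigma|+O(t^2)$, so one reaches $L(r_0,1)_{\sigma}$ with $t=O(|x_n^0|/|\sigma|)$, i.e. $\dist(x_n^0,L(r_0,1)_{\sigma})=O(|x_n^0|^2)$; likewise $\dist(\hat x_n^0,L(\hat r_0,1)_{\sigma})=O(|\hat x_n^0|^2)$ with $\hat r_0:=\log|\hat\xi_0|$, $\hat\mu_0:=\arg\hat\xi_0$. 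Finally, at $x=x_n^0$ one has $dy/dx=(N_0/D_0)'(\rho\xi_0)\cdot\sigma\rho\xi_0/x_n^0+O(1)$, of modulus $\asymp|x_n^0|^{-1}\neq 0$ since $\rho\xi_0$ is a simple zero of $N_0/D_0$; hence the zeros are simple, and likewise for $\{\hat x_n^0\}$.

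The main obstacle is the first two steps: extracting the leading rational function $N_0(w)/D_0(w)$ in closed form — above all the two roots $\rho\xi_0,\rho\hat\xi_0$ — and checking, under precisely the hypotheses listed, that $N_0,D_0$ are coprime quadratics with $D_0$ nonvanishing at the zeros of $N_0$ and with $\Psi_1\Psi_2 FG$ nonvanishing there. This forces one back to the matrix solution of the Schlesinger equation and the ratio-of-series construction of Sections \ref{sc3}, \ref{ssc5.1} and \ref{sc7}, isolating the factors $F,G,\Phi_1,\Phi_2,\Psi_1,\Psi_2$ of Remark \ref{rem2.3.a}(i) and computing their zeros and their values at $x=0$; the arithmetic side-conditions of the theorem are exactly those that make this leading picture non-degenerate. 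Once it is in hand, the Rouch\'e/implicit-function bookkeeping of the last two steps is routine.
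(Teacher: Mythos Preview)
Your strategy is the paper's: represent $y$ in $D_{\mathrm{even}}(\sigma,\rho,0)$ as a ratio whose leading part in $w=\rho x^{\sigma}$ has numerator vanishing exactly where $\Phi_1$ or $\Phi_2$ does, then apply Rouch\'e along the spiral $L(r_0,1)_{\sigma}$. The Rouch\'e bookkeeping, the asymptotics for $\log|x^0_n|$, the distance estimate, and the simplicity check are all fine and in fact spelled out in more detail than the paper, which just cites \cite[\S 2.2.2]{S2}.

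There is one slip to repair. You assert that $N_0$ and $D_0$ are \emph{quadratic} in $w$, i.e.\ that the common factor $F(x)G(x)$ cancels between numerator and denominator of \eqref{7.0}. That cancellation is precisely what the paper says it \emph{cannot} establish (last sentence of Remark \ref{rem2.3.a}): one only knows
\[
y(\sigma,\rho,x)=\frac{\Phi_1\Phi_2\,FG+O(x)}{\Psi_1\Psi_2\,FG+O(x)},
\]
so $N_0$ and $D_0$ are \emph{quartic} in $w$, and the $O(x)$ remainders are not known to be divisible by $FG$. Your argument does not actually need the cancellation: fix the zero $w=\rho\xi_0$ of $\Phi_1$ and check that, under the hypotheses $(\sigma^2-\theta_\infty^2)(\sigma^2-(\theta_0\pm\theta_x)^2)\neq 0$, $\theta_x(\theta_0+\theta_x-\theta_\infty)(\theta_0^2-\theta_x^2+\sigma^2-2\theta_0\theta_\infty)\neq 0$, and \eqref{2.6}, none of $\Phi_2,F,G,\Psi_1,\Psi_2$ vanishes there (equivalently, the ratio of $\Phi_1$ to any of the other five is not constant). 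Then $N_0$ has a \emph{simple} zero at $\rho\xi_0$ with $D_0(\rho\xi_0)\neq 0$, and your Rouch\'e step goes through unchanged. This extra nondegeneracy verification is what the listed side-conditions are for; once you phrase the leading parts as quartics and insert this check, the proof is complete and matches the paper's.
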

\begin{thm}\label{thm2.7}
In addition to $(\sigma^2-\theta_{\infty}^2 ) ( \sigma^2- (\theta_0
\pm \theta_x)^2 ) \not=0,$ suppose that
$$
 \theta_0( \theta_0+\theta_x-\theta_{\infty})
( \theta_0^2-\theta_x^2-\sigma^2+2\theta_x\theta_{\infty} ) \not=0
$$
for $\sigma\in \mathrm{cl}(\Sigma_0)$ and
\begin{equation}
\label{2.7}
 \theta_0 -\theta_x+\theta_{\infty} \not=0.
\end{equation}
Set
$$
r_{\infty}:=\log|\xi_{\infty}|, \quad \mu_{\infty}:=\arg \xi_{\infty} \quad 
\text{with \,\, $\xi_{\infty}:=
 \displaystyle\frac{(\sigma+\theta_x)^2 -\theta_0^2}{(\sigma-\theta_x)^2
- \theta_0^2} \rho^{-1}$}.
$$
Then $y(\sigma,\rho, x)$ admits a sequence of simple poles $\{x^{\infty}_{n}\}
_{n\in \N} \subset D_{\mathrm{even}}(\sigma,\rho, 0)$ such that
$$
|\sigma|^2 \log|x^{\infty}_{n}| -r_{\infty}\re\sigma -\mu_{\infty}
\im \sigma \sim -2\pi n
|\im \sigma |
$$
and $\dist(x^{\infty}_{n}, L(r_{\infty},1)_{\sigma}) 
= O(|x^{\infty}_{n} |^2)$. Another similar sequence of simple poles
$\{\hat{x}^{\infty}_{n} \}_{n\in \N} 
 \subset D_{\mathrm{even}}(\sigma, \rho, 0)$ exists for
$$
\hat{\xi}_{\infty} := \frac{(\sigma+\theta_{\infty})(\sigma+\theta_0+\theta_x)
}{(\sigma-\theta_{\infty})( \sigma-\theta_0 -\theta_x)} \rho^{-1}.
$$
\end{thm}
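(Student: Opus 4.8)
The plan is to prove Theorem~\ref{thm2.7} in essentially the same way as Theorem~\ref{thm2.6}, reducing the location of poles of $y(\sigma,\rho,x)$ to the location of zeros of the denominator in its meromorphic representation as a ratio of convergent series on $\Omega_{\sigma,\rho}(\varepsilon_0)$ (Remark~\ref{rem2.1.a}, Section~\ref{ssc5.1} and Section~\ref{sc7}). First I would record that in $D_{\mathrm{even}}(\sigma,\rho,0)$ both $\rho x^{\sigma}$ and $(\rho x^{\sigma})^{-1}$ may be comparable in size, and that on the overlap with $\Omega^+_{\sigma,\rho}(\varepsilon_0)$ (respectively $\Omega^-_{\sigma,\rho}(\varepsilon_0)$) the representation of $y_+$ (respectively $y_-$) from Theorem~\ref{thm2.1} is valid; gluing these, $y(\sigma,\rho,x)=\mathcal{N}(x)/\mathcal{D}(x)$ where $\mathcal{N},\mathcal{D}$ are holomorphic in $x$ on $\Omega_{\sigma,\rho}(\varepsilon_0)$ and, after factoring out the appropriate power of $\rho x^{\sigma}$, have the shape $\mathcal{D}(x)=\alpha+\beta(\rho x^{\sigma})^{\mp1}+ (\text{higher order in }\rho x^{\sigma}\text{ and in }x)$. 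A pole of $y$ of the stated type occurs precisely where $\mathcal{D}(x)=0$ while $\mathcal{N}(x)\neq0$.

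The next step is to extract the leading balance. From the explicit first terms in $y_+$ and $y_-$ in Theorem~\ref{thm2.1}(ii), the denominator, to leading order, is governed by an equation of the form $1+\xi_{\infty}^{-1}\rho x^{\sigma}\cdot(1+o(1))=0$ with $\xi_{\infty}=\dfrac{(\sigma+\theta_x)^2-\theta_0^2}{(\sigma-\theta_x)^2-\theta_0^2}\rho^{-1}$ as in the statement (the second sequence, governed by $\hat\xi_{\infty}$, comes from the other factor of the numerator of $1-1/y$, i.e.\ from the representation through the $\pi$-substituted quantities as in Remark~\ref{rem2.3.a}(ii) and the proof of Theorem~\ref{thm2.9}, where the relevant factor vanishes when $\hat\rho x^{\sigma+1}$ hits a specific value, back-translated to $\hat\xi_{\infty}$). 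Writing $w=\rho x^{\sigma}$, the leading equation $w=\xi_{\infty}$ has, for $\im\sigma\neq0$, a sequence of solutions $x$ with $\sigma\log x=\log\xi_{\infty}+2\pi\i n$; separating real and imaginary parts gives $\re\sigma\cdot\log|x|-\im\sigma\cdot\arg x=r_{\infty}$ and $\im\sigma\cdot\log|x|+\re\sigma\cdot\arg x=\mu_{\infty}-2\pi n$, whence $|\sigma|^2\log|x|=r_{\infty}\re\sigma+\mu_{\infty}\im\sigma-2\pi n\im\sigma$, which is the asserted asymptotic relation; and the first of these two linear relations is exactly $x\in L(r_{\infty},1)_{\sigma}$.

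The final step is to upgrade this leading-order root to an actual simple zero of $\mathcal{D}(x)$ nearby, and to check $\mathcal{N}\neq0$ and simplicity. I would fix a large $n$, let $x_n^{(0)}$ be the exact solution of the leading equation, and apply the implicit function theorem / Rouché's theorem to $\mathcal{D}$ on a small disc around $x_n^{(0)}$ of radius $\asymp|x_n^{(0)}|^2$: since the correction terms in $\mathcal{D}$ are $O(x)=O(|x_n^{(0)}|^2)$ relative to the principal balance and $\partial_x\mathcal{D}$ is nonvanishing there (because $\sigma\neq0$ makes $\partial_x(\rho x^{\sigma})$ of size $\asymp|x|^{-1}\gg$ the error), there is exactly one zero $x_n^{\infty}$ in that disc, it is simple, and $\dist(x_n^{\infty},L(r_{\infty},1)_{\sigma})=O(|x_n^{\infty}|^2)$. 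The side conditions $\theta_0(\theta_0+\theta_x-\theta_{\infty})(\theta_0^2-\theta_x^2-\sigma^2+2\theta_x\theta_{\infty})\neq0$ and \eqref{2.7} are exactly what is needed to guarantee $\xi_{\infty},\hat\xi_{\infty}\neq0,\infty$, that $\mathcal{N}\neq0$ at these points (so the zeros of $\mathcal{D}$ are genuine poles, not cancelled), and that the two sequences are distinct; I would verify these nonvanishing statements by inspecting the leading coefficients of $\mathcal{N}$ and of the two factors of $\mathcal{D}$. The main obstacle I anticipate is bookkeeping the precise form of $\mathcal{N}$ and $\mathcal{D}$ — in particular identifying which factor of the numerator of $1-1/y$ produces $\hat\xi_{\infty}$ and confirming it is not cancelled — and controlling the error terms uniformly in $n$ inside the spiral domain $D_{\mathrm{even}}(\sigma,\rho,0)$, so that the radius $\asymp|x_n|^2$ and the $O(|x_n|^2)$ distance estimate hold simultaneously for all large $n$; this is where the precise inequalities defining $D_{\mathrm{even}}$ and $\Omega_{\sigma,\rho}(\varepsilon_0)$ in Remarks~\ref{rem2.2} and~\ref{rem2.1.a} do the work.
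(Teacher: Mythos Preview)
Your overall strategy---locate the zeros of the denominator of a ratio representation of $y$ on $\Omega_{\sigma,\rho}(\varepsilon_0)$ and then perturb by Rouch\'e---is the paper's strategy, and your analysis of the leading root $x^{\sigma}=\xi_{\infty}$ and of the curve $L(r_{\infty},1)_{\sigma}$ is correct. However, two points of your execution diverge from the paper and one of them is a genuine gap.

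The paper does not glue $y_{+}$ and $y_{-}$; those series do not converge in $D_{\mathrm{even}}(\sigma,\rho,0)$. It uses the representation $y=Y_{11}Y_{12}$ of \eqref{5.1}, whose numerators and denominators \eqref{5.5}, \eqref{5.6} converge on all of $\Omega_{\sigma,\rho}(\varepsilon_0)$ and, in $D_{\mathrm{even}}$, factor as
\[
-Y_{11}=\frac{F\,\Phi_{1}+O(x)}{G\,\Psi_{1}+O(x)},\qquad -Y_{12}=\frac{G\,\Phi_{2}+O(x)}{F\,\Psi_{2}+O(x)}
\]
with the six explicit functions $F,G,\Phi_{1},\Phi_{2},\Psi_{1},\Psi_{2}$ of Section~\ref{ssc7.1}. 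The first pole sequence comes from the zeros of $\Psi_{1}$ (giving $x^{\sigma}=\xi_{\infty}$) and the second from the zeros of $\Psi_{2}$ (giving $x^{\sigma}=\hat\xi_{\infty}$), both read off from the \emph{same} representation. Your proposal to obtain $\hat\xi_{\infty}$ through the $\pi$-substituted expression for $1-1/y$ in Remark~\ref{rem2.3.a}(ii) and the B\"acklund machinery of Theorem~\ref{thm2.9} is misdirected: the zeros of the numerator of $1-1/y=(y-1)/y$ are the $1$-points of $y$, not its poles, and that machinery shifts the analysis to $D_{\mathrm{odd}}$, not $D_{\mathrm{even}}$.

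The role of the side conditions is also slightly different from what you describe. They guarantee that the ratio of $\Psi_{1}$ (resp.\ $\Psi_{2}$) to each of the other five functions is not a constant in $\rho x^{\sigma}$; hence at a zero $x_{n}$ of $\Psi_{1}$ the values $F(x_{n}),G(x_{n}),\Phi_{1}(x_{n}),\Phi_{2}(x_{n}),\Psi_{2}(x_{n})$ are fixed nonzero numbers (so the zero is a genuine simple pole of $y$, not cancelled), and condition \eqref{2.7} keeps the two sequences distinct. With this factored form in hand, the Rouch\'e step you sketch is exactly the one the paper invokes (by reference to \cite[Section~2.2.2]{S2}).
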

\begin{thm}\label{thm2.8}
If $\theta_0-\theta_x-\theta_{\infty}=0$ in place of \eqref{2.6}, then
$\{x^{0}_{n}\}_{n\in \N} =\{\hat{x}^{0}_{n}\}_{n\in \N}$ is a sequence
of double zeros.  
If $\theta_0-\theta_x+\theta_{\infty}=0$ in place of \eqref{2.7}, then
$\{x^{\infty}_{n}\}_{n\in \N} =\{\hat{x}^{\infty}_{n}\}_{n\in \N}$ is
a sequence of double poles. 
\end{thm}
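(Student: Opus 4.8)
The plan is to re-examine the construction of the two sequences of zeros $\{x^0_n\}$ and $\{\hat x^0_n\}$ in Theorem \ref{thm2.6} (and, dually, the two sequences of poles in Theorem \ref{thm2.7}) and to track what happens to the defining equations when the genericity hypothesis \eqref{2.6} is replaced by the equality $\theta_0-\theta_x-\theta_\infty=0$. Recall that in $D_{\mathrm{even}}(\sigma,\rho,0)$ the solution $y(\sigma,\rho,x)$ is given as a ratio of convergent series, and in that domain both $|\rho x^\sigma|$ and $|x(\rho x^\sigma)^{-1}|$ are small; writing $z:=\rho x^\sigma$, the numerator is, to leading order, a function of $z$ whose vanishing produces the principal sequences of zeros. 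The first sequence $\{x^0_n\}$ comes from the leading term of $y_+(\sigma-2\nu,\rho,x)$-type expansion near the boundary with $D_+$, governed by the coefficient $c(\sigma)$ in \eqref{2.2}, whose numerator contains the factor $\theta_0+\theta_x-\sigma$; the second sequence $\{\hat x^0_n\}$ comes dually from the $y_-$-type branch, governed by the coefficient in $y_-(\sigma,\rho,x)$ in Theorem \ref{thm2.1}, whose numerator contains $\theta_0+\theta_x+\sigma$. The two associated leading parameters are exactly $\xi_0=-\frac{\sigma+\theta_0+\theta_x}{\sigma-\theta_0-\theta_x}\rho^{-1}$ and $\hat\xi_0=-\frac{(\sigma+\theta_\infty)((\sigma+\theta_x)^2-\theta_0^2)}{(\sigma-\theta_\infty)((\sigma-\theta_x)^2-\theta_0^2)}\rho^{-1}$.

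The first step is to show that under $\theta_0-\theta_x-\theta_\infty=0$ one has $\xi_0=\hat\xi_0$, so that the two spiral curves $L(r_0,1)_\sigma$ and $L(\hat r_0,1)_\sigma$ coincide and the two sequences have the same leading asymptotics; this is a direct algebraic identity. Indeed, substituting $\theta_\infty=\theta_0-\theta_x$ into $\hat\xi_0$, the factor $(\sigma+\theta_\infty)=(\sigma+\theta_0-\theta_x)$ cancels against one factor of $(\sigma+\theta_x)^2-\theta_0^2=(\sigma+\theta_x-\theta_0)(\sigma+\theta_x+\theta_0)$, and similarly $(\sigma-\theta_\infty)=(\sigma-\theta_0+\theta_x)$ cancels against a factor of $(\sigma-\theta_x)^2-\theta_0^2=(\sigma-\theta_x-\theta_0)(\sigma-\theta_x+\theta_0)$, leaving $\hat\xi_0=-\frac{\sigma+\theta_0+\theta_x}{-(\sigma-\theta_0-\theta_x)}\cdot(-1)\rho^{-1}$, which one checks equals $\xi_0$. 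The second step is to upgrade ``the two simple-zero sequences coincide'' to ``the common sequence consists of double zeros'': here I would go back to the zero-counting argument behind Theorem \ref{thm2.6}, where each $x^0_n$ is produced as a simple zero of an analytic function $N(z)$ of $z=\rho x^\sigma$ by Rouché's theorem applied to an annulus around $|z|=|\xi_0^{-1}\rho|$-scale. When the two leading contributions merge, the relevant local model for $N$ near the approximate root acquires a double zero: the linear term in the Taylor expansion of the leading-order numerator at $z=z_n$ vanishes (this is precisely the content of $\xi_0=\hat\xi_0$ together with the matching of the two competing linear terms), while the quadratic term is nonzero thanks to the surviving genericity hypotheses $\theta_x(\theta_0+\theta_x-\theta_\infty)(\theta_0^2-\theta_x^2+\sigma^2-2\theta_0\theta_\infty)\neq 0$ of Theorem \ref{thm2.6}, which are retained. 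A Rouché argument on a small circle then gives exactly two zeros counted with multiplicity collapsing to one point, i.e. a double zero, with the same asymptotic location $|\sigma|^2\log|x^0_n|-r_0\re\sigma-\mu_0\im\sigma\sim-2\pi n|\im\sigma|$ and $\dist(x^0_n,L(r_0,1)_\sigma)=O(|x^0_n|^2)$.

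The third step is the completely parallel treatment of poles: under $\theta_0-\theta_x+\theta_\infty=0$ in place of \eqref{2.7}, one checks the algebraic identity $\xi_\infty=\hat\xi_\infty$ by substituting $\theta_\infty=\theta_x-\theta_0$ into $\hat\xi_\infty=\frac{(\sigma+\theta_\infty)(\sigma+\theta_0+\theta_x)}{(\sigma-\theta_\infty)(\sigma-\theta_0-\theta_x)}\rho^{-1}$ and cancelling $(\sigma+\theta_x-\theta_0)$ and $(\sigma-\theta_x+\theta_0)$ against the numerator and denominator of $\xi_\infty=\frac{(\sigma+\theta_x)^2-\theta_0^2}{(\sigma-\theta_x)^2-\theta_0^2}\rho^{-1}$; then the same Rouché/local-model argument applied to the denominator series of the meromorphic representation of $y(\sigma,\rho,x)$ shows the merged pole sequence consists of double poles. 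The main obstacle I anticipate is the second step — making rigorous the claim that the ``merging'' of the two simple-zero mechanisms of Theorem \ref{thm2.6} genuinely forces a double zero rather than, say, two nearby simple zeros or an accidental higher-order degeneracy. This requires carefully identifying, in the leading-order numerator of the ratio representation, the precise term whose coefficient becomes the derivative $N'(z_n)$ and verifying that it vanishes exactly when $\xi_0=\hat\xi_0$ while the next coefficient stays bounded away from zero under the surviving hypotheses; the bookkeeping of the two series $\sum c^+_{jn}$ and (via the $y_-$ side, equivalently the continuation relation of Theorem \ref{thm2.5}) the matching contribution is where the real work lies, although it is conceptually routine once the algebraic identities $\xi_0=\hat\xi_0$ and $\xi_\infty=\hat\xi_\infty$ are in hand.
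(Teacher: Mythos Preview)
Your Step~1 is correct and corresponds exactly to what the paper uses: in the factorised representation
\[
y(\sigma,\rho,x)=\frac{\Phi_1(x)\Phi_2(x)F(x)G(x)+O(x)}{\Psi_1(x)\Psi_2(x)F(x)G(x)+O(x)}
\]
from Section~\ref{ssc7.1}, the condition $\theta_0-\theta_x-\theta_\infty=0$ makes $\Phi_1(x)/\Phi_2(x)$ a constant, which is equivalent to your identity $\xi_0=\hat\xi_0$. Likewise $\Psi_1/\Psi_2$ becomes constant when $\theta_0-\theta_x+\theta_\infty=0$.

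Your Step~2, however, has a real gap. A Rouch\'e argument on the numerator will only tell you that there are \emph{two zeros counted with multiplicity} in a small disc around the approximate root; it does not by itself force them to coincide. Your proposed remedy --- checking that the derivative of the leading-order numerator vanishes at $z_n$ --- is not enough either: the leading term $\Phi_1\Phi_2FG$ certainly has a double zero there, but the $O(x)$ correction could in principle split it into two nearby simple zeros. To rule that out via series you would need exact control over the $O(x)$ terms, not just bounds, and you have no mechanism for that.

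The paper bypasses this entirely with a one-line structural observation about the equation $(\mathrm{V})$ itself. When $\theta_0-\theta_x-\theta_\infty=0$ the coefficient of $1/y$ on the right-hand side of $(\mathrm{V})$ vanishes, and a standard local analysis of the ODE near $y=0$ then shows that \emph{every} zero of \emph{any} solution must be double (simple zeros are incompatible with the remaining singular structure $y''\sim y'^2/(2y)+\cdots$). So once Theorem~\ref{thm2.6} produces a zero at $x^0_n$, it is automatically double; no further series analysis is needed. The pole case is identical: $\theta_0-\theta_x+\theta_\infty=0$ kills the coefficient of $y$ in $(\mathrm{V})$, forcing every pole to be double. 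This is the missing idea you should use in place of your Rouch\'e refinement.
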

Note that the singular values of (V) are $y=0,1,\infty.$ The results above
describe sequences of zeros and of poles in $D_{\mathrm{even}}(\sigma,\rho,0)$.
In $D_{\mathrm{odd}}(\sigma,\rho,0)$ there exist sequences of $1$-points.
\begin{thm}\label{thm2.9}
In addition to $(\sigma^2 -\theta_{\infty}^2) (\sigma^2-(\theta_0 \pm
\theta_x)^2) \not=0,$ suppose that
\begin{align}
\label{2.6.a}
& (\theta_0 -1) (\theta_0+\theta_x -\theta_{\infty}) 
( (2-\theta_0  -\theta_{\infty})^2 - \theta_x^2)
( (\sigma+1)^2-(\theta_0 \pm  \theta_x -1)^2) 
\\
\notag
 &\times ( (\sigma+1)^2 -(1-\theta_{\infty})^2)
 ((\sigma+1)^2+(1-\theta_0)^2 +2( 1-\theta_{\infty})(1-\theta_0) -\theta_x^2) 
\not=0
\end{align}
for $\sigma\in \mathrm{cl}(\Sigma_0).$ Set
$$
r_{1}:=\log|\xi_{1}|, \quad \mu_{1}:=\arg \xi_{1} \quad 
$$
with 
$$
\xi_{1}:=
 \frac{(\sigma+2 -\theta_0+\theta_x) (\sigma+\theta_{\infty})
c(-\sigma)\tilde{c}(\sigma+1)}{2(\sigma+\theta_0-\theta_x)} \rho^{-1}
= \frac{8\sigma^2(\sigma+1)^2}{(\theta_{\infty}-\sigma)
(\theta_0^2-(\theta_x-\sigma)^2)}\rho^{-1}.
$$
Then $y(\sigma,\rho, x)$ admits a sequence of simple $1$-points $\{x^{1}_{n}\}
_{n\in \N} \subset D_{\mathrm{odd}}(\sigma,\rho, 0)$ such that
$$
|\sigma+1|^2 \log|x^{1}_{n}| -r_{1}(\re\sigma+1) -\mu_{1}
\im \sigma \sim -2\pi n|\im \sigma |
$$
and $\dist(x^{1}_{n}, L(r_{1},0)_{\sigma}) 
= O(|x^{1}_{n} |^2)$. Another similar sequence of simple $1$-points
$\{\hat{x}^{1}_{n} \}_{n\in \N} 
 \subset D_{\mathrm{odd}}(\sigma, \rho, 0)$ exists for
$$
\hat{\xi}_{1} :=- \frac{\sigma+\theta_0+\theta_x
}{\sigma +2 -\theta_0 -\theta_x} \xi_{1}.
$$
\end{thm}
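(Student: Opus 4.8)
The plan is to transport the problem through Gromak's B\"{a}cklund transformation and reduce it to Theorems~\ref{thm2.6} and~\ref{thm2.7}. Recall from Section~\ref{sc6} that the transformation attached to the substitution $\pi$ of \eqref{2.3} (Lemma~\ref{lem6.1}) expresses a solution of $(\mathrm{V})$ with parameters $\tilde\theta=(\tilde\theta_0,\tilde\theta_x,\tilde\theta_{\infty})$, where $\tilde\theta_0-\tilde\theta_x=1-\theta_{\infty}$, $\tilde\theta_0+\tilde\theta_x=1-\theta_0+\theta_x$, $\tilde\theta_{\infty}=\theta_0+\theta_x-1$, rationally in $x$, $y$, $y'$ for a solution $y$ of $(\mathrm{V})$ with parameters $(\theta_0,\theta_x,\theta_{\infty})$. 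First I would check, by matching the leading term in $\Omega^+(\Sigma_0,\varepsilon_0)$ (where the transformed solution equals $1$ plus an $x^{\sigma+1}$-small correction) and using uniqueness of the solution in the family of Theorem~\ref{thm2.1} with a prescribed leading term, that the image of $y(\sigma,\rho,x)$ is the complex power solution $\tilde y(x):=y(\sigma+1,\hat\rho,x)$ of $(\mathrm{V})$ with parameters $\tilde\theta$, where $\hat\rho=\dfrac{(\sigma-\theta_{\infty})(\sigma+2-\theta_0+\theta_x)(\theta_0+\theta_x-\sigma)}{8\sigma^2(\sigma+1)^2}\rho$ as in Remark~\ref{rem2.3.a}; equivalently $y(\sigma,\rho,x)=\pi^{-1}(\tilde y)(x)$ with $\pi^{-1}$ rational in $x$, $\tilde y$, $\tilde y'$.

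Using $y=\pi^{-1}(\tilde y)$ I would then write
\[
1-\frac{1}{y(\sigma,\rho,x)}=\frac{x\bigl(x\tilde y'+a(x)\tilde y^{2}+b(x)\tilde y+c(x)\bigr)}{d_2(x)\tilde y^{2}+d_1(x)\tilde y+d_0(x)},
\]
with $a,b,c,d_j$ rational in $x$ and $\sigma,\theta_0,\theta_x,\theta_{\infty}$; the decisive structural feature is the overall factor $x$ (the origin of the leading $x$ in Remark~\ref{rem2.3.a}(ii)). Now $D_{\mathrm{odd}}(\sigma,\rho,0)$ coincides, up to the fixed shift $\log|\hat\rho/\rho|$ of the spiral parameter, with $D_{\mathrm{even}}(\sigma+1,\hat\rho,0)$, on which $\tilde y$, $\tilde y'$ are meromorphic and given by the ratio of convergent series in $x$ and $u^{\pm1}$, $u:=\hat\rho x^{\sigma+1}$ (Remark~\ref{rem2.1.a} applied to $\tilde y$, legitimate because \eqref{2.6.a} keeps $\tilde y$ of genuine complex power type). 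Substituting and clearing denominators, a $1$-point of $y$ becomes a zero of a convergent series $\Xi(x)$ in $x$, $u^{\pm1}$ whose $n=0$ part is a polynomial $P_0(u)$ of bounded degree --- structurally the same situation met in the proofs of Theorems~\ref{thm2.6} and~\ref{thm2.7}. The same Rouch\'e/implicit-function analysis then yields, near each simple root $u_*$ of $P_0$ lying in the transition region, a sequence of simple zeros $x_n$ of $\Xi$ with $u(x_n)\to u_*$ and $\dist(x_n,L)=O(|x_n|^2)$ for the spiral $L:|u|=|u_*|$; at such $x_n$ the denominator above stays bounded away from $0$ (the spiral $L$ being distinct from the loci $\Phi_2^{\pi}=0$, $\Psi_1^{\pi}=0$ of Remark~\ref{rem2.3.a}), so $y=1$ genuinely there. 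The two decisive roots of $P_0$ I would identify with the multipliers $\xi_1$, $\hat\xi_1$ by verifying the rational identities $\xi_0=\xi_1$, $\hat\xi_{\infty}=\hat\xi_1$, where $\xi_0$, $\hat\xi_{\infty}$ are what Theorems~\ref{thm2.6} and~\ref{thm2.7} give under $(\sigma,\rho,\theta_0,\theta_x,\theta_{\infty})\mapsto(\sigma+1,\hat\rho,\tilde\theta_0,\tilde\theta_x,\tilde\theta_{\infty})$ --- equivalently, that the first zero-sequence of $\tilde y$ and the second pole-sequence of $\tilde y$ lie at $1$-points of $y$. Since $\re(\sigma+1)=\re\sigma+1$ and $\im(\sigma+1)=\im\sigma$, the asymptotic relations and the $O(|x|^2)$ estimates for these sequences become exactly those claimed for $\{x^1_n\}$, $\{\hat x^1_n\}$ relative to $L(r_1,0)_{\sigma}$, $L(\hat r_1,0)_{\sigma}$, and simplicity of the $1$-points follows from that of the roots $u_*$.

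It remains to match hypotheses: the factors $(\sigma+1)^2-(\theta_0\pm\theta_x-1)^2$ and $(\sigma+1)^2-(1-\theta_{\infty})^2$ of \eqref{2.6.a} are the conditions of Theorem~\ref{thm2.1} for $\tilde y$ (recalling $\tilde\theta_0+\tilde\theta_x=1-\theta_0+\theta_x$, $\tilde\theta_0-\tilde\theta_x=1-\theta_{\infty}$, $\tilde\theta_{\infty}=\theta_0+\theta_x-1$), which also force $\hat\rho\ne0,\infty$; the remaining factors $\theta_0-1$, $\theta_0+\theta_x-\theta_{\infty}$, $(2-\theta_0-\theta_{\infty})^2-\theta_x^2$ and the last one encode the non-degeneracy conditions coming from the zero/pole analysis of $\tilde y$ and from $\pi^{-1}$, via $\tilde\theta_0=\tfrac12(2-\theta_0+\theta_x-\theta_{\infty})$, $\tilde\theta_0+\tilde\theta_x-\tilde\theta_{\infty}=2(1-\theta_0)$, $\tilde\theta_0-\tilde\theta_x+\tilde\theta_{\infty}=\theta_0+\theta_x-\theta_{\infty}$. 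I expect the main obstacle to be the analytic core rather than this bookkeeping: establishing $y(\sigma,\rho,x)=\pi^{-1}(\tilde y)(x)$ with the precise $\hat\rho$ rigorously, and controlling the $O(x)$ corrections in $\Xi$ uniformly on $D_{\mathrm{odd}}(\sigma,\rho,0)$ tightly enough for the Rouch\'e step --- all without appealing to the unproven cancellations of Remark~\ref{rem2.3.a}, only to the weaker fact that $1-1/y$ factors as $x$ times a rational function of $\tilde y$, $\tilde y'$. The verification of $\xi_0=\xi_1$ and $\hat\xi_{\infty}=\hat\xi_1$, whence the explicit forms of $\xi_1$, $\hat\xi_1$, is a further lengthy but routine computation.
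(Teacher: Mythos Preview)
Your plan is essentially the paper's own proof. The paper first computes, from the second expression of $\hat B(y)$ in Lemma~\ref{lem6.1} together with the factorisations of $y$, $(A_x)_{11}+\theta_x/2$ in terms of $F,G,\Phi_i,\Psi_i$, the explicit identity
\[
1-\frac{1}{\hat y}=\frac{x\bigl(\Phi_1\Phi_2\Psi_1\Psi_2\,F^2G^2+O(x)\bigr)}{2\sigma^2(\sigma^2-(\theta_0+\theta_x)^2)\,\Phi_2\Psi_1\,F^2G^2+O(x)}
\]
(this is \eqref{7.1}); it then applies $\pi$ with $(\sigma,\rho)\mapsto(\hat\sigma,\hat\rho)=(\sigma+1,\hat\rho)$ and invokes \eqref{6.1} with $\nu=0$ to identify $y(\sigma,\rho,x)$ with $\hat y^{\pi}(\hat\sigma,\hat\rho,x)$, exactly your $B(\tilde y)$. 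The $1$-points are then read off from the zeros of $\Phi_1^{\pi}$ and $\Psi_2^{\pi}$---which are precisely the $\xi_0$ and $\hat\xi_{\infty}$ of Theorems~\ref{thm2.6}, \ref{thm2.7} after the substitution $(\sigma,\rho,\theta)\mapsto(\sigma+1,\hat\rho,\tilde\theta)$, confirming your proposed identities $\xi_0=\xi_1$, $\hat\xi_{\infty}=\hat\xi_1$. The only organisational difference is that the paper derives the factored denominator $\Phi_2^{\pi}\Psi_1^{\pi}F^{\pi\,2}G^{\pi\,2}$ directly from \eqref{7.1} rather than inferring it, which is what makes it immediate that the other two zero/pole sequences of $\tilde y$ (from $\Phi_2^{\pi}$, $\Psi_1^{\pi}$) do \emph{not} yield clean $1$-points, and that your condition \eqref{2.6.a} is exactly \eqref{7.2} under $\pi$ with $\sigma\mapsto\sigma+1$; you should carry out that short algebraic computation rather than cite Remark~\ref{rem2.3.a}.
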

\begin{rem}\label{rem2.7}
In the proof of Theorem \ref{thm2.9} in Section \ref{sc7}, if we use
\eqref{6.2} instead of \eqref{6.1}, we obtain a sequence of simple $1$-points
such that
\begin{align*}
&\xi_1:=- \frac{2(\sigma -\theta_0 +\theta_x)}{(\sigma - 2 +\theta_0-\theta_x)
(\sigma-\theta_{\infty}) c(\sigma) \tilde{c}(1-\sigma) } \rho^{-1}
=\frac{(\theta_{\infty}+\sigma)(\theta_0^2-(\theta_x+\sigma)^2)}
{8\sigma^2(\sigma-1)^2} \rho^{-1},
\\
& |\sigma- 1|^2 \log |x^1_{n} | - r_1 (\re \sigma -1) -\mu_1 \im \sigma
\sim -2\pi n |\im \sigma |
\end{align*}
and $\dist (x^1_{n}, L(r_1, 2)_{\sigma} ) =O(|x^1_{n} |^2 ),$ and a
similar sequence for
$$
\hat{\xi}_1 := \frac{\sigma - 2+\theta_0+\theta_x}{\sigma - \theta_0 -\theta_x}
\xi_1.
$$
\end{rem}
\begin{rem}\label{rem2.8}
Using Theorems \ref{thm2.6} through \ref{thm2.9} combined with
the relation of Theorem \ref{thm2.5}, 
we may find sequences of zeros, of poles and of $1$-points beyond 
$\mathrm{cl}(D_{\mathrm{odd}}(\sigma, \rho, 0) ) \cup
D_{-}(\sigma,\rho, 0) \cup \mathrm{cl}(D_{\mathrm{even}}(\sigma, \rho, 0) ) \cup
D_{+}(\sigma,\rho, 0).$ As a result $D_{\mathrm{even}}(\sigma,\rho,\nu)$
contains sequences of zeros and of poles, and 
$D_{\mathrm{odd}}(\sigma,\rho,\nu)$ those of $1$-points. They are lying
asymptotically along the respective spiral curves or rays.
\end{rem}
For the solutions of special complex power type we have
\begin{thm}\label{thm2.10}
Under the same supposition as in Theorem $\ref{thm2.2}$ with $\rho\not=0,$ set
$$
r^*_0:= \log |\xi^*_0|, \quad \mu^*_0:= \arg\xi_0^* \quad
\text{with \,\, ${\xi}^*_{0}:=\displaystyle\frac{2\theta_0}{\sigma_0 +\theta_{\infty}}
\rho^{-1}$},
$$
and
$$
{\hat{\xi}}^*_{0}:= \frac{2\theta_x}{\sigma_0 -\theta_{\infty}}
\rho^{-1}, \quad
{\xi}^*_{\infty}:= - \frac{2\theta_x}{\sigma_0 +\theta_{\infty}}
\rho^{-1}, \quad
{\hat{\xi}}^*_{\infty}:= - \frac{2\theta_0}{\sigma_0 -\theta_{\infty}}
\rho^{-1}.
$$
\par
$(1)$ Let $\sigma_0=\theta_0+\theta_x.$ Then, under \eqref{2.6}, 
$y_{\sigma_0}(\rho, x)$ admits a sequence of simple zeros $\{x^{*0}_{n}\}
_{n\in \N} \subset D_{\mathrm{even}}(\sigma_0,\rho, 0)$ such that
$$
|\sigma_0|^2 \log|x^{*0}_{n}| -r_{0}^* \re\sigma_0  -\mu_{0}^*
\im \sigma_0 \sim -2\pi n|\im \sigma_0 |
$$
and $\dist(x^{*0}_{n}, L(r_{0}^*,1)_{\sigma_0}) 
= O(|x^{*0}_{n} |^2)$, and a similar sequence of simple zeros
$\{\hat{x}^{*0}_{n} \}_{n\in \N} 
 \subset D_{\mathrm{even}}(\sigma_0, \rho, 0)$ for $\hat{\xi}^*_{0}$. 
Under \eqref{2.7}, there exist sequences of simple poles 
$\{{x}^{*\infty}_{n} \}_{n\in \N}$, 
$\{\hat{x}^{*\infty}_{n} \}_{n\in \N} 
 \subset D_{\mathrm{even}}(\sigma_0, \rho, 0)$ for
${\xi}^*_{\infty}$ and $\hat{\xi}^*_{\infty}$,   
respectively.
If $\theta_0-\theta_x -\theta_{\infty}=0,$ then $x^{*0}_n =\hat{x}^{*0}_n$
$(n\in \N)$ are double zeros$;$ and if
$\theta_0-\theta_x +\theta_{\infty}=0,$ then $x^{*\infty}_n 
=\hat{x}^{*\infty}_n$
$(n\in \N)$ are double poles.
\par
$(2)$ If $\sigma_0=\theta_0 -\theta_x$ and $\theta_0+\theta_x\not=\theta
_{\infty},$ then $y_{\sigma_0}(\rho, x)$ admits 
sequences of simple zeros $\{x^{*0}_{n}\}_{n\in \N} $ and of simple poles
$\{\hat{x}^{*\infty}_{n}\}_{n\in \N}$ as in $(1)$.
\par
$(3)$ If $\sigma_0=\theta_x -\theta_0$ and $\theta_0+\theta_x\not=\theta
_{\infty},$ then $y_{\sigma_0}(\rho, x)$ admits 
sequences of simple zeros $\{\hat{x}^{*0}_{n}\}_{n\in \N} $ and of 
simple poles
$\{{x}^{*\infty}_{n}\}_{n\in \N}$ as in $(1)$.
\end{thm}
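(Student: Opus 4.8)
The plan is to treat Theorem \ref{thm2.10} as a degenerate specialisation of Theorems \ref{thm2.6}, \ref{thm2.7} and \ref{thm2.8}, exploiting the fact that the special complex power solution $y_{\sigma_0}(\rho,x)$ is, up to the identifications in Theorem \ref{thm2.2}, a limiting case of the generic family $y(\sigma,\rho,x)$ as $\sigma \to \sigma_0 \in \{\theta_0+\theta_x,\,\theta_0-\theta_x,\,\theta_x-\theta_0\}$. First I would record that in each of these three cases one of the two leading coefficients in the expansions of $y_+(\sigma,\rho,x)$ or $y_-(\sigma,\rho,x)$ either vanishes or becomes the ``holomorphic'' branch, so that the ratio-of-convergent-series representation of $y(\sigma,\rho,x)$ valid in $\Omega_{\sigma,\rho}(\varepsilon_0)$ (Remark \ref{rem2.1.a}, Sections \ref{ssc5.1}, \ref{sc7}) specialises to a meromorphic representation of $y_{\sigma_0}(\rho,x)$ on $D_{\mathrm{even}}(\sigma_0,\rho,0)$. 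In particular the oscillatory numerator/denominator structure of Remark \ref{rem2.3.a}(i), namely $y=\Phi_1\Phi_2 FG+O(x)$ over $\Psi_1\Psi_2 FG+O(x)$, carries over with $\sigma$ replaced by $\sigma_0$, so that zeros of $y_{\sigma_0}$ come from zeros of $\Phi_1\Phi_2$ and poles from zeros of $\Psi_1\Psi_2$.

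Next I would locate the zeros and poles by the same argument used for Theorems \ref{thm2.6} and \ref{thm2.7}. The leading balance in $D_{\mathrm{even}}(\sigma_0,\rho,0)$ is governed by a single oscillatory quantity of modulus comparable to $1$; writing this as a perturbation of an explicit trigonometric expression in $\log(\rho x^{\sigma_0})$, one finds that a zero occurs precisely when a certain elementary factor — which for $\sigma_0=\theta_0+\theta_x$ reads $\xi_0^* x^{\sigma_0}=1+O(x)$ with $\xi_0^*=2\theta_0(\sigma_0+\theta_\infty)^{-1}\rho^{-1}$, and similarly $\hat\xi_0^* x^{\sigma_0}=1+O(x)$ for the second family — is satisfied. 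Solving $\xi_0^* x^{\sigma_0}=1+O(x)$ on $\mathcal R(\C\setminus\{0\})$ gives the countable sequence $\{x_n^{*0}\}$, and taking $|\cdot|$ and $\arg$ of the logarithm of this relation produces the asymptotics $|\sigma_0|^2\log|x_n^{*0}|-r_0^*\re\sigma_0-\mu_0^*\im\sigma_0\sim -2\pi n|\im\sigma_0|$ together with the distance estimate $\dist(x_n^{*0},L(r_0^*,1)_{\sigma_0})=O(|x_n^{*0}|^2)$, exactly as in the generic theorems. The simplicity of the zeros follows because the relevant factor has a simple zero in the oscillatory variable and the $O(x)$ corrections do not destroy this; the same reasoning applied to $\Psi_1\Psi_2$ yields the poles at $\xi_\infty^*,\hat\xi_\infty^*$. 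For parts (2) and (3) the same analysis applies after noting that when $\sigma_0=\pm(\theta_0-\theta_x)$ one of the two would-be zero-sequences and one of the two would-be pole-sequences degenerates away (the corresponding elementary factor becoming a nonzero constant), leaving exactly the sequences claimed; the extra hypothesis $\theta_0+\theta_x\neq\theta_\infty$ is precisely what keeps the surviving leading coefficients $(\sigma_0\pm\theta_\infty)$ nonzero.

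Finally, for the coincidence/doubling statement in part (1), I would observe that $\theta_0-\theta_x-\theta_\infty=0$ forces $\xi_0^*=\hat\xi_0^*$: indeed under $\sigma_0=\theta_0+\theta_x$ one has $2\theta_0/(\sigma_0+\theta_\infty)=2\theta_x/(\sigma_0-\theta_\infty)$ exactly when $\theta_0(\sigma_0-\theta_\infty)=\theta_x(\sigma_0+\theta_\infty)$, i.e. $\theta_0-\theta_x-\theta_\infty=0$ after simplification using $\sigma_0=\theta_0+\theta_x$. Hence the two zero-sequences merge and, since each contributed a simple zero, the merged point is a double zero; the pole case with $\theta_0-\theta_x+\theta_\infty=0$ is identical with $\xi_\infty^*=\hat\xi_\infty^*$. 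I expect the main obstacle to be the bookkeeping in the first step: verifying cleanly that the degenerate limit $\sigma\to\sigma_0$ of the ratio representation of $y(\sigma,\rho,x)$ is legitimate term-by-term and produces exactly the series in Theorem \ref{thm2.2} with the stated leading coefficients, and identifying which of the four generic zero/pole factors from Theorems \ref{thm2.6}--\ref{thm2.7} survive in each of the three cases of $\sigma_0$ — this is where the special structure (e.g. the vanishing of $\theta_0+\theta_x-\sigma_0$ when $\sigma_0=\theta_0+\theta_x$) must be tracked with care, since it is what makes these solutions genuinely special rather than generic.
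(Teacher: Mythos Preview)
Your overall strategy---represent $y_{\sigma_0}(\rho,x)$ on $D_{\mathrm{even}}(\sigma_0,\rho,0)$ as a ratio of explicit oscillatory functions plus $O(x)$, read off candidate zero/pole locations from the leading factors, then apply the Rouch\'e argument of Section~\ref{ssc7.1}---is exactly what the paper does. Its proof is a single sentence: Theorem~\ref{thm2.10} is proved using \eqref{5.7} by the same argument as for Theorems~\ref{thm2.6}--\ref{thm2.8}.

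The gap is in your proposed implementation. You want to obtain the factorisation by specialising the generic $\Phi_i,\Psi_i,F,G$ of Section~\ref{ssc7.1} at $\sigma=\sigma_0$. This does not work as stated: those functions are written in terms of the $\rho$ already rescaled by \eqref{5.4}, and the factor $\theta_0+\theta_x-\sigma$ in \eqref{5.4} vanishes at $\sigma_0=\theta_0+\theta_x$ (and analogously for $\sigma_0=\pm(\theta_0-\theta_x)$). Consequently, at $\sigma=\sigma_0=\theta_0+\theta_x$ each of $\Phi_1,\Phi_2,\Psi_1,\Psi_2$ collapses to a single monomial in $(\rho x^{\sigma_0})^{\pm 1/2}$ with \emph{no} zeros, so ``zeros of $y_{\sigma_0}$ come from zeros of $\Phi_1\Phi_2$'' becomes vacuous; correspondingly the generic $\xi_0,\hat\xi_0,\xi_\infty,\hat\xi_\infty$ of Theorems~\ref{thm2.6}--\ref{thm2.7} all blow up rather than converging to $\xi_0^*,\hat\xi_0^*,\xi_\infty^*,\hat\xi_\infty^*$. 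The paper sidesteps this by using \eqref{5.7}, which records $Y_{11}|_{\sigma=\sigma_0}$ and $Y_{12}|_{\sigma=\sigma_0}$ computed from the \emph{un}-rescaled formulas \eqref{5.2}--\eqref{5.3}. From \eqref{5.7} (with $\sigma_0=\theta_0+\theta_x$) the four linear factors
\[
2\theta_0-(\sigma_0+\theta_\infty)\rho x^{\sigma_0},\quad
2\theta_x-(\sigma_0-\theta_\infty)\rho x^{\sigma_0},\quad
2\theta_x+(\sigma_0+\theta_\infty)\rho x^{\sigma_0},\quad
2\theta_0+(\sigma_0-\theta_\infty)\rho x^{\sigma_0}
\]
drop out immediately, giving exactly $\xi_0^*,\hat\xi_0^*,\xi_\infty^*,\hat\xi_\infty^*$; and for $\sigma_0=\pm(\theta_0-\theta_x)$ one sees at once from \eqref{5.7} which two of the four factors survive. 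You correctly flag this bookkeeping as the main obstacle; the resolution is simply to work from \eqref{5.7} rather than from the post-\eqref{5.4} factorisation.

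For the doubling statement your merger argument is fine at the level of the leading factors, but the paper (as for Theorem~\ref{thm2.8}) also invokes the ODE-level fact that when $\theta_0-\theta_x\mp\theta_\infty=0$ every zero (resp.\ pole) of a solution of (V) is automatically double; this makes the conclusion robust without having to exclude accidental cancellation with the $O(x)$ corrections.
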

\begin{thm}\label{thm2.11}
In addition to the supposition of Theorem $\ref{thm2.2}$, suppose 
\eqref{2.6.a} with $\sigma=\sigma_0$. Set
$$
r^*_{1}:=\log|\xi^*_{1}|, \quad \mu^*_{1}:=\arg \xi^*_{1} \quad 
\quad \text{with} \quad 
\xi^*_{1}:=
\frac{ 2 (\sigma_0+1)^2 
c^*(\sigma_0) }{\sigma_0+\theta_0+\theta_x} \rho^{-1},
$$
where
$$
c^*(\sigma_0):=\begin{cases}
-\displaystyle\frac{4\sigma_0^2}{\sigma_0^2-\theta^2
_{\infty}} &\quad \text{if $\sigma_0=\theta_0 +\theta_x,$}
\\[0.4cm]
-\displaystyle\frac{4\theta_0\sigma_0}{\sigma_0^2-\theta_{\infty}^2}
 &\quad \text{if $\sigma_0=\theta_0 -\theta_x,$}
\\[0.4cm]
-\displaystyle\frac{4\theta_x\sigma_0}{\sigma_0^2-\theta_{\infty}^2}
 &\quad \text{if $\sigma_0=\theta_x -\theta_0 .$}
\end{cases}
$$
Then $y_{\sigma_0}(\rho, x)$ admits a sequence of simple $1$-points 
$\{x^{*1}_{n}\}_{n\in \N} \subset D_{\mathrm{odd}}(\sigma_0,\rho, 0)$ 
such that
$$
|\sigma_0+1|^2 \log|x^{*1}_{n}| -r_{1}^*(\re\sigma_0+1) -\mu_{1}^*
\im \sigma_0 \sim -2\pi n|\im \sigma_0 |
$$
and $\dist(x^{*1}_{n}, L(r_{1}^*,0)_{\sigma_0}) 
= O(|x^{*1}_{n} |^2)$. Another similar sequence of simple $1$-points
$\{\hat{x}^{*1}_{n} \}_{n\in \N} 
 \subset D_{\mathrm{odd}}(\sigma_0, \rho, 0)$ exists for
$$
\hat{\xi}^*_{1} :=- \frac{\sigma_0+\theta_0+\theta_x
}{\sigma_0 +2 -\theta_0 -\theta_x} \xi^*_{1}.
$$
\end{thm}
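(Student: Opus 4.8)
The plan is to run the proof of Theorem \ref{thm2.9} essentially verbatim, with the generic complex power solution of Theorem \ref{thm2.1} replaced by the special complex power solution $y_{\sigma_0}(\rho,x)$ of Theorem \ref{thm2.2}. Recall the structure of the proof of Theorem \ref{thm2.9} (Section \ref{sc7}): to locate the $1$-points of $y(\sigma,\rho,x)$ one applies the B\"acklund transformation $\pi$ of Lemma \ref{lem6.1}, which turns the condition $y=1$ into a zero/pole condition for a complex power solution at exponent $\sigma+1$ with the parameters $\pi(\theta_0,\theta_x,\theta_\infty)$, thereby reducing matters to Theorems \ref{thm2.6} and \ref{thm2.7}; the $1$-point locus $\xi_1$ is assembled from a rational prefactor, from the $\Omega^-$-leading coefficient $c(-\sigma)$ of $y(\sigma,\rho,\cdot)$ (that is, of $y_-$), and from the $\Omega^+$-leading coefficient $\tilde c(\sigma+1)=c^\pi(\sigma+1)$ of the transformed solution, and one checks that it collapses to $\xi_1=2(\sigma+1)^2 c(-\sigma)\,\rho^{-1}/(\sigma+\theta_0+\theta_x)$, while the second sequence $\hat\xi_1$ is a fixed rational multiple of $\xi_1$.

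For $y_{\sigma_0}(\rho,x)$ the same machine applies; the sole modification is that $\sigma_0=\theta_0+\theta_x$, $\theta_0-\theta_x$ or $\theta_x-\theta_0$ is exactly one of the exponents at which the generic $\Omega^-$-coefficient $c(-\sigma)$ degenerates to $0$ or $\infty$. Accordingly one reads the coefficient of $(\rho x^{\sigma_0})^{-1}$ in the $\Omega^-$-expansion directly from Theorem \ref{thm2.2}(ii.a)/(ii.b)/(ii.c), and this coefficient is precisely $c^*(\sigma_0)$ as defined in the statement; equivalently, in the formula expressing the transformed parameter $\hat\rho$ in terms of $\rho$ (cf. Remark \ref{rem2.3.a}(ii) and the proof of Theorem \ref{thm2.9}) one must recompute, from the series of Theorem \ref{thm2.2}, the factors that vanish or blow up at $\sigma=\sigma_0$. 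Because $y_{\sigma_0}(0,x)$ is a Taylor solution not identically equal to $1$, the B\"acklund transformation acquires no spurious pole at $x=0$, so the transformed object is a genuine complex power solution of $(\mathrm{V})$ in the sense of Theorem \ref{thm2.1}, with exponent $\sigma_0+1$ (admissible, since $\im\sigma_0\neq0$ makes it non-real) and parameters $\pi(\theta_0,\theta_x,\theta_\infty)$. Here condition \eqref{2.6.a} with $\sigma=\sigma_0$ supplies exactly the non-degeneracy needed: it contains the hypotheses of Theorems \ref{thm2.6} and \ref{thm2.7} transcribed for $\pi(\theta_0,\theta_x,\theta_\infty)$, namely $(\theta_0-1)(\theta_0+\theta_x-\theta_\infty)((2-\theta_0-\theta_\infty)^2-\theta_x^2)(\cdots)\neq0$, together with $((\sigma_0+1)^2-(\theta_0\pm\theta_x-1)^2)((\sigma_0+1)^2-(1-\theta_\infty)^2)\neq0$, which guarantees that $\sigma_0+1$ is an admissible exponent.

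Granting these substitutions, the computation of Section \ref{sc7} runs through unchanged and produces the two announced sequences: the one attached to $\xi_1^*=2(\sigma_0+1)^2 c^*(\sigma_0)\,\rho^{-1}/(\sigma_0+\theta_0+\theta_x)$ comes from the zero-locus of the transformed solution in $D_{\mathrm{even}}(\sigma_0+1,\cdot,0)$, and the one attached to $\hat\xi_1^*=-\xi_1^*(\sigma_0+\theta_0+\theta_x)/(\sigma_0+2-\theta_0-\theta_x)$ from its pole-locus; both translate, via the transformation whose denominator is of the form $x\cdot(\text{non-vanishing})$ on the relevant region, into $1$-points of $y_{\sigma_0}$ in $D_{\mathrm{odd}}(\sigma_0,\rho,0)$. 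The asymptotic count $|\sigma_0+1|^2\log|x^{*1}_n|-r_1^*(\re\sigma_0+1)-\mu_1^*\im\sigma_0\sim-2\pi n|\im\sigma_0|$, the estimate $\dist(x^{*1}_n,L(r_1^*,0)_{\sigma_0})=O(|x^{*1}_n|^2)$, and the simplicity of the $1$-points are all inherited verbatim. The main obstacle, as flagged in the second paragraph, is precisely that one cannot merely specialise the generic formulas of Theorem \ref{thm2.9} at $\sigma=\sigma_0$ — several of their factors become $0$ or $\infty$ there — so the B\"acklund computation must genuinely be re-run from the special series of Theorem \ref{thm2.2} in each of the three cases, carrying $c^*(\sigma_0)$ through $\pi$; everything else is routine bookkeeping. (A minor point to verify along the way is $\im\sigma_0\neq0$, without which $D_{\mathrm{odd}}(\sigma_0,\rho,0)$ is not a spiral.)
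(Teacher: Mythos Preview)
Your approach is correct and essentially matches the paper's: the paper reads the $\Omega^-$-leading coefficient $c^*(\sigma_0)$ from Theorem~\ref{thm2.2}, matches $y_{\sigma_0}(\rho,x)\equiv y^*(\sigma_0+1,\hat\rho,x)$ via the uniqueness Lemma~\ref{lem6.2} (giving $-(\theta_\infty+\sigma_0)c^*(\sigma_0)\tilde c(\sigma_0+1)\hat\rho=2\rho$), and then invokes the $1$-point analysis already carried out for $y^*$ in the proof of Theorem~\ref{thm2.9}. The only minor difference is that the paper avoids ``re-running the B\"acklund computation from the special series'' by identifying $y_{\sigma_0}$ with an already-analysed generic $y^*$, which is slightly more economical than what you describe but amounts to the same argument.
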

\begin{rem}\label{rem2.9}
The author believes that, in $D_{\mathrm{even}}(\cdots)$ and
$D_{\mathrm{odd}}(\cdots)$, there exists no sequence of zeros, of poles or
of $1$-points other than those given in Theorems \ref{thm2.6} through
\ref{thm2.11}. Indeed, solution \eqref{2.4} with the double zeros and poles
given by Theorem \ref{thm2.8} (respectively, \eqref{2.3.a} with the
$1$-points given by Theorem \ref{thm2.9}) 
has no zeros, poles or $1$-points other than
them in $D_{\mathrm{even}}(\sigma, \rho, 0)$ (respectively, in $D_{\mathrm{odd}}
(\sigma,\rho, 0)).$ 
However we have not succeeded in excluding the possibility of its existence
with the exception of the cases of 
$y_{\sigma_0}(\rho, x)$ with $\sigma_0=\theta_0
+\theta_x$ in $D_{\mathrm{even}}(\sigma_0, \rho, 0)$
and of
$y(\sigma, \rho, x)$ with $\theta_0-\theta_x
=\theta_{\infty}=0$.  
\end{rem}
\subsection{Monodromy data}\label{ssc2.3}
Linear system \eqref{1.1} given by
$$
\frac{dY}{d\lambda} = \Bigl(\frac{A_0(x)}{\lambda} + \frac{A_x(x)}{\lambda-x}
+\frac{J}{2} \Bigr)Y
$$
with the properties (a) and (b) 
admits a fundamental matrix solution of the form
\begin{equation}\label{2.8}
Y(\lambda, x)= (I+O(\lambda^{-1}) )e^{(\lambda/2)J} \lambda^{-(\theta_{\infty}
/2) J}
\end{equation}
as $\lambda\to\infty$
through the sector $-\pi/2 <\arg\lambda < 3\pi/2.$ Other matrix solutions
$Y_1(\lambda, x),$ $Y_2(\lambda,x)$ with the same asymptotic representation
through the sectors $-3\pi/2 <\arg\lambda <\pi/2,$ $\pi/2<\arg \lambda <
5\pi/2,$ respectively, are related to $Y(\lambda, x)$ by
$$
Y(\lambda, x)=Y_1(\lambda, x)S_1, \quad Y_2(\lambda, x)=Y(\lambda, x)S_2,
$$
where $S_1=I+s_1\Delta_-$ and $S_2=I+s_2\Delta$ are Stokes multipliers. Let
$M_0,$ $M_x,$ $M_{\infty}$ be monodromy matrices with respect to $Y(\lambda,x)$
such that $M_0,$ $M_x,$ $M_{\infty}$ are given by loops surrounding $\lambda
=0,$ $x,$ $\infty$, respectively, in the positive sense, and that $M_{\infty}
M_x M_0=I.$ Then the isomonodromy deformation of \eqref{1.1} preserving
the monodromy data $M_0,$ $M_x,$ $M_{\infty},$ $S_1,$ $S_2$ under a small
change of $x$, is controlled by the Schlesinger
equation
\begin{equation}\label{2.9}
x\frac{dA_0}{dx} =[A_x, A_0], \quad 
x\frac{dA_x}{dx} =[A_0, A_x] + \frac x2 [J, A_x]  
\end{equation}
that is equivalent to (V) through
\begin{equation}\label{2.10}
y(x)= \frac{A_x(x)_{12} (A_0(x)_{11} +\theta_0/2 )}
{A_0(x)_{12} (A_x(x)_{11} +\theta_x/2 )}
\end{equation}
(for details see \cite{JM}).
{\tiny
\begin{center}
\unitlength=0.75mm
\begin{picture}(80,58)(-5,-8)
\thicklines
\put(0,40){\circle*{1}}
\put(0,40){\line(0,-1){35}}
\put(0,0){\circle{10}}
\put(0,0){\circle*{1}}
\put(0,40){\line(1,-1){28.3}}
\put(31.8, 8.2){\circle{10}}
\put(31.8, 8.2){\circle*{1}}
\put(0,40){\line(1,0){70}}
\put(75,40){\circle{10}}
\put(75,40){\circle*{1}}
\thinlines
\put(-2,25){\vector(0,-1){8}}
\put(2,17){\vector(0,1){8}}
\qbezier(-2,-7)(0,-8)(2,-7)
\put(2,-7){\vector(3,2){0}}
\put(12,25){\vector(1,-1){5}}
\put(21,22){\vector(-1,1){5}}
\qbezier(34.7, 1.9)(36.8, 2.6)(37.6, 4.7)
\put(37.6, 4.7){\vector(1,4){0}}
\put(35,38){\vector(1,0){8}}
\put(43,42){\vector(-1,0){8}}
\qbezier(82, 38)(83, 40)(82, 42)
\put(82, 42){\vector(-2,3){0}}
\put(70,28){\makebox{$\lambda=\infty$}}
\put(43,4){\makebox{$\lambda=x$}}
\put(9,-6){\makebox{$\lambda=0$}}
\put(3,12){\makebox{$M_0$}}
\put(26,17.5){\makebox{$M_x$}}
\put(52,43){\makebox{$M_{\infty}$}}
\end{picture}
\end{center}
}
{\small \begin{center} Figure 2.2 \end{center} }
\vskip0.3cm\par
The following results give the monodromy data $M_0,$ $M_x$ related
to each solution of (V) by the correspondence described above. 
Note that $S_1$ and $S_2$
follow from the relation $M_{\infty}=M_0^{-1} M_x^{-1} = S_2 e^{\pi i\theta
_{\infty}J} S_1.$
\begin{thm}\label{thm2.12}
Suppose that $\theta_0,$ $\theta_x \not\in \Z.$ Then the monodromy data
for $y(\sigma,\rho, x)$ of Theorem $\ref{thm2.1}$ are given by
$$
M_0=(C_0C_{\infty})^{-1} e^{\pi i \theta_0 J}C_0C_{\infty}, \quad
M_x=(C_xC_{\infty})^{-1} e^{\pi i \theta_x J}C_xC_{\infty}, 
$$
where
\begin{align*}
& C_{\infty} = \begin{pmatrix}
- \dfrac{e^{-\pi i(\sigma +\theta_{\infty})/2} \Gamma(-\sigma)}{\Gamma
(1-(\sigma -\theta_{\infty})/2) }
& -\dfrac{\Gamma(-\sigma)}{\Gamma(1- (\sigma +\theta_{\infty})/2 )}
\\[0.3cm]
- \dfrac{e^{\pi i(\sigma -\theta_{\infty})/2} \Gamma(\sigma)}{\Gamma
((\sigma +\theta_{\infty})/2) }
& \dfrac{\Gamma(\sigma)}{\Gamma((\sigma -\theta_{\infty})/2 )}
\end{pmatrix},
\\[0.2cm]
&C_0= \tilde{C}_0
 \begin{pmatrix}
1 & 0 \\   0 & 2\rho/(\sigma +\theta_{\infty})
\end{pmatrix},
\quad
C_x= \tilde{C}_1
 \begin{pmatrix}
1 & 0 \\   0 & 2\rho/(\sigma +\theta_{\infty})
\end{pmatrix}
\end{align*}
with
\begin{align*}
& \tilde{C}_0 =
\begin{pmatrix}
\dfrac{e^{\pi i (\sigma-\theta_0+\theta_x)/2} \Gamma(1-\sigma) \Gamma(-\theta_0)}
{\Gamma(-\frac{\sigma+\theta_0 +\theta_x}2)\Gamma(1-\frac
{\sigma+\theta_0-\theta_x}2)}
& 
\dfrac{e^{-\pi i(\sigma+\theta_0-\theta_x)/2} \Gamma(1+\sigma) \Gamma(-\theta_0)}
{\Gamma(\frac{\sigma-\theta_0 -\theta_x}2)\Gamma(1+\frac{\sigma-\theta_0+\theta_x}2)}
\\[0.4cm]
\dfrac{e^{\pi i (\sigma+\theta_0+\theta_x)/2} \Gamma(1-\sigma) \Gamma(\theta_0)}
{\Gamma(-\frac{\sigma-\theta_0+\theta_x}2)\Gamma(1-\frac{\sigma-\theta_0-\theta_x}2)}
& 
\dfrac{e^{-\pi i(\sigma-\theta_0-\theta_x)/2} \Gamma(1+\sigma) \Gamma(\theta_0)}
{\Gamma(\frac{\sigma+\theta_0 -\theta_x}2)\Gamma(1+\frac{\sigma+\theta_0+\theta_x}2)}
\end{pmatrix},
\\[0.3cm]
& \tilde{C}_1 =
\begin{pmatrix}
\dfrac{ \Gamma(1-\sigma) \Gamma(-1-\theta_x)}
{\Gamma(-\frac{\sigma-\theta_0 +\theta_x}2)\Gamma(-\frac{\sigma+\theta_0+\theta_x}2)}
& 
\dfrac{ \Gamma(1+\sigma) \Gamma(-1-\theta_x)}
{\Gamma(\frac{\sigma+\theta_0 -\theta_x}2)\Gamma(\frac{\sigma-\theta_0-\theta_x}2)}
\\[0.4cm]
\dfrac{ \Gamma(1-\sigma) \Gamma(1+\theta_x)}
{\Gamma(1-\frac{\sigma-\theta_0-\theta_x}2)\Gamma(1-\frac{\sigma+\theta_0-\theta_x}2)}
& 
\dfrac{ \Gamma(1+\sigma) \Gamma(1+\theta_x)}
{\Gamma(1+\frac{\sigma+\theta_0 +\theta_x}2)\Gamma(1+\frac{\sigma-\theta_0+\theta_x}2)}
\end{pmatrix}.
\end{align*}
\end{thm}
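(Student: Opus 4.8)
The plan is to exploit the fact that $M_0,M_x,M_\infty,S_1,S_2$ are invariants of the isomonodromy deformation \eqref{2.9}, hence constant along the family $\{y(\sigma,\rho,x)\}$, so that they may be computed in the limit $x\to0$ taken along a ray inside a spiral domain $\Omega^{\pm}_{\sigma,\rho}(\varepsilon_0)$ on which the series of Theorem \ref{thm2.1} converges. Along such a ray I would use the explicit matrix solution $(A_0(x),A_x(x))$ of the Schlesinger equation attached to $y(\sigma,\rho,x)$ constructed in Section \ref{sc3}. Its relevant features are: $A_0(x)$ and $A_x(x)$ have eigenvalues $\pm\theta_0/2$ and $\pm\theta_x/2$; and $A_0(x)+A_x(x)$, after conjugation by an explicit $x$-dependent matrix carrying the factor $\rho x^{\sigma}$, tends to a constant matrix $\hat A$ with eigenvalues $\pm\sigma/2$. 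The normalisation of that conjugation --- the precise place where $\rho$ enters --- is pinned down by the correspondence \eqref{2.10} together with the leading coefficient $c(\sigma)\rho x^{\sigma}$ of $y_+(\sigma,\rho,x)$ recorded in \eqref{2.2}. The hypothesis $\theta_0,\theta_x\notin\Z$ is used here to ensure that the limiting residues $\hat A_0,\hat A_x$ at $\lambda=0$ and $\lambda=x$ are diagonalisable and that the Gamma factors occurring in $\tilde C_0,\tilde C_1$ are finite and nonzero.

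Next I would carry out the two-scale matching summarised in Section \ref{sc8} following \cite{Jimbo}. Splitting the $\lambda$-plane by an intermediate radius $\delta(x)$ with $|x|\ll\delta(x)\ll1$: in the outer region $|\lambda|\gtrsim\delta(x)$ the system \eqref{1.1} is a controlled perturbation of the Whittaker-type system $dY/d\lambda=(\hat A/\lambda+J/2)Y$; and in the inner region $|\lambda|\lesssim\delta(x)$, after setting $\lambda=xz$, it is a controlled perturbation of the Gauss hypergeometric-type system $dY/dz=(\hat A_0/z+\hat A_x/(z-1))Y$. These overlap in an annulus around $\lambda=0$ (i.e.\ around $z=\infty$), where $\hat A_0+\hat A_x$ governs the behaviour with exponents $\pm\sigma/2$. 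Integrating the two model systems in terms of confluent hypergeometric (Whittaker) and Gauss hypergeometric functions and invoking their classical connection formulas --- the content of Section \ref{sc9} --- yields the connection matrix $C_\infty$ relating the solution \eqref{2.8} normalised at $\lambda=\infty$ to a solution diagonalising $\hat A$ at $\lambda=0$, and the matrices $C_0$, $C_x$ relating solutions diagonalising $\hat A_0$ at $z=0$, $\hat A_x$ at $z=1$, and $\hat A_0+\hat A_x$ at $z=\infty$. Gluing the two pictures across the overlap and propagating the factor $2\rho/(\sigma+\theta_\infty)$ through the inner normalisation gives the asserted explicit forms of $C_0,C_x,C_\infty$.

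The monodromy matrices are then assembled by transporting a small positively oriented loop about each finite singularity into the frame of \eqref{2.8}. Around $\lambda=0$ the local monodromy of a solution that diagonalises $A_0$ is $e^{2\pi i(\theta_0/2)J}=e^{\pi i\theta_0 J}$; since $Y(\lambda,x)=Y_0^{\mathrm{loc}}(\lambda,x)\,C_0C_\infty$ with $Y_0^{\mathrm{loc}}$ such a local solution, this gives $M_0=(C_0C_\infty)^{-1}e^{\pi i\theta_0 J}C_0C_\infty$; the same argument at $\lambda=x$ gives $M_x=(C_xC_\infty)^{-1}e^{\pi i\theta_x J}C_xC_\infty$. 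Finally I would verify $M_\infty M_x M_0=I$ and that $M_\infty=S_2e^{\pi i\theta_\infty J}S_1$ has the correct Stokes form, which both follow from the construction and double as a consistency check on signs and branches.

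The principal difficulty is concentrated in the matching step and is twofold: making the inner/outer approximation rigorous and uniform as $x\to0$ inside the spiral domain --- bounding the perturbation of each model system and, above all, controlling the error incurred when the outer Whittaker solution re-expanded near $\lambda=0$ is glued to the inner hypergeometric solution re-expanded near $z=\infty$ --- and obtaining exact constants rather than constants up to a common scalar, so that $\rho$ enters precisely as $2\rho/(\sigma+\theta_\infty)$ and the branch and diagonalisation conventions in the Gamma-function entries of $C_\infty$, $\tilde C_0$, $\tilde C_1$ are mutually consistent. The remaining work --- evaluating the Whittaker and Gauss connection coefficients --- is classical special-function computation carried out in Sections \ref{sc8}--\ref{sc9}.
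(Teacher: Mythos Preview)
Your proposal is correct and follows essentially the same route as the paper: the argument in Section~\ref{sc8} is precisely the two-scale Jimbo matching you describe, with the outer Whittaker system \eqref{8.1} and inner hypergeometric system \eqref{8.2} solved via the connection formulas of Section~\ref{sc9}, and Section~\ref{ssc10.1} assembles $M_0,M_x$ from \eqref{8.7} after the parameter identification $\alpha=-(\sigma+\theta_0+\theta_x)/2$, $\beta=(\sigma-\theta_0-\theta_x)/2$, $\gamma=1-\theta_0$. The only nuance is that the paper does not carry out the uniform error analysis you flag as the principal difficulty but instead invokes \cite{SMJ} and \cite[Proposition~2.1]{Jimbo} for the existence of the limits $\hat Y(\lambda)=\lim_{x\to0}Y(\lambda,x)$ and $\tilde Y(\lambda)=\lim_{x\to0}x^{-\Lambda}Y(x\lambda,x)$, and the factor $2\rho/(\sigma+\theta_\infty)$ emerges from the substitution \eqref{5.4} via $\rho_*^{-J/2}$ in \eqref{10.1} rather than directly from $c(\sigma)$.
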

\begin{thm}\label{thm2.13}
Suppose that $\theta_0,$ $\theta_x \not\in \Z.$ Let $\sigma_0$ be as in
Theorem $\ref{thm2.2}$. For $\tilde{C}_0=((\tilde{C}_0)_{ij}),$
$\tilde{C}_1=((\tilde{C}_1)_{ij})$ and $C_{\infty}$ in Theorem $\ref{thm2.12}$,
set 
$$
(\tilde{C}_0)_{ij}(\sigma_0):=(\tilde{C}_0)_{ij}|_{\sigma=\sigma_0}, \quad
(\tilde{C}_1)_{ij}(\sigma_0):=(\tilde{C}_1)_{ij}|_{\sigma=\sigma_0}, 
\quad
C_{\infty}(\sigma_0):=C_{\infty}|_{\sigma=\sigma_0}.
$$
Then the monodromy data for $y_{\sigma_0}(\rho, x)$ of Theorem $\ref{thm2.2}$
are given by
$$
M_0=(C^*_0C_{\infty}(\sigma_0))^{-1} e^{\pi i \theta_0 J}C^*_0C_{\infty}
(\sigma_0), \quad
M_x=(C^*_xC_{\infty}(\sigma_0))^{-1} e^{\pi i \theta_x J}C^*_xC_{\infty}
(\sigma_0) 
$$
with 
$$
C^*_0= \tilde{C}^*_0
 \begin{pmatrix}
1 & 0 \\   0 & \rho
\end{pmatrix}, \quad
C^*_x= \tilde{C}^*_1
 \begin{pmatrix}
1 & 0 \\   0 & \rho
\end{pmatrix} . 
$$
Here $\tilde{C}^*_0$ and $\tilde{C}^*_1$ are as follows$:$
\par
$(\mathrm{i})$ if $\sigma_0=\theta_0 +\theta_x$,
$$
\tilde{C}^*_0 = \begin{pmatrix}
(\tilde{C}_0)_{11}(\sigma_0) & (\tilde{C}^*_0)_{12}  \\
  0 &     (\tilde{C}_0)_{22}(\sigma_0) 
\end{pmatrix}, \quad
\tilde{C}^*_1 = \begin{pmatrix}
(\tilde{C}_1)_{11}(\sigma_0) & (\tilde{C}^*_1)_{12}  \\
  0 &     (\tilde{C}_1)_{22}(\sigma_0) 
\end{pmatrix}
$$
with
$$
(\tilde{C}^*_{0})_{12} = - \frac{ \sigma_0 e^{-\pi i\theta_0} \Gamma(1+\sigma_0)
\Gamma(-\theta_0)}{\theta_0 \Gamma(1+\theta_x) } , \quad
(\tilde{C}^*_{1})_{12} = - \frac{ \sigma_0 \Gamma(1+\sigma_0)
\Gamma(-1-\theta_x)}{ \Gamma(1+\theta_0) } ;
$$
\par
$(\mathrm{ii})$ if $\sigma_0=\theta_0 -\theta_x$,
$$
\tilde{C}^*_0 = \begin{pmatrix}
(\tilde{C}_0)_{11}(\sigma_0) & (\tilde{C}_0)_{12}(\sigma_0)/\theta_x  \\
 0 &  (\tilde{C}_0)_{22}(\sigma_0)    
\end{pmatrix}, \quad
\tilde{C}^*_1 = \begin{pmatrix}
 0 & (\tilde{C}_1)_{12}(\sigma_0)/\theta_x  \\
 (\tilde{C}_1)_{21}(\sigma_0)\theta_x &  (\tilde{C}_1)_{22}(\sigma_0)    
\end{pmatrix};
$$
\par
$(\mathrm{iii})$ if $\sigma_0=\theta_x -\theta_0$,
$$
\tilde{C}^*_0 = \begin{pmatrix}
0 & (\tilde{C}_0)_{12}(\sigma_0) \sigma_0/\theta_0   \\
 (\tilde{C}_0)_{21}(\sigma_0) \theta_0/\sigma_0 & (\tilde{C}^*_0)_{22} 
\end{pmatrix}, \quad
\tilde{C}^*_1 = \begin{pmatrix}
(\tilde{C}_1)_{11}(\sigma_0) & (\tilde{C}^*_1)_{12}  \\
  0 &     (\tilde{C}_1)_{22}(\sigma_0) 
\end{pmatrix}
$$
with
$$
(\tilde{C}^*_{0})_{22} = \frac{ e^{\pi i\theta_0} \Gamma(1+\sigma_0)
\Gamma(\theta_0)}{\Gamma(1+\theta_x) } , \quad
(\tilde{C}^*_{1})_{12} = - \frac{ \sigma_0 \Gamma(1+\sigma_0)
\Gamma(-1-\theta_x)}{ \Gamma(1-\theta_0) } .
$$
\end{thm}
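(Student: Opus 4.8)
The plan is to run, for the matrix solution $(A_0(x),A_x(x))$ of the Schlesinger equation \eqref{2.9} attached to $y_{\sigma_0}(\rho,x)$ and constructed in Sections~\ref{sc3} and~\ref{sc5}, the argument that proves Theorem~\ref{thm2.12}: apply the limiting procedure of Section~\ref{sc8} to the linear system \eqref{1.1} as $x\to 0$, so that the monodromy matrices $M_0,M_x$ relative to the normalised solution \eqref{2.8} are expressed through the connection data of two model systems — a Whittaker system, governing the region $|\lambda|\gg|x|$ (carrying $S_1,S_2$ and the loop around $\lambda=\infty$), and the Gauss hypergeometric system obtained after the rescaling $\lambda=x\zeta$ as the inner limit at the coalescing points $\lambda=0,x$, with exponents $\pm\theta_0/2$ at $\zeta=0$, $\pm\theta_x/2$ at $\zeta=1$, and the exponents at $\zeta=\infty$ governed by $\sigma_0$. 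Just as for Theorem~\ref{thm2.12} this yields $M_0=(C_0^*C_\infty(\sigma_0))^{-1}e^{\pi \i \theta_0 J}C_0^*C_\infty(\sigma_0)$ and $M_x=(C_x^*C_\infty(\sigma_0))^{-1}e^{\pi \i \theta_x J}C_x^*C_\infty(\sigma_0)$, where $C_\infty(\sigma_0)$ is the Whittaker connection matrix (the same one as in Theorem~\ref{thm2.12}, now evaluated at $\sigma=\sigma_0$) and $C_0^*,C_x^*$ carry the hypergeometric connection data; $S_1,S_2$ are then recovered from $M_\infty=M_0^{-1}M_x^{-1}$ as before.

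The single real difference from the proof of Theorem~\ref{thm2.12} is that for $\sigma_0=\theta_0+\theta_x,\ \theta_0-\theta_x$ or $\theta_x-\theta_0$ the inner hypergeometric connection problem is \emph{non-generic}: one of the Gamma arguments $\tfrac12(\sigma\pm\theta_0\pm\theta_x)$ appearing in $\tilde C_0,\tilde C_1$ of Theorem~\ref{thm2.12} vanishes at $\sigma=\sigma_0$, i.e.\ a Gauss series terminates and a local solution degenerates — which is exactly why the associated linear system is solvable in closed form when $\rho=0$, cf.\ Remark~\ref{rem2.5} and \cite{K-O}. Accordingly I would feed the non-generic connection formulas of Section~\ref{ssc9.3} (cf.\ Remark~\ref{rem2.11}) into the assembly instead of the generic ones. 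In each of (i), (ii), (iii) this identifies which entry of $\tilde C_0^*$ and of $\tilde C_1^*$ drops out, producing the triangular/sparse shapes stated (for $\sigma_0=\theta_0+\theta_x$ the $(2,1)$ entries of both; for $\sigma_0=\theta_0-\theta_x$ the $(2,1)$ entry of $\tilde C_0^*$ and the $(1,1)$ entry of $\tilde C_1^*$; for $\sigma_0=\theta_x-\theta_0$ the $(1,1)$ entry of $\tilde C_0^*$ and the $(2,1)$ entry of $\tilde C_1^*$), while the surviving entries come out by specialising the remaining Gamma quotients at $\sigma=\sigma_0$ and simplifying with $\Gamma(z+1)=z\Gamma(z)$ and the reflection formula — this is the origin of the closed forms $(\tilde C_0^*)_{12}$, $(\tilde C_1^*)_{12}$, $(\tilde C_0^*)_{22}$, as well as of the $\theta_x$-factors occurring in cases (ii) and (iii), which are monodromy-preserving diagonal rescalings (left multiplication of $C_0^*$, $C_x^*$ by diagonal matrices commutes with $e^{\pi \i \theta_0 J}$, $e^{\pi \i \theta_x J}$) introduced to bring $C_0^*,C_x^*$ to the normalised form $C_0^*=\tilde C_0^*\,\diag[1,\rho]$, $C_x^*=\tilde C_1^*\,\diag[1,\rho]$ matching the parameter $\rho$ of Theorem~\ref{thm2.2}.

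The hard part will be the non-generic hypergeometric connection analysis of Section~\ref{ssc9.3}: determining the correct degenerate connection matrix (and whether a logarithmic local solution intervenes), establishing exactly which entries vanish and the precise constants of those that remain, and checking that the resulting $C_0^*,C_x^*$ stay invertible so that the formulas for $M_0,M_x$ are meaningful. Everything else reduces to routine Gamma-function identities. Two consistency checks are available: setting $\rho=0$ reduces $y_{\sigma_0}(0,x)$ to the Taylor solution of Remark~\ref{rem2.5}, the inner system becoming exactly hypergeometric, so the $M_0,M_x$ must match those of the solutions (II), (III) of \cite{K-O}; and letting $\sigma\to\sigma_0$ in Theorem~\ref{thm2.12}, after the substitution $\rho\mapsto\rho_{\mathrm{gen}}(\sigma)$ that rescales the leading coefficient $c(\sigma)$ of \eqref{2.2} to the one recorded in Theorem~\ref{thm2.2} (so that $c(\sigma)\rho_{\mathrm{gen}}(\sigma)$ stays finite and nonzero), the $0\cdot\infty$ cancellation between the divergence of $\rho_{\mathrm{gen}}$ and the vanishing Gamma factor should reproduce $\tilde C_0^*,\tilde C_1^*$.
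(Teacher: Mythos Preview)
Your overall framework is correct and matches the paper: run the Section~\ref{sc8} limiting procedure for the matrix solution of Proposition~\ref{prop3.1}(2), obtaining the same Whittaker outer system (hence $C_\infty(\sigma_0)$) and a hypergeometric inner system, then assemble $M_0,M_x$ via \eqref{8.7}. But you have misidentified the relevant hypergeometric degeneration. Section~\ref{ssc9.3} and Remark~\ref{rem2.11} treat the cases $\gamma\in\Z$ or $\gamma-\alpha-\beta\in\Z$, i.e.\ $\theta_0\in\Z$ or $\theta_x\in\Z$, which are \emph{excluded} by hypothesis here. The degeneration $\sigma_0=\theta_0\pm\theta_x$ or $\theta_x-\theta_0$ corresponds instead to one of $\alpha,\beta,\gamma-\alpha,\gamma-\beta$ becoming~$0$ in the parametrisation of Section~\ref{ssc10.1}; the hypotheses of Proposition~\ref{prop9.3} ($\alpha-\beta,\gamma,\gamma-\alpha-\beta\notin\Z$) remain satisfied, and no logarithmic local solution appears. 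So Proposition~\ref{prop9.3} is still the correct connection result --- some entries of $\tilde C_0,\tilde C_1$ simply vanish (via $1/\Gamma(0)=0$) while the normalising factor $R=(\theta_0^2-(\sigma-\theta_x)^2)/(4\sigma)$ in \eqref{10.1} vanishes in cases (i) and (iii).

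The paper's actual argument is precisely what you listed as your second ``consistency check'': take the limit $\sigma\to\sigma_0$ in \eqref{10.1} after replacing $\rho_*$ by $\rho$ (because for $y_{\sigma_0}(\rho,x)$ Section~\ref{sc8} sets $\tilde\rho=\rho^{1/\sigma_0}$, not $\rho_*^{1/\sigma}$). The resulting $0\cdot\infty$ between the vanishing second-column entries of $\tilde C_{\iota/x}$ and the diverging $1/R$ gives the finite closed forms $(\tilde C_0^*)_{12},(\tilde C_1^*)_{12}$; for $\sigma_0=\theta_x-\theta_0$ the limit of $C_0^*$ needs the diagonal gauge freedom of Remark~\ref{rem8.1}, pre-multiplying $\tilde C_0$ by $\diag[(\sigma+\theta_0-\theta_x)/2,\,2/(\sigma+\theta_0-\theta_x)]$ before letting $\sigma\to\sigma_0$. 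This is also where the $\theta_x$-factors in case (ii) and the $\theta_0/\sigma_0$-factors in case (iii) come from. Promote that limit to your main argument and drop the reference to Section~\ref{ssc9.3}; the rest is indeed routine Gamma simplification.
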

Let $\psi(x)=\Gamma'(x)/\Gamma(x).$
\begin{thm}\label{thm2.14}
Suppose that $\theta_0,$ $\theta_x \not\in \Z.$ Then the monodromy data
for $y_{\mathrm{ilog}} (\rho,x)$ or $y^{(l)}_{\mathrm{ilog}}
(\rho, x)$ $(l=1,2)$ of Theorem $\ref{thm2.3}$ with $\theta_0^2-\theta_x^2
\not=0$ are given by
$$
M_0=(C^{-}_0C_{\infty})^{-1} e^{\pi i \theta_0 J}C^{-}_0C_{\infty}, \quad
M_x=(C^{-}_xC_{\infty})^{-1} e^{\pi i \theta_x J}C^{-}_xC_{\infty},
$$
those for $y^{\pm}_{\mathrm{ilog}}(\rho, x)$ with $\theta_0^2-\theta_x^2=0$ by
$$
M_0=(C^{\pm}_0C_{\infty})^{-1} e^{\pi i \theta_0 J}C^{\pm}_0C_{\infty}, \quad
M_x=(C^{\pm}_xC_{\infty})^{-1} e^{\pi i \theta_x J}C^{\pm}_xC_{\infty},
$$
and those for $y^{(1)}_{\mathrm{ilog}}(\rho, x)$ 
with $\theta_0=-\theta_x\not=0$ by
$$
M_0=(C^{+}_0C_{\infty})^{-1} e^{\pi i \theta_0 J}C^{+}_0C_{\infty}, \quad
M_x=(C^{+}_xC_{\infty})^{-1} e^{\pi i \theta_x J}C^{+}_xC_{\infty},
$$
where
$$
C^{\pm}_0 =  \tilde{C}^{\pm}_0 \rho_0^{-\Delta} , \quad
C^{\pm}_x =  \tilde{C}^{\pm}_1 \rho_0^{-\Delta}  
$$
with $\rho_0=\rho \exp(-2\theta_x(\theta_0^2-\theta_x^2)^{-1})$ if
$\theta_0^2-\theta_x^2\not=0,$ $\rho_0=\rho$ otherwise. The
matrices $C_{\infty},$ $\tilde{C}_0^{\pm},$ $\tilde{C}^{\pm}_1$ are as follows$:$
\par
$(\mathrm{i})$ if $\theta_{\infty}\not=0,$ then
$$
C_{\infty} = \begin{pmatrix}
\dfrac{e^{-\pi i \theta_{\infty}/2}(\psi(1+\theta_{\infty}/2) - 2\psi(1)-\pi i) 
 }{\Gamma(1+ \theta_{\infty}/2) }
&
\dfrac{\psi(-\theta_{\infty}/2) - 2\psi(1) }{\Gamma(1 -\theta_{\infty}/2) }
\\[0.3cm]
\dfrac{e^{-\pi i \theta_{\infty}/2} }{\Gamma(1+ \theta_{\infty}/2) }
 &
\dfrac{ 1   }{\Gamma(1 -\theta_{\infty}/2) }
\end{pmatrix},
$$
and if $\theta_{\infty}=0,$ then 
$C_{\infty}= I- \psi(1) \Delta$ for $l=1,$ and 
$C_{\infty}= (1- \pi i -\psi(1)) (I+J)/2 +\Delta + \Delta_-$ 
for $l=2;$
\par
$(\mathrm{ii})$ 
$$
\tilde{C}^{\pm}_0 =
\begin{pmatrix}
\dfrac{e^{-\pi i (\theta_0 \pm \theta_x)/2} \Gamma(-\theta_0) }
{\Gamma(-(\theta_0\mp \theta_x)/2)\Gamma(1-(\theta_0\pm  \theta_x)/2)}
&
\dfrac{e^{-\pi i (\theta_0 \pm \theta_x)/2} \psi^0_{12}(\theta_0, \theta_x)
 \Gamma(-\theta_0) }
{\Gamma(-(\theta_0\mp \theta_x)/2)\Gamma(1-(\theta_0\pm  \theta_x)/2)}
\\[0.4cm]
\dfrac{e^{\pi i (\theta_0 \mp \theta_x)/2} \Gamma(\theta_0) }
{\Gamma(1+(\theta_0\mp \theta_x)/2)\Gamma((\theta_0\pm  \theta_x)/2)}
&
\dfrac{e^{\pi i (\theta_0 \mp \theta_x)/2} \psi^0_{22}(\theta_0, \theta_x)
 \Gamma(\theta_0) }
{\Gamma(1+(\theta_0\mp \theta_x)/2)\Gamma((\theta_0\pm  \theta_x)/2)}
\end{pmatrix}
$$
with
\begin{align*}
& \psi^0_{12}(\theta_0, \theta_x) =
\psi(-(\theta_0 \mp\theta_x)/2) + \psi(1-(\theta_0 \pm\theta_x)/2) -2\psi(1)
+\pi i,
\\ 
& \psi^0_{22}(\theta_0, \theta_x) =
\psi(1+(\theta_0 \mp\theta_x)/2) + \psi((\theta_0 \pm\theta_x)/2) -2\psi(1)
+\pi i,
\end{align*}
and
\begin{align*}
&\tilde{C}^{\pm}_1 = K^{\pm}
\\
& \times
\begin{pmatrix}
\dfrac{\Gamma( -1\pm \theta_x) }
{\Gamma(-(\theta_0\mp \theta_x)/2)\Gamma((\theta_0\pm  \theta_x)/2)}
&
\dfrac{ \psi^1_{12}(\theta_0, \theta_x)
 \Gamma( -1\pm\theta_x) }
{\Gamma(-(\theta_0\mp \theta_x)/2)\Gamma((\theta_0\pm  \theta_x)/2)}
\\[0.4cm]
\dfrac{ \Gamma(1\mp \theta_x) }
{\Gamma(1+(\theta_0\mp \theta_x)/2)\Gamma(1-(\theta_0\pm  \theta_x)/2)}
&
\dfrac{ \psi^1_{22}(\theta_0, \theta_x)
 \Gamma(1\mp \theta_x) }
{\Gamma(1+(\theta_0\mp \theta_x)/2)\Gamma(1-(\theta_0\pm  \theta_x)/2)}
\end{pmatrix}
\end{align*}
with $ K^- =I,$ $ K^+ = \Delta + \Delta_-,$
\begin{align*}
& \psi^1_{12}(\theta_0, \theta_x) =
\psi(-(\theta_0 \mp\theta_x)/2) + \psi((\theta_0 \pm\theta_x)/2) -2\psi(1),
\\ 
& \psi^1_{22}(\theta_0, \theta_x) =
\psi(1+(\theta_0 \mp\theta_x)/2) + \psi(1-(\theta_0 \pm\theta_x)/2) -2\psi(1).
\end{align*}
\end{thm}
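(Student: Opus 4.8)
The plan is to follow the scheme prepared in Sections \ref{sc8} and \ref{sc9}. Since $M_0$, $M_x$, $M_{\infty}$, $S_1$, $S_2$ are invariants of the isomonodromy deformation \eqref{2.9}, it suffices to evaluate them in the limit $x\to 0$ along $\Omega^*_{\rho}(\varepsilon_0,\Theta_0)$, starting from the matrix solution $(A_0(x),A_x(x))$ of the Schlesinger equation \eqref{2.9} attached to $y_{\mathrm{ilog}}(\rho,x)$ — respectively $y^{(l)}_{\mathrm{ilog}}$ or $y^{\pm}_{\mathrm{ilog}}$ — constructed in Section \ref{sc3}. Substituting it into \eqref{1.1} and applying the limiting procedure of Jimbo recalled in Section \ref{sc8}, I would let the two coalescing singular points $\lambda=0$ and $\lambda=x$ merge: in the limit one obtains an \emph{outer} model system for $|\lambda|\gg|x|$ with a single regular singularity at $\lambda=0$ of residue $\lim_{x\to0}(A_0(x)+A_x(x))$, which is a Whittaker (confluent hypergeometric) system, together with an \emph{inner} model system obtained by the rescaling $\lambda=x\zeta$, which is a Gauss hypergeometric system with singular points $\zeta=0,1,\infty$. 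The canonical solution $Y(\lambda,x)$ normalised as in \eqref{2.8} is matched to these two model solutions, whence $M_0$ and $M_x$ come out in the conjugated form $M_0=(C_0^{\pm}C_{\infty})^{-1}e^{\pi i \theta_0 J}C_0^{\pm}C_{\infty}$, $M_x=(C_x^{\pm}C_{\infty})^{-1}e^{\pi i \theta_x J}C_x^{\pm}C_{\infty}$, with $C_{\infty}$ the connection matrix of the Whittaker system (from $\lambda=\infty$ to $\lambda=0$) and $C_0^{\pm}$, $C_x^{\pm}$ those of the Gauss system (from $\zeta=\infty$ to $\zeta=0$ and to $\zeta=1$).

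The feature distinguishing the present theorem from Theorem \ref{thm2.12} is that for the inverse logarithmic solutions the formal exponent at the merged singularity vanishes and the corresponding residue matrix carries a nontrivial Jordan block, so the Whittaker model system lies in the resonant, logarithmic case; and, according as $\theta_0^2-\theta_x^2\not=0$ or $\theta_0=\theta_x$ or $\theta_0=-\theta_x$ (and, independently, $\theta_{\infty}\not=0$ or $\theta_{\infty}=0$), the inner Gauss system is resonant at $\lambda=0$, at $\lambda=x$, or at $\lambda=\infty$. I would therefore invoke the connection formulas for the Whittaker and hypergeometric systems established in Section \ref{sc9}, \emph{including the non-generic cases} of Section \ref{ssc9.3} (cf.\ Remark \ref{rem2.11}); it is exactly these resonant connection formulas — where differentiation of $\Gamma$ in the parameter at a pole produces di-Gamma terms — that account for the $\psi$-entries of $C_{\infty}$ in case $(\mathrm{i})$ and for the quantities $\psi^0_{ij}(\theta_0,\theta_x)$, $\psi^1_{ij}(\theta_0,\theta_x)$ inside $\tilde{C}_0^{\pm}$ and $\tilde{C}_1^{\pm}$. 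The auxiliary factors $K^-=I$, $K^+=\Delta+\Delta_-$ and the two $\theta_{\infty}=0$ shapes of $C_{\infty}$, namely $I-\psi(1)\Delta$ for $l=1$ and $(1-\pi i-\psi(1))(I+J)/2+\Delta+\Delta_-$ for $l=2$, fall out of the same case analysis, the latter reflecting the further Bessel-type degeneration of the Whittaker system when $\theta_{\infty}=0$.

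It then remains to fix the scalar normalisation, i.e.\ the diagonal conjugations $\rho_0^{-\Delta}$ entering $C_0^{\pm}=\tilde{C}_0^{\pm}\rho_0^{-\Delta}$, $C_x^{\pm}=\tilde{C}_1^{\pm}\rho_0^{-\Delta}$, with $\rho_0=\rho\exp(-2\theta_x(\theta_0^2-\theta_x^2)^{-1})$ when $\theta_0^2-\theta_x^2\not=0$ and $\rho_0=\rho$ otherwise. These come from comparing the $x\to0$ leading behaviour of the Schlesinger matrix solution of Section \ref{sc3} — organised, through \eqref{2.10}, in powers of $\log(\rho x)$ as in Theorem \ref{thm2.3} — with the local solution of the inner Gauss model at $\zeta=0$: the logarithmic variable emerging from the limiting procedure differs from $\log(\rho x)$ by an additive constant, which matching identifies as $-2\theta_x(\theta_0^2-\theta_x^2)^{-1}$ and which is absorbed into $\rho_0$. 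Collecting $C_0^{\pm}$, $C_x^{\pm}$ and $C_{\infty}$ yields $M_0$, $M_x$ as stated; $S_1$, $S_2$ then follow from $M_{\infty}=S_2 e^{\pi i\theta_{\infty}J}S_1$ as noted after Figure 2.2.

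I expect the main obstacle to be the simultaneous double resonance: carrying Jimbo's matching through with estimates uniform in $x$ when \emph{both} model systems are logarithmic forces one to keep careful control of terms of size $\log(\rho x)$ and $\log^2(\rho x)$, and, after assembling $C_{\infty}$ with $C_0^{\pm}$, $C_x^{\pm}$, to verify that the apparent $x$-dependence cancels so that $M_0$, $M_x$ are genuinely constant. As an internal check one may also recover the formulas formally from Theorem \ref{thm2.12} by sending $\sigma\to0$ with $\rho$ rescaled suitably: the simple poles of $\Gamma(\pm\sigma)$ are then consumed against the accompanying factors and the surviving first-order terms reproduce precisely the di-Gamma combinations displayed above.
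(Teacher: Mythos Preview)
Your overall architecture is correct and coincides with the paper's: one lets $x\to 0$, obtains the outer Whittaker system \eqref{8.1} and the inner hypergeometric system \eqref{8.2}, and reads $M_0,M_x$ from \eqref{8.7} as $(C^{(\iota)}C)^{-1}e^{\pi i\theta_{\iota}J}C^{(\iota)}C$. The $\psi$-entries in $C_\infty$ do come from the $\sigma=0$ logarithmic case of the Whittaker system, exactly as you say (Proposition \ref{prop9.1}(2)).

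The genuine gap is your diagnosis of the \emph{inner} system. You claim that ``according as $\theta_0^2-\theta_x^2\not=0$ or $\theta_0=\theta_x$ or $\theta_0=-\theta_x$ \ldots\ the inner Gauss system is resonant at $\lambda=0$, at $\lambda=x$, or at $\lambda=\infty$'' and that the $\psi$-terms in $\tilde{C}_0^{\pm},\tilde{C}_1^{\pm}$ arise from the non-generic formulas of Section \ref{ssc9.3}. This is not what happens. Because $\sigma=0$, the residue $\Lambda$ at $\infty$ has a single Jordan block (Lemma \ref{lem4.2}), which forces the inner system into the $\alpha=\beta$ form \eqref{9.2}; with the substitution $1-\gamma=\theta_0$, $\gamma-2\alpha-1=\mp\theta_x$ one gets $\alpha=\beta=-(\theta_0\mp\theta_x)/2$. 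The resonance is therefore \emph{always} at $z=\infty$, and the hypothesis $\theta_0,\theta_x\notin\Z$ means precisely that $\gamma,\ \gamma-2\alpha\notin\Z$, so the inner system is \emph{non}-resonant at $z=0$ and $z=1$. Hence the relevant connection formulas are those of Proposition \ref{prop9.5}, not Section \ref{ssc9.3}; the di-Gamma combinations $\psi^0_{ij},\psi^1_{ij}$ are exactly $\hat{\psi}^0_{ij},\hat{\psi}^1_{ij}$ of Proposition \ref{prop9.5} after the above substitution. Section \ref{ssc9.3} is reserved for $\theta_0\in\Z$ or $\theta_x\in\Z$ (cf.\ Remark \ref{rem2.11}), which is explicitly excluded here. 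If you try to run your plan through Propositions \ref{prop9.6}--\ref{prop9.8} you will not land on the stated matrices.

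Two smaller corrections. First, the shift $\rho_0=\rho\exp(-2\theta_x(\theta_0^2-\theta_x^2)^{-1})$ is not a matching constant produced by the limiting procedure; it is built into the \emph{definition} of $y_{\mathrm{ilog}}(\rho,x)$ via Proposition \ref{prop5.0}, which replaces $\rho$ so that the Schlesinger solution becomes independent of the sign choice in Lemma \ref{lem4.2}. The monodromy computation then simply starts from $A_0^-(\rho_0,x)$, whence the factor $\rho_0^{-\Delta}$ in $C_\iota^{\pm}=\tilde{C}_{\iota/x}^{\pm}\rho_0^{-\Delta}$. Second, $K^+=\Delta+\Delta_-$ is not a generic ``case analysis'' artefact: for the choice $\alpha=-(\theta_0-\theta_x)/2$ the local exponent at $z=1$ is $(\gamma-2\alpha-1)/2=-\theta_x/2$ rather than $+\theta_x/2$, so the local solution has the form $G_1(I+O(z-1))(z-1)^{-(\theta_x/2)J}$ and one must conjugate by $\Delta+\Delta_-$ to bring it to the normalisation $(z-1)^{(\theta_x/2)J}$ used in \eqref{8.7}.
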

\begin{rem}\label{rem2.10}
In $\tilde{C}_0^{\pm}$ and $\tilde{C}_1^{\pm}$ above, if $(\theta_0 \mp
\theta_x)/2 \in \Z$, read $\psi(-n)/\Gamma(-n) =(-1)^{n+1} n!$ for $n=0,1,
2, \ldots .$ For example, if $(\theta_0 \mp \theta_x)/2 =0,$ then
$$
\tilde{C}^{\pm}_0 = \begin{pmatrix}
0 &  e^{-\pi i \theta_0}/\theta_0  \\
1  &  \pi i +\psi(\theta_0) -\psi(1) 
\end{pmatrix}, \quad
\tilde{C}^{\pm}_1 = \begin{pmatrix}
0 &  1/(1-\theta_0)  \\
1  &  \psi(1- \theta_0) -\psi(1) 
\end{pmatrix}. 
$$
\end{rem}
\begin{rem}\label{rem2.11}
For $\theta_0\in \Z$ or $\theta_x \in \Z$ as well, it is possible to compute
the monodromy data $M_0,$ $M_x$ by using Propositions \ref{prop9.6} through 
\ref{prop9.8} instead of Propositions \ref{prop9.3} and \ref{prop9.5}
in an argument of Section \ref{sc10}. 
\end{rem}
\begin{thm}\label{thm2.15}
If $\theta_0\not= 0,$ the monodromy data for $y^{\pm}_{\mathrm{Taylor}}
(a,x)$ of Theorem $\ref{thm2.4}$ are given by $M_0=T e^{\pi i \theta_0 J}
T^{-1},$ $M_x=T e^{ - \pi i \theta_0 J}T^{-1},$ and if $\theta_0=0,$ then 
$M_0 =T_0 e^{2\pi i \Delta} T_0^{-1},$ $M_x =T_0 e^{-2\pi i \Delta} T_0^{-1},$
where
$$
T= \begin{pmatrix}
1 & 1 \\ a  & \theta_0 + a 
\end{pmatrix}, \quad
T_0= \begin{pmatrix}
1 & 1 \\ a  & a -1 
\end{pmatrix}. 
$$
\end{thm}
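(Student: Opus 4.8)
The plan is to read off $M_0,M_x$ from the degeneration of the isomonodromic linear system \eqref{1.1} as $x\to 0$ along the Taylor solution, using the limiting procedure of Section \ref{sc8}. Since $y^{\pm}_{\mathrm{Taylor}}(a,x)$ is holomorphic at $x=0$ and $\theta_{\infty}=0$, the matrix solution $(A_0(x),A_x(x))$ of the Schlesinger equation \eqref{2.9} attached to it by \eqref{2.10}, constructed in Sections \ref{sc3} and \ref{sc5}, extends holomorphically to $x=0$; moreover this is the branch of composite exponent $0$, so $A_x(0)=-A_0(0)$, and the construction gives
$$
A_0(0)=\begin{pmatrix}(\theta_0+2a)/2 & -1\\ a(\theta_0+a) & -(\theta_0+2a)/2\end{pmatrix},
$$
which has eigenvalues $\pm\theta_0/2$ and whose entry $(A_0(0))_{11}=(\theta_0+2a)/2$ reproduces, via \eqref{2.10} with $A_x(0)=-A_0(0)$, the values $y^+_{\mathrm{Taylor}}(a,0)=(a+\theta_0)/a$ (case $\theta_0=\theta_x$) and $y^-_{\mathrm{Taylor}}(a,0)=1$ (case $\theta_0=-\theta_x$) of Theorem \ref{thm2.4}; here $a$ is exactly the integration constant of that theorem. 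So the first step is to establish this identification from Section \ref{sc3}.

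Next I would carry out the two-scale analysis. In the outer region ($\lambda$ fixed, $x\to 0$) the system \eqref{1.1} tends to $dY/d\lambda=\bigl((A_0(0)+A_x(0))/\lambda+J/2\bigr)Y=(J/2)Y$, whose solution normalised by \eqref{2.8} (recall $\theta_{\infty}=0$) is exactly $Y=e^{(\lambda/2)J}$; hence the outer system has trivial monodromy about $\lambda=0$ and no Stokes phenomenon, so $S_1=S_2=I$ and $M_{\infty}=I$. In the inner region $\lambda=x\mu$ the system tends to
$$
\frac{dY}{d\mu}=\Bigl(\frac{A_0(0)}{\mu}+\frac{A_x(0)}{\mu-1}\Bigr)Y=A_0(0)\Bigl(\frac1{\mu}-\frac1{\mu-1}\Bigr)Y,
$$
whose singularity at $\mu=\infty$ is removable and which has the elementary fundamental matrix $Y_{\mathrm{in}}(\mu)=\bigl(\mu/(\mu-1)\bigr)^{A_0(0)}C$. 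Matching with $e^{(x\mu/2)J}\to I$ on the overlap $1\ll|\mu|\ll|x|^{-1}$ forces $C=I$, and since a positive loop about $\lambda=0$ (resp.\ $\lambda=x$) is a positive loop about $\mu=0$ (resp.\ $\mu=1$), the local monodromies of $Y_{\mathrm{in}}$ give $M_0=e^{2\pi i A_0(0)}$ and $M_x=e^{-2\pi i A_0(0)}$. In this degenerate setting the connection formulae of Section \ref{sc9} are not needed, both limiting systems being explicitly integrable.

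It then remains to diagonalise $A_0(0)$. For $\theta_0\ne 0$, $(1,a)^{\top}$ and $(1,\theta_0+a)^{\top}$ are eigenvectors for $\theta_0/2$ and $-\theta_0/2$, so with $T=\begin{pmatrix}1&1\\ a&\theta_0+a\end{pmatrix}$ one has $A_0(0)=T\bigl((\theta_0/2)J\bigr)T^{-1}$, whence $M_0=Te^{\pi i\theta_0 J}T^{-1}$ and $M_x=Te^{-\pi i\theta_0 J}T^{-1}$. For $\theta_0=0$, $A_0(0)=\begin{pmatrix}a&-1\\ a^2&-a\end{pmatrix}$ is a nonzero nilpotent with $\ker A_0(0)=\C\,(1,a)^{\top}$ and $A_0(0)(1,a-1)^{\top}=(1,a)^{\top}$, so $T_0=\begin{pmatrix}1&1\\ a&a-1\end{pmatrix}$ satisfies $A_0(0)=T_0\Delta T_0^{-1}$; since $A_0(0)^2=0$ this gives $M_0=I+2\pi i A_0(0)=T_0e^{2\pi i\Delta}T_0^{-1}$ and likewise $M_x=T_0e^{-2\pi i\Delta}T_0^{-1}$. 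The agreement of the two formulas for $y^{\pm}_{\mathrm{Taylor}}$ when $\theta_0=\theta_x=0$ is consistent with $c^+_n(a)=c^-_n(a)$ in Theorem \ref{thm2.4}.

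The main obstacle is the very first step: extracting $A_0(0)$ explicitly from the Section \ref{sc3} construction and checking that the parameter there is precisely the $a$ of Theorem \ref{thm2.4}, and in particular that the Taylor branch really has $A_0(0)+A_x(0)=0$ (composite exponent $0$). Everything afterwards is routine, because here the outer limiting system is trivial and the inner one is solved in elementary functions, so the Whittaker and hypergeometric connection analysis of Section \ref{sc9} used for Theorems \ref{thm2.12}--\ref{thm2.14} is bypassed.
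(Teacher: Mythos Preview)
Your approach is correct and essentially the same as the paper's. The paper's proof in Section~\ref{ssc10.3} simply invokes the limiting framework of Section~\ref{sc8} (your inner/outer picture), writes down the two limiting systems $d\hat{Y}/d\lambda=(J/2)\hat{Y}$ and $d\tilde{Y}/d\lambda=\bigl(\Lambda_0/\lambda-\Lambda_0/(\lambda-1)\bigr)\tilde{Y}$, records the explicit solutions $e^{(\lambda/2)J}$ and $T\,\diag[\lambda^{\theta_0/2}(\lambda-1)^{-\theta_0/2},\,\lambda^{-\theta_0/2}(\lambda-1)^{\theta_0/2}]\,T^{-1}$ (resp.\ the Jordan analogue with $T_0$ when $\theta_0=0$), and reads off $M_0,M_x$ from \eqref{8.7}; the ``obstacle'' you flag is not one, since $A_0(0)=\Lambda_0$ and the diagonalising matrix $T$ (resp.\ $T_0$) are exactly those supplied by Lemma~\ref{lem4.3}, already used in Section~\ref{ssc5.4} to build the Taylor solutions.
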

\begin{rem}\label{rem2.12}
The transformation $Y=DZ$ with an invertible matrix $D$ preserves the form of
\eqref{1.1} and its properties (a), (b) if and only if $D$ is diagonal. Then
\eqref{1.1} becomes
\begin{equation}\label{2.11}
\frac{dZ}{d\lambda} = \Bigl( \frac{\tilde{A}_0(x) } {\lambda} +\frac{\tilde
{A}_x(x)}{\lambda- x} +\frac J 2 \Bigr) Z
\end{equation}
with $\tilde{A}_0(x) = D_0^{-1} A_0(x) D_0,$ 
$\tilde{A}_x(x) = D_0^{-1} A_x(x) D_0,$ $D_0 =\mathrm{diag} [r, 1/r],$
$r\not=0,$ and we have
$$
y(x) = \frac{ A_x(x)_{12} (A_0(x)_{11} +\theta_0/2 )}
{ A_0(x)_{12} (A_x(x)_{11} +\theta_x/2 )} =
 \frac{ \tilde{A}_x(x)_{12} (\tilde{A}_0(x)_{11} +\theta_0/2 )}
 { \tilde{A}_0(x)_{12} (\tilde{A}_x(x)_{11} +\theta_x/2 )},
$$
that is, $(\tilde{A}_0(x), \tilde{A}_x(x) )= (D_0^{-1} A_0(x)
D_0, D_0^{-1} A_x(x) D_0 )$ yields the same solution as that obtained from 
$(A_0(x), A_x(x))$ given in each of Theorems \ref{thm2.1} through \ref
{thm2.4} (cf. Section \ref{sc5}). Hence, $(M_0,M_x)$ for each
solution above may be replaced by $(D_0 M_0 D_0^{-1}, D_0 M_x D_0^{-1})$, 
which is
the monodromy for the matrix solution $Z(\lambda, x)= (I+O(\lambda^{-1}))
e^{(\lambda/2)J} \lambda^{-(\theta_{\infty}/2)J}$ of \eqref{2.11}
(for example, cf. \cite[Theorem 6.2]{Andreev-Kitaev}).
\end{rem}
\section{Solutions of the Schlesinger equation}\label{sc3}
We give matrix solutions of \eqref{2.9}, from which 
Theorems \ref{thm2.1}, 
\ref{thm2.2} and \ref{thm2.3} are
obtained by using \eqref{2.10}.
\par
Suppose that $\Lambda_0, \Lambda_x \in M_2(\Q_{\theta}[\sigma,\sigma^{-1}]),$
$T \in GL_2(\Q_{\theta}[\sigma,\sigma^{-1}])$ and $\Lambda:= \Lambda_0+
\Lambda_x$ have the properties:
\par
(P.1) the eigenvalues of $\Lambda_{\iota}$ $(\iota=0,x)$ are $\pm \theta_{\iota}
/2;$
\par
(P.2) $(\Lambda)_{11}=(\Lambda_0+\Lambda_x)_{11} =
- (\Lambda)_{22}=-(\Lambda_0+\Lambda_x)_{22} = -\theta_{\infty}/2;$
\par
(P.3) $T^{-1} \Lambda T = (\sigma/2)J.$
\par\noindent
For $\Sigma_0$ and $\Sigma_+$ as in Theorems \ref{thm2.1} and \ref{thm2.2} we
have
\begin{prop}\label{prop3.1}
$(1)$ System \eqref{2.9} possesses a two-parameter family of solutions
$
\{ (A_0(\sigma, \rho, x), A_x(\sigma, \rho, x)) ; \, (\sigma,\rho)\in \Sigma_0
\times (\C\setminus \{0 \}) \}
$ 
given by the convergent series
\begin{align*}
& A_0(\sigma, \rho, x) =(\rho x^{\sigma})^{\Lambda/\sigma}
\left( \Lambda_0 +\sum_{n=1}^{\infty} x^n \Pi^n_0 (\sigma, \rho x^{\sigma})
\right) (\rho x^{\sigma} )^{-\Lambda/\sigma},
\\
& A_x(\sigma, \rho, x) =(\rho x^{\sigma})^{\Lambda/\sigma}
\left( \Lambda_x +\sum_{n=1}^{\infty} x^n \Pi^n_x (\sigma, \rho x^{\sigma})
\right) (\rho x^{\sigma} )^{-\Lambda/\sigma}
\end{align*}
with 
$$
\Pi^n_{\iota}(\sigma, \xi) = \sum_{m=-n}^{n} C_{\iota m}^n (\sigma) \xi^m,
\quad C^n_{\iota m}(\sigma) \in M_2(\Q_{\theta}(\sigma) )
$$
$(\iota= 0,x)$, which are holomorphic in $(\sigma, \rho, x) \in \Omega(\Sigma_0,
\varepsilon_0) \subset \Sigma_0 \times (\C\setminus \{0\}) \times \mathcal{R}
(\C \setminus \{0\})$ with
\begin{align*}
&\Omega(\Sigma_0, \varepsilon_0) := \bigcup_{(\sigma, \rho) \in 
 \Sigma_0 \times (\C\setminus \{0\}) }   \{(\sigma, \rho) \} \times 
\Omega_{\sigma, \rho}(\varepsilon_0),
\\
&\Omega_{\sigma, \rho}(\varepsilon_0)= \{ x\in \mathcal{R}(\C\setminus\{ 0\});
\,\, |x(\rho x^{\sigma})|<\varepsilon_0, \,\,
 |x(\rho x^{\sigma})^{-1}|<\varepsilon_0 \},
\end{align*}
$\varepsilon_0$ being a sufficiently small number depending only on $\Sigma_0$
and $(\theta_0, \theta_x, \theta_{\infty}).$ 
\par
$(2)$ If $(T^{-1}\Lambda_0 T)_{21}$ vanishes at $\sigma=\sigma_0 \in \Sigma_+,$
then \eqref{2.9} admits a one-parameter family of solutions $\{(A_0(\sigma_0,
\rho, x), A_x(\sigma_0,\rho, x) ) ; \, \rho\in \C \}$ given by the 
representations above restricted to $\sigma=\sigma_0$ whose inner sums
satisfy
$$
\xi^{\Lambda/\sigma_0} \Pi^n_{\iota} (\sigma_0, \xi) \xi^{-\Lambda/\sigma_0}
= \sum^{n+1}_{m=0} \tilde{C}^n_{\iota m}(\sigma_0) \xi^m, \quad
\xi^{\Lambda/\sigma_0} \Lambda_{\iota} \xi^{-\Lambda/\sigma_0} =\tilde{C}^0
_{\iota 0} (\sigma_0) +\tilde{C}^0_{\iota 1}(\sigma_0) \xi
$$
with $\tilde{C}^n_{\iota m}(\sigma_0) \in M_2(\Q_{\theta}(\sigma_0))$ $(\iota
=0,x)$ for $n\ge 0.$ Each entry of the solution is holomorphic in $(\rho,
x) \in \Omega(\varepsilon_0) \subset \C \times \mathcal{R}(\C \setminus
\{ 0\}),$ where
\begin{align*}
&\Omega(\varepsilon_0) := \bigcup_{ \rho \in \C } \{ \rho \} \times 
\Omega_{\rho}(\varepsilon_0),
\\
&\Omega_{\rho}(\varepsilon_0):= \{ x\in \mathcal{R}(\C\setminus\{ 0\});
\, |x|<\varepsilon_0, \,\, |x(\rho x^{\sigma_0})|<\varepsilon_0 \},
\end{align*}
$\varepsilon_0$ being a sufficiently small number depending only on $\sigma_0$
and $(\theta_0, \theta_x, \theta_{\infty}).$ 
\end{prop}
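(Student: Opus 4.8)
The plan is to produce the series directly: substitute the stated ansatz into \eqref{2.9}, reduce the system to a $\xi$-graded linear recursion for the matrix coefficients, solve that recursion using the non-resonance supplied by the distance condition on $\Sigma_0$ (resp.\ by the arithmetic defining $\Sigma_+$), and close with a majorant estimate for convergence. First, write $\xi=\rho x^{\sigma}$ and $P(x)=(\rho x^{\sigma})^{\Lambda/\sigma}=\exp\bigl((\log\rho+\sigma\log x)\Lambda/\sigma\bigr)$, single-valued on $\mathcal{R}(\C\setminus\{0\})$, and seek $A_\iota=PB_\iota P^{-1}$ with $B_\iota=\Lambda_\iota+\sum_{n\ge1}x^n\Pi^n_\iota(\sigma,\xi)$ and $\Pi^n_\iota(\sigma,\xi)=\sum_{m=-n}^n C^n_{\iota m}(\sigma)\xi^m$. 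Using $x\,dP/dx=\Lambda P$, $[\Lambda,P]=0$ and the substitution $\Lambda=B_0+B_x-\sum_{n\ge1}x^n\Pi^n$ (with $\Pi^n:=\Pi^n_0+\Pi^n_x$), the system \eqref{2.9} collapses to
\[
x\frac{dB_0}{dx}=\Bigl[\sum_{n\ge1}x^n\Pi^n,\,B_0\Bigr],\qquad
x\frac{dB_x}{dx}=\Bigl[\sum_{n\ge1}x^n\Pi^n,\,B_x\Bigr]+\frac x2\bigl[\hat J(\xi),B_x\bigr],
\]
where by (P.3) the matrix $\hat J(\xi):=P^{-1}JP=T\xi^{-J/2}(T^{-1}JT)\xi^{J/2}T^{-1}=\hat J_0+\hat J_+\xi+\hat J_-\xi^{-1}$ has $T^{-1}\hat J_+T$ a scalar multiple of $\Delta_-$ and $T^{-1}\hat J_-T$ a scalar multiple of $\Delta$. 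Since $x\,d/dx$ turns $x^n\xi^m$ into $(n+\sigma m)x^n\xi^m$, comparing coefficients of $x^n\xi^m$ gives, for each $n\ge1$ and $|m|\le n$,
\[
\mathcal L_{n,m}\bigl(C^n_{0m},C^n_{xm}\bigr)=\bigl(S^{(0)}_{nm},S^{(x)}_{nm}\bigr),
\]
\[
\mathcal L_{n,m}(X_0,X_x):=\bigl((n+\sigma m)X_0-[X_0+X_x,\Lambda_0],\;(n+\sigma m)X_x-[X_0+X_x,\Lambda_x]\bigr),
\]
with $S^{(0)}_{nm},S^{(x)}_{nm}$ polynomial over $\Q_{\theta}(\sigma)$ in the $C^k_{\iota m'}$ with $k<n$ and in the entries of $\Lambda_0,\Lambda_x,\hat J_0,\hat J_\pm$ (the $\hat J$-term contributing $\tfrac12[\hat J(\xi),\Pi^{n-1}_x]$, $\Pi^0_x:=\Lambda_x$). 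Every source term has $\xi$-degree in $[-n,n]$, so the bound $|m|\le n$ propagates automatically.

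Next I would solve the recursion. Writing $X=X_0+X_x$, $\mathcal L_{n,m}$ is triangular: one first solves $\bigl((n+\sigma m)I-\mathrm{ad}\bigr)X=S^{(0)}_{nm}+S^{(x)}_{nm}$, where $\mathrm{ad}(Y):=[Y,\Lambda]$ has eigenvalues $0,0,\pm\sigma$ on $M_2$ by (P.3), and then $X_0=(n+\sigma m)^{-1}\bigl(S^{(0)}_{nm}+[X,\Lambda_0]\bigr)$, $X_x=X-X_0$. Thus $\mathcal L_{n,m}$ is invertible whenever $n+\sigma m'\ne0$ for $m'\in\{m-1,m,m+1\}$; for $|m'|\le n$ the resonance value $-n/m'$ lies in $(-\infty,-1]\cup[1,\infty)\subset\Sigma_*$ and so is excluded, with $|n+\sigma m'|\ge|m'|\dist(\mathrm{cl}(\Sigma_0),\Sigma_*)$. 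The extreme degrees $m=\pm n$ (where $m'=\pm(n+1)$ might meet $\mathrm{cl}(\Sigma_0)$) are still solvable: one shows by induction on $n$ that $C^n_{0,\pm n}+C^n_{x,\pm n}$ is lower/upper triangular in the $T$-basis — for $m=n$ the source $S^{(0)}_{nn}+S^{(x)}_{nn}=\sum_{k=1}^{n-1}[C^k_{\cdot,k},C^{n-k}_{\cdot,n-k}]+\tfrac12[\hat J_+,C^{n-1}_{x,n-1}]$ is lower triangular (commutators of lower triangular matrices, plus $[\hat J_+,\,\cdot\,]$, which is always lower triangular in the $T$-basis since $T^{-1}\hat J_+T$ is a multiple of $\Delta_-$), and on the lower-triangular subspace $(n+\sigma n)I-\mathrm{ad}$ has eigenvalues $n+\sigma n$ and $n+\sigma(n-1)$, resonant only at $\sigma=-1$ and $\sigma=-n/(n-1)\in\Sigma_*$ — so $C^n_{\cdot,n}$ is determined, and $C^n_{0,n}$ follows from $n+\sigma n\ne0$. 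From the explicit triangular inversion, $\mathcal L_{n,m}^{-1}$ has entries in $\Q_{\theta}(\sigma)$ with denominators dividing $(n+\sigma m)^2(n+\sigma(m-1))(n+\sigma(m+1))$, so an induction gives $C^n_{\iota m}(\sigma)\in M_2(\Q_{\theta}(\sigma))$; a direct computation of $\mathcal L_{1,m}$ recovers the normalisations stated in Theorem~\ref{thm2.1}.

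For convergence, the bounds above give $\|\mathcal L_{n,m}^{-1}\|\le C_0/n$ uniformly for $\sigma\in\mathrm{cl}(\Sigma_0)$ and $|m|\le n$. Feeding this into the recursion together with the quadratic (plus one linear) structure of the sources, if $\max_{|m|\le n}\sup_{\mathrm{cl}(\Sigma_0)}\|C^n_{\iota m}\|\le M_n$ then $M_n\le(C_1/n)\bigl(\sum_{k=1}^{n-1}(2k+1)(2(n-k)+1)M_kM_{n-k}+M_{n-1}\bigr)$, from which a standard majorant argument produces $r_0>0$ with $\sum_n M_n r^n<\infty$ for $r<r_0$. On $\Omega_{\sigma,\rho}(\varepsilon_0)$ one has $|x|<\varepsilon_0$, hence $|x^n\xi^m|\le\varepsilon_0^n$ for every $|m|\le n$, so $\|x^n\Pi^n_\iota(\sigma,\rho x^\sigma)\|\le(2n+1)M_n\varepsilon_0^n$; taking $\varepsilon_0<r_0$ makes the double series converge absolutely and locally uniformly on $\Omega(\Sigma_0,\varepsilon_0)$, and $A_\iota=PB_\iota P^{-1}$ is then a holomorphic solution of \eqref{2.9} there. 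This proves (1).

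For part (2): if $(T^{-1}\Lambda_0T)_{21}=0$ at $\sigma=\sigma_0$ then, since $T^{-1}\Lambda T=(\sigma_0/2)J$ is diagonal, $(T^{-1}\Lambda_xT)_{21}=0$ as well, so $\xi^{\Lambda/\sigma_0}\Lambda_\iota\xi^{-\Lambda/\sigma_0}=T\xi^{J/2}(T^{-1}\Lambda_\iota T)\xi^{-J/2}T^{-1}$ is a polynomial in $\xi$ of degree $\le1$ (the entry that would create $\xi^{-1}$ being the vanishing $(2,1)$-entry). Running the recursion for $\tilde C^n_{\iota m}(\sigma_0)$ defined by $\xi^{\Lambda/\sigma_0}\Pi^n_\iota\xi^{-\Lambda/\sigma_0}=\sum_m\tilde C^n_{\iota m}(\sigma_0)\xi^m$ — equivalently, working directly with $A_\iota$, where $J$ has $\xi$-degree $0$ — one checks inductively that only the powers $\xi^0,\dots,\xi^{n+1}$ occur (the recursion preserving ``polynomial in $\xi$'', the top degree handled exactly as in the extreme-degree argument above), and that the resonances $n+\sigma_0 m'$, now with $m'\in\{-1,0,\dots,n+2\}$, are nonzero and bounded below because $\sigma_0\in\Sigma_+$ (so $\sigma_0\notin\Z$ and $\sigma_0\notin\{s\le-1\}$) and the two extreme degrees again fall into the triangular subspaces. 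Then $A_\iota=\sum_{n\ge0}x^n\sum_{m=0}^{n+1}\tilde C^n_{\iota m}(\sigma_0)(\rho x^{\sigma_0})^m$ is a convergent $x$-series with polynomial-in-$\rho x^{\sigma_0}$ coefficients, which extends holomorphically across $\rho=0$ and is holomorphic on $\Omega(\varepsilon_0)$ (there $|x|<\varepsilon_0$ and $|x\rho x^{\sigma_0}|<\varepsilon_0$), yielding the one-parameter family over $\rho\in\C$. I expect the technical heart to be the bookkeeping just described — keeping track of the triangular structure of the extreme-$\xi$-degree coefficients so that the only surviving resonances sit inside $\Sigma_*$, and organising the quadratic estimate so that the majorant series has a radius uniform over $\mathrm{cl}(\Sigma_0)$; once these are in place the holomorphy assertions and part (2) are essentially formal.
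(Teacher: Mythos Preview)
Your approach is the paper's, just organised as a direct coefficient recursion rather than a Picard iteration. The paper makes the substitution \eqref{3.1}, $A_0=t^{\Lambda}(\Lambda_0+U_0)t^{-\Lambda}$, $A_0+A_x=\Lambda+U_\infty$ with $x=\kappa t$, $t^{\sigma}=\rho x^{\sigma}$, passes to system \eqref{3.2}, and constructs the solution by the iteration \eqref{3.3} in the Banach ring $\mathfrak{S}(D(\Sigma_0))$ of formal series $\sum C^n_m(\kappa t)^n t^{\sigma m}$, citing \cite{S2} for the estimates. Your pair $(B_0,B_x)$ is exactly $(\Lambda_0+U_0,\,\Lambda_x-U_0+t^{-\Lambda}U_\infty t^{\Lambda})$, your inversion of $\mathcal L_{n,m}$ is the level-$(n,m)$ block of the paper's operator $\mathcal I$, and your extreme-degree triangularity argument (which is correct; note incidentally that the quadratic piece $\sum_{k=1}^{n-1}[C^k_{\cdot,k},C^{n-k}_{\cdot,n-k}]$ in your top-degree source vanishes by antisymmetry, so only the $\hat J_+$ term survives) is precisely what makes $\mathcal I$ well defined on $\widehat{\mathfrak{S}}$ --- the paper does not spell this out. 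Part (2) likewise matches the paper's passage to $Z_0=t^{\Lambda}U_0 t^{-\Lambda}$ and the one-sided ring $\widehat{\mathfrak{S}}^+(\sigma_0)$.

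One point does need repair: your majorant with the $\max_{|m|\le n}$ norm and the factor $(2k+1)(2(n-k)+1)$ does not close. The quadratic contribution is then $\asymp n^2$ times the Cauchy product, so after the $C_1/n$ you are left with $M_n\lesssim n\sum_k M_kM_{n-k}$, which no geometric bound $M_n\le Kr^{-n}$ satisfies. The fix is to use the $\ell^1$ norm $\tilde M_n=\sum_{|m|\le n}\sup_{\mathrm{cl}(\Sigma_0)}\|C^n_{\iota m}\|$ --- exactly the norm on $\mathfrak{S}(D(\Sigma_0))$ that the paper uses --- for which the source bound becomes $\sum_m\|S^{(\iota)}_{nm}\|\le C\sum_k\tilde M_k\tilde M_{n-k}$ with no extra combinatorial factor, and the resulting recursion $n\tilde M_n\le C_1\bigl(\sum_k\tilde M_k\tilde M_{n-k}+\tilde M_{n-1}\bigr)$ is majorised by the analytic solution of $rg'=C_1(g^2+rg)+g_1r$. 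With that adjustment your argument goes through; your bound $\|\mathcal L_{n,m}^{-1}\|\le C_0/n$ itself is correct, since $\inf\{|1+\sigma r|:\sigma\in\mathrm{cl}(\Sigma_0),\,r\in[-1,1]\}>0$ by compactness and the distance hypothesis.
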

Set $\Q_{\tilde{\theta}} := \Q_{\theta} [\theta_{\infty}^{-1} ]= \Q
[\theta_0, \theta_x, \theta_{\infty},\theta_{\infty}^{-1} ]$ if $\theta_{\infty}
\not=0,$ and $= \Q[\theta_0, \theta_x ]$ if $\theta_{\infty}=0.$
Let $\Lambda_0, \Lambda_x \in M_2(\Q_{\tilde{\theta}}),$
$T \in GL_2(\Q_{\tilde{\theta}})$ and $\Lambda:= \Lambda_0+
\Lambda_x$ have the properties $(\mathrm{P}.1), (\mathrm{P}.2)$ and
\par
$(\mathrm{P}.3')$ $T^{-1} \Lambda T = \Delta.$
\par\noindent
Then we have
\begin{prop}\label{prop3.2}
System \eqref{2.9} possesses a one-parameter family of solutions of logarithmic
type
$
\{ (A_0(\rho, x), A_x(\rho, x)) ; \, \rho \in 
\mathcal{R}(\C\setminus \{0 \}) \}
$ 
given by the convergent series
\begin{align*}
& A_0(\rho, x) =(\rho x)^{\Lambda}
\left( \Lambda_0 +\sum_{n=1}^{\infty} x^n \Pi^{*n}_0 (\log( \rho x))
\right) (\rho x )^{-\Lambda},
\\
& A_x(\rho, x) =(\rho x)^{\Lambda}
\left( \Lambda_x +\sum_{n=1}^{\infty} x^n \Pi^{*n}_x (\log( \rho x))
\right) (\rho x )^{-\Lambda}
\end{align*}
with 
$$
\Pi^{*n}_{\iota}(\xi) = \sum_{m=0}^{2n} C_{\iota m}^{*n} \xi^m,
\quad C^{*n}_{\iota m} \in M_2(\Q_{\tilde{\theta}})
$$
$(\iota= 0,x)$, which are holomorphic in $(\rho, x) \in \Omega^*(
\varepsilon_0, \Theta_0) \subset \mathcal{R}(\C \setminus \{0\})^2$,
where $\Omega^*(\varepsilon_0,\Theta_0)$ is the domain as in Theorem $\ref
{thm2.3}$. Furthermore, if 
$(T^{-1}\Lambda_0 T)_{21}=0,$ then   
$$
e^{\Lambda\xi} \Pi^{*n}_{\iota} (\xi) e^{-\Lambda\xi}
= \sum^{n+1}_{m=0} \tilde{C}^{*n}_{\iota m} \xi^m, \quad
e^{\Lambda \xi} \Lambda_{\iota} e^{-\Lambda \xi} =\tilde{C}^{*0}
_{\iota 0} +\tilde{C}^{*0}_{\iota 1} \xi
$$
with $\tilde{C}^{*n}_{\iota m} \in M_2(\Q_{\tilde{\theta}})$ $(\iota
=0,x)$ for $n\ge 0.$ 
\end{prop}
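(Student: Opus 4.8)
The plan is to proceed exactly as in the proof of Proposition~\ref{prop3.1}, the structural novelty being that $T^{-1}\Lambda T=\Delta$ is nilpotent rather than semisimple, so the conjugating factor becomes $(\rho x)^{\Lambda}=e^{\xi\Lambda}=I+\xi\Lambda$ with $\xi:=\log(\rho x)$ (an affine function of $\xi$, since $\Lambda^2=0$) instead of the diagonal Laurent factor $\mathrm{diag}[(\rho x^{\sigma})^{1/2},(\rho x^{\sigma})^{-1/2}]$; this is precisely why the inner coefficients come out as genuine polynomials $\Pi^{*n}_{\iota}(\xi)=\sum_{m=0}^{2n}C^{*n}_{\iota m}\xi^m$ rather than Laurent polynomials. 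Concretely, I would first conjugate by $T$, write $A_{\iota}=e^{\xi\Lambda}B_{\iota}e^{-\xi\Lambda}$ with $B_{\iota}=\Lambda_{\iota}+\sum_{n\ge1}x^n\Pi^{*n}_{\iota}(\xi)$, and use $x\,d/dx=d/d\xi$ together with $x\,(d/dx)e^{\pm\xi\Lambda}=\pm\Lambda e^{\pm\xi\Lambda}$ to rewrite \eqref{2.9} as
$$
x\frac{dB_0}{dx}+[\Lambda,B_0]=[B_x,B_0],\qquad
x\frac{dB_x}{dx}+[\Lambda,B_x]=[B_0,B_x]+\frac x2[J(\xi),B_x],
$$
where $J(\xi):=e^{-\xi\Lambda}Je^{\xi\Lambda}=(I-\xi\Lambda)J(I+\xi\Lambda)$ is a quadratic polynomial in $\xi$ with coefficients in $M_2(\Q_{\tilde{\theta}})$.

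Comparing coefficients of $x^n$: the $x^0$-terms hold identically because $\Lambda=\Lambda_0+\Lambda_x$, and for $n\ge1$, writing $\Sigma_n:=\Pi^{*n}_0+\Pi^{*n}_x$, one gets
$$
\frac{d}{d\xi}\Pi^{*n}_{\iota}+n\,\Pi^{*n}_{\iota}+[\Lambda_{\iota},\Sigma_n]=R^{(\iota)}_n(\xi)\qquad(\iota=0,x),
$$
where $R^{(\iota)}_n$ is a polynomial built from $\Lambda_0,\Lambda_x,J(\xi)$ and the $\Pi^{*j}_{0,x}$ with $j\le n-1$, hence by induction of degree $\le 2n$ with coefficients in $M_2(\Q_{\tilde{\theta}})$. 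Adding and subtracting these two relations decouples them into $(d/d\xi+n+\mathrm{ad}\,\Lambda)\Sigma_n=R^{(0)}_n+R^{(x)}_n$ and $(d/d\xi+n)(\Pi^{*n}_0-\Pi^{*n}_x)=R^{(0)}_n-R^{(x)}_n-[\Lambda_0-\Lambda_x,\Sigma_n]$. The key point is that $\Lambda$ is conjugate to $\Delta$, so $\mathrm{ad}\,\Lambda$ is nilpotent on $M_2(\C)$ (indeed $(\mathrm{ad}\,\Lambda)^3=0$); since $d/d\xi$ commutes with $\mathrm{ad}\,\Lambda$ and is nilpotent on polynomials of bounded degree, both $d/d\xi+n+\mathrm{ad}\,\Lambda$ and $d/d\xi+n$ are invertible on polynomials in $\xi$ for every $n\ge1$, with inverses given by finite Neumann series; restricted to polynomials of degree $\le2n$ they are represented by matrices of determinant a power of $n$, so the inverses preserve $M_2(\Q_{\tilde{\theta}})$-coefficients. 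Hence $\Sigma_n$, then $\Pi^{*n}_0-\Pi^{*n}_x$, then $\Pi^{*n}_0,\Pi^{*n}_x$ are uniquely determined as polynomials in $\xi$ of degree $\le2n$ with $C^{*n}_{\iota m}\in M_2(\Q_{\tilde{\theta}})$; undoing the $T$-conjugation leaves this intact. Taking traces forces $\mathrm{tr}\,\Pi^{*n}_{\iota}=0$, and that (P.1), (P.2) persist for $A_0,A_x$ follows as usual from $\det A_{\iota}$ and $(A_0+A_x)_{11}$ being constant along \eqref{2.9}.

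The bulk of the work, and the main obstacle, is convergence of $\sum_{n\ge1}x^n\Pi^{*n}_{\iota}(\log(\rho x))$ on $\Omega^*(\varepsilon_0,\Theta_0)$. Here I would run the standard majorant argument: from the recursion and the estimate $\|(d/d\xi+n+\mathrm{ad}\,\Lambda)^{-1}\|=O(1/n)$ on polynomials of degree $\le2n$, a convolution/generating-function bookkeeping tracking $n$ and the $\xi$-degree simultaneously yields $|C^{*n}_{\iota m}|\le K_0M^n$ for constants $K_0,M>0$ depending only on $\theta_0,\theta_x,\theta_{\infty}$. On $\Omega^*(\varepsilon_0,\Theta_0)$ the condition $|\arg(\rho x)|<\Theta_0$ gives $|\log(\rho x)|\le\log(1/|\rho x|)+\Theta_0$, and $|x(\rho x)^{-1/2}|<\varepsilon_0$ gives $|x|<\varepsilon_0\,|\rho x|^{1/2}$, whence
$$
|x|\,|\log(\rho x)|^2\le\varepsilon_0\,|\rho x|^{1/2}\bigl(\log(1/|\rho x|)+\Theta_0\bigr)^2,
$$
whose right side tends to $0$ with $\varepsilon_0$ and can thus be made $<1/(2M)$ by choosing $\varepsilon_0=\varepsilon_0(\Theta_0,\theta_0,\theta_x,\theta_{\infty})$ small; then $|x^n\Pi^{*n}_{\iota}|\le K_0(2n+1)\bigl(M|x|\,|\log(\rho x)|^2\bigr)^n$ up to a bounded factor, which is summable and locally uniform, so the series defines a holomorphic solution of \eqref{2.9} on $\Omega^*(\varepsilon_0,\Theta_0)$.

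Finally, for the last assertion, when $(T^{-1}\Lambda_0T)_{21}=0$ both $T^{-1}\Lambda_0T$ and $T^{-1}\Lambda_xT$ are upper triangular (as is $T^{-1}\Lambda T=\Delta$), and an additional induction on $n$ — the same one used for the refinement in Proposition~\ref{prop3.1}(2) — shows that in $e^{\xi\Lambda}\Pi^{*n}_{\iota}(\xi)e^{-\xi\Lambda}$ the potential top-degree terms cancel, lowering the $\xi$-degree from the a priori bound $2n+2$ to $n+1$ and giving $\sum_{m=0}^{n+1}\tilde C^{*n}_{\iota m}\xi^m$ with $\tilde C^{*n}_{\iota m}\in M_2(\Q_{\tilde{\theta}})$, while $e^{\xi\Lambda}\Lambda_{\iota}e^{-\xi\Lambda}=\tilde C^{*0}_{\iota0}+\tilde C^{*0}_{\iota1}\xi$ is read off directly from $e^{\xi\Lambda}=I+\xi\Lambda$.
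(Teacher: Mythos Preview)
Your formal construction is correct and is a legitimate alternative to the paper's argument: you equate coefficients of $x^n$ and invert $d/d\xi+n+\mathrm{ad}\,\Lambda$ on polynomials, whereas the paper follows \cite{S2} and works with the change of variables \eqref{3.1}, the system \eqref{3.2}, and a Picard iteration \eqref{3.3} in the normed ring $\mathfrak{L}(D)$ built from the weighted norm
\[
\|\Phi\|=\sum_{n\ge 1}\sum_{m=0}^{2n}\|C^n_m\|\,|\kappa t|^n|t|^{-m/4},
\]
together with the operator $\mathcal{I}$ satisfying $\|\mathcal{I}[\Phi]\|\le 2\|\Phi\|$. Both routes produce the same unique formal series; the difference is in how convergence is controlled.

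The gap in your proof is precisely the convergence step. Your claimed bound $|C^{*n}_{\iota m}|\le K_0 M^n$ is asserted to follow from $\|(d/d\xi+n+\mathrm{ad}\,\Lambda)^{-1}\|=O(1/n)$ on polynomials of degree $\le 2n$, but this operator estimate is \emph{false} for the sup--norm (or the $\ell^1$--norm) on coefficients. For the scalar model $(d/d\xi+n)^{-1}\xi^{2n}$ one computes the coefficient of $\xi^m$ to be $(-1)^{2n-m}(2n)!/\bigl(m!\,n^{2n-m+1}\bigr)$; at $m\approx n$ this is of order $(4/e)^n/n$ by Stirling, which grows exponentially. So the inverse does not contract by $1/n$ in these naive norms, and your ``convolution/generating-function bookkeeping'' cannot close without an additional weight on the $\xi$--degree. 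The paper's weight $|t|^{-m/4}$ (equivalently, replacing $\xi^m$ by $|\log t|^m\le |t|^{-m/4}$ on the domain $D$) is exactly what tames this degree--mixing: it converts the Neumann series for $(d/d\xi+n)^{-1}$ into an honestly summable one, uniformly in the degree, and this is the content of $\|\mathcal{I}[\Phi]\|\le 2\|\Phi\|$. If you want to salvage your approach, you must introduce a comparable weighted norm and redo the estimate; as written, the majorant argument does not go through.

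The final assertion under $(T^{-1}\Lambda_0 T)_{21}=0$ is also only sketched. In the paper this is handled by observing that then $\kappa t\cdot t^{\Lambda}U^{\infty}_0 t^{-\Lambda}$ and $U^{\infty}_{\infty}$ lie in the subring $\widehat{\mathfrak{L}}^*$ of series with $m\le n$, which one checks inductively along the iteration \eqref{3.3}; your degree--drop claim needs the analogous inductive verification, not just the remark that top--degree terms ``cancel''.
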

System \eqref{2.9} corresponds to the Schlesinger equation associated with
the sixth Painlev\'e equation studied in \cite{S2}.
These propositions are proved by the same arguments as in the proofs of
\cite[Theorems 2.1 and 2.2]{S2} given in \cite[\S 5]{S2}. We describe the
outline of the proofs of them. By the change of variables
\begin{equation}\label{3.1}
x= \kappa t, \quad A_0 = t^{\Lambda} (\Lambda_0 + U_0) t^{-\Lambda}, \quad
A_0+A_x=\Lambda + U_{\infty} ,
\end{equation}
where $\Lambda=\Lambda_0+\Lambda_x$ and
$\kappa\not= 0$, equation \eqref{2.9} is taken to
\begin{equation}\label{3.2}
\begin{split}
&t\frac{dU_0}{dt} = [t^{-\Lambda} U_{\infty} t^{\Lambda}, \Lambda_0 + U_0],
\\
&t\frac{dU_{\infty}}{dt} = \kappa t [J /2, t^{\Lambda} \Lambda_{x} t^{-\Lambda}
- t^{\Lambda} U_0 t^{-\Lambda} +U_{\infty} ],
\end{split}
\end{equation}
since $A_x= \Lambda +U_{\infty} - t^{\Lambda}(\Lambda_0 + U_0) t^{-\Lambda}
=t^{\Lambda} (\Lambda_x -U_0 +t^{-\Lambda} U_{\infty} t^{\Lambda})t^{-\Lambda}.$ 
The form of system \eqref{3.2} is similar to that of \cite[(5.2)]{S2}.
\par 
To show Proposition \ref{prop3.2} for each fixed $(\theta_0, \theta_x, 
\theta_{\infty}),$ we use the ring $\widehat{\mathfrak{L}}$
of formal series of the form
$$
\Phi=\Phi(\kappa, t)=\sum_{n=1}^{\infty} \sum_{m=0}^{2n} C^n_m (\kappa t)^n
\log^m t, \quad C^n_m \in M_2(\Q_{\tilde{\theta}}),
$$
and the subring
$$
\mathfrak{L}(D) :=\{\Phi \in \widehat{\mathfrak{L}} ; \, \|\Phi \| <\infty \,\,
\text{for} \,\, (\kappa, t) \in D \},
$$
which are defined in \cite[\S 4.1]{S2}. Here, for $\Phi\in \widehat{\mathfrak
{L}}$ as above, $\|\Phi \|$ is the norm defined by
$$
\|\Phi \| :=  \sum_{n=1}^{\infty} \sum_{m=0}^{2n} \|C^n_m\| \, |\kappa t|^n
|t|^{-m/4} 
$$
with the standard norm of the matrix $\| C^n_m \|=\max_{i=1,2}\{|(C^n_m)_{i1}|
+|(C^n_m)_{i2}| \}$, and $D$ is a subdomain of $(\C \setminus \{0\})
\times \mathcal{R}(\C\setminus \{0\}) $ such that $|\log t| \le |t|^{-1/4}$
for every $(\kappa, t) \in D.$
Then the holomorphic nature of $\Phi(\kappa,t)\in \mathfrak{L}(D)$ in $D$ is
guaranteed by \cite[Proposition 4.1]{S2}.
For $(m,n)\in (\N\cup\{0\})\times \N$ and $C\in M_2(\Q_{\tilde{\theta}})$ let
$$
\mathcal{I}[C(\kappa t)^n\log^m t ]
:= C\frac{(\kappa t)^n}{n} \Bigl( \log^m t +\cdots +
\frac{(-1)^jm!}{n^j(m-j)!} \log^{m-j} t + \cdots + \frac{(-1)^m m!}{n^m} \Bigr).
$$
Then $\mathcal{I}$ induces a linear operator $\mathcal{I}:$ $\widehat{\mathfrak
{L}} \to \widehat{\mathfrak{L}}$ assigning the formal primitive function of
$t^{-1} \Phi$ to each $\Phi\in \widehat{\mathfrak{L}},$ which satisfies
$\mathcal{I}[\Phi] \in \mathfrak{L}(D),$ $t(d/dt)\mathcal{I}[\Phi] =\Phi$ and
$\| \mathcal{I}[\Phi] \|\le 2 \|\Phi \|$ for $\Phi \in \mathfrak{L}(D),$
provided that, for $(\kappa, t)\in D$, $|t|$ is sufficiently
small. To construct a solution of \eqref{3.2} we define the sequence
$\{ (U_0^{(\nu)}, U_{\infty}^{(\nu)} ) \in (\widehat{\mathfrak{L}})^2; \,
\nu \ge 0 \}$ by 
\begin{align}
\notag
&U_{\infty}^{(0)} = U^{(0)}_0  \equiv 0,
\\
\label{3.3}
\begin{split}
&U_{\infty}^{(\nu+1)} = \mathcal{I} [ \kappa t [J/2, t^{\Lambda}\Lambda_x
t^{-\Lambda} - t^{\Lambda} U_0^{(\nu)} t^{-\Lambda} +U_{\infty}^{(\nu)} ]\, ],
\\
&U_{0}^{(\nu+1)} = \mathcal{I} [\, [t^{-\Lambda} U_{\infty}^{(\nu+1)} t^{\Lambda},
\Lambda_0+ U_0^{(\nu)} ]\, ]
\end{split}
\end{align} 
with $\Lambda_0,$ $\Lambda_x$ and $\Lambda$ satisfying (P.1), (P.2), (P.$3'$).
This converges to a formal series solution of \eqref{3.2}. Choosing $D$
suitably, we may show that $(U^{\infty}_0, U^{\infty}_{\infty})=
\lim_{\nu\to\infty} (U^{(\nu)}_0, U^{(\nu)}_{\infty} ) 
\in \mathfrak{L}(D)^2$ solves \eqref{3.2} and that $t^{-\Lambda} U^{\infty}
_{\infty} t^{\Lambda} \in \mathfrak{L}(D)$.
Setting $\kappa t=x$ and $t=\rho x$ in 
$$
A_0=t^{\Lambda}(\Lambda_0 +U^{\infty}_0)t^{-\Lambda},  \quad 
A_x=t^{\Lambda}(\Lambda_x -U^{\infty}_0 + t^{-\Lambda} U^{\infty}_{\infty}
t^{\Lambda} )t^{-\Lambda},
$$  
we obtain the
family of solutions $\{(A_0(\sigma, \rho, x), A_x(\sigma, \rho, x) ) \}$ 
as in Proposition \ref{prop3.2}. If $(T^{-1}\Lambda_0 T)
_{21} =0,$ then $\kappa t\cdot t^{\Lambda}U^{\infty}_0 t^{-\Lambda},$
$U^{\infty}_{\infty} \in \widehat{\mathfrak{L}}^*,$ where $\widehat{\mathfrak
{L}}^*$ is the subring of $\widehat{\mathfrak{L}}$ consisting of formal series
of the form
$$
\Phi=\sum_{n=1}^{\infty} \sum_{m=0}^{n} C^n_m (\kappa t)^n
\log^m t, \quad C^n_m \in M_2(\Q_{\tilde{\theta}}).
$$
From this fact the remaining part of Proposition \ref{prop3.2} follows (cf.
\cite[\S 5.2]{S2}).
\par
To show Proposition \ref{prop3.1} for each fixed $(\theta_0,\theta_x,
\theta_{\infty}),$  
consider the ring $\widehat{\mathfrak{S}}$ of formal series of the form
$$
\Phi=\Phi(\sigma, \kappa, t) = \sum_{n=1}^{\infty} \sum_{m=-n}^n C^n_m(\kappa t)
^n t^{\sigma m}, \quad C^n_m=C^n_m(\sigma) \in M_2(\Q_{\theta}(\sigma) ),
$$
and the subring
$$
\mathfrak{S}(D(\Sigma_0) ):=\{\Phi \in\widehat{\mathfrak{S}}; \,
\|\Phi\| <\infty \,\, \text{for} \,\, (\sigma,\kappa, t)\in D(\Sigma_0) \}
$$
as in \cite[\S 4.2]{S2}. Here, for $\Phi \in \widehat{\mathfrak{S}}$ as above,
$$
\|\Phi \| :=  \sum_{n=1}^{\infty} \sum_{m=-n}^n \| C^n_m\| \, |\kappa t|^n
| t^{\sigma }|^m, 
$$
and $D(\Sigma_0)$ is a subdomain of $\Sigma_0 \times (\C\setminus \{0\})
\times \mathcal{R}(\C\setminus \{0\})$. For $(m,n) \in\Z\times \N$ and $C\in
M_2(\Q_{\theta}(\sigma)),$ let $\mathcal{I}$ be such that
$$
\mathcal{I} [C (\kappa t)^n t^{\sigma m} ] :=\frac 1{n+ \sigma m} C(\kappa t)^n
t^{\sigma m}.
$$
This induces a linear operator $\mathcal{I}:$ $\widehat{\mathfrak{S} }\to
\widehat{\mathfrak{S}}$ satisfying
$\mathcal{I}[\Phi] \in \mathfrak{S}(D(\Sigma_0)),$ $t(d/dt)\mathcal{I}[\Phi]
 =\Phi$ and
$\| \mathcal{I}[\Phi] \|\le L_0 \|\Phi \|$ for some $L_0>0$ if 
$\Phi \in \mathfrak{S}(D(\Sigma_0)).$
Then we define $\{ (U^{(\nu)}_0, U^{(\nu)}_{\infty} ) \in (\widehat{\mathfrak
{S}})^2; \, \nu\ge 0\}$ by \eqref{3.3} with $\Lambda_0,$ $\Lambda_x$ and 
$\Lambda$ satisfying (P.1), (P.2), (P.3). Choosing $D(\Sigma_0)$ suitably, we
may show that $(U^{\infty}_0, U^{\infty}_{\infty}) = \lim_{\nu\to\infty}
(U^{(\nu)}_0, U^{(\nu)}_{\infty} ) \in \mathfrak{S}(D(\Sigma_0))^2$ solves 
\eqref{3.2} and that $t^{-\Lambda} U^{\infty}_{\infty} t^{\Lambda} \in
\mathfrak{S}(D(\Sigma_0))$. Setting $\kappa t= x$
and $t^{\sigma}= \rho x^{\sigma}$ we obtain Proposition \ref{prop3.1}, (1).
The second assertion is proved by considering, for $\sigma_0 \in \Sigma_+$, 
the ring $\widehat{\mathfrak{S}}^+(\sigma_0)$ of the formal series
$$
\Phi(\sigma_0, \kappa, t) = \sum_{n=1}^{\infty} \sum_{m=0}^n C^n_m(\kappa t)
^n t^{\sigma_0 m}, \quad C^n_m \in M_2(\Q_{\theta}(\sigma_0) ),
$$
and by using the sequence $\{ (Z^{(\nu)}_0, U_{\infty}^{(\nu)} )|_{\sigma=
\sigma_0}; \, \nu\ge 0 \}$ with $Z_0^{(\nu)}:= t^{\Lambda} U_0^{(\nu)}
t^{-\Lambda}$ such that $\kappa t Z_0^{(\nu)},$ $U_{\infty}^{(\nu)} \in 
\widehat{\mathfrak{S}}^+(\sigma_0)$ if $
(T^{-1}\Lambda_0 T)_{21}|_{\sigma=\sigma_0} =0.$ In a suitable domain, we get
$(U^{\infty}_{\infty}, \kappa t Z^{\infty}_0 )=\lim_{\nu\to\infty} (U^{(\nu)}
_{\infty}, \kappa t Z^{(\nu)}_0 ),$ from which the desired solution of
\eqref{2.9} follows (cf. \cite[\S 5.3]{S2}).
\begin{prop}\label{prop3.3}
The solution $(A_0(x), A_x(x))$ of \eqref{2.9} given by Proposition $\ref
{prop3.1}$ or $\ref{prop3.2}$ satisfies the conditions $(\mathrm{a})$ and
$(\mathrm{b})$ on \eqref{1.1}, and the corresponding system \eqref{1.1} has 
the isomonodromy property. 
\end{prop}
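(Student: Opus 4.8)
The plan is to check the conditions (a) and (b) directly from the explicit shape of $(A_0,A_x)$ supplied by Propositions~\ref{prop3.1} and \ref{prop3.2}, and then to establish the isomonodromy property by producing the deformation equation in $x$ compatible with \eqref{1.1}, whose integrability condition turns out to be precisely the Schlesinger system \eqref{2.9}.

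I would dispose of (b) first. By the substitution \eqref{3.1} one has $A_0+A_x=\Lambda+U_{\infty}$, and $U_{\infty}$ is built by the iteration \eqref{3.3}, each step of which applies $\mathcal{I}$ to an expression of the form $\kappa t\,[J/2,\,\cdot\,]$. Since $[J,M]$ has vanishing diagonal for every $M$ and $\mathcal{I}$ acts on each entry separately, every $U_{\infty}^{(\nu)}$, hence $U_{\infty}$, is off-diagonal; thus the diagonal of $A_0+A_x$ coincides with that of $\Lambda$, which by (P.2) is $\diag[-\theta_{\infty}/2,\theta_{\infty}/2]$, giving (b). For (a) I would use that \eqref{2.9} is isospectral in each residue: writing the two equations as $x\,dA_0/dx=[A_x,A_0]$ and $x\,dA_x/dx=[A_0+(x/2)J,A_x]$, the cyclicity of the trace gives $(d/dx)\mathrm{tr}(A_0^k)=(d/dx)\mathrm{tr}(A_x^k)=0$, so (both matrices being $2\times2$) the characteristic polynomials of $A_0$ and $A_x$ are independent of $x$. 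To identify them, recall $A_0=(\rho x^{\sigma})^{\Lambda/\sigma}(\Lambda_0+\Xi_0)(\rho x^{\sigma})^{-\Lambda/\sigma}$ with $\Xi_0=\sum_{n\ge1}x^n\Pi^n_0$, and likewise $A_x$ with $\Lambda_x$ in place of $\Lambda_0$ plus a series tending to $0$; letting $x\to0$ inside the domain along a path on which $|\rho x^{\sigma}|$ stays bounded away from $0$ and $\infty$ (so that each monomial $x^n(\rho x^{\sigma})^m$, $|m|\le n$, is $O(|x|^n)$), these series tend to $0$ and, conjugation leaving eigenvalues unchanged, the spectra of $A_0,A_x$ tend to those of $\Lambda_0,\Lambda_x$, i.e.\ to $\{\pm\theta_0/2\}$ and $\{\pm\theta_x/2\}$ by (P.1). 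Constancy then propagates (a) over the whole connected domain. The identical argument handles the logarithmic family of Proposition~\ref{prop3.2} (there $\Lambda$ is nilpotent, $\rho x^{\sigma}$ is replaced by $\rho x$, and $x^n\Pi^{*n}_{\iota}$ carries a polynomial in $\log(\rho x)$ which is still killed by $x^n$ as $x\to0$ with $\arg(\rho x)$ bounded).

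For the isomonodromy property I would follow the classical scheme of \cite{JM}. Let $Y(\lambda,x)$ be the solution of \eqref{1.1} normalised by \eqref{2.8}; it exists and is holomorphic in $x$ because the $A_{\iota}$ are, the point $\lambda=\infty$ being irregular of Poincar\'e rank $1$ with $x$-independent leading term $J/2$. Put $W:=\partial_xY+(\lambda-x)^{-1}A_xY$. Using $\partial_{\lambda}\partial_xY=\partial_x\partial_{\lambda}Y$ and the partial-fraction identity $\lambda^{-1}(\lambda-x)^{-1}=x^{-1}((\lambda-x)^{-1}-\lambda^{-1})$, one computes
\[
\partial_{\lambda}W=\Bigl(\frac{A_0}{\lambda}+\frac{A_x}{\lambda-x}+\frac{J}{2}\Bigr)W+\frac{1}{x}\Bigl\{\frac{x\,\partial_xA_0-[A_x,A_0]}{\lambda}+\frac{x\,\partial_xA_x-[A_0,A_x]-\tfrac{x}{2}[J,A_x]}{\lambda-x}\Bigr\}Y,
\]
so that, by \eqref{2.9}, $W$ satisfies the same linear system as $Y$; hence $Y^{-1}W$ is independent of $\lambda$, say $W=YR(x)$. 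Substituting \eqref{2.8} (write $Y=\widehat Y\,e^{(\lambda/2)J}\lambda^{-(\theta_{\infty}/2)J}$ with $\widehat Y=I+O(\lambda^{-1})$), the diagonal entries of $R$ equal those of $V:=\widehat Y^{-1}(\partial_x\widehat Y+(\lambda-x)^{-1}A_x\widehat Y)=O(\lambda^{-1})$, while its $(1,2)$ and $(2,1)$ entries are $e^{-\lambda}\lambda^{\theta_{\infty}}V_{12}$ and $e^{\lambda}\lambda^{-\theta_{\infty}}V_{21}$; letting $\lambda\to\infty$ first in an arbitrary direction of the sector of \eqref{2.8}, then along $\arg\lambda=0$, then along $\arg\lambda=\pi$, all four entries of $R$ are forced to $0$, whence
\[
\partial_xY=-\frac{A_x}{\lambda-x}\,Y.
\]
The same identity holds for the solutions $Y_1,Y_2$ on the neighbouring sectors (repeat the argument there, choosing the two decay directions inside each sector). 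Because $(\lambda-x)^{-1}A_x$ is single-valued in $\lambda$, differentiating $Y(\gamma\lambda,x)=Y(\lambda,x)M_{\gamma}$ for loops around $0,x,\infty$ and $Y=Y_1S_1$, $Y_2=YS_2$ with respect to $x$ gives $\partial_xM_0=\partial_xM_x=\partial_xM_{\infty}=\partial_xS_1=\partial_xS_2=0$; together with the $x$-independence of the exponents $\pm\theta_0/2,\pm\theta_x/2,\theta_{\infty}/2$ (from (a) and \eqref{2.8}) this says that all of the monodromy data are preserved, i.e.\ \eqref{1.1} has the isomonodromy property.

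The routine-but-essential computation is the one producing the displayed identity for $\partial_{\lambda}W$; this is the only place the hypothesis that $(A_0,A_x)$ solves \eqref{2.9} is used, and the two parentheses in it are literally the two sides of \eqref{2.9}. The step I expect to require the most care is the vanishing of $R(x)$: its off-diagonal entries are not manifestly bounded as $\lambda\to\infty$, so one must be careful to pick asymptotic directions inside the prescribed sectors along which the exponential factors $e^{\mp\lambda}$ decay, and to re-do this on the sectors carrying $Y_1,Y_2$. I would also note explicitly that the whole argument is insensitive to the arithmetic of $\theta_0,\theta_x,\theta_{\infty}$: possible logarithmic terms in the local solutions at $\lambda=0$ or $\lambda=x$ play no role, since everything is carried out at the level of the $x$-deformation of the solution $Y$ normalised at $\lambda=\infty$.
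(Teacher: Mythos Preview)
Your argument is correct and follows essentially the same route as the paper. The only minor difference is in the verification of (b): the paper observes directly from the sum of the two Schlesinger equations that $(d/dx)(A_0+A_x)=[J,A_x]/2$ has vanishing diagonal, so the diagonal of $A_0+A_x$ is constant and equals that of $\Lambda$ in the limit $x\to 0$, whereas you deduce the off-diagonality of $U_\infty$ from the structure of the iteration \eqref{3.3}; these are two sides of the same coin. For (a) your isospectral argument plus limit identification is exactly what the paper intends by its reference to \cite[Proposition 3.1]{S2}. Your detailed derivation of the deformation equation $\partial_xY=-(\lambda-x)^{-1}A_xY$ and the consequent constancy of $M_0,M_x,M_\infty,S_1,S_2$ spells out what the paper takes as known from \cite{JM}; your care in choosing $\arg\lambda=0$ and $\arg\lambda=\pi$ inside the sector $-\pi/2<\arg\lambda<3\pi/2$ to kill the off-diagonal entries of $R(x)$ is the right way to handle that step.
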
 
\begin{proof}
Observing that $(d/dx) (A_0(x) + A_x(x)) =[J, A_x(x)]/2,$ in which the 
diagonal part on the right-hand side vanishes identically, 
we deduce (b) from the fact 
that $A_0(x)+ A_x(x) \to \Lambda$ as $x\to 0$ along a suitable curve. 
The property (a) may be verified by the same argument as in the 
proof of \cite[Proposition 3.1]{S2}.
\end{proof}
\section{Lemmas on matrices}\label{sc4}
For $\sigma\not=0,$ we have the following (cf. \cite[Lemma A.2]{G-Elliptic}):
\begin{lem}\label{lem4.1}
The matrices
\begin{align*}
& \Lambda_0 = T \begin{pmatrix}
(\Lambda'_0)_{11}  &  1  \\
(\Lambda'_0)_{21}    &  - (\Lambda'_0)_{11} 
\end{pmatrix} T^{-1}, 
\quad
 \Lambda_x = T \begin{pmatrix}
(\Lambda'_x)_{11}  &  - 1  \\
(\Lambda'_x)_{21}    &  - (\Lambda'_x)_{11} 
\end{pmatrix} T^{-1}, 
\\
& (\Lambda'_0)_{11}= \frac{ \sigma}2 - (\Lambda'_x)_{11} = \frac 1{4\sigma}
(\sigma^2 +\theta_0^2 -\theta_x^2),
\\
& -(\Lambda'_0)_{21}= (\Lambda'_x)_{21} = \frac 1{16\sigma^2}
((\theta_0 +\theta_x)^2 -\sigma^2) ((\theta_0 -\theta_x)^2 -\sigma^2), 
\\
&  T = \begin{pmatrix}
  (\sigma -\theta_{\infty})/2 & -1  \\
(\sigma+\theta_{\infty}) /2 &   1 
\end{pmatrix}, 
\quad
\Lambda =\Lambda_0+\Lambda_x = \begin{pmatrix}
-\theta_{\infty} /2 &  (\sigma -\theta_{\infty})/2  \\
(\sigma+\theta_{\infty}) /2 &  \theta_{\infty}/2  
\end{pmatrix} 
\end{align*}
have the properties $(\mathrm{P}.1)$, $(\mathrm{P}.2)$ and $(\mathrm{P}.3).$
\end{lem}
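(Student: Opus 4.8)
The plan is a direct verification in which the only substantive computation — an elementary polynomial identity — is isolated. Write $\Lambda'_0,\Lambda'_x$ for the two inner matrices in the definitions of $\Lambda_0,\Lambda_x$ (so that $\Lambda_\iota = T\Lambda'_\iota T^{-1}$). By construction $\Lambda'_0$ and $\Lambda'_x$ are traceless, so in each case the eigenvalues $\mu$ satisfy $\mu^2 = -\det$. From the displayed entries,
\[
\det \Lambda'_0 = -(\Lambda'_0)_{11}^2 - (\Lambda'_0)_{21},\qquad
\det \Lambda'_x = -(\Lambda'_x)_{11}^2 + (\Lambda'_x)_{21},
\]
and, using $(\Lambda'_0)_{21} = -(\Lambda'_x)_{21}$ and $(\Lambda'_x)_{11} = \sigma/2 - (\Lambda'_0)_{11} = (\sigma^2-\theta_0^2+\theta_x^2)/(4\sigma)$, the two equalities $\det\Lambda'_0 = -\theta_0^2/4$ and $\det\Lambda'_x = -\theta_x^2/4$ reduce, after clearing the common denominator $16\sigma^2$, to the single identity
\[
(\sigma^2+\theta_0^2-\theta_x^2)^2 - \bigl((\theta_0+\theta_x)^2-\sigma^2\bigr)\bigl((\theta_0-\theta_x)^2-\sigma^2\bigr) = 4\sigma^2\theta_0^2
\]
(for $\Lambda'_x$ one interchanges $\theta_0$ and $\theta_x$; the factor-product on the left is symmetric in $\theta_0,\theta_x$, so nothing new is required). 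This identity is routine: expanding in powers of $\sigma$, the $\sigma^4$-terms and the $\sigma$-independent terms cancel, leaving $2\sigma^2(\theta_0^2-\theta_x^2)+2\sigma^2(\theta_0^2+\theta_x^2)=4\sigma^2\theta_0^2$. Since $\Lambda_\iota = T\Lambda'_\iota T^{-1}$ is conjugate to $\Lambda'_\iota$, it has the same eigenvalues $\pm\theta_\iota/2$, which is $(\mathrm{P}.1)$ (the case $\theta_\iota=0$ being included automatically).

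For $(\mathrm{P}.3)$ I would add the inner matrices directly: the $(1,2)$-entries $+1$ and $-1$ cancel, the $(2,1)$-entries $(\Lambda'_0)_{21}$ and $(\Lambda'_x)_{21}=-(\Lambda'_0)_{21}$ cancel, and the diagonal entries sum to $\pm\bigl((\Lambda'_0)_{11}+(\Lambda'_x)_{11}\bigr)=\pm\sigma/2$ by the defining relation $(\Lambda'_0)_{11} = \sigma/2 - (\Lambda'_x)_{11}$. Hence $\Lambda'_0+\Lambda'_x=(\sigma/2)J$, and therefore $T^{-1}\Lambda T = T^{-1}(\Lambda_0+\Lambda_x)T = \Lambda'_0+\Lambda'_x = (\sigma/2)J$, which is $(\mathrm{P}.3)$. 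For $(\mathrm{P}.2)$ it then suffices to evaluate $\Lambda = (\sigma/2)\,T J T^{-1}$: since $\det T = \sigma$, a short computation gives
\[
\Lambda = \begin{pmatrix} -\theta_\infty/2 & (\sigma-\theta_\infty)/2 \\ (\sigma+\theta_\infty)/2 & \theta_\infty/2 \end{pmatrix},
\]
which is the displayed matrix; in particular $(\Lambda)_{11}=-\theta_\infty/2=-(\Lambda)_{22}$, so $(\mathrm{P}.2)$ holds.

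Finally I would record that every step uses $\sigma\neq 0$: this is needed for the stated entries to be defined and for $\det T = \sigma\neq 0$, so that $T\in GL_2(\Q_{\theta}[\sigma,\sigma^{-1}])$. There is no genuine obstacle — the statement is essentially a bookkeeping exercise — so the only point to watch is the sign and coefficient in the polynomial identity above, which I would expand carefully.
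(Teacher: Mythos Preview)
Your proof is correct and complete. The paper does not supply its own proof of this lemma, merely citing \cite[Lemma A.2]{G-Elliptic}; your direct verification of $(\mathrm{P}.1)$--$(\mathrm{P}.3)$ is exactly what is needed, and the polynomial identity you isolate is the only nontrivial step.
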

Using \cite[Proposition 2.1, Jordan case]{G-Matching}, we have
\begin{lem}\label{lem4.2}
$(1)$ If $\theta_{\infty}\not=0,$ then the matrices
\begin{align*}
&\Lambda_0 = T \begin{pmatrix}
\mp \theta_x /2  & 1  \\
(\theta_0^2 -\theta_x^2)/4  & \pm \theta_x/2
\end{pmatrix} T^{-1},
\quad
\Lambda_x = T \begin{pmatrix}
\pm \theta_x /2  & 0  \\
(\theta_x^2 -\theta_0^2)/4  & \mp \theta_x/2
\end{pmatrix} T^{-1},
\\
& 
T =\begin{pmatrix}
 -\theta_{\infty}  /2   &   1   \\
 \theta_{\infty}  /2   &   0  
\end{pmatrix} ,
\quad
\Lambda=\Lambda_0 +\Lambda_x =\frac{\theta_{\infty}}2 \begin{pmatrix}
  -1   &  -1  \\     1    &   1   \end{pmatrix} 
\end{align*}
have the properties $(\mathrm{P}.1)$, $(\mathrm{P}.2)$ and $(\mathrm{P}.3').$
\par
$(2)$ If $\theta_{\infty} =0,$ then the matrices $\Lambda_0,$ $\Lambda_x$
given as above with
$$
T=I \quad \Biggl(respectively, \,\,\, T=\begin{pmatrix} 0 & 1 \\ 1 & -1
\end{pmatrix} \, \Biggr)
$$
and $\Lambda =\Delta$ $($respectively, $\Lambda=\Delta_-$ $)$
have the properties $(\mathrm{P}.1)$, $(\mathrm{P}.2)$ and $(\mathrm{P}.3').$
\end{lem}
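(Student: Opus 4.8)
The plan is to prove both parts by straightforward verification, exploiting the fact that in every one of the three situations the matrices are given in the form $\Lambda_0 = TM_0T^{-1}$, $\Lambda_x = TM_xT^{-1}$ with the \emph{same} core matrices
\[
M_0 = \begin{pmatrix} \mp\theta_x/2 & 1 \\ (\theta_0^2-\theta_x^2)/4 & \pm\theta_x/2 \end{pmatrix}, \qquad M_x = \begin{pmatrix} \pm\theta_x/2 & 0 \\ (\theta_x^2-\theta_0^2)/4 & \mp\theta_x/2 \end{pmatrix},
\]
only the conjugator $T$ changing. Since conjugation by an invertible matrix preserves the spectrum, property $(\mathrm{P}.1)$ reduces to inspecting $M_0$ and $M_x$: the matrix $M_x$ is lower triangular with diagonal $\pm\theta_x/2$, hence has eigenvalues $\pm\theta_x/2$; and $M_0$ has trace $0$ and determinant $(\mp\theta_x/2)(\pm\theta_x/2) - (\theta_0^2-\theta_x^2)/4 = -\theta_0^2/4$ regardless of the sign choice, so its characteristic polynomial is $\mu^2 - \theta_0^2/4$ and its eigenvalues are $\pm\theta_0/2$. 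One then only has to note that each $T$ in the statement is invertible: $\det T = -\theta_{\infty}/2 \ne 0$ in part $(1)$ precisely under the hypothesis $\theta_{\infty}\ne 0$, while in part $(2)$ one has $\det I = 1$ and $\det\begin{pmatrix}0&1\\1&-1\end{pmatrix} = -1$.

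For $(\mathrm{P}.2)$ and $(\mathrm{P}.3')$ the key observation is that $M_0 + M_x = \Delta$ in all cases: the $(2,1)$ entries $\pm(\theta_0^2-\theta_x^2)/4$ cancel and so do the diagonal $\pm\theta_x/2$ terms, leaving $\begin{pmatrix}0&1\\0&0\end{pmatrix}$. Hence $\Lambda = \Lambda_0 + \Lambda_x = T\Delta T^{-1}$ always, which is exactly $T^{-1}\Lambda T = \Delta$, i.e.\ $(\mathrm{P}.3')$. Property $(\mathrm{P}.2)$ then follows from an explicit computation of $T\Delta T^{-1}$. In part $(1)$, using $T^{-1} = \begin{pmatrix} 0 & 2/\theta_{\infty} \\ 1 & 1 \end{pmatrix}$, one gets $T\Delta T^{-1} = \tfrac{\theta_{\infty}}{2}\begin{pmatrix} -1 & -1 \\ 1 & 1 \end{pmatrix}$, so $(\Lambda)_{11} = -\theta_{\infty}/2 = -(\Lambda)_{22}$ and the displayed closed form for $\Lambda$ is confirmed. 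In part $(2)$, where $\theta_{\infty}=0$, the choice $T = I$ gives $\Lambda = \Delta$, and the choice $T = \begin{pmatrix}0&1\\1&-1\end{pmatrix}$ gives $T\Delta T^{-1} = \Delta_-$; both $\Delta$ and $\Delta_-$ have vanishing diagonal, in agreement with $-\theta_{\infty}/2 = 0$, so $(\mathrm{P}.2)$ holds, and in the second case $(\mathrm{P}.3')$ appears in the slightly less trivial form $T^{-1}\Delta_- T = \Delta$.

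There is no genuine obstacle; the proof is bookkeeping. The only point needing care is the dichotomy $\theta_{\infty}\ne 0$ versus $\theta_{\infty}=0$: the conjugator $T$ of part $(1)$ degenerates as $\theta_{\infty}\to 0$, so a replacement is required, and one must check that the nilpotent (Jordan) normal form of $\Lambda$ that then arises---$\Delta$, or $\Delta_-$ after a further $GL_2$ twist---is still carried into $\Delta$ by the new $T$, which is exactly why the second conjugator $T = \begin{pmatrix}0&1\\1&-1\end{pmatrix}$ is listed. This is also the structural reason part $(2)$ offers two versions: they supply the two normalisations $\Lambda = \Delta$ and $\Lambda = \Delta_-$ invoked afterward in Proposition~\ref{prop3.2}. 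These matrices originate from \cite[Proposition 2.1, Jordan case]{G-Matching}, but the explicit checks above are self-contained.
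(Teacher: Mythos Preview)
Your verification is correct and complete. The paper does not actually supply a proof of this lemma; it merely introduces it with ``Using \cite[Proposition 2.1, Jordan case]{G-Matching}, we have'' and states the matrices, so your direct check of (P.1), (P.2), (P.3$'$) via the common core matrices $M_0,M_x$ with $M_0+M_x=\Delta$ is in the same spirit but more self-contained than what the paper offers.
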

\begin{lem}\label{lem4.3}
Suppose that $\theta_{\infty}=0.$ 
\par
$(1)$ If $\theta_0 =\pm \theta_x \not=0,$ then
$$
\Lambda_0= -\Lambda_x = T(\theta_0/2) J T^{-1} =\begin{pmatrix}
\theta_0/2 +a &  -1 \\  a(\theta_0 +a) &  -\theta_0/2 -a 
\end{pmatrix},
\quad
T=\begin{pmatrix}
1 & 1 \\ a  & \theta_0 +a 
\end{pmatrix}
$$
have the properties $(\mathrm{P}.1)$ and $(\mathrm{P}.2).$
\par
$(2)$ If $\theta_0 =\theta_x =0,$ then
$$
\Lambda_0= -\Lambda_x = T\Delta T^{-1} =\begin{pmatrix}
a &  -1 \\  a^2 &  -a 
\end{pmatrix},
 \quad T=\begin{pmatrix}
1 & 1 \\ a  & a -1 
\end{pmatrix}
$$
have the properties $(\mathrm{P}.1)$ and $(\mathrm{P}.2).$
\end{lem}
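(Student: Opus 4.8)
The plan is a direct verification, treating the two cases separately; both assertions reduce to elementary $2\times2$ linear algebra once the closed forms of $\Lambda_0$ displayed in the statement are confirmed, and property $(\mathrm{P}.2)$ will be immediate because $\Lambda_x=-\Lambda_0$ forces $\Lambda=\Lambda_0+\Lambda_x=0$ (whose diagonal entries are $0=-\theta_{\infty}/2$ since $\theta_{\infty}=0$).

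For part $(1)$, where $\theta_0=\pm\theta_x\not=0$, I would first note $\det T=(\theta_0+a)-a=\theta_0\not=0$, so $T\in GL_2$ with $T^{-1}=\frac{1}{\theta_0}\begin{pmatrix}\theta_0+a & -1\\ -a & 1\end{pmatrix}$, and then multiply out $T\bigl((\theta_0/2)J\bigr)T^{-1}$ to recover $\begin{pmatrix}\theta_0/2+a & -1\\ a(\theta_0+a) & -\theta_0/2-a\end{pmatrix}$; this is a short computation. For $(\mathrm{P}.1)$: $\Lambda_0$ is by construction conjugate to $(\theta_0/2)J=\diag[\theta_0/2,-\theta_0/2]$, hence has eigenvalues $\pm\theta_0/2$, and since $\theta_0=\pm\theta_x$ the unordered pair $\{\theta_0/2,-\theta_0/2\}$ equals $\{\theta_x/2,-\theta_x/2\}$; likewise $\Lambda_x=-\Lambda_0$ has eigenvalues $\mp\theta_0/2$, i.e. the same symmetric pair, so $(\mathrm{P}.1)$ holds for both $\Lambda_0$ and $\Lambda_x$. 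Together with $(\mathrm{P}.2)$ above this completes part $(1)$.

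Part $(2)$, with $\theta_0=\theta_x=0$, is the analogous degenerate case. Here $\det T=(a-1)-a=-1$, so $T^{-1}=\begin{pmatrix}1-a & 1\\ a & -1\end{pmatrix}$, and computing $T\Delta T^{-1}$ gives the stated $\begin{pmatrix}a & -1\\ a^2 & -a\end{pmatrix}$. Since $\Delta$ is nilpotent, $\Lambda_0$ has the double eigenvalue $0=\pm\theta_0/2$, and the same holds for $\Lambda_x=-\Lambda_0$, giving $(\mathrm{P}.1)$; $(\mathrm{P}.2)$ follows again from $\Lambda=0$.

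There is essentially no obstacle: the only points requiring care are the normalisation of $T^{-1}$ (the determinant is $\theta_0$, not $1$, in part $(1)$, and $-1$ in part $(2)$) and, in part $(1)$, observing that the sign ambiguity in $\theta_0=\pm\theta_x$ is absorbed by the fact that the eigenvalues occur in the symmetric pair $\pm\theta_0/2$. It is also worth recording in passing that $T^{-1}\Lambda_0 T=(\theta_0/2)J$ in part $(1)$ and $=\Delta$ in part $(2)$, so in particular $(T^{-1}\Lambda_0 T)_{21}=0$; this is the hypothesis under which the Jordan-type conclusions of Propositions \ref{prop3.1} and \ref{prop3.2} become available when these matrices are fed into Theorems \ref{thm2.4} and \ref{thm2.15}.
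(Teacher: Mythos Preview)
Your proposal is correct and is exactly the direct verification the paper has in mind; in fact the paper states Lemma~\ref{lem4.3} without proof, treating the matrix identities and properties $(\mathrm{P}.1)$, $(\mathrm{P}.2)$ as immediate computations, which is precisely what you carry out.
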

\section{Proofs of Theorems \ref{thm2.1}, and \ref{thm2.2}
through \ref{thm2.4}}\label{sc5}
\subsection{Proof of Theorem \ref{thm2.1}}\label{ssc5.1}
By Proposition \ref{prop3.1} with $(\Lambda_0, \Lambda_x, T, \Lambda)$ 
in Lemma \ref{lem4.1} we have, for $\sigma\in \Sigma_0,$ $\rho \in 
\C \setminus \{ 0\},$
\begin{align*}
A_0(\sigma,\rho,x) & = T (\rho x^{\sigma})^{J/2} 
\biggl( T^{-1} \Lambda_0 T + \sum_{n=1}
^{\infty} x^n T^{-1} \Pi^n_0(\sigma, \rho x^{\sigma}) T \biggr) (\rho x^{\sigma}
)^{-J/2} T^{-1}
\\
&= T \biggl( (\rho x^{\sigma})^{J/2} T^{-1} \Lambda_0 T (\rho x^{\sigma})^{-J/2}
 + \sum_{n=1}
^{\infty} x^n \sum_{j=-n-1}^{n+1} \tilde{C}^n_{0j}(\sigma) (\rho x^{\sigma})^j 
 \biggr) T^{-1}
\end{align*}
with $\tilde{C}^n_{0j}(\sigma) \in M_2(\Q_{\theta}(\sigma) ),$ and hence
\begin{align*}
(A_0)_{11} &= -\sigma^{-1} \Bigl( \theta_{\infty} (\Lambda'_0)_{11} +\frac 14
(\sigma^2-\theta_{\infty}^2) \rho x^{\sigma} + (\Lambda'_0)_{21} (\rho 
x^{\sigma})^{-1} \Bigr) + (\cdots),
\\
(A_0)_{12} &= -\sigma^{-1} \Bigl( -(\sigma-\theta_{\infty})(\Lambda'_0)_{11} 
-\frac 14
(\sigma-\theta_{\infty})^2 \rho x^{\sigma} + (\Lambda'_0)_{21} (\rho 
x^{\sigma})^{-1} \Bigr) + (\cdots),
\end{align*}
where $(\cdots)$ denotes a series of the form $\sum_{n=1}^{\infty} x^n
\sum_{j=-n-1}^{n+1} c_{nj}(\sigma) (\rho x^{\sigma})^j,$ $c_{nj}(\sigma) \in
\Q_{\theta}(\sigma).$ Similarly,
\begin{align*}
(A_x)_{11} &= -\sigma^{-1} \Bigl( \theta_{\infty} (\Lambda'_x)_{11} -\frac 14
(\sigma^2-\theta_{\infty}^2) \rho x^{\sigma} + (\Lambda'_x)_{21} (\rho 
x^{\sigma})^{-1} \Bigr) + (\cdots),
\\
(A_x)_{12} &= -\sigma^{-1} \Bigl(-(\sigma-\theta_{\infty})(\Lambda'_x)_{11} 
+\frac 14
(\sigma-\theta_{\infty})^2 \rho x^{\sigma} + (\Lambda'_x)_{21} (\rho 
x^{\sigma})^{-1} \Bigr) + (\cdots).
\end{align*}
By Proposition \ref{prop3.3} and \eqref{2.10}, $y=Y_{11} Y_{12}$ with
\begin{equation}\label{5.1}
Y_{11} =\frac{(A_0)_{11} + \theta_0/2} {(A_x)_{11} + \theta_x/2}, \quad
Y_{12} =\frac{(A_x)_{12}}{(A_0)_{12} }
\end{equation}
solves (V). Then we may write $Y_{ij} = (Y_{ij})_{\mathrm{num}}
/(Y_{ij})_{\mathrm{den}}$ $( (i, j)=(1,1), (1,2) )$ with
\begin{equation}\label{5.2}
\begin{split}
(Y_{11})_{\mathrm{num}} &= 4\theta_0 \sigma^2 - 2\theta_{\infty} (\sigma^2
+\theta_0^2 - \theta_x^2) -2\sigma(\sigma^2 -\theta_{\infty}^2) \rho x^{\sigma}
\\
& +\frac{1}{2\sigma} ((\theta_0+ \theta_x)^2 -\sigma^2)
((\theta_0- \theta_x)^2 -\sigma^2) (\rho x^{\sigma})^{-1} +\sigma^2(\cdots),
\\
(Y_{11})_{\mathrm{den}} &= 4\theta_x \sigma^2 - 2\theta_{\infty} (\sigma^2
+\theta_x^2 - \theta_0^2) +2\sigma(\sigma^2 -\theta_{\infty}^2) \rho x^{\sigma}
\\
& -\frac{1}{2\sigma} ((\theta_0+ \theta_x)^2 -\sigma^2)
((\theta_0- \theta_x)^2 -\sigma^2) (\rho x^{\sigma})^{-1} +\sigma^2(\cdots),
\end{split}
\end{equation}
and
\begin{equation}\label{5.3}
\begin{split}
(Y_{12})_{\mathrm{num}} &= 2(\sigma-\theta_{\infty}) (\sigma^2
+\theta_x^2 - \theta_0^2) -2\sigma(\sigma -\theta_{\infty})^2 \rho x^{\sigma}
\\
& -\frac{1}{2\sigma} ((\theta_0+ \theta_x)^2 -\sigma^2)
((\theta_0- \theta_x)^2 -\sigma^2) (\rho x^{\sigma})^{-1} +\sigma^2(\cdots),
\\
(Y_{12})_{\mathrm{den}} &= 2(\sigma - \theta_{\infty}) (\sigma^2
+\theta_0^2 - \theta_x^2) +2\sigma(\sigma -\theta_{\infty})^2 \rho x^{\sigma}
\\
& +\frac{1}{2\sigma} ((\theta_0+ \theta_x)^2 -\sigma^2)
((\theta_0- \theta_x)^2 -\sigma^2) (\rho x^{\sigma})^{-1} +\sigma^2(\cdots).
\end{split}
\end{equation}
Aiming at a solution of the form as in Theorem \ref{thm2.1} we replace $\rho$
by
\begin{equation}\label{5.4}
\frac {(\theta_0-\theta_x +\sigma)(\theta_0+\theta_x -\sigma)}{2\sigma
(\sigma +\theta_{\infty})} \rho. 
\end{equation}
Then $(Y_{ij})_{\mathrm{num}}$ and $(Y_{ij})_{\mathrm{den}}$ become
\begin{equation}\label{5.5}
\begin{split}
(Y_{11})_{\mathrm{num}} &= 4\theta_0 \sigma^2 - 2\theta_{\infty} (\sigma^2
+\theta_0^2 - \theta_x^2) -(\sigma -\theta_{\infty})(\theta_{0}^2 -(\sigma
-\theta_x)^2) \rho x^{\sigma}
\\
& +(\sigma+\theta_{\infty}) (\theta_0^2-(\sigma + \theta_x)^2 )
 (\rho x^{\sigma})^{-1} +(\cdots),
\\
(Y_{11})_{\mathrm{den}} &= 4\theta_x \sigma^2 - 2\theta_{\infty} (\sigma^2
-\theta_0^2 + \theta_x^2) +(\sigma -\theta_{\infty})(\theta_{0}^2 -(\sigma
-\theta_x)^2) \rho x^{\sigma}
\\
& -(\sigma+\theta_{\infty}) (\theta_0^2-(\sigma + \theta_x)^2 )
 (\rho x^{\sigma})^{-1} +(\cdots),
\end{split}
\end{equation}
and $(Y_{12})_{\mathrm{num}} =(\sigma+\theta_{\infty})^{-1}
(Y_{12})_{\mathrm{num}}^*$, 
$(Y_{12})_{\mathrm{den}} =(\sigma+\theta_{\infty})^{-1}
(Y_{12})_{\mathrm{den}}^*$ with
\begin{equation}\label{5.6}
\begin{split}
(Y_{12})_{\mathrm{num}}^* &= 2(\sigma^2-\theta_{\infty}^2) (\sigma^2
-\theta_0^2 + \theta_x^2) -(\sigma -\theta_{\infty})^2(\theta_0^2 -
(\sigma-\theta_x)^2) \rho x^{\sigma}
\\
& -(\sigma+\theta_{\infty})^2 (\theta_0^2 -(\sigma+\theta_x)^2)
 (\rho x^{\sigma})^{-1} +(\cdots),
\\
(Y_{12})_{\mathrm{den}}^* &= 2(\sigma^2-\theta_{\infty}^2) (\sigma^2
+\theta_0^2 - \theta_x^2) +(\sigma -\theta_{\infty})^2(\theta_0^2 -
(\sigma-\theta_x)^2) \rho x^{\sigma}
\\
& +(\sigma+\theta_{\infty})^2 (\theta_0^2 -(\sigma+\theta_x)^2)
 (\rho x^{\sigma})^{-1} +(\cdots),
\end{split}
\end{equation}
which are holomorphic
in a domain on $\mathcal{R}(\C\setminus\{0\})$ where $|x(\rho x^{\sigma})|$
and $|x(\rho x^{\sigma})^{-1}|$ are sufficiently small. 
A general solution $y(\sigma,\rho, x)$ meromorphic in such a domain is
represented in terms of \eqref{5.5} and \eqref{5.6}.  
If $\rho x^{\sigma}$ is also sufficiently small, then, under the supposition of
Theorem \ref{thm2.1}, we may write, say $(Y_{11})_{\mathrm{num}}$, in the form
\begin{align*}
&(Y_{11})_{\mathrm{num}} = (\sigma +\theta_{\infty})(\theta_0^2 -(\sigma
+\theta_x)^2) (\rho x^{\sigma})^{-1}
\biggl( 1+ \frac{4\theta_0\sigma^2 -2\theta_{\infty}(\sigma^2 +
\theta_0^2 -\theta_x^2)}
{(\sigma +\theta_{\infty})(\theta_0^2 -(\sigma+\theta_x)^2)} \rho x^{\sigma}
\\
&\phantom{-----}
+ c^{(11)}_2 (\sigma)(\rho x^{\sigma})^2 +\sum_{n=1}^{\infty} x^n \sum_{j=-n-1}
^{n+1}c^{(11)}_{nj}(\sigma) (\rho x^{\sigma})^{j+1} \biggr) 
\end{align*}
with $c_2^{(11)}(\sigma),$ $c_{nj}^{(11)}(\sigma) \in \Q_{\theta}(\sigma).$
From this and analogous expressions of $(Y_{11})_{\mathrm{den}}$, 
$(Y_{12})_{\mathrm{num}}^*$ and $(Y_{12})_{\mathrm{den}}^*$, 
we derive $y_+(\sigma, \rho, x)$ convergent in $\Omega^+(\Sigma_0,
\varepsilon_0).$ If $(\rho x^{\sigma})^{-1}$ is sufficiently small, writing
\begin{align*}
&(Y_{11})_{\mathrm{num}} = - (\sigma -\theta_{\infty})(\theta_0^2 -(\sigma
-\theta_x)^2) \rho x^{\sigma}
\biggl( 1- \frac{4\theta_0\sigma^2 -2\theta_{\infty}(\sigma^2 +
\theta_0^2 -\theta_x^2)}
{(\sigma -\theta_{\infty})(\theta_0^2 -(\sigma-\theta_x)^2)}(\rho x^{\sigma})
^{-1}
\\
&\phantom{-----}
+ {c'}^{(11)}_2 (\sigma)(\rho x^{\sigma})^{-2} +\sum_{n=1}^{\infty} x^n \sum_{j=-n-1}
^{n+1}{c'}^{(11)}_{nj}(\sigma) (\rho x^{\sigma})^{-j-1} \biggr) 
\end{align*}
with ${c'}_2^{(11)}(\sigma),$ ${c'}_{nj}^{(11)}(\sigma) \in \Q_{\theta}(\sigma)$
and so on, we have $y_-(\sigma, \rho, x)$. Thus we obtain Theorem \ref{thm2.1}.
\subsection{Proof of Theorem \ref{thm2.2}}\label{ssc5.2}
If $\sigma^2_0 =(\theta_0 \pm \theta_x)^2,$ then $2\theta_0 \sigma_0^2-
\theta_{\infty}(\sigma_0^2 +\theta_0^2 -\theta_x^2) = 2\theta_0(\theta_0 \pm
\theta_x) (\theta_0 \pm \theta_x -\theta_{\infty}).$ Putting $\sigma=\sigma_0$
in $Y_{11}$ and $Y_{12}$ with \eqref{5.2} and \eqref{5.3}, we have
\begin{equation}
\label{5.7}
\begin{split}
&Y_{11}|_{\sigma=\sigma_0} = \frac
{2\theta_0 (\theta_0 \pm \theta_x) (\theta_0 \pm \theta_x -\theta_{\infty})
-\sigma_0( \sigma_0^2 -\theta_{\infty}^2 )\rho x^{\sigma_0} + \sigma_0^2
(\cdots)}
{2\theta_x (\theta_x \pm \theta_0) (\theta_x \pm \theta_0 -\theta_{\infty})
+\sigma_0( \sigma_0^2 -\theta_{\infty}^2 )\rho x^{\sigma_0} + \sigma_0^2
(\cdots)} ,
\\
&Y_{12}|_{\sigma=\sigma_0} = \frac
{2\theta_x (\theta_x \pm \theta_0) 
-\sigma_0( \sigma_0 -\theta_{\infty} )\rho x^{\sigma_0} + \sigma_0^2
(\sigma_0 -\theta_{\infty})^{-1} (\cdots)}
{2\theta_0 (\theta_0 \pm \theta_x) 
+\sigma_0( \sigma_0 -\theta_{\infty} )\rho x^{\sigma_0} + \sigma_0^2
(\sigma_0 -\theta_{\infty})^{-1} (\cdots)}
\end{split}
\end{equation}
with $(\cdots)$ denoting a series of the form $\sum_{n=1}^{\infty} x^n\sum_{j=0}
^{n+1} c_{nj}(\sigma_0) (\rho x^{\sigma_0})^j.$
Suppose that $\sigma_0=\theta_0 +\theta_x \not=\theta_{\infty}$ and $\theta_0
\theta_x \not=0.$ If $\rho x^{\sigma_0}$ is sufficiently small,
\begin{align*}
Y_{11} Y_{12} |_{\sigma=\sigma_0} &=
\frac{1-(\sigma_0 +\theta_{\infty}) \rho x^{\sigma_0}/(2\theta_0) +\cdots}
{1+(\sigma_0 +\theta_{\infty}) \rho x^{\sigma_0}/(2\theta_x) +\cdots}
\,\cdot \,
\frac{1-(\sigma_0 -\theta_{\infty}) \rho x^{\sigma_0}/(2\theta_x) +\cdots}
{1+(\sigma_0 -\theta_{\infty}) \rho x^{\sigma_0}/(2\theta_0) +\cdots}
\\
&= 1-\frac{\sigma_0^2}{\theta_0\theta_x} \rho x^{\sigma_0} +
\sum_{j\ge 2} c_j^0(\sigma_0) (\rho x^{\sigma_0})^j +\sum_{n=1}^{\infty}
x^n \sum_{j\ge 0} c^0_{jn}(\sigma_0) (\rho x^{\sigma_0})^j
\end{align*}
with $c^0_j(\sigma_0),$ $c_{jn}^0(\sigma_0) \in \Q_{\theta}[\theta_0^{-1},
\theta_x^{-1}] (\sigma_0),$ and if 
$(\rho x^{\sigma_0})^{-1}$ and $x (\rho x^{\sigma_0})$ are sufficiently small,
\begin{align*}
&=
\frac{1- 2\theta_{0}(\rho x^{\sigma_0})^{-1}/(\sigma_0+\theta_{\infty}) +\cdots}
{1+ 2\theta_{x}(\rho x^{\sigma_0})^{-1}/(\sigma_0+\theta_{\infty}) +\cdots}
\,\cdot \,
\frac{1- 2\theta_{x}(\rho x^{\sigma_0})^{-1}/(\sigma_0-\theta_{\infty}) +\cdots}
{1+ 2\theta_{0}(\rho x^{\sigma_0})^{-1}/(\sigma_0-\theta_{\infty}) +\cdots}
\\
&= 1-\frac{4\sigma_0^2}{\sigma_0^2 -\theta_{\infty}^2} 
(\rho x^{\sigma_0})^{-1} +
\sum_{j\ge 2} \tilde{c}_j^0(\sigma_0) (\rho x^{\sigma_0})^{-j}
+\sum_{n=1}^{\infty}
x^n \sum_{j\ge 0} \tilde{c}^0_{jn}(\sigma_0) (\rho x^{\sigma_0})^{n-j}
\end{align*}
with $\tilde{c}^0_j(\sigma_0),$ $\tilde{c}_{jn}^0(\sigma_0) \in \Q_{\theta}
(\sigma_0)$. The other cases are similarly treated.
\subsection{Proof of Theorem \ref{thm2.3}}\label{ssc5.3}
Suppose that $\theta_{\infty}\not=0.$ By Proposition \ref{prop3.2} with
$(\Lambda_0, \Lambda_x, T, \Lambda)$ in Lemma \ref{lem4.2}, (1), we have
$$
A_0(\rho,x) = T\biggl( (\rho x)^{\Delta} T^{-1} \Lambda_0 T(\rho x)^{-\Delta}
+\sum_{n=1}^{\infty} x^n (\rho x)^{\Delta} T^{-1} \Pi^{*n}_0 (\log(\rho x))
T (\rho x)^{-\Delta} \biggr) T^{-1}.
$$
Hence
\begin{align*}
&(A_0)_{11} = \pm \frac {\theta_x}2 - \frac{\theta_{\infty}}2 -\frac 14
(\theta_0^2 -\theta_x^2 \pm 2\theta_x\theta_{\infty}) \log(\rho x) +
\frac 18 (\theta_0^2 -\theta_x^2)\theta_{\infty} \log^2(\rho x) +(\cdots),
\\ 
&(A_0)_{12} = \pm {\theta_x} - \frac{\theta_{\infty}}2 +\frac{\theta_0^2-
\theta_x^2}{2\theta_{\infty}} -\frac 12
(\theta_0^2 -\theta_x^2 \pm \theta_x\theta_{\infty}) \log(\rho x)
\\
&\phantom{---------------} 
 + \frac 18 (\theta_0^2 -\theta_x^2)\theta_{\infty} \log^2(\rho x) +(\cdots),
\end{align*}
and
$$
(A_x)_{11}= -(A_0)_{11} -\frac{\theta_{\infty}} 2 +(\cdots), \quad
(A_x)_{12}= -(A_0)_{12} -\frac{\theta_{\infty}} 2 +(\cdots).
$$
Here the sign $\pm$ is chosen according to $(T^{-1}\Lambda_0 T)_{11} =
\mp \theta_x/2,$ and
$(\cdots)$ denotes a series of the form $\sum_{n=1}^{\infty} x^n
\sum_{j=0}^{n(\theta)} c^*_{nj} \log^j(\rho x)$ with 
$c^*_{jn} \in \Q_{\tilde{\theta}}$, $n(\theta)$ being such that 
$n(\theta)=2n +2$ if $\theta
_0^2-\theta_x^2 \not=0,$ and $=n+1$ if $\theta_0^2 -\theta_x^2 =0.$
Note that
$$
-Y_{11} = 1+ \frac{\theta_0 +\theta_x -\theta_{\infty} +(\cdots)}{2(A_0)_{11}
-\theta_x +\theta_{\infty} +(\cdots)}, \quad
-Y_{12} = 1+ \frac{\theta_{\infty} +(\cdots)}{2(A_0)_{12} +(\cdots)}. 
$$
If $\theta_0^2 -\theta_x^2\not=0,$ then
\begin{align*}
&-Y_{11} = 1+ \frac{4(\theta_0 +\theta_x -\theta_{\infty})}{\theta_{\infty}
(\theta_0^2 -\theta_x^2)} \log^{-2}(\rho x) 
+ \cdots , \phantom{--------}
\\
&-Y_{12} = 1+ \frac{4\theta_{\infty}}{\theta_{\infty}
(\theta_0^2 -\theta_x^2)} \log^{-2}(\rho x) 
+ \cdots ;
\end{align*}
and if $\theta_0^2 =\theta_x^2 \not=0,$ then
\begin{align*}
&-Y_{11} = 1\mp  \frac{\theta_0 +\theta_x -\theta_{\infty}}{\theta_x
\theta_{\infty}} \log^{-1}(\rho x) \Bigl( 1 + \frac{\theta_x \mp \theta_x}
{\theta_x\theta_{\infty}} \log^{-1}(\rho x) + \cdots \Bigr),
\\
&-Y_{12} = 1\mp  \frac{\theta_{\infty}}{\theta_x
\theta_{\infty}} \log^{-1}(\rho x) \Bigl( 1 + \frac{2\theta_{x} 
 \mp \theta_{\infty} }
{\theta_x\theta_{\infty}} \log^{-1}(\rho x) + \cdots \Bigr).
\end{align*}
From these formulas, we derive $y_{\mathrm{ilog}} (\rho, x)$ and 
$y^{\pm}_{\mathrm{ilog}} (\rho, x).$ 
In the case where $\theta_0^2-\theta_x^2\not=0,$ apparently, 
there exist two kinds of inverse logarithmic solutions depending on the
sign of $\mp\theta_x$, but by the following proposition
verified later we may replace $\rho$ suitably to derive
$y_{\mathrm{ilog}}(\rho, x)$ as in (1) independent of the sign; indeed 
$A^*_0(\rho, x)$ in the proposition has entries as follows:
\begin{align*}
&(A^*_0)_{11} =\frac{\theta_{\infty}}{8} (\theta_0^2-\theta_x^2) \log^2
(\rho x) -\frac 14 (\theta_0^2-\theta_x^2) \log (\rho x) -\frac{\theta
_{\infty}\theta_x^2}{2}(\theta_0^2-\theta_x^2)^{-1} -\frac{\theta_{\infty}}2
 + \cdots,
\\
&(A^*_0)_{12} = (A^*_0)_{11} 
 -\frac 14 (\theta_0^2-\theta_x^2) \log (\rho x) +\frac {1}{2\theta
_{\infty}}(\theta_0^2-\theta_x^2)+\cdots.
\end{align*}
\begin{prop}\label{prop5.0}
Suppose that $\theta_0^2-\theta_x^2\not=0.$ Let
$A^-_0(\rho,x)$ denote $A_0(\rho,x)$ in the case where
$(T^{-1} \Lambda_0 T)_{11} = \theta_x/2.$ 
Then $A^*_0(\rho,x) := A^-_0(\rho\exp(-2\theta_x(\theta_0^2-\theta_x^2)^{-1}),
x) = A_0(\rho\exp(\pm 2\theta_x(\theta_0^2-\theta_x^2)^{-1}),x) $ is 
represented by
\begin{align*}
& \frac 14 T  \begin{pmatrix}
(\theta_0^2 -\theta_x^2) \log(\rho x)  &
4-(\theta_0^2 -\theta_x^2) \log^2(\rho x) + 4\theta_x^2(\theta_0^2-\theta_x^2)
^{-1} 
\\
\theta_0^2 -\theta_x^2 &
-(\theta_0^2 -\theta_x^2) \log(\rho x)  \end{pmatrix}
T^{-1}
\\
 &  + \sum_{n=1}^{\infty} x^n \sum_{j=0}^{n(\theta)} A^*_{jn}
\log ^j(\rho x)
\end{align*}
with $A^*_{jn} \in M_2(\Q_{\theta}[ (\theta_0^2-\theta_x^2 )^{-1} ] ),$
which is independent of the sign $\pm.$  
\end{prop}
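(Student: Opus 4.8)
The plan is to exhibit $A_0^{*}(\rho,x)$ as the solution that Proposition \ref{prop3.2} attaches to a new triple $(\tilde\Lambda_0,\tilde\Lambda_x,T)$ whose matrices are even in $\theta_x$; both the explicit shape and the independence of the sign then drop out. Write $A_0^{+}(\rho,x)$ for the solution of \eqref{2.9} attached by Proposition \ref{prop3.2} to the matrices of Lemma \ref{lem4.2}, (1) with $(T^{-1}\Lambda_0 T)_{11}=-\theta_x/2$, so that $A_0^{-}$ is the one with $(T^{-1}\Lambda_0 T)_{11}=\theta_x/2$; the two triples differ only by $\theta_x\mapsto-\theta_x$, and both have the same $\Lambda=\Lambda_0+\Lambda_x=\tfrac{\theta_\infty}{2}\bigl(\begin{smallmatrix}-1&-1\\1&1\end{smallmatrix}\bigr)$, with $\Lambda^{2}=0$, so that $e^{u\Lambda}=I+u\Lambda$ commutes with every $(\rho x)^{w\Lambda}$. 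Put $c_\theta:=2\theta_x(\theta_0^{2}-\theta_x^{2})^{-1}$ and $\tilde\Lambda_0:=e^{-c_\theta\Lambda}\Lambda_0^{-}e^{c_\theta\Lambda}$, $\tilde\Lambda_x:=\Lambda-\tilde\Lambda_0$. A one-line $2\times2$ computation, using $T^{-1}\Lambda T=\Delta$ and $\Delta^{2}=0$, gives
\[
T^{-1}\tilde\Lambda_0 T=\begin{pmatrix}0 & 1+\theta_x^{2}(\theta_0^{2}-\theta_x^{2})^{-1}\\ (\theta_0^{2}-\theta_x^{2})/4 & 0\end{pmatrix}=e^{c_\theta\Delta}\,(T^{-1}\Lambda_0^{+}T)\,e^{-c_\theta\Delta},
\]
so $\tilde\Lambda_0$, $\tilde\Lambda_x$, $\Lambda$, $T$, $J$ are all even in $\theta_x$, and one and the same $\tilde\Lambda_0$ arises whether one conjugates $\Lambda_0^{-}$ by $e^{-c_\theta\Lambda}$ or $\Lambda_0^{+}$ by $e^{c_\theta\Lambda}$.

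Next I would identify $A_0^{-}(\rho e^{-c_\theta},x)$ with the Proposition \ref{prop3.2} solution for $(\tilde\Lambda_0,\tilde\Lambda_x,T)$. Since $(\rho e^{-c_\theta}x)^{\Lambda}=(\rho x)^{\Lambda}e^{-c_\theta\Lambda}$ and $\log(\rho e^{-c_\theta}x)=\log(\rho x)-c_\theta$, the convergent series of Proposition \ref{prop3.2} for $A_0^{-}$ becomes
\[
A_0^{-}(\rho e^{-c_\theta},x)=(\rho x)^{\Lambda}\Bigl(\tilde\Lambda_0+\sum_{n\ge1}x^{n}e^{-c_\theta\Lambda}\Pi^{*n}_0\bigl(\log(\rho x)-c_\theta\bigr)e^{c_\theta\Lambda}\Bigr)(\rho x)^{-\Lambda},
\]
and $(A_0^{-}+A_x^{-})(\rho e^{-c_\theta},x)=\Lambda+U_\infty$ with $U_\infty$ a series in $x^{n}$, $n\ge1$. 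Carrying out the change of variables \eqref{3.1} with this very triple $(\tilde\Lambda_0,\tilde\Lambda_x,\Lambda)$, $t=\rho x$, $\kappa=\rho^{-1}$, the displayed identities show that the resulting pair $(U_0,U_\infty)$ lies in $\mathfrak{L}(D)^{2}$, carries no term of order $x^{0}$, and (exactly as in the passage from \eqref{3.1} to \eqref{3.2} in Section \ref{sc3}) solves \eqref{3.2}. Since the iteration \eqref{3.3} with data $(\tilde\Lambda_0,\tilde\Lambda_x,\Lambda)$ is the contraction whose unique fixed point is the only such solution --- this is the content of the proof of Proposition \ref{prop3.2} taken over from \cite[\S5.2]{S2} --- the pair $(U_0,U_\infty)$ must be that fixed point, so $A_0^{-}(\rho e^{-c_\theta},x)$ is indeed the Proposition \ref{prop3.2} solution for $(\tilde\Lambda_0,\tilde\Lambda_x,T)$. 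Repeating the argument with $\Lambda_0^{+}$ and $c_\theta$ replaced by $-c_\theta$, and using $e^{c_\theta\Lambda}\Lambda_0^{+}e^{-c_\theta\Lambda}=\tilde\Lambda_0$, shows that $A_0^{+}(\rho e^{c_\theta},x)$ equals the same solution. Therefore $A_0^{-}(\rho e^{-c_\theta},x)=A_0^{+}(\rho e^{c_\theta},x)=:A_0^{*}(\rho,x)$, which is the asserted independence of the sign $\pm$.

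The explicit form now follows. The term of order $x^{0}$ in $A_0^{*}$ is $(\rho x)^{\Lambda}\tilde\Lambda_0(\rho x)^{-\Lambda}=T\bigl(I+\log(\rho x)\Delta\bigr)(T^{-1}\tilde\Lambda_0 T)\bigl(I-\log(\rho x)\Delta\bigr)T^{-1}$, which on multiplying out is precisely the $\tfrac14T(\cdots)T^{-1}$ matrix of the statement; the remaining series $\sum_{n\ge1}x^{n}\sum_{j}A^{*}_{jn}\log^{j}(\rho x)$ equals $(\rho x)^{\Lambda}U_0(\rho x)^{-\Lambda}$, and $U_0$ is produced from $(\tilde\Lambda_0,\tilde\Lambda_x,\Lambda,J)$ by repeated application of $\mathcal{I}$, of commutators, of multiplication by $\kappa t$, and of conjugation by $t^{\pm\Lambda}$. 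Passing to the $T$-basis, where $T^{-1}\tilde\Lambda_0 T$, $T^{-1}\tilde\Lambda_x T$, $\Delta=T^{-1}\Lambda T$ are free of $\theta_\infty$ and $T^{-1}JT=\bigl(\begin{smallmatrix}-1&0\\-\theta_\infty&1\end{smallmatrix}\bigr)$ carries no $\theta_\infty^{-1}$, all four operations preserve both parity in $\theta_x$ and the relevant polynomial ring up to the factor $(\theta_0^{2}-\theta_x^{2})^{-1}$; tracking coefficients as in Section \ref{ssc5.1} and Section \ref{ssc5.3} then yields $A^{*}_{jn}\in M_2(\Q_\theta[(\theta_0^{2}-\theta_x^{2})^{-1}])$, even in $\theta_x$, with the logarithmic degree bounded by $2n+2$ as in Proposition \ref{prop3.2}.

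I expect the identification step of the second paragraph to be the main obstacle: one must be sure that carrying out \eqref{3.1} with the \emph{new} triple $(\tilde\Lambda_0,\tilde\Lambda_x,\Lambda)$ --- rather than with the original $(\Lambda_0^{-},\Lambda_x^{-},\Lambda)$ --- lands the $\rho$-shifted solution in $\mathfrak{L}(D)^{2}$ with vanishing $x^{0}$-part, so that the uniqueness of the fixed point of \eqref{3.3} may legitimately be invoked. Concretely this amounts to checking that $(A_0^{-}+A_x^{-})(\rho e^{-c_\theta},x)\to\Lambda(=\tilde\Lambda)$ along the admissible curves and that the $x^{0}$-part of $(\rho x)^{-\Lambda}A_0^{-}(\rho e^{-c_\theta},x)(\rho x)^{\Lambda}$ is exactly $\tilde\Lambda_0$ --- both immediate from the displayed series --- together with the harmless shrinking of $\varepsilon_0$ needed to keep $\Omega^{*}(\varepsilon_0,\Theta_0)$ inside the domain of convergence of the shifted series. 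The remaining coefficient bookkeeping is routine but a little laborious.
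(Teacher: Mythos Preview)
Your approach is correct and is essentially the same as the paper's. Both arguments rest on the single key computation you carried out: conjugating $\Lambda_0^{\pm}$ by $e^{\mp c_\theta\Lambda}$ yields one and the same matrix $\tilde\Lambda_0$ (your displayed $T^{-1}\tilde\Lambda_0 T$ is exactly the paper's $\hat\Lambda_0(\tau)|_{\log\tau=0}$), which is even in $\theta_x$. The only difference is in how the conclusion is drawn from this. You package the $\rho$-shifted solution as the Proposition~\ref{prop3.2} solution attached to the new triple $(\tilde\Lambda_0,\tilde\Lambda_x,T)$ and appeal to \emph{uniqueness} of the fixed point of \eqref{3.3}. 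The paper instead substitutes $t=c_{\pm}\tau$ directly into the recursion \eqref{3.3}, rewrites it in terms of $\tau$, and shows by induction on $\nu$ that the pair $\bigl((c_{\pm}\tau)^{\Lambda}U_0^{(\nu)}(c_{\pm}\tau)^{-\Lambda},\,U_\infty^{(\nu)}\bigr)$ is already sign-independent at every finite stage; the limit is then automatically sign-independent. The paper's route is marginally more self-contained, since it avoids having to extract a uniqueness statement from the contraction argument in \cite[\S 5.2]{S2}, which is precisely the step you flagged as the main obstacle; your route is conceptually cleaner once that uniqueness is granted. Either way the explicit leading term and the form of the tail follow exactly as you describe.
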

\noindent
If $\theta_0=\theta_x\not=0,$ from
$$
Y_{11}Y_{12}=\frac
{(2(A_0)_{11} +\theta_0 + (\cdots) ) (2(A_0)_{12} +\theta_{\infty} + (\cdots))} 
{(2(A_0)_{11} -\theta_x +\theta_{\infty} + (\cdots) ) (2(A_0)_{12}  + (\cdots))} 
$$
we derive the expression
$$
y^+_{\mathrm{ilog}} (\rho, x) = 1 -\frac{2}{\theta_{\infty}} \log ^{-1}(\rho x)
+\sum_{n=1}^{\infty} x^n \sum_{j \ge 0}c_{jn}^+ \log^{n-j}(\rho x).
$$
\par
Let us suppose that $\theta_{\infty}=0$. Then we use Lemma \ref{lem4.2}, (2).
If $\Lambda=\Delta,$ 
\begin{align*}
&(A_0)_{11} =\mp \frac{\theta_x}2 +\frac 14(\theta_0^2 -\theta_x^2)\log(\rho x)
+(\cdots)_{11},
\\
&(A_0)_{12} =1 \pm \theta_x \log(\rho x) -\frac 14 (\theta_0^2 -\theta_x^2)
\log^2(\rho x) +(\cdots)_{12},
\\
& (A_x)_{11} = -(A_0)_{11} +(\cdots)_{11}, \quad
(A_x)_{12} = -(A_0)_{12} +1 +(\cdots)_{12}, 
\end{align*}
and hence, under the condition $\theta_0^2-\theta_x^2 \not=0$ or $\theta_x
\not=0,$
$$
- Y_{11}= 1+\frac{\theta_0 +\theta_x +(\cdots)_{11}}{2(A_0)_{11}-\theta_x
 +(\cdots)_{11}},
\quad
- Y_{12}= 1-\frac{1 +(\cdots)_{12}}{(A_0)_{12} +(\cdots)_{12}},
$$
where $(\cdots)_{11}$ (respectively, $(\cdots)_{12}$) denotes the sum of
$c^*_{nj} x^n\log^j(\rho x)$ for $n\ge 1$ and for $0\le j \le n(\theta) -1$
(respectively, $0\le j \le n(\theta)$).
If $\Lambda=\Delta_-,$
\begin{align*}
&(A_0)_{11} =\frac 14(\theta_0^2 -\theta_x^2)\pm \frac{\theta_x}2 
-\frac 14(\theta_0^2 -\theta_x^2)\log(\rho x) +(\cdots)_{11},
\\
&(A_0)_{12} =\frac 14 (\theta_0^2 -\theta_x^2) +(\cdots)_{12},
\\
& (A_x)_{11} = -(A_0)_{11}+(\cdots)_{11} , \quad
(A_x)_{12} = -(A_0)_{12} +(\cdots)_{12}, 
\end{align*}
and hence, under the condition $\theta_0^2-\theta_x^2\not=0,$
$$
- Y_{11}= 1+\frac{\theta_0 +\theta_x +(\cdots)_{11}}{2(A_0)_{11}-\theta_x
 +(\cdots)_{11}},
\quad
- Y_{12}= 1+(\cdots)_{12},
$$
where $(\cdots)_{11}$ (respectively, $(\cdots)_{12}$) denotes the sum of
$c^*_{nj} x^n\log^j(\rho x)$ for $n\ge 1$ and for $0\le j \le n(\theta) -1$
(respectively, $0\le j \le n(\theta)-2$).
The solutions $y^{(1)}_{\mathrm{ilog}}(\rho, x)$ and
$y^{(2)}_{\mathrm{ilog}}(\rho, x)$ in (3) are derived from the formulas
above for $\Lambda=\Delta$ and for $\Lambda=\Delta_-$, respectively. 
In case $\theta_0^2-\theta_x^2\not=0,$ we use Proposition \ref{prop5.0}. 
If $\Lambda=\Delta,$ $\theta_0+\theta_x =0$ and 
$(A_0)_{11} = -\theta_x/2 +(\cdots)_{11}$,
then $y^{(1)}_{\mathrm{ilog}}(\rho, x)$ follows. Thus we obtain the 
expressions in Theorem \ref{thm2.3}.
\par
{\it Proof of Proposition $\ref{prop5.0}$.} 
Note that $t^{\Lambda} \Lambda_0 t^{-\Lambda} = T \hat{\Lambda}_0(\tau)
T^{-1},$ $t^{\Lambda} \Lambda_x t^{-\Lambda} = T(\Delta- \hat{\Lambda}_0(\tau))
T^{-1},$ where $\tau = t \exp (\mp 2\theta_x (\theta_0^2-\theta_x^2)^{-1})$
and 
\begin{align*}
\hat{\Lambda}_0(\tau): & =t^{\Delta} T^{-1} \Lambda_0 T t^{-\Delta}
\\
&= \Delta + 
 \frac 14  \begin{pmatrix}
(\theta_0^2 -\theta_x^2) \log\tau  &
-(\theta_0^2 -\theta_x^2) \log^2\tau + 4\theta_x^2(\theta_0^2-\theta_x^2)
^{-1} 
\\
\theta_0^2 -\theta_x^2 &
-(\theta_0^2 -\theta_x^2) \log\tau  \end{pmatrix}
\\
&= \tau^{\Delta}    \begin{pmatrix}
0 & 1+ \theta_x^2(\theta_0^2-\theta_x^2)^{-1} 
\\
(\theta_0^2 -\theta_x^2)/4  & 0  \end{pmatrix}
\tau^{-\Delta}.
\end{align*}
Putting $t=c_{\pm}\tau,$ $\kappa'= c_{\pm} \kappa$ with $c_{\pm}=
 \exp (\pm 2\theta_x (\theta_0^2-\theta_x^2)^{-1})$ in \eqref{3.3}, we have
\begin{align*}
&U^{(0)}_{\infty} = U^{(0)}_{0} \equiv 0,
\\
&U^{(\nu+1)}_{\infty} = \mathcal{I} \left[ \kappa' \tau [J/2, T(\Delta 
- \hat{\Lambda}_0(\tau) ) T^{-1} - (c_{\pm}\tau)^{\Lambda} U^{(\nu)}_0
(c_{\pm}\tau)^{-\Lambda} + U^{(\nu)}_{\infty} ]\,\right],
\\
&(c_{\pm}\tau)^{\Lambda} U^{(\nu+1)}_{0} (c_{\pm}\tau)^{-\Lambda}
 = \tau^{\Lambda} \mathcal{I} \left[ \tau^{-\Lambda} 
[U_{\infty}^{(\nu+1)} , T 
 \hat{\Lambda}_0(\tau) T^{-1} + (c_{\pm}\tau)^{\Lambda} U^{(\nu)}_0
(c_{\pm}\tau)^{-\Lambda}  ] \tau^{\Lambda}   \right] \tau^{-\Lambda} ,
\end{align*}
since
$$
[t^{-\Lambda} U_{\infty}^{(\nu+1)} t^{\Lambda}, \Lambda_0 + U^{(\nu)}_0 ]
 = t^{-\Lambda} [U_{\infty}^{(\nu+1)}, t^{\Lambda} (\Lambda_0 + U^{(\nu)}_0)
t^{-\Lambda} ] t^{\Lambda}.
$$
Using this new recursive relation, we may inductively show that, for every
integer $\nu,$ 
$( (c_{\pm}\tau)^{\Lambda} U^{(\nu)}_{0} (c_{\pm}\tau)^{-\Lambda}, U_{\infty}
^{(\nu)} )$ does not depend on the choice of the sign $\pm.$
Write
$\hat{U}^{\infty}_0(\kappa', \tau):= \lim_{\nu\to\infty}  
 (c_{\pm}\tau)^{\Lambda} U^{(\nu)}_{0} (c_{\pm}\tau)^{-\Lambda}. $ 
Setting $A^*_0(\rho, x)
= T \hat{\Lambda}_0(\rho x) T^{-1} + \hat{U}^{\infty}_0(1/\rho, \rho x),$ 
which is equal to $A_0(c_{\pm}\rho, x) =A^-_0(c_-\rho, x),$ we arrive
at the conclusion of Proposition \ref{prop5.0}.
\hfill$\square$
\subsection{Proof of Theorem \ref{thm2.4}}\label{ssc5.4}
Suppose that $\theta_{\infty}=0.$ Let $\Lambda_0=-\Lambda_x$ 
be as in Lemma \ref{lem4.3}. Then $U_0$ and $U_{\infty}$ such that $A_0=
\Lambda_0 + U_0,$ $A_0 +A_x=U_{\infty}$ satisfy 
\begin{equation}\label{5.8}
\begin{split}
&t\frac{dU_0}{dt} = [U_{\infty}, \Lambda_0+U_0 ],
\\
&t\frac{dU_{\infty}}{dt} = t [J, -\Lambda_0 -U_0 + U_{\infty}]
\end{split}
\end{equation}
with $t=x/2$ (cf. \eqref{3.1}, \eqref{3.2}). 
\begin{prop}\label{prop5.1}
System \eqref{5.8} admits a solution $(U_0, U_{\infty})= (U^*_0(t),U^*
_{\infty}(t) )$ with
$$
U^*_0(t) =\sum_{j=1}^{\infty} U^0_j t^j , \quad
U^*_{\infty}(t) =\sum_{j=1}^{\infty} U^{\infty}_j t^j 
$$
holomorphic around $t=0.$ Here $U^0_j,$ $U^{\infty}_j \in M_2(\Q[\theta_0,a]),$
and $(U^*_{\infty}(t))_{11}= (U^*_{\infty}(t))_{22}\equiv 0.$
\end{prop}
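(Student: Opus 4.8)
The plan is to build the formal power-series solution of \eqref{5.8} by a triangular recursion, read off that its coefficients lie in $M_2(\Q[\theta_0,a])$, deduce the vanishing of the diagonal of $U^*_\infty$ from the shape of the second equation, and then prove convergence by the fixed-point scheme already used for Propositions \ref{prop3.1} and \ref{prop3.2}.

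First I would insert $U_0=\sum_{j\ge1}U^0_jt^j$ and $U_\infty=\sum_{j\ge1}U^\infty_jt^j$ into \eqref{5.8} and equate coefficients of $t^j$. The second equation yields $U^\infty_1=-[J,\Lambda_0]$ and, for $j\ge2$, $jU^\infty_j=[J,U^\infty_{j-1}-U^0_{j-1}]$; the first equation yields $jU^0_j=[U^\infty_j,\Lambda_0]+\sum_{k=1}^{j-1}[U^\infty_k,U^0_{j-k}]$. Thus $U^\infty_j$ is determined by $U^0_{j-1},U^\infty_{j-1}$, and then $U^0_j$ by $U^\infty_j$ together with the lower-order terms; the only denominators appearing are the positive integers $j$, so the formal solution exists and is unique. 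Since $\Lambda_0\in M_2(\Q[\theta_0,a])$ by Lemma \ref{lem4.3}, and the recursion uses only commutators (a $\Q$-bilinear operation) and division by positive integers, a trivial induction gives $U^0_j,U^\infty_j\in M_2(\Q[\theta_0,a])$ for every $j$.

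Next, the right-hand side of the second equation in \eqref{5.8} is $t[J,\,\cdot\,]$, and $[J,M]$ has zero $(1,1)$- and $(2,2)$-entries for every $M\in M_2$. Hence along any solution $t\frac d{dt}(U_\infty)_{11}=t\frac d{dt}(U_\infty)_{22}=0$, so these diagonal entries are constant; as $U_\infty$ vanishes at $t=0$ they vanish identically, giving $(U^*_\infty(t))_{11}=(U^*_\infty(t))_{22}\equiv0$. (Equivalently, the recursion above shows that each $U^\infty_j$ has zero diagonal.)

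It remains to prove convergence, which is the only step requiring real care. Following the proofs of Propositions \ref{prop3.1} and \ref{prop3.2}, one works in the Banach space of power series $\Phi=\sum_{n\ge1}C_nt^n$, $C_n\in M_2(\C)$, with norm $\|\Phi\|=\sum_{n\ge1}\|C_n\|\,r^n$, on which $\mathcal I[Ct^n]:=n^{-1}Ct^n$ is bounded with norm $\le1$, and runs the iteration \eqref{3.3} specialised to $\Lambda=\Lambda_0+\Lambda_x=0$ and $\kappa=2$; the contraction estimate is the routine majorant computation already carried out in \cite[\S5]{S2}. A clean way to see the same fact directly is to set $U_0=tZ$, $U_\infty=tV$, which turns \eqref{5.8} into the Briot--Bouquet system $t\,dZ/dt=-Z+[V,\Lambda_0]+t[V,Z]$, $t\,dV/dt=-V-[J,\Lambda_0]+t[J,V-Z]$ with holomorphic right-hand side near $t=0$; its associated constant linear operator has $-1$ as its only eigenvalue, which is not a positive integer, so there is a unique solution $(Z,V)$ holomorphic at $t=0$, and then $U^*_0(t)=tZ(t)$, $U^*_\infty(t)=tV(t)$ is the required holomorphic solution of \eqref{5.8}, with the stated coefficient ring and vanishing diagonal as already established.
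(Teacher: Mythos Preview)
Your proof is correct and essentially parallels the paper's. The paper builds the formal solution via the Picard-type iteration $U^{(n+1)}_\infty=\mathcal I[\,t[J,-\Lambda_0-U^{(n)}_0+U^{(n)}_\infty]\,]$, $U^{(n+1)}_0=\mathcal I[\,[U^{(n+1)}_\infty,\Lambda_0+U^{(n)}_0]\,]$ and observes that $U^{(n)}-U^{(n-1)}=O(t^n)$, which is exactly the triangularity of your direct coefficient recursion; the coefficient ring and the vanishing diagonal of $U^*_\infty$ are obtained in the same way. The only genuine difference is the convergence step: the paper simply invokes \cite[Theorem A]{GLS} once the formal solution is in hand, whereas you supply a self-contained argument via Briot--Bouquet (or the majorant scheme of \S\ref{sc3}), which is equally valid and arguably more transparent here since $\Lambda=0$ kills the $t^\sigma$/$\log t$ complications.
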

\begin{proof}
For $V= \sum_{j=0}^{\infty} V_j t^j \in M_2(\Q[\theta_0, a])[[t]]$ and
for $n\in \N$ write $V=O(t^n)$ if $V=\sum_{j=n}^{\infty} V_j t^j.$ 
If $V=O(t),$ we may set $\mathcal{I}[V] : =\sum_{j=1}^{\infty}
(V_j/j) t^j  \in M_2(\Q[\theta_0, a])[[t]]$, and then
$t(d/dt) \mathcal{I}[V]=V.$ By induction on $n$ we may define
$U^{(n)}_0,$ $U^{(n)}_{\infty} \in M_2(\Q[\theta_0,a])[[t]]$ $(n\ge 1)$ by
\begin{align*}
& U^{(0)}_{\infty} \equiv 0, \quad  U^{(0)}_{0} \equiv 0, 
\\
& U^{(n+1)}_{\infty} = \mathcal{I} [ t[ J, -\Lambda_0 -U^{(n)}_0 + U^{(n)}
_{\infty} ]\,],
\quad
 U^{(n+1)}_{0} = \mathcal{I} [ \,[ U^{(n+1)}_{\infty}, \Lambda_0 +U^{(n)}_0 
 ]\,].
\end{align*}
Indeed, supposing $U^{(n)}_{\infty} =O(t)$ and $U^{(n)}_0=O(t)$ we easily show
$U^{(n+1)}_{\infty}=O(t)$ and $U_0^{(n+1)}=O(t).$ Furthermore, since
\begin{align*}
& U^{(n+1)}_{\infty} - U^{(n)}_{\infty} 
= \mathcal{I} [ t[ J, -(U^{(n)}_0 -U^{(n-1)}_0)  + (U^{(n)}_{\infty} -
U^{(n-1)}_{\infty}) ]\,],
\\
& U^{(n+1)}_{0}-U^{(n)}_0  = \mathcal{I} [ \,[ U^{(n+1)}_{\infty} -U^{(n)}
_{\infty} , \Lambda_0 +U^{(n)}_0  ] +
[ U^{(n)}_{\infty}, U^{(n)}_0 -U^{(n-1)}_0  ]\,],
\\
& U^{(1)}_{\infty} -U^{(0)}_{\infty} =O(t), \quad
 U^{(1)}_{0} -U^{(0)}_{0} =O(t), 
\end{align*}
we have $U^{(n)}_{\infty} -U^{(n-1)}_{\infty}=O(t^n),$
$U^{(n)}_{0} -U^{(n-1)}_{0}=O(t^n)$ for $n\ge 1.$ Hence $U^*_{\infty}(t)
:=\lim_{n\to\infty}U^{(n)}_{\infty}$,
$U^*_{0}(t):=\lim_{n\to\infty}U^{(n)}_{0}$ are in $M_2(\Q[\theta_0, a])[[t]],$
and $(U^*_0(t), U^*_{\infty}(t))$ formally solves \eqref{5.8}. By \cite
[Theorem A]{GLS}, $U^*_{0}(t)$ and $U^*_{\infty}(t)$ are convergent around $t=0.$
It is easy to see that the diagonal part of $U^*_{\infty}(t)$ vanishes 
identically. 
\end{proof}
From the relations for $n=0,1$ in the proof above we have
\begin{align*}
&(U^{\infty}_1)_{11}=0, \quad (U^{\infty}_1)_{12}=2, \quad
(U^{0}_1)_{11}=4a(\theta_0+a), \quad (U^{0}_1)_{12}=-2(\theta_0 +2a), 
\\
&(U^{\infty}_2)_{11}=0, \quad (U^{\infty}_2)_{12}=2(\theta_0 +2a +1). 
\end{align*}
Note that $(A_0, A_x) =(\Lambda_0 + U^*_0(x/2), -\Lambda_0 -U^*_0(x/2) +
U^*_{\infty}(x/2))$ solves \eqref{2.9}. Here
\begin{align*}
&(A_0)_{11} =-(A_x)_{11} = \theta_0/2 +a + 2a(\theta_0 +a) x +\cdots,
\quad
(A_0)_{12} =-1  - (\theta_0 +2a) x +\cdots,
\\
&(A_x)_{12} =-(A_0)_{12} + x + ( 1/2) (\theta_0 + 2a +1) x^2 +\cdots,
\end{align*}
and hence $-Y_{12} =1+x+ (1/2)(1-\theta_0 -2a)x^2+\cdots.$ 
If $\theta_0=-\theta_x$,
then $-Y_{11}=1;$ and if $\theta_0=\theta_x,$ 
then $-Y_{11} = (a+\theta_0)a^{-1} (1-2\theta_0 x +\cdots).$ 
From these series we obtain $y^{\pm}_{\mathrm{Taylor}}(a,x).$
\section{Proof of Theorem \ref{thm2.5}}\label{sc6}
Recall the B\"{a}cklund transformation for (V) by Gromak \cite{Gromak} (see
also \cite[\S 39]{GLS}, \cite{Okamoto}).
\begin{lem}\label{lem6.1}
Let $y$ be a given solution of $(\mathrm{V})$ and
let $\pi$ be the substitution defined by \eqref{2.3}, that is,
$$
\pi: \,\,\,  (\theta_0 -\theta_x, \theta_0+\theta_x, \theta_{\infty}) 
\mapsto (1-\theta_{\infty}, 1-\theta_0+\theta_x, \theta_0 +\theta_x -1).
$$
Set
\begin{align*}
\hat{y}= \hat{B}(y):=
& 1-\frac{2xy}{xy' -(\theta_0-\theta_x +\theta_{\infty})y^2/2+(\theta_{\infty}
+x)y +(\theta_0-\theta_x -\theta_{\infty})/2}
\\
=& 1-\frac{2xy}{2(y-1)^2 (A_x)_{11} +\theta_x y^2 +2xy -\theta_x }.
\end{align*}
Then $\hat{y}^{\pi},$ which is the result of application of $\pi$ to $\hat{y}$,
solves $(\mathrm{V})$, that is,
\begin{align*}
&\hat{y}^{\pi} =\hat{B}(y)^{\pi} = {B}(y^{\pi}):
\\
&= 1-\frac{2xy^{\pi}}{x(y^{\pi})' -(\theta_0+\theta_x -\theta_{\infty})(y^{\pi})
^2/2+(\theta_{0} +\theta_x -1
+x)y^{\pi} +1-(\theta_0+\theta_x +\theta_{\infty})/2}
\end{align*}
is also a solution of $(\mathrm{V})$.
\end{lem}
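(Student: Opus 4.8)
The plan is to deduce the lemma from the isomonodromic origin of (V): since (V) is equivalent to the Schlesinger system \eqref{2.9} for the linear system \eqref{1.1} through the correspondence \eqref{2.10}, and B\"acklund transformations of (V) arise from rational gauge (Schlesinger) transformations of \eqref{1.1}, it suffices to exhibit such a gauge transformation realizing $\hat B$ followed by the relabeling $\pi$ of \eqref{2.3}.

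First I would check that the two displayed expressions for $\hat B(y)$ agree. For a solution $y$ of (V) coming from isomonodromy, differentiating \eqref{2.10} and using \eqref{2.9} together with properties (a), (b) yields a rational formula for $x\, dy/dx$ in terms of the entries of $A_0(x)$, $A_x(x)$ and $\theta_0, \theta_x, \theta_{\infty}$; substituting it into $xy' - (\theta_0-\theta_x+\theta_{\infty}) y^2/2 + (\theta_{\infty}+x) y + (\theta_0-\theta_x-\theta_{\infty})/2$ and simplifying with $\mathrm{tr}\, A_x = 0$, $\det A_x = -\theta_x^2/4$ and $(A_0+A_x)_{11} = -\theta_{\infty}/2$ should collapse it to $2(y-1)^2 (A_x)_{11} + \theta_x y^2 + 2xy - \theta_x$. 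This is a finite $2\times 2$ computation.

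For the substantive claim I would introduce a rational multiplier $R(\lambda) = R(\lambda, x)$ and set $\hat Y(\lambda) = R(\lambda) Y(\lambda)$, choosing $R$ by its prescribed pole and zero structure at $\lambda = 0, x, \infty$ (in the spirit of \cite{JM}, \cite{Okamoto}, \cite{GLS}) so that $\hat Y$ again has the normalized form \eqref{2.8} at $\lambda = \infty$ and the new residues $\hat A_0, \hat A_x$ satisfy (a), (b) with $(\theta_0, \theta_x, \theta_{\infty})$ replaced by their images under the (extended affine Weyl group) element corresponding to $\pi$. Since $R$ is single-valued in $\lambda$, the monodromy and Stokes data are preserved up to the explicit change induced by the shifted formal exponents, so the isomonodromy property persists; hence, as in Proposition \ref{prop3.3}, $(\hat A_0, \hat A_x)$ solves \eqref{2.9} with the new parameters, and by \eqref{2.10} applied to $\hat A_0, \hat A_x$ the resulting $\hat y$ solves (V) with those parameters. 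It then remains to eliminate the matrix entries: evaluating $\hat y = (\hat A_x)_{12} ( (\hat A_0)_{11} + \theta_0^{\pi}/2 ) / ( (\hat A_0)_{12} ( (\hat A_x)_{11} + \theta_x^{\pi}/2 ) )$ in terms of the original $A_0, A_x$ should reproduce exactly the rational expression $1 - 2x(\cdot)/(\cdot)$ defining $\hat B$, and, after accounting for the relabeling $\pi$ of the parameters appearing in (V), one obtains $\hat y^{\pi} = \hat B(y)^{\pi} = B(y^{\pi})$, which by construction solves (V).

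The main obstacle is pinning down $R(\lambda)$: the Schlesinger multiplier is not unique, and it must be fixed as the correct elementary root-lattice shift (a composition of one-step shifts of $\theta_0, \theta_x, \theta_{\infty}$ by $\pm 1$) so that the reading-off through \eqref{2.10} matches the particular rational form chosen by Gromak; this unwinds into several $2\times 2$ linear-algebra identities, and one must also verify that the denominator of $\hat B(y)$ is generically nonvanishing, the exceptional case in which it is divisible by $x$ being the one flagged in Remark \ref{rem2.6} and handled separately. As a self-contained alternative not using the linear system, the lemma can be verified by brute force: write (V) as the first-order Hamiltonian system for a pair $(y,z)$, put $u := \hat B(y)$, compute $xu'$ from that system using the explicit formula, and check that $(u, z_u)$ --- equivalently its $\pi$-image --- satisfies the same Hamiltonian system with the transformed parameters; this reduces to a polynomial identity in $y, y', x, \theta_0, \theta_x, \theta_{\infty}$ modulo (V), which can be confirmed directly, as in \cite{Gromak}.
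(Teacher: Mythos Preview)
Your plan is sound, but it far exceeds what the paper actually does. The paper does not prove Lemma~\ref{lem6.1}: it is stated as a known B\"acklund transformation and attributed to Gromak~\cite{Gromak}, with further references to \cite[\S 39]{GLS} and \cite{Okamoto}. The only verification carried out in the paper is the equivalence of the two displayed forms of $\hat B(y)$, and even that is done not by differentiating \eqref{2.10} through \eqref{2.9} as you suggest, but simply by quoting the identity
\[
2(A_x)_{11}(y-1)^2 = xy' -(\theta_0+\theta_{\infty})(y-1)^2 - xy + \tfrac12 (y-1)\bigl((\theta_0-\theta_x+\theta_{\infty})y - (3\theta_0+\theta_x+\theta_{\infty})\bigr)
\]
from \cite[(1.2)--(1.4)]{Andreev-Kitaev} and \cite{JM}, which when substituted into the second denominator recovers the first.

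Both routes you sketch --- the Schlesinger (rational gauge) transformation of \eqref{1.1} and the direct Hamiltonian verification --- are standard and correct derivations of Gromak's transformation; the latter is essentially how the cited references establish it. So there is no gap in your proposal, only a mismatch of scope: you are writing a proof where the paper writes a citation.
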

\noindent
The second expression of $\hat{B}(y)$ follows from 
\begin{align*}
2(A_x)_{11}(y-1)^2 =& xy' -(\theta_0+\theta_{\infty}) (y-1)^2 - xy
\\
& + (1/2) (y-1) ((\theta_0 -\theta_x +\theta_{\infty} ) y -
 (3 \theta_0 +\theta_x +\theta_{\infty} ) )
\end{align*}
(cf. \cite[(1.2), (1.3), (1.4)]{Andreev-Kitaev}, \cite{JM}). 
\par
Concerning the uniqueness of a solution of (V) near $x=0$ we have
\begin{lem}\label{lem6.2}
Let $\sigma,$ $\rho_0 \in \C\setminus\{0\}$ with $\im \sigma\not=0.$ Let
$L(r_0,\omega)_{\sigma}$ be the curve defined by \eqref{2.5}. 
If $0<\omega <1$ $($respectively, $1<\omega <2)$,
then a solution of $(\mathrm{V})$ such that 
$$
y(x)= 1+\rho_0 x^{-\sigma}(1+o(1))   \quad
(\text{respectively,} \,\,\, = 1+ \rho_0 x^{\sigma}(1+o(1))\,\,)
$$
as $x\to 0$ along $L(r_0,\omega)_{\sigma}$ is uniquely determined.
\end{lem}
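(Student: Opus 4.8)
**The plan is to prove Lemma \ref{lem6.2} by reducing the asymptotic normalization to a unique choice of integration constant in the convergent series solutions constructed in Theorem \ref{thm2.1}.** First I would observe that the condition $0<\omega<1$ (respectively $1<\omega<2$) is precisely what makes the curve $L(r_0,\omega)_\sigma$ enter the domain $\Omega^-_{\sigma,\rho}(\varepsilon_0)$ (respectively $\Omega^+_{\sigma,\rho}(\varepsilon_0)$) for a suitable $\rho$: along $L(r_0,\omega)_\sigma$ one has $(1+\re\sigma-\omega)\log|x|-\im\sigma\cdot\arg x=r_0$, so that $|\rho x^\sigma|$ behaves like a fixed power of $|x|$, namely $|\rho x^\sigma|\asymp|x|^{1-\omega}$ after absorbing $r_0$ into $\rho$. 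In the range $1<\omega<2$ this tends to $0$, placing us in $\Omega^+$ where $y_+(\sigma,\rho,x)=1+c^+_1(\sigma)\rho x^\sigma+\cdots$; in the range $0<\omega<1$ the quantity $|\rho x^\sigma|$ tends to $\infty$ while $|x(\rho x^\sigma)^{-1}|\to0$, placing us in $\Omega^-$ where $y_-(\sigma,\rho,x)=1+c^-_1(\sigma)(\rho x^\sigma)^{-1}+\cdots$. The leading term of each series reproduces the prescribed asymptotics $1+\rho_0 x^{\pm\sigma}(1+o(1))$ once $\rho$ is chosen so that $c^\pm_1(\sigma)\rho=\rho_0$ (equivalently $\rho^{-1}$ matched in the $\Omega^-$ case), which is possible and unique since the coefficients $c^\pm_1(\sigma)$ are nonzero rational functions of $\sigma$ under the genericity hypotheses inherited from Theorem \ref{thm2.1}.

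**The substantive step is the uniqueness, i.e.\ that no other solution has the same asymptotics along the curve.** For this I would argue as follows. Suppose $y(x)$ is any solution of (V) with $y(x)=1+\rho_0 x^{-\sigma}(1+o(1))$ along $L(r_0,\omega)_\sigma$ with $1<\omega<2$ (the other case being symmetric under $\sigma\mapsto-\sigma$, since $y_+(\sigma,\rho,x)\equiv y_-(-\sigma,\rho^{-1},x)$ as noted in Section \ref{ssc2.2}). Let $y_0(x)=y_+(\sigma,\rho,x)$ be the series solution with the matching leading coefficient. Then $w(x):=y(x)-y_0(x)=o(x^{-\sigma})$ along the curve. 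I would substitute $y=y_0+w$ into (V) and linearize: because (V) is a second-order ODE, $w$ satisfies a linear second-order equation with coefficients holomorphic in $x$ and in the convergent series $y_0$, whose indicial exponents at $x=0$ (computed from the $(A_0,A_x)$ structure, or directly from the known monodromy exponents $\pm\sigma/2$ of the linear system \eqref{1.1}) are $0$ and $-\sigma$ up to the relevant shift — concretely, the two homogeneous solutions of the variational equation behave like $x^{-\sigma}(1+\cdots)$ and $1+\cdots$ (or rather the two independent directions $\rho x^\sigma$ and $(\rho x^\sigma)^{-1}$ visible in the structure of Theorem \ref{thm2.1}). The hypothesis $w=o(x^{-\sigma})$ along $L(r_0,\omega)_\sigma$ forces the coefficient of the "large" solution branch to vanish, while the remaining branch, being $O(x^\epsilon)$ on the curve for some $\epsilon>0$ by the choice of $\omega$, combined with a Gronwall-type estimate propagated along the curve as $x\to0$, forces $w\equiv0$. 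The point is that $L(r_0,\omega)_\sigma$ with $1<\omega<2$ stays in a region where one exponential is genuinely dominant, so that the asymptotic condition selects a one-dimensional (in fact zero-dimensional, after also matching the next order) subspace.

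**The main obstacle I anticipate is making the Gronwall/fixed-point argument rigorous along a spiral curve rather than a sector.** Since $\im\sigma\neq0$, $L(r_0,\omega)_\sigma$ is a logarithmic spiral accumulating at $x=0$, so a naive contraction-mapping argument on a sector does not directly apply; one must work on a thin spiral strip (a neighbourhood of the curve inside $\Omega^+_{\sigma,\rho}(\varepsilon_0)$ or $\Omega^-_{\sigma,\rho}(\varepsilon_0)$) and estimate the Volterra-type integral operator obtained from the variational equation, integrating along the spiral. This is exactly the kind of estimate already carried out for the Schlesinger system in Section \ref{sc3} (the ring $\mathfrak{S}(D(\Sigma_0))$ and the operator $\mathcal{I}$ with $\|\mathcal{I}[\Phi]\|\le L_0\|\Phi\|$), so I would invoke or adapt those norm estimates rather than redo them; alternatively, one may bypass the ODE argument entirely by lifting to the isomonodromy side — a solution of (V) with the stated asymptotics determines, via \eqref{2.10}, a solution $(A_0,A_x)$ of the Schlesinger equation \eqref{2.9} with $A_0+A_x\to\Lambda$ and the correct leading behaviour, and Proposition \ref{prop3.1} together with the uniqueness of the fixed point of \eqref{3.3} in $\mathfrak{S}(D(\Sigma_0))$ then yields uniqueness of $(A_0,A_x)$, hence of $y$. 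I would present the isomonodromic route as the cleaner argument, reducing the proof to: (i) the asymptotics of $y$ along $L(r_0,\omega)_\sigma$ pin down the leading term $\Lambda$ and the parameter $\rho$ of the corresponding Schlesinger solution; (ii) the remainder lies in $\mathfrak{S}(D(\Sigma_0))$ and solves \eqref{3.3}; (iii) the contraction property forces it to coincide with the series solution already constructed.
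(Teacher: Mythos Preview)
Your proposal takes a different route from the paper and has a substantive gap in the part you flag as ``cleaner''. The paper's proof is self-contained and does not invoke Theorem~\ref{thm2.1} or the Schlesinger machinery at all: it applies the substitution $y=\tanh^2(u/2)$, which transforms (V) into $x(xu')'=f(x,e^{-u},xe^u)$ with $f$ holomorphic at the origin and $f(0,0,0)=0$. The asymptotic hypothesis $y=1+\rho_0 x^{\sigma}(1+o(1))$ translates directly into $u=-\sigma\log x-\log(-\rho_0/4)-v$ with $v=o(1)$ along the curve, and $v$ satisfies $x(xv')'=g(x,\rho_0 x^{\sigma},\rho_0^{-1}x^{1-\sigma},v)$ with $g$ holomorphic, Lipschitz in $v$, and of size $O(|x|^{\omega-1}+|x|^{2-\omega})$ along $L(r_0,\omega)_\sigma$. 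A double integration along the curve converts this to an integral equation, and the Lipschitz bound gives at once $\sup|v_2-v_1|=O(|x|^{\omega-1}+|x|^{2-\omega})\sup|v_2-v_1|$, hence uniqueness. No genericity on $(\theta_0,\theta_x,\theta_\infty,\sigma)$ and no comparison solution are needed.

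Your approach, by contrast, relies on Theorem~\ref{thm2.1} to produce a reference solution $y_0$, which imports the condition $(\sigma^2-(\theta_0\pm\theta_x)^2)(\sigma^2-\theta_\infty^2)\neq0$ that Lemma~\ref{lem6.2} does not assume. More seriously, your isomonodromic route has a gap at step~(ii): you assert that an \emph{arbitrary} solution $y$ with the prescribed asymptotics yields a Schlesinger pair $(A_0,A_x)$ whose remainder lies in $\mathfrak{S}(D(\Sigma_0))$, but this is exactly what is at stake---Proposition~\ref{prop3.1} constructs solutions in that space, it does not show that every solution with matching leading behaviour must live there. Without an a~priori bound on $y-y_0$ (equivalently on $U_0,U_\infty$) along the spiral, the contraction of Section~\ref{sc3} cannot be invoked and the argument is circular. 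Your linearization route is closer in spirit to what the paper does and could in principle be completed, but the $\tanh^2$ substitution is precisely the device that makes it clean: it turns the singular value $y=1$ into the regular condition $v\to0$, so that the ``Gronwall along a spiral'' you anticipate as an obstacle becomes a two-line estimate rather than an analysis of indicial exponents of a variational equation you have not written down.
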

\begin{proof}
By $y=\tanh^2(u/2),$ (V) is changed into
$ x(xu')' =  f(x, e^{-u}, xe^u) $ with
\begin{align*}
 f(x, e^{-u}, xe^u)
=& \frac 18 \Bigl((\theta_0-\theta_x +\theta_{\infty})^2\frac{\sinh(u/2)}
{\cosh^3(u/2)} - (\theta_0 -\theta_x -\theta_{\infty})^2 \frac{\cosh(u/2)}
{\sinh^3(u/2)} \Bigr)
\\
&+ \frac 12 (1-\theta_0 -\theta_x) x \sinh u + \frac{x^2}8 \sinh(2u),
\end{align*}
where $f(x,\xi, \eta)$ is holomorphic around $x=\xi=\eta=0$ and $f(0,0,0)=0.$
Note that $|\rho_0 x^{1+\sigma}| =O(|x|^{\omega})$ along $L(r_0,\omega)_{\sigma}
.$ Suppose that $1<\omega <2$ and that $y(x)=1+\rho_0 x^{\sigma}(1+o(1))
=1+O(|x|^{\omega-1})$ along $L(r_0, \omega)_{\sigma}.$ Let $u(x)$ and $v(x)$
be such that
$u(x)= -\sigma\log x-\log(-\rho_0/4) -v(x)$ with
$y(x)=\tanh^2(u(x)/2)=1- 4 e^{-u(x)}(1+O(e^{-u(x)}) ).$ They satisfy
$(y(x)-1)(\rho_0 x^{\sigma})^{-1} =e^{v(x)}(1+O(e^{-u(x)}) ) =e^{v(x)}
(1+O(|x|^{\omega-1})),$ which implies $v(x)=o(1)$ along $L(r_0,\omega)_{\sigma}.
$ Then $v=v(x)$ solves
$$
x(xv')' = g(x, \rho_0 x^{\sigma}, \rho_0^{-1} x^{1-\sigma}, v),
$$
where $g(x, \xi, \eta, v)=f(x, \xi e^v, \eta e^{-v})$. The function $g(x,\xi,
\eta,v)$ is holomorphic around $x=\xi=\eta=v=0$ and satisfies
$g(x,\xi,\eta,v)=O(|x|+|\xi|+|\eta|)$ and
$
g(x,\xi,\eta,\tilde{v}) -g(x,\xi,\eta, v)=O(|x|+|\xi|+|\eta|)|\tilde{v}-v|
$
if $|v|$ and $|\tilde{v}|$ are small. For $x, x_0 \in L(r_0,\omega)_{\sigma}$
$$
x_0 v'(x_0) -xv'(x) =\int_{L(x_0)\setminus L(x)} g(t, \rho_0 t^{\sigma},
\rho_0^{-1} t^{1-\sigma}, v(t)) \frac{dt}t,
$$
where $L(x)\subset L(r_0,\omega)_{\sigma}$ is a curve joining $0$ to $x$ given
by $t=\tau e^{i\theta(\tau)},$ $\tau=|t|,$ $0<\tau \le |x|$ with 
$\theta(\tau) = ((1+ \re \sigma -\omega)\log\tau - r_0)/\im \sigma.$
Observing that
$dt/\d\tau =O(1)$ and $g(t,\rho_0 t^{\sigma} , \rho_0^{-1} t^{1-\sigma},
v(t)) = O(\tau^{\omega-1} +\tau^{2-\omega})$ along $L(r_0,\omega)_{\sigma}
,$ we have $xv'(x)\to c_0$ as $x\to 0$ for some $c_0 \in \C.$ Since $v(x)=o(1),$
we have $c_0=0,$ and hence
$$
v(x)= \int_{L(x)} \int_{L(s)} 
 g(t, \rho_0 t^{\sigma} , \rho_0^{-1} t^{1-\sigma}, v(t) )
 \frac{dt}t \frac{ds}s.
$$
If $v_1(x),$ $v_2(x)=o(1)$ are solutions of this equation, then $\phi(x)=\sup
_{t\in L(x)} |v_2(t)-v_1(t) |$ satisfies $\phi(x)=O(|x|^{\omega-1}+|x|^{2-
\omega})\phi(x),$ which implies the uniqueness of $y(x).$
\end{proof}
By Remark \ref{rem2.2}, $\Omega^-_{\sigma,\rho}(\varepsilon_0)$ and
$\Omega^+_{\sigma,\rho}(\varepsilon_0)$ are spanned by $L(r_0,\omega)_{\sigma}$
with $0<\omega <1$ and $1<\omega <2$, respectively. Hence
$$
y(\sigma,\rho, x) = \begin{cases}
y_-(\sigma,\rho, x) \sim 1+ c(-\sigma) \rho^{-1} x^{-\sigma} 
& \quad \text{on} \,\, L(r_0,\omega)_{\sigma} \,\, \text{with} \,\,
0<\omega <1,
\\[0.3cm]
y_+(\sigma,\rho, x) \sim 1+ c(\sigma) \rho x^{\sigma} 
& \quad \text{on} \,\, L(r_0,\omega)_{\sigma} \,\, \text{with} \,\,
1<\omega <2
\end{cases}
$$
with $c(\sigma)$ given by \eqref{2.2}.
To $y(\sigma,\rho, x)$, $y_{\pm}(\sigma,\rho,x)$ and $c(\sigma)$, apply 
$\pi$ of Lemma \ref{lem6.1}, and denote the results by
$y^{\pi}(\sigma,\rho, x),$ $y^{\pi}_{\pm}(\sigma,\rho, x)$ and $\tilde{c}
(\sigma):=c^{\pi}(\sigma).$ Then the results of the B\"{a}cklund transformation
$y^*(\sigma, \rho, x):=B(y^{\pi}(\sigma,\rho, x)),$
$y^*_{\pm}(\sigma, \rho, x):=B(y^{\pi}_{\pm}(\sigma,\rho, x))$ also solve
(V) and satisfy
$$
y^*(\sigma,\rho, x) = \begin{cases}
y^*_-(\sigma,\rho, x) \sim 1+ \dfrac{2 \rho x^{1+\sigma}} 
{(1-\theta_{\infty}+\sigma)\tilde{c}(-\sigma) }
& \quad \text{on} \,\, L(r_0,\omega)_{\sigma} \,\, \text{with} \,\,
0<\omega <1,
\\[0.4cm]
y^*_+(\sigma,\rho, x) \sim 1+ \dfrac{2 \rho^{-1} x^{1-\sigma}} 
{(1-\theta_{\infty}-\sigma)\tilde{c}(\sigma) }
& \quad \text{on} \,\, L(r_0,\omega)_{\sigma} \,\, \text{with} \,\,
1<\omega <2.
\end{cases}
$$
From the facts above, for every $\nu \in \Z$ we derive 
the following expressions along the
curve $L(r_0,\omega)_{\sigma} = L(r_0,\omega-2\nu)_{\sigma-2\nu} = 
L(r_0,\omega-2\nu +1)_{\sigma -2\nu +1}$:
\par
(1) if $2\nu < \omega < 2\nu +1,$
$$
y(\sigma-2\nu, \rho, x) \sim 1+ c(2\nu -\sigma) \rho^{-1} x^{2\nu -\sigma};
$$
\par
(2) if $2\nu +1 < \omega < 2\nu +2,$
$$
y(\sigma-2\nu, \rho, x) \sim 1+ c(\sigma- 2\nu ) \rho x^{\sigma- 2\nu};
$$
\par
(3) if $2\nu -1 < \omega < 2\nu,$
$$
y^*(\sigma-2\nu+1, \hat{\rho}, x)
 \sim 1+ \frac{2 \hat{\rho} x^{\sigma- 2\nu+2}}
{(2-\theta_{\infty} -2\nu+\sigma)\tilde{c}(2\nu -1-\sigma)} ;
$$
\par
(4) if $2\nu < \omega < 2\nu+1,$
$$
y^*(\sigma-2\nu+1, \hat{\rho}, x)
 \sim 1+ \frac{2 \hat{\rho}^{-1} x^{2\nu-\sigma }}
{(2\nu-\theta_{\infty} -\sigma)\tilde{c}(\sigma- 2\nu +1)} .
$$
By Lemma \ref{lem6.2}, matching (1) with (4), we derive
\begin{equation}\label{6.1}
y(\sigma-2\nu,\rho, x) \equiv y^*(\sigma-2\nu+1, \hat{\rho}, x)
\end{equation}
if 
$$
c(2\nu -\sigma) \rho^{-1} =\frac{2\hat{\rho}^{-1} }
{(2\nu -\theta_{\infty} -\sigma) \tilde{c}(\sigma-2\nu +1)} .
$$
Similarly, from (2) and (3) we obtain
\begin{equation}\label{6.2}
y(\sigma-2\nu,\rho, x) \equiv y^*(\sigma-2\nu-1, \hat{\rho}, x)
\end{equation}
if 
$$
c(\sigma -2\nu) \rho =\frac{2\hat{\rho}}
{(\sigma -\theta_{\infty} -2\nu) \tilde{c}(2\nu +1-\sigma)} .
$$
From these relations Theorem \ref{thm2.5} follows.
\begin{rem}\label{rem6.1}
For $y^*(\sigma,\rho, x)$ we have
$$
y^*(\sigma -2\nu +1, \rho, x)\equiv y^*(\sigma-2\nu -1, \gamma^*(\sigma,
\nu)\rho, x),
$$
where
$$
\gamma^*(\sigma,\nu) =\frac 14 (\sigma-\theta_{\infty} -2\nu)(2\nu-\theta
_{\infty} -\sigma) c(\sigma-2\nu) c(2\nu -\sigma)
\tilde{c}(2\nu+1-\sigma) \tilde{c}(\sigma-2\nu+1).
$$
\end{rem}
\section{Proofs of Theorems \ref{thm2.1.a}, and 
\ref{thm2.6} through \ref{thm2.11}}\label{sc7}
\subsection{Proofs of Theorems \ref{thm2.6} through \ref{thm2.11}}\label{ssc7.1}
By \eqref{5.5} and \eqref{5.6} we write $Y_{11}$ and $Y_{12}$ as follows:
$$
-Y_{11}=\frac{F(x) \Phi_1(x) +O(x)}{G(x)\Psi_1(x) +O(x) }, \quad
-Y_{12}=\frac{G(x) \Phi_2(x) +O(x)}{F(x)\Psi_2(x) +O(x) }
$$
in $D_{\mathrm{even}}(\sigma,\rho, 0) =\{ x\in \mathcal{R} (\C\setminus\{0\});
\, |x|<\varepsilon_0, \,\, \varepsilon_0 < |\rho x^{\sigma} | < 
\varepsilon_0^{-1} \},$ where
\begin{align*}
& F(x)=(\sigma-\theta_{\infty}) (\sigma +\theta_0 -\theta_x) (\rho x^{\sigma})
^{1/2} - (\sigma +\theta_{\infty})(\sigma-\theta_0+\theta_x)(\rho x^{\sigma})
^{-1/2},
\\
& G(x)=(\sigma-\theta_{\infty}) (\rho x^{\sigma})^{1/2} 
+ (\sigma +\theta_{\infty})(\rho x^{\sigma})^{-1/2},
\\
& \Phi_1(x)= (\sigma -\theta_0 -\theta_x) (\rho x^{\sigma})^{1/2} 
+ (\sigma+\theta_0+\theta_x)(\rho x^{\sigma})^{-1/2},
\\
& \Psi_1(x)=((\sigma -\theta_x)^2 -\theta_0^2) (\rho x^{\sigma})^{1/2} 
- ((\sigma+\theta_x)^2 -\theta_0^2)(\rho x^{\sigma})^{-1/2},
\\
& \Phi_2(x)=(\sigma-\theta_{\infty})((\sigma-\theta_x)^2-\theta_0^2)
(\rho x^{\sigma})^{1/2}
+(\sigma +\theta_{\infty})((\sigma+\theta_x)^2-\theta_0^2)(\rho x^{\sigma})
^{-1/2},
\\
& \Psi_2(x)=(\sigma-\theta_{\infty})(\sigma-\theta_0-\theta_x)(\rho x^{\sigma})
^{1/2} - (\sigma +\theta_{\infty})(\sigma+\theta_0+\theta_x)(\rho x^{\sigma})
^{-1/2}.
\end{align*}
Let us seek the zeros of
$$
\Phi_1(x) =(\sigma -\theta_0-\theta_x)\rho(\rho x^{\sigma})^{-1/2}(x^{\sigma}
-\xi_0 ), \quad \xi_0 = -\frac{\sigma +\theta_0 +\theta_x}{\sigma-\theta_0
-\theta_x} \rho^{-1}.
$$
Set $r_0=\log |\xi_0|$ and $\mu_0 =\arg \xi_0.$ Then we have
$$
x^{\sigma}-\xi_0 = -2i \exp( (\sigma\log x+r_0 +i\mu_0)/2)
\sin( (i\sigma \log x +\mu_0 -ir_0)/2),
$$
in which, along $L(r_0,1)_{\sigma},$
$$
\frac 12 (i\sigma \log x +\mu_0 -ir_0)= -\frac 1{2\im \sigma} (|\sigma|^2
\log|x| -r_0 \re\sigma -\mu_0 \im \sigma ).
$$
Hence $\Phi_1(x)$ has zeros $x_n,$ $n\in\N$ such that
$
|\sigma|^2\log|x_n| - r_0\re\sigma -\mu_0\im\sigma = -2\pi |\im \sigma|n
$
on $L(r_0,1)_{\sigma}.$ Under the supposition of Theorem \ref{thm2.6} the
ratio of $\Phi_1(x)$ to any of the other five functions is not a constant, and
then $F(x_n), \ldots, \Psi_2(x_n)$ other than $\Phi_1(x_n)$ are nonzero
numbers independent of $n$, since $x_n^{\sigma}=\xi_0$. 
Using Rouch\'{e}'s theorem, by the same 
argument as that of \cite[Section 2.2.2]{S2} we may prove the existence of the
sequence of zeros $\{x^0_n\}_{n\in\N}$ in Theorem \ref{thm2.6}. The other
sequence $\{\hat{x}_n^0\}_{n\in\N}$ is obtained from zeros of $\Phi_2(x).$
Theorem \ref{thm2.7} is proved by using $\Psi_1(x)$ and $\Psi_2(x)$.
Theorem \ref{thm2.10} is also proved by using \eqref{5.7} 
by the same argument as above.
Theorem \ref{thm2.8} follows from the facts that $\Phi_1(x)/\Phi_2(x)$ is
a constant if $\theta_0-\theta_x-\theta_{\infty}=0$ and that then every
zero of a solution of (V) is double, and from analogous facts on poles.
\par
To prove Theorem \ref{thm2.9} we use the B\"{a}cklund transformation.
Let $y$ and $\hat{y}=\hat{B}(y)$ be as in Lemma \ref{lem6.1}. Then
$$
\frac{1}{\hat{y}} -1 = \frac{2 xy}{2(y-1)^2(A_x)_{11} +\theta_x (y^2-1)}.
$$
For $x\in D_{\mathrm{even}}(\sigma,\rho, 0)$ substitute
\begin{equation}\label{7.0}
y=y(\sigma, \rho, x)= \frac{\Phi_1(x)\Phi_2(x)F(x)G(x) +O(x) }
{\Psi_1(x)\Psi_2(x)F(x)G(x) +O(x) }
\end{equation}
into the right-hand side. Observing that
$$
(A_x)_{11} +\frac{\theta_x} 2 =\frac 1{8\sigma^2} (Y_{11})_{\mathrm{den}}
= - \frac 1{8\sigma^2} (G(x) \Psi_1(x) +O(x) )
$$
with $(Y_{11})_{\mathrm{den}}$ as in \eqref{5.5}, that
$\Phi_1(x)\Phi_2(x) -\Psi_1(x)\Psi_2(x) = 4\sigma^2(\sigma^2
-(\theta_0 +\theta_x)^2) ,$ and that $(\sigma^2-(\theta_0+\theta_x)^2) G(x)
-2\theta_x \Psi_2(x)=\Phi_2(x)$,
we obtain
\begin{equation}\label{7.1}
1-\frac 1{\hat{y}} = \frac{x(\Phi_1(x) \Phi_2(x) \Psi_1(x) \Psi_2(x)
F(x)^2 G(x)^2 +O(x) ) }
{2\sigma^2(\sigma^2 -(\theta_0+\theta_x)^2) \Phi_2(x) \Psi_1(x) 
F(x)^2 G(x)^2 +O(x) } .
\end{equation}
Under the supposition
\begin{align}\label{7.2}
&\theta_0 (\theta_0+\theta_x -\theta_{\infty})
((\theta_0-\theta_x)^2-\theta_{\infty}^2)(\sigma^2-\theta_{\infty}^2) 
\\
\notag
&\phantom{----}
\times
(\sigma^2-(\theta_0 \pm \theta_x)^2) (\theta_0^2 -\theta_x^2 +\sigma^2
- 2\theta_{\infty}\theta_0 )\not=0,
\end{align}
which is a part of those of Theorems \ref{thm2.6} and \ref{thm2.7},
the ratio of $\Phi_1(x)$ or of $\Psi_2(x)$ to any of the other five functions is
not a constant and its zeros have the same property as that of $x_n$ above. 
Setting $(\sigma, \rho)=(\hat{\sigma}, \hat{\rho})$ 
in \eqref{7.1} and \eqref{7.2},
we apply $\pi$ of Lemma \ref{lem6.1} to them. 
Then \eqref{7.2} becomes \eqref{2.6.a} with 
$\sigma=\hat{\sigma}-1$.
From the result of the application of $\pi$ to \eqref{7.1} with $(\hat{\sigma},
\hat{\rho})$ we conclude that 
$
y^*(\hat{\sigma},\hat{\rho}, x) = B({y}^{\pi}(\hat{\sigma}, \hat{\rho}, x)) 
= \hat{y}^{\pi}(\hat{\sigma}, \hat{\rho}, x) 
$
solving (V) has $1$-points near the zeros of 
$\Phi^{\pi}_1(x)$ and $\Psi^{\pi}_2(x).$ 
By \eqref{6.1} with $\nu=0$ we have $y(\sigma,\rho, x) \equiv y^*(\hat{\sigma},
\hat{\rho}, x)$ if 
$$
\hat{\sigma} =\sigma+1, \quad \hat{\rho}^{-1}= -(\theta_{\infty}+
\sigma) \tilde{c}(\sigma +1) c(-\sigma) \rho^{-1}/2.
$$ 
Observing that $\Phi^{\pi}_1(x)$ (respectively,
$\Psi^{\pi}_2(x)$) vanishes if 
$$
x^{\hat{\sigma}}=\xi_1 = -\frac{\hat{\sigma} +1 -\theta_0 +\theta_x}  
{\hat{\sigma} -1 +\theta_0 -\theta_x} \hat{\rho}^{-1} \,\,\,\,
\quad \Bigl(\text{respectively,} \,\,\, 
x^{\hat{\sigma}}=\hat{\xi}_1 = -  \frac{\hat{\sigma}-1 +\theta_0 +\theta_x}  
{\hat{\sigma}+1 -\theta_0 -\theta_x } \xi_1 \, \Bigr),
$$ 
and that $L(r_1,1)_{\hat{\sigma}} =L(r_1, 1)_{\sigma +1} = L(r_1,0)_{\sigma}$
with $r_1=\log|\xi_1|,$ we obtain Theorem \ref{thm2.9}.
By the expressions of $y_{\sigma_0}(\rho, x)$ in $\Omega^-(\sigma_0,\varepsilon
_0)$ of Theorem \ref{thm2.2}, 
$
y_{\sigma_0}(\rho, x) \sim 1+ c^*(\sigma_0) \rho^{-1} x^{-\sigma_0}$
on $L(r_0,\omega)_{\sigma_0}$ with $0<\omega <1$ in each case, $r_0$ being
a fixed number. On the other hand
$$
y^*(\sigma_0+1, \hat{\rho}, x) 
=y^*_+(\sigma_0+1, \hat{\rho}, x) 
\sim 1- \frac{2\hat{\rho}^{-1} x^{-\sigma_0} }
{(\theta_{\infty} +\sigma_0) \tilde{c}(\sigma_0 +1) }
$$
on $L(r_0,\omega+1)_{\sigma_0+1}= L(r_0,\omega)_{\sigma_0}$ with $0<\omega <1$ 
(cf. Section \ref{sc6}). Hence
$y_{\sigma_0}(\rho, x) \equiv y^*(\sigma_0
+1, \hat{\rho}, x)$ with $-(\theta_{\infty}+\sigma_0) c^*(\sigma_0) \tilde{c}
(\sigma_0 +1) \hat{\rho} =2\rho.$ 
Using this fact we may show Theorem \ref{thm2.11}.
\subsection{Proof of Theorem \ref{thm2.1.a}}\label{ssc7.2}
If $\theta_0-\theta_x=\theta_{\infty}=0$, then by \cite[Theorem 5.6]{S}
equation (V) admits a family of solutions given by
$$
 \tanh^2 \Bigl( \Bigl(\frac 12 -\tilde{\sigma} \Bigr)\log x + \frac 12 \log
  \tilde{\rho} + s_{\mathrm{V}}(\tilde{\sigma}, x, \tilde{\rho}^{-1/2}
x^{\tilde{\sigma}}, \tilde{\rho}^{1/2} x^{1-\tilde{\sigma}}) \Bigr) 
$$
with $ \tilde{\sigma} \in \tilde{\Sigma} \subset \C\setminus (\{ \tilde
{\sigma} \le 0\}\cup \{\tilde{\sigma} \ge 1\}),$ $ \tilde{\rho}\in
\C\setminus \{ 0\} $, $\tilde{\Sigma}$ being a bounded domain such 
that $\mathrm{dist} (\tilde{\Sigma}, \{ \tilde
{\sigma} \le 0\}\cup \{\tilde{\sigma} \ge 1\}) >0.$ 
The series
$$
  s_{\mathrm{V}}(\tilde{\sigma}, x, \xi, \eta) = \sum_{i \ge 1} c^0_i
(\tilde{\sigma}) x^i 
  + \sum_{i\ge 0, \, j \ge 1} c^{1}_{ij}(\tilde{\sigma}) x^i \xi^{2j} 
  + \sum_{i\ge 0, \, j \ge 1} c^{2}_{ij}(\tilde{\sigma}) x^i \eta^{2j} 
$$
with $c_i^0(\tilde{\sigma})$, $c_{ij}^1(\tilde{\sigma})$,  
$c_{ij}^2(\tilde{\sigma}) \in \Q[\theta_0](\tilde{\sigma})$ converges if
$|x|$, $|\xi|,$ $|\eta|$ are sufficiently small. Note that we have replaced 
$(\xi, \eta)$ in \cite[Theorem 5.6]{S} by $(\xi^2,\eta^2)$ since, under the
condition $\theta_0-\theta_x=\theta_{\infty}=0$, 
the local equation in \cite[\S 5]{S} equivalent to (V)
is written in terms of
$F(x,\xi, \eta)= (1/32)(\eta^2-\xi^2) (4(1-\theta_0 -\theta_x) +
(\eta^2+\xi^2) ).$
Putting $\sigma=1-2\tilde{\sigma} \in \Sigma_0$, we have
$$
 \tanh^2 \Bigl( \frac 12 \log (\tilde{\rho}x^{\sigma}) 
  + s_{\mathrm{V}}((1-{\sigma})/2, x, (\tilde{\rho}^{-1}
x^{1-{\sigma}})^{1/2}, (\tilde{\rho} x^{1+{\sigma}})^{1/2} ) 
 \Bigr), 
$$
which satisfies
$$
 =\begin{cases}
 1-4\tilde{\rho} x^{\sigma}(1+o(1) )
 & \quad \text{in $D_+(\sigma,\tilde{\rho},  0)$},
 \\[0.2cm]
 1-4(\tilde{\rho} x^{\sigma})^{-1}(1+o(1) ) & \quad 
 \text{in $D_-(\sigma,\tilde{\rho}, 0)$}.
 \end{cases}
$$
Since $( (\tilde{\rho}^{\pm 1} x^{1\pm \sigma})^{1/2} ) ^{2j} = x^j
(\tilde{\rho} x^{\sigma})^{\pm j}$, $s_{\mathrm{V}}(\cdots)$ has the form
$\sum_{n=1}^{\infty} x^n \sum_{j=-n}^{n} c^*_{nj}(\tilde{\rho} x^{\sigma})^j$.
Noting that $c(\sigma)= -4 (2\theta_0 -\sigma) (2\theta_0 +\sigma)^{-1},$
we compare the behaviour above with that of $y(\sigma, \rho, x)$ to obtain
\eqref{2.4}.
By Lemma \ref{lem6.1},
$
\hat{y}= \hat{B}(y) =1-{2y}/{(y' + y)}.
$
Substitution of \eqref{2.4} with $(\sigma,\rho) =(\hat{\sigma}, \hat{\rho} )$
yields
$$
\hat{y}(\hat{\sigma},\hat{\rho} , x) = 1- \frac
{2 x \sinh (\log ({\breve{\rho}_-} x^{\hat{\sigma}} ) +\Sigma_{\mathrm{V}}
(x) ) }
{2 \hat{\sigma} + 2x \Sigma_{\mathrm{V}}'(x)  +
 x \sinh (\log ({\breve{\rho}_-} x^{\hat{\sigma}} ) +\Sigma_{\mathrm{V}}
(x) ) }
$$
with ${\breve{\rho}_-} = (2\theta_0 -\hat{\sigma})
 (2\theta_0 +\hat{\sigma})^{-1} \hat{\rho},$ where
$$
  \Sigma_{\mathrm{V}}(x) = 2s_{\mathrm{V}}
((1-\hat{\sigma})/2, x, ({\breve{\rho}}^{-1}_-
x^{1-\hat{\sigma}})^{1/2}, ({\breve{\rho}}_-
 x^{1+\hat{\sigma}})^{1/2} ) 
$$
and $\Sigma'_{\mathrm{V}}(x)= (d/dx) \Sigma_{\mathrm{V}}(x).$ Apply $\pi$
to both sides. Observing that ${\breve{\rho}}:=  
({\breve{\rho}}_-)^{\pi} = (2 -\hat{\sigma})
 (2 +\hat{\sigma})^{-1} \hat{\rho},$ and that $y^*(\hat{\sigma}, \hat{\rho},
x) = \hat{y}^{\pi} (\hat{\sigma}, \hat{\rho}, x)$ coincides with $y(\sigma, 
\rho, x) $ if $\hat{\sigma}=\sigma+1$ and
$$
\hat{\rho}^{-1} = - \sigma \tilde{c}(\sigma+1) c(-\sigma) \frac{\rho^{-1}} 2
=  \frac{8\sigma (\sigma +1)^2 } {(\sigma +2)(2\theta_0 -\sigma)} \rho^{-1},
$$
we obtain \eqref{2.3.a}.

\section{Monodromy data for the isomonodromy deformation}\label{sc8}
Let $(A_0(x),A_x(x))$ be the solution in Proposition \ref{prop3.1} or
\ref{prop3.2} yielding each solution of (V) in Section \ref{sc2} (cf.
Proposition \ref{prop3.3} and Section \ref{sc5}).
The associated linear system \eqref{1.1} admits the matrix solution
$$
Y(\lambda,x) =(I+O(\lambda^{-1}) ) e^{(\lambda/2)J} \lambda^{-(\theta_{\infty}
/2)J} \quad\quad\quad
 \lambda\to \infty, \,\,\,  -\pi/2 < \arg \lambda < 3\pi/2
$$
having the isomonodromy property. 
Set $\tilde{\rho}=\rho_{*}^{1/\sigma}$ with
$$
\rho_* := \frac{(\theta_0 -\theta_x +\sigma)(\theta_0 +\theta_x-\sigma)}
{2\sigma (\sigma +\theta_{\infty}) } \rho
$$
if $(A_0(x), A_x(x))$ yields $y(\sigma,\rho, x)$ (cf. \eqref{5.4}), 
$\tilde{\rho}=\rho^{1/\sigma_0}$ if it yields
$y_{\sigma_0}(\rho, x)$, and $\tilde{\rho}=\rho$ if $\sigma=0.$  
Then by Proposition \ref{prop3.1} or \ref{prop3.2}, $A_0$ and $A_x$ satisfy 
$$
x^{-\Lambda} A_0 x^{\Lambda} \to \tilde{\Lambda}_0:= \tilde{\rho}^{\Lambda}
\Lambda_0 \tilde{\rho}^{-\Lambda}, \quad
x^{-\Lambda} A_x x^{\Lambda} \to \tilde{\Lambda}_x:= \tilde{\rho}^{\Lambda}
\Lambda_x \tilde{\rho}^{-\Lambda},
\quad
A_0 + A_x \to \Lambda
$$
as $x \to 0,$ the eigenvalues of $\Lambda$ being $\pm \sigma/2$ with $\sigma
\in (\C\setminus\Z)\cup \{0\}.$ 
\par
Let us apply the argument of \cite[\S 2]{Jimbo} to our case. By \cite{SMJ}
$$
\hat{Y}(\lambda) := \lim_{x\to 0} Y(\lambda, x) \quad \text{and} \quad
\tilde{Y}(\lambda) := \lim_{x\to 0} x^{-\Lambda} Y(x\lambda, x) 
$$
solve
\begin{align}\label{8.1}
&\frac{d\hat{Y}}{d\lambda} = \Bigl( \frac{\Lambda}{\lambda} + \frac J2 \Bigr)
\hat{Y}
\intertext{and}
\label{8.2}
&\frac{d\tilde{Y}}{d\lambda} = \Bigl( \frac{\tilde{\Lambda}_0}{\lambda}
+ \frac{\tilde{\Lambda}_x}{\lambda - 1} \Bigr)\tilde{Y},
\phantom{-------}
\end{align}
respectively.
Since $\sigma \in (\C\setminus \Z)\cup \{0\},$ we may choose a matrix solution
of \eqref{8.1} such that 
\begin{align*}
 \hat{Y}_0(\lambda) & = (I+O(\lambda^{-1}) ) e^{(\lambda/2)J} \lambda^{-(\theta
_{\infty}/2) J} & \quad &\lambda \to \infty,&        
\\
  & = (I+O(\lambda) ) \lambda^{\Lambda} C & \quad &\lambda \to 0,&        
\end{align*}
and of \eqref{8.2} such that
\begin{align}\label{8.4}
 \tilde{Y}_0(\lambda) & = (I+O(\lambda^{-1}) ) \lambda^{\Lambda}
 & \quad &\lambda \to \infty,&        
\\
\notag
  & = G_0(I+O(\lambda) ) \lambda^{(\theta_0/2)J} \lambda^{\Delta_0} C^{(0)} 
& \quad &\lambda \to 0,&        
\\
\notag
  & = G_x(I+O(\lambda -1))(\lambda-1)^{(\theta_x/2)J}(\lambda-1)^{\Delta_x} 
C^{(x)} 
& \quad &\lambda \to 1&        
\end{align}
for $0 <\arg\lambda <\pi,$ $0<\arg(\lambda -1) <\pi,$ where $C$, 
$C^{(0)},$ $C^{(x)},$
$G_0,$ $G_x$ are some invertible matrices, and, for each $\iota=0, x$, 
the matrix $\Delta_{\iota}$ equals 
$0$ if $\theta_{\iota}\not\in \Z,$ $\epsilon_{\iota}\Delta$ if
$\theta_{\iota} \in \N\cup \{0\},$ and $\epsilon_{\iota}\Delta_-$ if
$-\theta_{\iota} \in \N$ with $\epsilon_{\iota} \in \C.$ 
Then by the same argument as in the proof of \cite[Proposition 2.1]{Jimbo}
$$
\hat{Y}(\lambda) = \lim_{x\to 0} Y(\lambda, x) =\hat{Y}_0(\lambda), \quad
\tilde{Y}(\lambda)= \lim_{x\to 0} x^{-\Lambda} Y(x\lambda, x) 
  =\tilde{Y}_0(\lambda) C
$$
as long as $|x^{1+\sigma}|, |x^{1-\sigma}| \to 0,$ and 
\begin{align*} 
  Y(\lambda, x) & = G_0(x) (I+O(\lambda)) \lambda^{(\theta_0/2)J}
 \lambda^{\Delta_0} C^{(0)} C 
& \quad &\lambda \to 0,&        
\\
&= G_x(x) (I+O(\lambda -x))(\lambda-x)^{(\theta_x/2)J}(\lambda-x)^{\Delta_x}
 C^{(x)} C & \quad &\lambda \to x,&        
\end{align*}
where $G_0(x)$ and $G_x(x)$ are invertible matrices. Therefore the 
monodromy matrices $M_0,$ $M_x$ defined in Section \ref{ssc2.3} are written as
follows:
\begin{equation}\label{8.7}
M_{\iota} = \begin{cases}
(C^{(\iota)} C)^{-1} e^{\pi i \theta_{\iota} J} C^{(\iota)} C  \quad &
\theta_{\iota} \not\in \Z,
\\
(-1)^{\theta_{\iota}} (C^{(\iota)} C)^{-1} e^{2\pi i \Delta_{\iota}}
 C^{(\iota)} C  \quad &
\theta_{\iota} \in \Z
\end{cases}
\end{equation}
$(\iota=0,x).$ Monodromy data for each solution will be computed by using
this fact.
\begin{rem} \label{rem8.1}
The matrices $C^{(\iota)}$ $(\iota=0, x)$ are not determined uniquely. Indeed,
say, if $\theta_{\iota}
\not\in \Z,$ we may take $G_{\iota}(x) D_0^{-1},$ $D_0C^{(\iota)}$ with any
invertible diagonal matrix $D_0$ instead of $G_{\iota}(x)$, $C^{(\iota)}.$
\end{rem}
\begin{rem}\label{rem8.2}
In our setting of isomonodromy deformation for system \eqref{1.1} with $x\not
=0$, 
the monodromy data and $y$ are the same as those in \cite{Andreev-Kitaev}, and
the unknown variables corresponding to $z=z_{\mathrm{AK}}$ and 
$u=u_{\mathrm{AK}}$
in \cite{Andreev-Kitaev} are $z_{\mathrm{AK}}$ and
$x^{-\theta_{\infty}}u_{\mathrm{AK}}$, respectively. 
\end{rem}
\section{Connection formulas for the Whittaker and the hypergeometric
systems}\label{sc9}
\subsection{Whittaker system}\label{ssc9.1}
Since $(A_0(x), A_x(x))$ mentioned in Section \ref{sc8} is obtained by using
$\Lambda_0,$ $\Lambda_x$, $T$, $\Lambda$ in Lemma \ref{lem4.1} or \ref{lem4.2},
our concern is \eqref{8.1} corresponding to such matrices.
The connection matrix $C$ for \eqref{8.1} is given by the following.
\begin{prop}\label{prop9.1}
$(1)$ Suppose that $\sigma\not\in \Z.$ For $\Lambda,$ $T$ as in Lemma
$\ref{lem4.1}$, $C= T C_{\infty}$ with 
$$
C_{\infty} = \begin{pmatrix}
-\dfrac{e^{-\pi i(\sigma+\theta_{\infty})/2} \Gamma(-\sigma)}
{\Gamma(1-(\sigma-\theta_{\infty})/2)}  &
-\dfrac{\Gamma(-\sigma)}
{\Gamma(1-(\sigma+\theta_{\infty})/2)}
  \\[0.3cm]
-\dfrac{e^{\pi i(\sigma-\theta_{\infty})/2} \Gamma(\sigma)}
{\Gamma((\sigma+\theta_{\infty})/2)}  &
\dfrac{\Gamma(\sigma)}
{\Gamma((\sigma-\theta_{\infty})/2)} 
\end{pmatrix}.
$$
\par
$(2)$ Suppose that $\sigma=0.$ For $\Lambda,$ $T$ as in Lemma
$\ref{lem4.2}$, $C=T C_{\infty}$ with $C_{\infty}$ given as follows$:$
\par
$(\mathrm{i})$ if $\theta_{\infty}\not=0,$
$$
C_{\infty} =  \begin{pmatrix}
\dfrac{e^{-\pi i\theta_{\infty}/2}(\psi(1+\theta_{\infty}/2) -2\psi(1) -\pi i)}
{\Gamma(1 + \theta_{\infty}/2)}
  &
\dfrac{ \psi(-\theta_{\infty}/2) -2\psi(1)}
{\Gamma(1 - \theta_{\infty}/2)}
 \\[0.3cm] 
\dfrac{e^{-\pi i\theta_{\infty}/2} }
{\Gamma(1+ \theta_{\infty}/2)}   &
 \dfrac{1 }
{\Gamma(1 -\theta_{\infty}/2)}
\end{pmatrix} ;
$$
\par
$(\mathrm{ii})$ if $\theta_{\infty}=0$ and $\Lambda=\Delta,$
then $C_{\infty} =I- \psi(1) \Delta;$
\par
$(\mathrm{iii})$ if $\theta_{\infty}=0$ and $\Lambda=\Delta_-,$
then $C_{\infty} =(1- \pi i- \psi(1))(I+J)/2 +\Delta + \Delta_-.$
\end{prop}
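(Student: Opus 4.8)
The plan is to reduce the $2\times2$ system \eqref{8.1} to a scalar confluent hypergeometric (Whittaker) equation, whose connection problem between the solution that is holomorphic-like at $\lambda=0$ and the one with prescribed exponential behaviour at $\lambda=\infty$ is classical. First I would apply the gauge transformation $\hat Y = T\Phi$ suggested by property $(\mathrm{P}.3)$ (resp.\ $(\mathrm{P}.3')$), turning \eqref{8.1} into $d\Phi/d\lambda = \bigl((\sigma/2)\lambda^{-1}J + \tfrac12 T^{-1}JT\bigr)\Phi$, where $T^{-1}JT$ is computed explicitly from Lemma \ref{lem4.1} (resp.\ Lemma \ref{lem4.2}). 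Eliminating the second component of $\Phi$ gives a second-order linear ODE for $\Phi_1$, and the substitution $\Phi_1 = \e^{-\lambda/2}\lambda^{a}w(\lambda)$ with a suitable exponent $a$ brings it to Kummer's equation $\lambda w'' + (b-\lambda)w' - cw = 0$ with $b,c\in\Q_\theta(\sigma)$ — equivalently Whittaker's equation with parameters $\kappa,\mu$ linear in $\theta_{\infty}$ and $\sigma$. The Frobenius exponents at $\lambda=0$ are $\pm\sigma/2$, and when $\sigma\notin\Z$ the corresponding solutions are, after $T$-conjugation, the Kummer functions $M_{\kappa,\pm\mu}$, while the solution at $\lambda=\infty$ with asymptotics $\e^{(\lambda/2)J}\lambda^{-(\theta_{\infty}/2)J}$ through $-\pi/2<\arg\lambda<3\pi/2$ corresponds to the pair $W_{\kappa,\mu}(\lambda)$, $W_{-\kappa,\mu}(\e^{\mp\i\pi}\lambda)$.

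Next I would impose the normalizations. The requirement $\hat Y_0(\lambda) = (I+O(\lambda))\lambda^{\Lambda}C$ as $\lambda\to0$, i.e.\ $\hat Y_0 C^{-1} = (I+O(\lambda))\,T\lambda^{(\sigma/2)J}T^{-1}$, fixes the normalization of the Frobenius pair at $0$ (up to the diagonal ambiguity of Remark \ref{rem8.1}), while the requirement $\hat Y_0(\lambda) = (I+O(\lambda^{-1}))\e^{(\lambda/2)J}\lambda^{-(\theta_{\infty}/2)J}$ at $\infty$ fixes the $W$-solutions. Expanding each $W$-solution in the $M$-basis by the standard Kummer/Whittaker connection formula — whose coefficients are ratios $\Gamma(\mp\sigma)/\Gamma(\cdots)$ with the remaining Gamma arguments built from $\tfrac12\pm\tfrac{\sigma\pm\theta_{\infty}}2$ — and substituting the explicit $\kappa,\mu$, one reads off the four entries of $C_{\infty}$; the exponential prefactors $\e^{\mp\i\pi(\sigma\pm\theta_{\infty})/2}$ come precisely from the chosen branch of $\lambda^{-\sigma/2}$ and from the rotated argument $\e^{\mp\i\pi}\lambda$ needed for that sector. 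This yields part $(1)$, with $C=TC_{\infty}$.

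For part $(2)$, with $\sigma=0$ the residue $\Lambda$ of Lemma \ref{lem4.2} is nilpotent with $T^{-1}\Lambda T=\Delta$ (or $\Delta_-$), so $\lambda^{\Lambda}=I+(\log\lambda)\Lambda$ and the Whittaker equation becomes resonant ($\mu=0$). I would obtain $C_{\infty}$ as the limit $\sigma\to0$ of the connection matrix $C=TC_{\infty}$ of part $(1)$: using $\Gamma(\pm\sigma)=\pm\sigma^{-1}+\psi(1)+O(\sigma)$ together with $\Gamma(1-(\sigma\pm\theta_{\infty})/2)^{-1}=\Gamma(1\mp\theta_{\infty}/2)^{-1}\bigl(1\pm\tfrac\sigma2\psi(1\mp\theta_{\infty}/2)+O(\sigma^{2})\bigr)$ and similar expansions, the $\sigma^{-1}$ poles of $C$ combine with the $\log\lambda$ coming from $\lambda^{\Lambda}$, and the finite parts assemble into the $\psi$-combinations displayed in $(2)(\mathrm{i})$. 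Letting $\theta_{\infty}\to0$ afterwards gives the $\theta_{\infty}=0$ cases, the structure $(1-\pi\i-\psi(1))(I+J)/2$ arising from the further confluence; for $\Lambda=\Delta_-$ one first conjugates by the matrix appearing in Lemma \ref{lem4.2}(2), which exchanges the roles of $\Delta$ and $\Delta_-$.

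The main obstacle I expect is the normalization and branch bookkeeping in the two matching steps: making sure the fundamental solution $(I+O(\lambda))\lambda^{\Lambda}C$ at $\lambda=0$ and the solution $(I+O(\lambda^{-1}))\e^{(\lambda/2)J}\lambda^{-(\theta_{\infty}/2)J}$ at $\lambda=\infty$, in the stated Stokes sector, are matched to Whittaker functions with exactly the right arguments, so that the prefactors $\e^{\mp\i\pi(\sigma\pm\theta_{\infty})/2}$ in $C_{\infty}$ emerge with the correct signs; a secondary difficulty is keeping the $\sigma\to0$ and then $\theta_{\infty}\to0$ confluences consistent so that the digamma terms and the $(I+J)/2$ pattern in $(2)$ come out precisely as stated.
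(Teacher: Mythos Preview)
Your approach for part~$(1)$ is essentially the paper's: it too builds the matrix solution of \eqref{8.1} out of Whittaker functions (the paper simply quotes Jimbo's explicit formula rather than deriving it by elimination to a scalar equation) and then reads off $C_\infty$ from the classical connection formula $W_{\kappa,\sigma/2}(z)=\Gamma(-\sigma)z^{\sigma/2}/\Gamma((1-\sigma)/2-\kappa)\,(1+O(z))+\Gamma(\sigma)z^{-\sigma/2}/\Gamma((1+\sigma)/2-\kappa)\,(1+O(z))$.

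For part~$(2)$ the paper takes a more direct route than your limiting procedure: it keeps the same explicit Whittaker matrix solution (which is well-defined for $(\sigma,\theta_\infty)\neq(0,0)$) and applies the known logarithmic expansion $e^{z/2}z^{-1/2}W_{\kappa,0}(z)=-\Gamma(1/2-\kappa)^{-1}\bigl((1+O(z))\log z+\psi(1/2-\kappa)-2\psi(1)+O(z)\bigr)$ directly, comparing with $T(I+O(\lambda))\lambda^{\Delta}T^{-1}C$. For $\sigma=\theta_\infty=0$ it writes down the matrix solutions by hand (the system is then triangular, with a single nontrivial entry $\lambda^{-1/2}W_{-1/2,0}$). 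Your $\sigma\to 0$ limit is legitimate, but note a hidden subtlety you do not mention: the diagonalising matrix $T$ of Lemma~\ref{lem4.1} has $\det T=\sigma$, so $T$ degenerates precisely as $C_\infty$ blows up through $\Gamma(\pm\sigma)$; the finiteness of $C=TC_\infty$ depends on a cancellation between these two singularities, and extracting the digamma terms from the finite remainder requires expanding both factors to the next order. The paper's use of the $\mu=0$ formula avoids this bookkeeping entirely, at the price of quoting one more classical identity.
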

\begin{proof}
If $(\sigma, \theta_{\infty})\not=(0,0),$ then
system \eqref{8.1} with $\Lambda$ as in Lemma \ref{lem4.1} or \ref{lem4.2}
has the matrix solution
$\hat{Y}(\lambda) $ given by 
$$ 
\begin{pmatrix}
e^{\pi i (1-\theta_{\infty})/2} W_{(1-\theta_{\infty})/2,\sigma/2}(e^{-\pi i}
\lambda) &
-\frac 12(\sigma-\theta_{\infty}) W_{(-1+\theta_{\infty})/2,\sigma/2}(\lambda) 
\\
-\frac 12(\sigma+\theta_{\infty})  
e^{\pi i (1-\theta_{\infty})/2} W_{-(1+\theta_{\infty})/2,\sigma/2}(e^{-\pi i}
\lambda) &
 W_{(1+\theta_{\infty})/2,\sigma/2}(\lambda) 
\end{pmatrix}
\lambda^{-1/2}
$$
(cf. \cite[(3.10)]{Jimbo}), which behaves as 
$$
\hat{Y}(\lambda) =(I+O(\lambda^{-1}))e^{(\lambda/2) J} 
 \lambda^{-(\theta_{\infty}/2)J}
\qquad\quad \lambda \to \infty, \,\,\,
0<\arg \lambda <\pi.
$$
Here $W_{\kappa,\mu}(z)$ is the Whittaker function such that $W_{\kappa, \mu}
(z) \sim e^{-z/2} z^{\kappa},$ $|\arg z|<\pi.$ If $\sigma\not\in \Z,$ we have,
around $\lambda=0,$
$$
\hat{Y}(\lambda) = (I+O(\lambda)) \lambda^{\Lambda}C
= T(I+O(\lambda) ) \lambda^{(\sigma/2)J}T^{-1}C,
$$
where $T$ is as in Lemma \ref{lem4.1}. Using the connection formula
$$
e^{z/2} z^{-1/2}W_{\kappa,\sigma/2}(z) =\frac{\Gamma(-\sigma) z^{\sigma/2} }
{\Gamma((1-\sigma)/2 -\kappa)} (1+O(z))
 + \frac{\Gamma(\sigma) z^{-\sigma/2} }
{\Gamma((1+\sigma)/2 -\kappa)} (1+O(z))
$$
(cf. \cite[13.1.3, 13.1.33]{AS}), 
we compare the behaviours around $\lambda=0$ to
derive $C$ as in (1). If $\sigma=0$ and $\theta_{\infty}\not=0,$ then, around
$\lambda=0,$ 
$$
\hat{Y}(\lambda)=(I+O(\lambda))\lambda^{\Lambda} C 
=T(I+O(\lambda) )\lambda^{\Delta}T^{-1}C,
$$
where $T$ and $\Lambda$ are as in Lemma \ref{lem4.2}. Note that (1,1)- and
(1,2)-entries of $T(I+O(\lambda))\lambda^{\Delta}$ are $-\theta_{\infty}/2+O
(\lambda)$ and $-(\theta_{\infty}/2)\log\lambda +1+O(\lambda),$ respectively. 
On the other hand, using
$$
e^{z/2} z^{-1/2} W_{\kappa,0}(z) = - \frac{1}{\Gamma(1/2-\kappa)}
\Bigl( (1+O(z))\log z + \psi(1/2-\kappa) -2\psi(1) +O(z)\Bigr )
$$
(cf. \cite[13.1.6]{AS}), we can also see how  
$\hat{Y}(\lambda)$ behaves around $\lambda=0$. Comparison of these
leads us to $C$ as in (i) of (2). Under the supposition $\sigma=
\theta_{\infty}=0,$ if $\Lambda=\Delta$ (respectively, $\Lambda=\Delta_-$),
then \eqref{8.1} admits the matrix solution
$$
\begin{pmatrix}
e^{\lambda/2} & -\lambda^{-1/2} W_{-1/2,0}(\lambda) 
\\
0 & e^{-\lambda/2}
\end{pmatrix}
\quad \Biggl(\text{respectively,} \,\,\,
\begin{pmatrix}
e^{\lambda/2} &  0
\\
e^{-\pi i/2} \lambda^{-1/2} W_{-1/2,0}( e^{-\pi i} \lambda) 
 & e^{-\lambda/2}
\end{pmatrix}
\,\, \Biggr).
$$
Using the connection formula above together with $T$ in each case, we 
find the matrices in (ii) and (iii).
\end{proof}
\subsection{Hypergeometric system}\label{ssc9.2}
Let us begin with 
$$
\frac{d\mathbf{u}}{dz} = \Biggl( \frac 1z \begin{pmatrix} 0 & 1 \\ 0 & 1-\gamma
\end{pmatrix}  + 
 \frac 1{z-1} \begin{pmatrix} 0 & 0 \\ -\alpha\beta & \gamma-\alpha-\beta -1
\end{pmatrix} \Biggr)  \mathbf{u}, \quad  \mathbf{u} =\begin{pmatrix}
 u  \\  zu'  \end{pmatrix},
$$
in which $u$ solves the hypergeometric equation
$z(1-z)u''+(\gamma -(\alpha+\beta +1)z)u' -\alpha\beta u=0$.
The eigenvalues of the residue matrices are $0, 1-\gamma$ at $z=0,\,$
$0, \gamma-\alpha-\beta -1$ at $z=1$ and $\alpha, \beta$ at $z=\infty.$
Under the supposition $\alpha-\beta\not\in \Z$, diagonalising the residue
matrix at $z=\infty$ and shifting the eigenvalues to $\pm(1-\gamma)/2,$
$\pm(\gamma-\alpha-\beta-1)/2$, $\pm(\beta-\alpha)/2$, we obtain the system
\begin{gather}\label{9.1}
\frac{d\Psi}{dz} = \Bigl( \frac{B_0} z +\frac{B_1}{z-1} \Bigr) \Psi,
\phantom{-------}
\\
\notag
\begin{split}
& B_0=\frac 1{\alpha-\beta} \begin{pmatrix}
(1-\gamma)(\alpha+\beta)/2 +\alpha\beta   &   \beta(\beta -\gamma +1)/R 
\\
\alpha(\gamma-\alpha -1)R     & 
- (1-\gamma)(\alpha+\beta)/2 -\alpha\beta    
\end{pmatrix},
\\
& B_1=\frac 1{\alpha-\beta} \begin{pmatrix}
(\gamma-\alpha -\beta-1)(\alpha+\beta)/2 +\alpha\beta & 
\beta(\gamma -\beta  -1)/R 
\\
\alpha(\alpha - \gamma +1)R     & 
- (\gamma-\alpha -\beta -1)(\alpha+\beta)/2 -\alpha\beta    
\end{pmatrix}
\end{split}
\end{gather} 
with given $R \not=0,$ which has the following property.
\begin{prop}\label{prop9.2}
Suppose that $\alpha-\beta\not\in\Z.$ System \eqref{9.1} admits the matrix
solution 
$$
\Psi(z) = z^{-(1-\gamma)/2} (z-1)^{-(\gamma-\alpha-\beta-1)/2} 
\diag [ 1, R] \, [\mathbf{v}_{\alpha} \,\, 
\mathbf{v}_{\beta} ] \, \diag[ 1, 1/ R]  
$$
with
\begin{align*}
& [\mathbf{v}_{\alpha} \,\, \mathbf{v}_{\beta} ]
 = P^{-1} \begin{pmatrix} u_{\alpha} & u_{\beta}
\\
zu'_{\alpha} & zu'_{\beta}  \end{pmatrix}, \quad P = 
 \begin{pmatrix} 1 &  1   \\
- \alpha & - \beta  \end{pmatrix},  
\\
&u_{\alpha} := z^{-\alpha} F(\alpha, \alpha-\gamma +1, \alpha-\beta +1, z^{-1}),
\\
&u_{\beta} := z^{-\beta} F(\beta, \beta-\gamma +1, \beta-\alpha +1, z^{-1}),
\end{align*}
which satisfies
$$
\Psi(z) = (I+ O(z^{-1})) z^{(\beta-\alpha)J/2}   \qquad\quad z\to \infty.
$$
\end{prop}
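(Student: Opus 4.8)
The plan is to realise $\Psi(z)$ as the image of the classical hypergeometric fundamental matrix under the chain of gauge transformations that produces \eqref{9.1}, and then to read off the behaviour at $z=\infty$ from the standard expansions of Kummer's solutions at infinity.

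First I would recall that, since $\alpha-\beta\notin\Z$, the two functions $u_\alpha,u_\beta$ are defined and give linearly independent solutions of the hypergeometric equation $z(1-z)u''+(\gamma-(\alpha+\beta+1)z)u'-\alpha\beta u=0$; consequently
\[
U(z):=\begin{pmatrix} u_\alpha & u_\beta \\ zu'_\alpha & zu'_\beta \end{pmatrix},\quad
C_0=\begin{pmatrix}0&1\\0&1-\gamma\end{pmatrix},\quad
C_1=\begin{pmatrix}0&0\\-\alpha\beta&\gamma-\alpha-\beta-1\end{pmatrix}
\]
with $U(z)$ a fundamental matrix of $dU/dz=(C_0/z+C_1/(z-1))U$. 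I would then carry out, in turn, the three operations used to pass from this $\mathbf u$-system to \eqref{9.1}: conjugation by $P^{-1}$, which diagonalises the coefficient of $1/z$ at $\infty$ because the columns $(1,-\alpha)^{\mathrm t}$, $(1,-\beta)^{\mathrm t}$ of $P$ span the eigenspaces of $C_0+C_1$ (eigenvalues $-\alpha,-\beta$); the scalar gauge factor $z^{-(1-\gamma)/2}(z-1)^{-(\gamma-\alpha-\beta-1)/2}$, which merely adds $-\tfrac{1-\gamma}{2}\tfrac1z I-\tfrac{\gamma-\alpha-\beta-1}{2}\tfrac1{z-1}I$ to the coefficient matrix, thereby symmetrising the local exponents at $0$ and $1$; and conjugation by $\diag[1,R]$, together with the harmless right multiplication by $\diag[1,1/R]$. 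The matrix so produced is precisely the $\Psi(z)$ of the statement, and it satisfies \eqref{9.1} once one verifies the finite identities
\begin{gather*}
B_0=\diag[1,R]\,P^{-1}C_0P\,\diag[1,1/R]-\tfrac{1-\gamma}{2}I,\\
B_1=\diag[1,R]\,P^{-1}C_1P\,\diag[1,1/R]-\tfrac{\gamma-\alpha-\beta-1}{2}I,
\end{gather*}
a routine $2\times2$ matrix computation.

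For the expansion at $\infty$ I would use $F(a,b,c,w)=1+O(w)$, differentiated term by term, to get $u_\alpha=z^{-\alpha}(1+O(z^{-1}))$, $zu'_\alpha=-\alpha z^{-\alpha}(1+O(z^{-1}))$, and likewise for $u_\beta$; a short computation then gives $P^{-1}U=(I+O(z^{-1}))\diag[z^{-\alpha},z^{-\beta}]$, where the off-diagonal entries are of order $z^{-\alpha-1}$ and $z^{-\beta-1}$ because they acquire an extra factor $F'$. Conjugation by $\diag[1,R]$ preserves this form, $\diag[z^{-\alpha},z^{-\beta}]$ commutes with $\diag[1,1/R]$, and the scalar prefactor equals $z^{(\alpha+\beta)/2}(1+O(z^{-1}))$ since $-\tfrac{1-\gamma}{2}-\tfrac{\gamma-\alpha-\beta-1}{2}=\tfrac{\alpha+\beta}{2}$ and $(z-1)^{c}=z^{c}(1+O(z^{-1}))$. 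Multiplying these out yields $\Psi(z)=(I+O(z^{-1}))\diag[z^{(\beta-\alpha)/2},z^{(\alpha-\beta)/2}]=(I+O(z^{-1}))z^{(\beta-\alpha)J/2}$; in particular $\Psi(z)$ is invertible for large $z$, hence a genuine fundamental matrix of \eqref{9.1}. The only point needing care here is the bookkeeping of the $O(z^{-1})$ remainders through the two conjugations and the scalar gauge — one must confirm that no logarithmic terms or spurious powers of $z$ creep in and that the column built from $u_\alpha$ indeed carries the exponent $(\beta-\alpha)/2$; the rest is standard.
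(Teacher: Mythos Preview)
Your argument is correct and follows exactly the construction the paper sketches just before the proposition: diagonalise the residue at $\infty$ via $P$, shift the local exponents by the scalar gauge $z^{-(1-\gamma)/2}(z-1)^{-(\gamma-\alpha-\beta-1)/2}$, and conjugate by $\diag[1,R]$. The paper gives no proof beyond that description, so your detailed verification of the two matrix identities and of the $(I+O(z^{-1}))z^{(\beta-\alpha)J/2}$ asymptotics simply fills in what the paper leaves to the reader.
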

We substitute the connection formulas representing $u_{\alpha}$ and
$u_{\beta}$ as linear combinations of hypergeometric series around $z=0$ or
$z=1$ 
(cf. \cite[\S 2.9]{HTF}) and choose gauge matrices suitably to obtain 
\begin{prop}\label{prop9.3}
Suppose that $\alpha-\beta,$ $\gamma,$ $\gamma-\alpha -\beta \not\in \Z.$
Then
\begin{align*}
\Psi(z)|_{R=1} & = G_0 (I+O(z) ) z^{(1-\gamma) J/2} \tilde{C}_0   & \quad
&z\to 0,
\\
 & = G_1 (I+O(z-1) )( z-1)^{(\gamma-\alpha -\beta -1) J/2} \tilde{C}_1 
  & \quad &z\to 1
\end{align*}
for $|\arg z- \pi | <\pi,$ $|\arg(z-1) -\pi |<\pi,$ where
\begin{align*}
& \tilde{C}_0 = \begin{pmatrix}
\dfrac{e^{\pi i(\gamma-\alpha-1)} \Gamma(1-\beta +\alpha) \Gamma(\gamma-1)}
{\Gamma(\alpha) \Gamma(\gamma-\beta) }     &
\dfrac{e^{\pi i(\gamma-\beta-1)} \Gamma(1-\alpha +\beta) \Gamma(\gamma-1)}
{\Gamma(\beta) \Gamma(\gamma-\alpha) }     
\\[0.4cm]
\dfrac{e^{- \pi i\alpha} \Gamma(1-\beta +\alpha) \Gamma(1-\gamma)}
{\Gamma(1-\gamma +\alpha) \Gamma(1-\beta) }     &
\dfrac{e^{- \pi i\beta} \Gamma(1-\alpha+\beta) \Gamma(1-\gamma)}
{\Gamma(1-\gamma +\beta) \Gamma(1-\alpha) } 
\end{pmatrix},
\\[0.2cm]
& \tilde{C}_1 = \begin{pmatrix}
\dfrac{ \Gamma(1-\beta +\alpha) \Gamma(\alpha+\beta-\gamma)}
{\Gamma(1-\gamma +\alpha) \Gamma(\alpha) }     &
\dfrac{ \Gamma(1-\alpha +\beta) \Gamma(\alpha+\beta-\gamma)}
{\Gamma(1-\gamma +\beta) \Gamma(\beta) }    
\\[0.4cm]
\dfrac{ \Gamma(1-\beta +\alpha) \Gamma(\gamma- \alpha-\beta)}
{\Gamma(1-\beta) \Gamma(\gamma-\beta) }     &
\dfrac{ \Gamma(1-\alpha +\beta) \Gamma(\gamma- \alpha-\beta)}
{\Gamma(1-\alpha) \Gamma(\gamma-\alpha) }    
\end{pmatrix}.
\end{align*}
\end{prop}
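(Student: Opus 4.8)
The plan is to obtain both local expansions of $\Psi(z)$ directly from the explicit representation in Proposition \ref{prop9.2} together with the classical connection formulae for the Gauss hypergeometric function. Recall that, for $R=1$,
$$
\Psi(z)= z^{-(1-\gamma)/2}(z-1)^{-(\gamma-\alpha-\beta-1)/2}\, P^{-1}
\begin{pmatrix} u_{\alpha} & u_{\beta} \\ z u'_{\alpha} & z u'_{\beta}\end{pmatrix},
\qquad
P=\begin{pmatrix} 1 & 1 \\ -\alpha & -\beta \end{pmatrix},
$$
with $u_{\alpha},u_{\beta}$ the standard fundamental system of the hypergeometric equation near $z=\infty$. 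Since $\gamma,\gamma-\alpha-\beta\notin\Z$, this equation has near $z=0$ a Frobenius basis $w_1(z)=z^{1-\gamma}F(\alpha-\gamma+1,\beta-\gamma+1,2-\gamma;z)$, $w_2(z)=F(\alpha,\beta,\gamma;z)$ (so $w_1=z^{1-\gamma}(1+O(z))$, $w_2=1+O(z)$), and near $z=1$ a Frobenius basis of the analogous shape with exponents $\gamma-\alpha-\beta$ and $0$. Throughout I work with the $2\times2$ companion matrices built from these bases, which factor as (constant invertible)$\cdot(I+O(\cdot))\cdot$(diagonal matrix of local powers).

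First I would express the companion matrix of $(u_{\alpha},u_{\beta})$ through that of $(w_1,w_2)$. Using the connection formula \cite[\S 2.9]{HTF}
$$
F(a,b,c;z)=\frac{\Gamma(c)\Gamma(b-a)}{\Gamma(b)\Gamma(c-a)}(-z)^{-a}F(a,a-c+1,a-b+1;z^{-1})
+\frac{\Gamma(c)\Gamma(a-b)}{\Gamma(a)\Gamma(c-b)}(-z)^{-b}F(b,b-c+1,b-a+1;z^{-1}),
$$
once with $(a,b,c)=(\alpha,\beta,\gamma)$ and once with $(a,b,c)=(\alpha-\gamma+1,\beta-\gamma+1,2-\gamma)$ — noting that multiplying the latter identity by $z^{1-\gamma}$ turns its right-hand side again into a combination of $u_{\alpha},u_{\beta}$ — one gets $w_1,w_2$ as explicit $\Gamma$-coefficient combinations of $u_{\alpha},u_{\beta}$. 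Inverting this $2\times2$ linear relation and differentiating (which preserves the companion structure, the coefficients being constant) yields $N_0\in GL_2(\C)$ with $\begin{pmatrix}u_{\alpha}&u_{\beta}\\ zu'_{\alpha}&zu'_{\beta}\end{pmatrix}=H_0(z)N_0$, $H_0$ the companion matrix of $(w_1,w_2)$. Substituting into $\Psi$, pulling the variable corrections into an $(I+O(z))$ factor while the constants (the value of $(z-1)^{-(\gamma-\alpha-\beta-1)/2}$ at $z=0$, the matrix of leading Frobenius coefficients, $P^{-1}$) collect into a constant invertible $G_0$, and using that $z^{-(1-\gamma)/2}$ times the diagonal power matrix $\mathrm{diag}[z^{1-\gamma},1]$ equals $z^{(1-\gamma)J/2}$, one arrives at $\Psi(z)|_{R=1}=G_0(I+O(z))z^{(1-\gamma)J/2}\tilde C_0$. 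Here $\tilde C_0$ is $N_0$ up to the normalisation of $G_0$; collecting the $\Gamma$-factors, applying the reflection formula $\Gamma(x)\Gamma(1-x)=\pi/\sin\pi x$, and reading the determination of $(-z)^{-a}$ on the cut plane $|\arg z-\pi|<\pi$, reproduces the displayed $\tilde C_0$ with its phases $e^{\pi i(\gamma-\alpha-1)}$, $e^{\pi i(\gamma-\beta-1)}$, $e^{-\pi i\alpha}$, $e^{-\pi i\beta}$. The expansion at $z=1$ is obtained identically, either by the substitution $z\mapsto 1-z$ (equivalently $(\alpha,\beta,\gamma)\mapsto(\alpha,\beta,\alpha+\beta-\gamma+1)$, which interchanges the singular points $0$ and $1$) or by applying directly the connection formula relating the solutions at $\infty$ to the Frobenius basis at $z=1$; on the cut plane $|\arg(z-1)-\pi|<\pi$ all the corresponding power-function phases reduce to $1$, giving $\tilde C_1$. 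The residual freedom in $G_0,G_1$ (cf.\ Remark \ref{rem8.1}) is fixed by imposing exactly the normalisations of $\tilde C_0,\tilde C_1$ in the statement.

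The $\Gamma$-function manipulations are routine; the genuinely delicate point, and where I expect the main effort to go, is the branch bookkeeping — tracking the determinations of $(-z)^{-a}$, $z^{1-\gamma}$ and $(z-1)^{\gamma-\alpha-\beta}$ on the two prescribed cut planes, and following how the reflection formula redistributes the $\sin$ factors, so that every scattered exponential in $\tilde C_0$ comes out precisely as written while none survives in $\tilde C_1$.
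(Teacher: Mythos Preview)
Your proposal is correct and follows essentially the same route as the paper: the paper simply states that one substitutes the classical connection formulae from \cite[\S 2.9]{HTF} expressing $u_{\alpha},u_{\beta}$ as linear combinations of the local hypergeometric bases around $z=0$ and $z=1$, and then chooses the gauge matrices $G_0,G_1$ suitably. The only cosmetic difference is that you write the connection formula in the direction $w_j\mapsto u_{\alpha},u_{\beta}$ and then invert, whereas the paper uses it directly in the opposite direction; either way the branch bookkeeping you flag is indeed the only point requiring care.
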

In the case where $\alpha=\beta$, we transform the system in such a way
that the residue matrix at $z=\infty$ becomes $\Delta$. The result of this
procedure is
\begin{align}\label{9.2}
& \frac{d\Psi}{dz} = \Bigl( \frac{B_0} z +\frac{B_1}{z-1} \Bigr) \Psi,
\\
\notag
\begin{split}
 B_0=&\begin{pmatrix}
(\gamma-2\alpha -1)/2  &  1 
\\
\alpha(\gamma-\alpha -1)  &  -(\gamma-2\alpha -1)/2  
\end{pmatrix},
\\
 B_1=& \begin{pmatrix}
(2\alpha -\gamma+1)/2  &   0 
\\
\alpha(\alpha - \gamma +1)   & -(2\alpha -\gamma+1)/2  
\end{pmatrix}.
\end{split}
\end{align} 
Write, for $m \in \N,$
\begin{align*}
 F_{\mathrm{log}}(\alpha, \beta ,m, z)
& := F(\alpha, \beta,m, z)\log z
\\
&\phantom{---} +\sum_{n=1}^{\infty} \frac{(\alpha)_n(\beta)_n}{(m)_n n!} 
\psi_n(\alpha, \beta, m) z^n - \sum_{n=1}^{m-1}
\frac{(n-1)!(1-m)_n}{(1-\alpha)_n(1-\beta)_n} z^{-n}
\end{align*}
with
\begin{align*}
\psi_n(\alpha, \beta, m) &=\psi(\alpha+n) -\psi(\alpha)
+\psi(\beta +n) -\psi(\beta)
\\
&\phantom{----}
 -\psi(m+n)+\psi(m) -\psi(1+n)+\psi(1).
\end{align*}
Then $z^{-\alpha} F_{\mathrm{log}}(\alpha,\alpha-\gamma+1, 1, z^{-1})$ is
a logarithmic hypergeometric function 
near $z=\infty$, which satisfies, for $|\arg z-\pi| <\pi,$
\begin{align}\label{9.3}
& - z^{-\alpha} F_{\mathrm{log} } (\alpha, \alpha -\gamma +1, 1, z^{-1})
 = \frac{\Gamma(\alpha) \Gamma(\gamma-\alpha)} {\Gamma(\gamma)} 
e^{-\pi i \alpha} F(\alpha, \alpha, \gamma, z)
\\
\notag
&\phantom{----} - (2\psi(1) -\psi(\alpha)
-\psi(\gamma-\alpha) -\pi i) z^{-\alpha} F(\alpha, \alpha-\gamma+1, 1, z^{-1})
\end{align}
(cf. \cite[15.3.13]{AS}, \cite[\S 2.10]{HTF}). Then we have 
\begin{prop}\label{prop9.4}
System \eqref{9.2} admits the matrix solution
$$
\Psi(z) = z^{-(1-\gamma)/2} (z-1)^{-(\gamma- 2\alpha -1)/2} 
[\mathbf{v}_{\alpha} \,\, \mathbf{v}_{\alpha\, \mathrm{log}} ]
$$
with
\begin{align*}
& [\mathbf{v}_{\alpha} \,\, \mathbf{v}_{\alpha\,\mathrm{log}} ] = P^{-1}
 \begin{pmatrix} u_{\alpha} & u_{\alpha\,\mathrm{log}}
\\
zu'_{\alpha} & zu'_{\alpha\,\mathrm{log}}  \end{pmatrix}, \quad P = 
 \begin{pmatrix} 1 &  0   \\
- \alpha & 1  \end{pmatrix},  
\\
&u_{\alpha} := z^{-\alpha} F(\alpha, \alpha-\gamma +1, 1, z^{-1}),
\\
&u_{\alpha\,\mathrm{log}} := - z^{-\alpha} F_{\mathrm{log}}(\alpha, 
\alpha-\gamma +1, 1, z^{-1}),
\end{align*}
and $\Psi(z)$ satisfies
$$
\Psi(z) = (I+ O(z^{-1})) z^{\Delta}   \qquad\quad z\to \infty.
$$
\end{prop}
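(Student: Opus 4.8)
The plan is to argue exactly as in the (not fully written out) proof of Proposition \ref{prop9.2}, the only change being that the coincidence of exponents $\alpha=\beta$ forces one of the two solutions at $z=\infty$ to be logarithmic. First I would record the two standard solutions of the hypergeometric equation $z(1-z)u''+(\gamma-(\alpha+\beta+1)z)u'-\alpha\beta u=0$ with $\alpha=\beta$ in a punctured neighbourhood of $z=\infty$ on $\mathcal{R}(\C\setminus\{0,1\})$: the analytic one $u_{\alpha}=z^{-\alpha}F(\alpha,\alpha-\gamma+1,1,z^{-1})$ and the logarithmic one $u_{\alpha\,\mathrm{log}}=-z^{-\alpha}F_{\mathrm{log}}(\alpha,\alpha-\gamma+1,1,z^{-1})$; that $u_{\alpha\,\mathrm{log}}$ also solves the equation is classical (cf. \cite[\S 2.10]{HTF}), and since one of the two carries a $\log z$ and the other does not, they are linearly independent.

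Next I would pass to the companion first-order system: setting $\mathbf{u}=(u,zu')^{\mathrm{T}}$ turns the scalar equation into the system displayed at the start of Section \ref{ssc9.2}, whose residues at $z=0,1,\infty$ have eigenvalues $\{0,1-\gamma\}$, $\{0,\gamma-\alpha-\beta-1\}$ and $\{\alpha,\beta\}$. Specialising to $\alpha=\beta$ and performing the same normalisation used in the text to produce \eqref{9.2} — namely the constant gauge transformation by $P^{-1}$ with $P=\bigl(\begin{smallmatrix}1&0\\-\alpha&1\end{smallmatrix}\bigr)$, which puts the residue at $z=\infty$ into the Jordan form $\Delta$, followed by multiplication by the scalar $z^{-(1-\gamma)/2}(z-1)^{-(\gamma-2\alpha-1)/2}$, which symmetrises the exponents at $z=0$ and $z=1$ — one obtains precisely system \eqref{9.2}. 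Hence $\Psi(z)$ as defined solves \eqref{9.2}, and it is a fundamental matrix because its columns $\mathbf{v}_{\alpha}$, $\mathbf{v}_{\alpha\,\mathrm{log}}$ come from the linearly independent pair $u_{\alpha}$, $u_{\alpha\,\mathrm{log}}$.

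Finally I would extract the behaviour at $z=\infty$. From $u_{\alpha}=z^{-\alpha}(1+O(z^{-1}))$ one gets $zu_{\alpha}'=-\alpha z^{-\alpha}(1+O(z^{-1}))$, so $\mathbf{v}_{\alpha}=P^{-1}(u_{\alpha},zu_{\alpha}')^{\mathrm{T}}=z^{-\alpha}\bigl(1+O(z^{-1}),\,O(z^{-1})\bigr)^{\mathrm{T}}$; likewise, using $F_{\mathrm{log}}(\alpha,\alpha-\gamma+1,1,w)=\log w+O(w\log w)$ as $w\to0$, one finds $u_{\alpha\,\mathrm{log}}=z^{-\alpha}(\log z+O(z^{-1}\log z))$ and $zu_{\alpha\,\mathrm{log}}'=z^{-\alpha}(-\alpha\log z+1+O(z^{-1}\log z))$, whence $\mathbf{v}_{\alpha\,\mathrm{log}}=z^{-\alpha}\bigl(\log z+O(z^{-1}\log z),\,1+O(z^{-1}\log z)\bigr)^{\mathrm{T}}$. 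Since $z^{-(1-\gamma)/2}(z-1)^{-(\gamma-2\alpha-1)/2}=z^{\alpha}(1+O(z^{-1}))$, multiplying the two columns by this factor gives $\Psi(z)=(I+O(z^{-1}))z^{\Delta}$ as $z\to\infty$, which is the asserted normalisation.

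I do not expect a genuine obstacle; the one place requiring attention is the bookkeeping of the logarithmic lower-order terms, in particular checking that the coefficient of $z^{-\alpha}$ in the second component of $\mathbf{v}_{\alpha\,\mathrm{log}}$ is exactly $1$ so that $z^{\Delta}$ — rather than some scalar multiple of $\Delta$ — appears. This is in any case consistent with, and can be double-checked against, the fact that the residue of \eqref{9.2} at $z=\infty$ equals $-\Delta$ by construction.
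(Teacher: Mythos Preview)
Your proposal is correct and follows exactly the route the paper has in mind: the paper states Proposition \ref{prop9.4} without proof, simply as the $\alpha=\beta$ analogue of Proposition \ref{prop9.2} after noting that the residue at $z=\infty$ has been brought to the Jordan form $\Delta$ and that $z^{-\alpha}F_{\mathrm{log}}(\alpha,\alpha-\gamma+1,1,z^{-1})$ provides the required logarithmic solution at infinity. Your only loose end is that the direct column-by-column estimate yields $\Psi(z)z^{-\Delta}=I+O(z^{-1}\log z)$ rather than $I+O(z^{-1})$; the sharper bound then follows from the general Fuchsian theory you invoke in your last sentence (the residue at $\infty$ being $-\Delta$ forces $\Psi(z)z^{-\Delta}$ to be holomorphic at $\infty$), so there is no genuine gap.
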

In \cite[\S 2.10, (3)]{HTF} replacing $z$ by $1-z$ and applying limiting
procedure, we get, for $|\arg z|<\pi,$
\begin{align}\label{9.4}
& F(\alpha, \alpha, 2\alpha-\gamma+1, 1-z) =
\frac{\Gamma(2\alpha -\gamma+1)}{\Gamma(\alpha) \Gamma(\alpha-\gamma+1)}
 z^{-\alpha} \Bigl( - F_{\mathrm{log} } (\alpha, \alpha -\gamma +1, 1, z^{-1})
\\
\notag
&\phantom{----} - (\psi(\alpha) +\psi(\alpha-\gamma+1) - 2\psi(1) ) 
F(\alpha, \alpha-\gamma+1, 1, z^{-1}) \Bigr).
\end{align}
Using \eqref{9.3} and this relation, we have the following:
\par
(i) if $\gamma\not\in \Z,$ then, for $|\arg z -\pi |<\pi,$
\begin{align*}
& z^{-\alpha} F(\alpha, \alpha-\gamma +1, 1, z^{-1}) =\frac{e^{-\pi i\alpha}
\Gamma(1-\gamma)} {\Gamma(1-\alpha) \Gamma(\alpha-\gamma +1)} f_0(z)
 -\frac {e^{\pi i (\gamma-\alpha)} \Gamma(\gamma-1)}{\Gamma(\alpha)
\Gamma(\gamma-\alpha) }  g_0(z),
\\
 -& z^{-\alpha} F_{\mathrm{log}}(\alpha, \alpha-\gamma +1, 1, z^{-1}) 
\\
& = - \frac{ e^{-\pi i\alpha} (2\psi(1) -\psi(1-\alpha) -\psi(\alpha- \gamma
+1) -\pi i) \Gamma(1-\gamma)} {\Gamma(1-\alpha) \Gamma(\alpha-\gamma +1)}
f_0(z)
\\
& \phantom{---} + \frac{e^{\pi i(\gamma -\alpha)} (2\psi(1) -\psi(\alpha) -\psi(\gamma-\alpha)
-\pi i)\Gamma(\gamma-1)} {\Gamma(\alpha) \Gamma(\gamma-\alpha)} g_0(z) 
\end{align*}
with
$$
f_0(z)=F(\alpha, \alpha, \gamma, z), \quad g_0(z)=
z^{1-\gamma} F(\alpha-\gamma+1, \alpha-\gamma+1,2-\gamma, z);
$$
\par
(ii) if $\gamma-2\alpha \not\in \Z,$ then, for $|\arg z|<\pi$, 
$|\arg (1- z) |<\pi,$
\begin{align*}
& z^{-\alpha} F(\alpha, \alpha-\gamma +1, 1, z^{-1}) =\frac{
\Gamma(\gamma-2\alpha)} {\Gamma(1-\alpha) \Gamma(\gamma-\alpha)} f_1(z)
 +\frac {e^{\pi i (\gamma-2\alpha)} \Gamma(2\alpha- \gamma)}{\Gamma(\alpha)
\Gamma(\alpha-\gamma+1) }  g_1(z),
\\
 -& z^{-\alpha} F_{\mathrm{log}}(\alpha, \alpha-\gamma +1, 1, z^{-1}) 
\\
& = \frac{ (\psi(1-\alpha) +\psi(\gamma- \alpha) -2\psi(1))
 \Gamma(\gamma -2\alpha)} {\Gamma(1-\alpha) \Gamma(\gamma-\alpha)} f_1(z)
\\
& \phantom{---} + \frac{e^{\pi i(\gamma -2\alpha)} (\psi(\alpha)
 +\psi(\alpha-\gamma+1) -2\psi(1) )
\Gamma(2\alpha- \gamma)} {\Gamma(\alpha) \Gamma(\alpha-\gamma+1)} g_1(z) 
\end{align*}
with
\begin{align*}
&f_1(z)=F(\alpha, \alpha, 2\alpha-\gamma +1, 1-z), 
\\ 
&g_1(z)=
(1-z)^{\gamma-2\alpha} F(\gamma-\alpha, \gamma-\alpha,\gamma-2\alpha +1, 1-z).
\end{align*}
Then we have
\begin{prop}\label{prop9.5}
Suppose that $\alpha=\beta$ and that $\gamma,$ $\gamma-2\alpha \not\in \Z.$ Then
\begin{align*}
\Psi(z)& = G_0 (I+O(z) ) z^{(1-\gamma) J/2} \tilde{C}_0   & \quad
&z\to 0,
\\
 & = G_1 (I+O(z-1) )( z-1)^{(\gamma-2\alpha -1) J/2} \tilde{C}_1 
  & \quad &z\to 1
\end{align*}
for $0<\arg z< \pi,$ $|\arg(z-1) -\pi |<\pi,$ where
\begin{align*}
& \tilde{C}_0 = \begin{pmatrix}
- \dfrac{e^{\pi i(\gamma-\alpha)} \Gamma(\gamma-1) }
{\Gamma(\alpha) \Gamma(\gamma-\alpha) }     &
\dfrac{e^{\pi i(\gamma-\alpha)} \hat{\psi}^{0}_{12}(\alpha,\gamma) 
\Gamma(\gamma-1)}
{\Gamma(\alpha) \Gamma(\gamma-\alpha) }     
\\[0.4cm]
\dfrac{e^{- \pi i\alpha} \Gamma(1-\gamma)}
{\Gamma(1-\alpha) \Gamma(\alpha-\gamma+1) }     &
- \dfrac{e^{- \pi i\alpha} 
\hat{\psi}^{0}_{22}(\alpha, \gamma)
\Gamma(1-\gamma)}
{\Gamma(1-\alpha) \Gamma(\alpha-\gamma+1) } 
\end{pmatrix},
\\[0.2cm]
& \tilde{C}_1 = \begin{pmatrix}
\dfrac{ \Gamma(2\alpha-\gamma)}
{\Gamma(\alpha)\Gamma(\alpha-\gamma +1) }     &
\dfrac{
\hat{\psi}^{1}_{12}(\alpha, \gamma)
\Gamma(2\alpha-\gamma)}
{\Gamma(\alpha) \Gamma(\alpha-\gamma +1) }    
\\[0.4cm]
\dfrac{ \Gamma(\gamma-2 \alpha)}
{\Gamma(1-\alpha) \Gamma(\gamma-\alpha) }     &
\dfrac{
\hat{\psi}^{1}_{22}(\alpha, \gamma)
\Gamma(\gamma-2\alpha)}
{\Gamma(1-\alpha) \Gamma(\gamma-\alpha) }    
\end{pmatrix}
\end{align*}
with
\begin{align*}
&\hat{\psi}^{0}_{12}(\alpha, \gamma)=
2\psi(1) -\psi(\alpha) -\psi(\gamma-\alpha) -\pi i, 
\\
&\hat{\psi}^{0}_{22}(\alpha, \gamma)
=2\psi(1) -\psi(1-\alpha) -\psi(\alpha-\gamma +1)-\pi i, 
\\
&\hat{\psi}^{1}_{12}(\alpha, \gamma)=
\psi(\alpha) +\psi(\alpha-\gamma+1) -2\psi(1), 
\\
& \hat{\psi}^{1}_{22}(\alpha, \gamma)
 =\psi(1-\alpha) +\psi(\gamma-\alpha) -2\psi(1).
\end{align*}
\end{prop}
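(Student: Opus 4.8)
The plan is to feed the two sets of connection formulas displayed just above Proposition \ref{prop9.5} — relations (i), valid near $z=0$, and relations (ii), valid near $z=1$, both obtained from \eqref{9.3}, \eqref{9.4} — into the explicit matrix solution $\Psi(z)$ of \eqref{9.2} furnished by Proposition \ref{prop9.4}, and then to read off the local normalising constant matrices $\tilde C_0$ and $\tilde C_1$.

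First I would treat the point $z=0$. Since $\gamma\notin\Z$, a basis of the underlying scalar hypergeometric equation at $z=0$ is $f_0(z)=F(\alpha,\alpha,\gamma,z)$ and $g_0(z)=z^{1-\gamma}F(\alpha-\gamma+1,\alpha-\gamma+1,2-\gamma,z)$, with exponents $0$ and $1-\gamma$. The companion matrix $\Phi_0(z):=\bigl(\begin{smallmatrix} f_0 & g_0\\ zf_0' & zg_0'\end{smallmatrix}\bigr)$ factors as $\Phi_0(z)=H_0(z)\,z^{\diag[0,1-\gamma]}$ with $H_0$ holomorphic and invertible at $z=0$ and $H_0(0)=\bigl(\begin{smallmatrix}1 & 1\\ 0 & 1-\gamma\end{smallmatrix}\bigr)$, so $\Phi_0(z)=H_0(0)\,(I+O(z))\,z^{\diag[0,1-\gamma]}$. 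Relations (i) say exactly that $\bigl(\begin{smallmatrix}u_\alpha & u_{\alpha\log}\\ zu_\alpha' & zu_{\alpha\log}'\end{smallmatrix}\bigr)=\Phi_0(z)\,N_0$, where $N_0$ is the constant matrix whose two columns record the coefficients of $f_0$ and of $g_0$ on the right-hand sides of (i); its entries are products of $e^{\mp\pi i\alpha}$, $e^{\pi i(\gamma-\alpha)}$, Gamma quotients, and the combinations $\hat\psi^{0}_{12},\hat\psi^{0}_{22}$. Inserting this into the formula of Proposition \ref{prop9.4}, noting that the scalar prefactor $z^{-(1-\gamma)/2}$ commutes with $z^{\diag[0,1-\gamma]}$ (turning it into $z^{(1-\gamma)J/2}$ once the local basis is ordered so that $g_0$ precedes $f_0$), while $(z-1)^{-(\gamma-2\alpha-1)/2}$ and $P^{-1}$ are holomorphic and invertible there, one reaches
\[
\Psi(z)=\underbrace{(z-1)^{-(\gamma-2\alpha-1)/2}\,P^{-1}H_0(0)}_{=:G_0(z)}\;(I+O(z))\;z^{(1-\gamma)J/2}\,\tilde C_0 ,
\]
with $\tilde C_0$ the residual constant matrix obtained by absorbing the diagonal leading-coefficient factor of $H_0(0)$ together with the scalar prefactor into the reordered $N_0$. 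Carrying out this bookkeeping produces the $\tilde C_0$ of the statement. The point $z=1$ is handled identically: $\gamma-2\alpha\notin\Z$ makes $z=1$ non-resonant, a local basis is $f_1(z)=F(\alpha,\alpha,2\alpha-\gamma+1,1-z)$ and $g_1(z)=(1-z)^{\gamma-2\alpha}F(\gamma-\alpha,\gamma-\alpha,\gamma-2\alpha+1,1-z)$, the companion matrix $\Phi_1(z)=\bigl(\begin{smallmatrix}f_1 & g_1\\ zf_1' & zg_1'\end{smallmatrix}\bigr)$ factors as $H_1(z)\,(z-1)^{\diag[0,\gamma-2\alpha-1]}$ with $H_1$ invertible at $z=1$ (the shift by $-1$ matching the residue eigenvalues $0,\gamma-2\alpha-1$ of the companion system there), relations (ii) supply the constant matrix $N_1$, and since now $z^{-(1-\gamma)/2}$ and $P^{-1}$ are holomorphic at $z=1$ while $(z-1)^{-(\gamma-2\alpha-1)/2}$ carries the singular factor $(z-1)^{(\gamma-2\alpha-1)J/2}$, the same manipulation gives $\Psi(z)=G_1(z)\,(I+O(z-1))\,(z-1)^{(\gamma-2\alpha-1)J/2}\,\tilde C_1$ with $\tilde C_1$ as claimed. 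The branch prescriptions $0<\arg z<\pi$ and $|\arg(z-1)-\pi|<\pi$ in the statement are sub-sectors of the domains of validity of (i) and (ii), so the $e^{\pm\pi i(\cdots)}$ factors carry over unchanged.

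The only genuinely nontrivial step, and hence the main obstacle, is the final bookkeeping in each part: one must track carefully the diagonal leading-coefficient matrix coming from $H_j(0)$ (respectively $H_j(1)$), the scalar prefactor evaluated against the exponent matrix, and the left factor $P^{-1}$, and check that after moving all invertible, locally holomorphic pieces into $G_j$ the residue is precisely the stated $\tilde C_j$ — in particular that every $\pi i$ occurring in $\hat\psi^{0}_{12},\hat\psi^{0}_{22},\hat\psi^{1}_{12},\hat\psi^{1}_{22}$ lands in the right entry. It is worth observing in advance that no stray logarithm survives in $z^{(1-\gamma)J/2}\tilde C_0$ or in $(z-1)^{(\gamma-2\alpha-1)J/2}\tilde C_1$: because $\gamma$ and $\gamma-2\alpha$ are non-integral, the local monodromies at $z=0$ and $z=1$ are diagonalisable, and the sole logarithmic solution of \eqref{9.2}, namely $u_{\alpha\log}$, belongs to the resonant point $z=\infty$ and decomposes near $z=0$ and near $z=1$ into the finite linear combinations exhibited in (i) and (ii).
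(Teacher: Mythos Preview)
Your approach is correct and coincides with the paper's: the paper derives the scalar connection relations (i) and (ii) from \eqref{9.3} and \eqref{9.4}, and then substitutes them into the matrix solution $\Psi(z)$ of Proposition~\ref{prop9.4} to read off $\tilde C_0$ and $\tilde C_1$, exactly as you describe. One small notational point: your $G_0(z)=(z-1)^{-(\gamma-2\alpha-1)/2}P^{-1}H_0(0)$ still depends on $z$, whereas the statement's $G_0$ is constant; since $(z-1)^{-(\gamma-2\alpha-1)/2}$ is holomorphic and nonvanishing at $z=0$, you should absorb its Taylor expansion into the $(I+O(z))$ factor and keep only its value at $z=0$ in $G_0$.
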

\subsection{Non-generic cases}\label{ssc9.3}
Suppose that $\alpha-\beta \not\in \Z.$ In the case where $\gamma \in \Z,$
the hypergeometric function behaves logarithmically around $z=0$. 
Under the condition $\alpha, \beta \not\in \Z,$
for $|\arg z-\pi|<\pi,$ application of limiting procedure to connection 
formulas in the generic case yields the following
(cf. \cite[\S 2.10]{HTF}, \cite[15.5.17, 15.5.19, 15.5.21]{AS}):
\par
(i) if $1- \gamma=l=0,1,2, \ldots,$
\begin{align*}
&z^{-\alpha} F(\alpha, \alpha+l, \alpha-\beta +1, z^{-1})
\\
& = a_-(\alpha, \beta, l) \left(z^l F_{\mathrm{log}}(\alpha+l, \beta+l ,1+l, z)
+b_-(\alpha, \beta, l) z^l F(\alpha+l, \beta+l, 1+l, z) \right ),
\end{align*}
where
\begin{align*}
& a_-(\alpha, \beta, l) = \frac{(-1)^{l-1} e^{-\pi i\alpha}\Gamma(\alpha-\beta +1)
\Gamma(\beta +l)} {\Gamma(\alpha)\Gamma(\beta) \Gamma(1-\beta) l!},
\\
& b_-(\alpha, \beta, l) = \psi(\alpha+l) +\psi(\beta+l) -\psi(1+l) -\psi(1)
-\psi(\beta) +\psi(1-\beta) -\pi i;
\end{align*}
\par
(ii) if $1- \gamma=l =-1, -2, -3, \ldots,$
\begin{align*}
&z^{-\alpha} F(\alpha, \alpha+l, \alpha-\beta +1, z^{-1})
\\
&\phantom{---}
 = a_+(\alpha, \beta, l) \left( F_{\mathrm{log}}(\alpha, \beta ,1-l, z)
+b_+(\alpha, \beta, l) F(\alpha, \beta, 1-l, z)\right),
\end{align*}
where
\begin{align*}
& a_+(\alpha, \beta, l) = - \frac{e^{-\pi i\alpha}\Gamma(\alpha-\beta +1)
\Gamma(\beta)} {\Gamma(\alpha+l)\Gamma(\beta+l) \Gamma(1-\beta-l) (-l)!},
\\
& b_+(\alpha, \beta, l) = \psi(\alpha) +\psi(\beta) -\psi(1-l) -\psi(1)
-\psi(\beta+l) +\psi(1-\beta-l) -\pi i.
\end{align*}
In these cases, $\Psi(z)|_{R=1}$ for \eqref{9.1} around $z=0$ has the form
$G_0(I+O(z)) E(z) \tilde{C}_0$, where $E(z)$ is $z^{lJ/2} z^{\Delta}$ if
$1- \gamma= l= 0, 1, 2, \ldots,$ and $z^{lJ/2} z^{\Delta_-}$ if $1- \gamma=l
=-1, -2, -3, \ldots,$ and hence we have
\begin{prop}\label{prop9.6}
Suppose that $\alpha,$ $\beta,$ $\alpha-\beta \not\in \Z$ 
and $1-\gamma=l \in \Z.$ For $|\arg z-\pi| <\pi,$
\begin{align*}
\tilde{C}_0 &= \begin{pmatrix}
b_-(\alpha, \beta, l) & b_-(\beta, \alpha, l) 
\\
1 & 1
\end{pmatrix}
 \begin{pmatrix}
a_-(\alpha, \beta, l) &  0 \\  0  &  a_-(\beta, \alpha, l) 
\end{pmatrix}  & \quad &\text{if} \,\,\, l =0,1,2, \ldots,
\\
&= \begin{pmatrix}
1 & 1 \\
b_+(\alpha, \beta, l) & b_+(\beta, \alpha, l) 
\end{pmatrix}
 \begin{pmatrix}
a_+(\alpha, \beta, l) &  0 \\  0  &  a_+(\beta, \alpha, l) 
\end{pmatrix}  & \quad &\text{if} \,\,\,  l =-1,-2, -3, \ldots.
\end{align*}
\end{prop}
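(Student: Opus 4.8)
The plan is to substitute the connection formulas (i) (for $l\ge 0$) and (ii) (for $l<0$) stated just above the proposition into the explicit matrix solution $\Psi(z)$ of \eqref{9.1} provided by Proposition \ref{prop9.2} with $R=1$, and to read off $\tilde C_0$ from the resulting local normal form at $z=0$. With $R=1$ Proposition \ref{prop9.2} gives
$$
\Psi(z)|_{R=1}=z^{-(1-\gamma)/2}(z-1)^{-(\gamma-\alpha-\beta-1)/2}\,P^{-1}
\begin{pmatrix} u_\alpha & u_\beta \\ zu'_\alpha & zu'_\beta \end{pmatrix},\qquad
P=\begin{pmatrix} 1 & 1 \\ -\alpha & -\beta \end{pmatrix}.
$$
Since $1-\gamma=l\in\Z$, the residue of \eqref{9.1} at $z=0$ is resonant (its eigenvalues $\pm l/2$ differ by the integer $l$), so, as recalled in the text, a canonical local solution there has the form $G_0(I+O(z))E(z)$ with $E(z)=z^{lJ/2}z^{\Delta}$ for $l\ge0$ and $E(z)=z^{lJ/2}z^{\Delta_-}$ for $l<0$. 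Thus the task is to bring $\Psi(z)|_{R=1}$ into the form $G_0(I+O(z))E(z)\tilde C_0$ near $z=0$.

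The key step is the observation that formula (i), respectively (ii), expresses \emph{both} $u_\alpha$ and $u_\beta$ as combinations of the \emph{same} pair of Frobenius solutions of the hypergeometric equation at $z=0$: the ``regular'' one $w_\infty(z):=z^l F(\alpha+l,\beta+l,1+l,z)$ and the ``logarithmic'' one $w_0(z):=z^l F_{\mathrm{log}}(\alpha+l,\beta+l,1+l,z)=w_\infty(z)\log z+h(z)$ with $h$ single-valued, when $l\ge 0$ (and the analogous pair $F(\alpha,\beta,1-l,z)$, $F_{\mathrm{log}}(\alpha,\beta,1-l,z)$ when $l<0$). This holds because $F(\,\cdot\,,\cdot\,,m,z)$ and $F_{\mathrm{log}}(\,\cdot\,,\cdot\,,m,z)$ are symmetric in their first two arguments, so the pair is insensitive to the interchange $\alpha\leftrightarrow\beta$. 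Writing $a_\pm=a_\pm(\alpha,\beta,l)$, $a'_\pm=a_\pm(\beta,\alpha,l)$ and likewise for $b_\pm$, the formulas then give
$$
\begin{pmatrix} u_\alpha & u_\beta \\ zu'_\alpha & zu'_\beta \end{pmatrix}
=\begin{pmatrix} w_0 & w_\infty \\ zw'_0 & zw'_\infty \end{pmatrix}
\begin{pmatrix} 1 & 1 \\ b_\pm & b'_\pm \end{pmatrix}
\diag[\,a_\pm,\ a'_\pm\,],
$$
with the appropriate pair $(w_0,w_\infty)$, so the $a_\pm$'s split off as a diagonal factor on the right.

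It then remains to match $z^{-(1-\gamma)/2}(z-1)^{-(\gamma-\alpha-\beta-1)/2}P^{-1}\begin{pmatrix} w_0 & w_\infty \\ zw'_0 & zw'_\infty \end{pmatrix}$ against $G_0(I+O(z))E(z)$ times a constant matrix $N$. Using $w_0=w_\infty\log z+h$, one rewrites $\begin{pmatrix} w_0 & w_\infty \\ zw'_0 & zw'_\infty \end{pmatrix}$ as a single-valued matrix built from $h$ and $w_\infty$, invertible at $z=0$, times a unipotent $\log z$-factor; comparing leading exponents at $z=0$ and absorbing all constants (the value of $(z-1)^{-(\gamma-\alpha-\beta-1)/2}$ at $z=0$, $P^{-1}$, the leading coefficients of $w_\infty$ and $h$) into $G_0$, one finds $N=\begin{pmatrix}0&1\\1&0\end{pmatrix}$ when $l\ge0$ (the regular solution $w_\infty\sim z^l$ must occupy the $z^{l/2}$-column of $z^{lJ/2}$, which is the second column of the pair $(w_0,w_\infty)$, forcing $E(z)=z^{lJ/2}z^{\Delta}$) and $N=I$ when $l<0$ (now the logarithmic solution is the dominant one at $z=0$, it already sits in the $z^{l/2}$-column, and $E(z)=z^{lJ/2}z^{\Delta_-}$). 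Consequently
$$
\tilde C_0=N\begin{pmatrix} 1 & 1 \\ b_\pm & b'_\pm \end{pmatrix}\diag[\,a_\pm,\ a'_\pm\,]
=\begin{cases}
\begin{pmatrix} b_- & b'_- \\ 1 & 1 \end{pmatrix}\diag[\,a_-,\ a'_-\,], & l=0,1,2,\dots,\\[0.45cm]
\begin{pmatrix} 1 & 1 \\ b_+ & b'_+ \end{pmatrix}\diag[\,a_+,\ a'_+\,], & l=-1,-2,-3,\dots,
\end{cases}
$$
which is the assertion.

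The main obstacle will be purely the bookkeeping in this last matching step: tracking which Frobenius solution occupies which column of $z^{lJ/2}$ for each sign of $l$, checking that all the constant and unipotent factors can genuinely be pushed into $G_0$ while the remainder stays of the type $I+o(1)$ after conjugation by the multivalued factors $z^{lJ/2},z^{\Delta}$ (one picks up terms of size $O(z\log z)$, still $o(1)$), and treating $l=0$ separately — there the two exponents coincide, $h(z)=O(z)$ vanishes at $z=0$ and $E(z)=z^{\Delta}$, but the surviving $\log z$-coefficient $w_\infty(0)\ne 0$ keeps $G_0$ invertible and the resulting $\tilde C_0$ coincides with the $l\ge 0$ formula. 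An alternative derivation as the limit $\gamma\to 1-l$ of Proposition \ref{prop9.3} is possible but less clean, since the Gamma-function poles appearing in $\tilde C_0$ there must be cancelled against the degeneration of $G_0$ and $z^{(1-\gamma)J/2}$; with (i) and (ii) already in hand, the direct substitution above is preferable.
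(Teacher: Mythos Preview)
Your approach is essentially the same as the paper's: substitute the connection formulas (i), (ii) for $u_\alpha,u_\beta$ into the explicit $\Psi(z)|_{R=1}$ of Proposition~\ref{prop9.2}, use the symmetry of $F$ and $F_{\mathrm{log}}$ in their first two arguments so that both columns land on the \emph{same} Frobenius pair $(w_0,w_\infty)$, and then match against the local normal form $G_0(I+O(z))E(z)\tilde C_0$ with $E(z)=z^{lJ/2}z^{\Delta}$ or $z^{lJ/2}z^{\Delta_-}$ according to the sign of $l$. The paper compresses all of your bookkeeping (the column swap $N$, the $l=0$ case, the absorption of constants into $G_0$) into a single sentence, so your write-up is simply a more detailed execution of the same argument.
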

From $F(\alpha, \beta, \gamma, z) =(1-z)^{-\alpha} F(\alpha, \gamma-\beta,
\gamma, z/(z-1) )$ \cite[15.3.4]{AS}, it follows that
$$
z^{-\alpha} F(\alpha, \alpha-\gamma+1, \alpha-\beta +1, z^{-1}) 
= e^{\pi i \alpha} \zeta^{-\alpha} F(\alpha, \gamma-\beta, \alpha-\beta +1,
\zeta^{-1} )
$$
for $|\arg(1-z) -\pi |<\pi,$ $|\arg z-\pi|<\pi,$ where $\zeta= 1-z =e^{\pi i}
(z-1).$ In the case where $\gamma-\alpha-\beta \in \Z,$ using this relation,
from the connection formulas above we immediately obtain the following:
\par
(i) if $\gamma-\alpha-\beta=l=0,1,2, \ldots,$
\begin{align*}
&z^{-\alpha} F(\alpha, 1-l-\beta, \alpha-\beta +1, z^{-1})
\\
& = e^{\pi i\alpha} a_-(\alpha, \beta, l)
 \Bigl((1-z)^l F_{\mathrm{log}}(\alpha+l, \beta+l ,1+l, 1-z)
\\
&\phantom{----------}
+b_-(\alpha, \beta, l) (1-z)^l F(\alpha+l, \beta+l, 1+l, 1-z) \Bigr );
\end{align*}
\par
(ii) if $\gamma- \alpha-\beta =l = -1, -2, -3, \ldots,$
\begin{align*}
&z^{-\alpha} F(\alpha, 1 -l -\beta, \alpha-\beta +1, z^{-1})
\\
& = e^{\pi i\alpha} a_+(\alpha, \beta, l)
 \left( F_{\mathrm{log}}(\alpha, \beta ,1-l, 1-z)
+b_+(\alpha, \beta, l) F(\alpha, \beta, 1-l, 1-z)\right).
\end{align*}
Observing that $(1-z)^{l J/2} (1-z)^{\Delta} =(-1)^l (I+O(z-1) )
(z-1)^{lJ/2}(z-1)^{\Delta},$ we have
\begin{prop}\label{prop9.7}
Suppose that $\alpha,$ $\beta,$ $\alpha-\beta \not\in \Z$ and $\gamma -\alpha
-\beta =l \in \Z.$ 
For $|\arg z-\pi| <\pi,$ $|\arg(z-1)|<\pi,$
\begin{align*}
\tilde{C}_1 
=& e^{\pi i\alpha} \begin{pmatrix}
b_-(\alpha, \beta, l) & b_-(\beta, \alpha, l) 
\\
1 & 1
\end{pmatrix}
 \begin{pmatrix}
a_-(\alpha, \beta, l) &  0 \\  0  & 
 a_-(\beta, \alpha, l) 
\end{pmatrix}  & \,\, &\text{if} \,\, l=0,1,2, \ldots,
\\
=& e^{\pi i\alpha} \begin{pmatrix}
1 & 1 \\
b_+(\alpha, \beta, l) & b_+(\beta, \alpha, l) 
\end{pmatrix}
 \begin{pmatrix}
 a_+(\alpha, \beta, l) &  0 \\  0  & 
 a_+(\beta, \alpha, l) 
\end{pmatrix}  & \,\, &\text{if} \,\, l =-1, -2, -3, \ldots.
\end{align*}
\end{prop}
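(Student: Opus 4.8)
The plan is to obtain Proposition \ref{prop9.7} exactly as Proposition \ref{prop9.6} was obtained, but relative to the singular point $z=1$ instead of $z=0$, all the analytic input being already at hand. By Proposition \ref{prop9.2}, with $R=1$, the matrix solution of \eqref{9.1} is
\[
\Psi(z)|_{R=1}=z^{-(1-\gamma)/2}(z-1)^{-(\gamma-\alpha-\beta-1)/2}\,P^{-1}\begin{pmatrix} u_{\alpha} & u_{\beta}\\ zu'_{\alpha} & zu'_{\beta}\end{pmatrix},\qquad P=\begin{pmatrix}1&1\\-\alpha&-\beta\end{pmatrix},
\]
with $u_{\alpha}=z^{-\alpha}F(\alpha,\alpha-\gamma+1,\alpha-\beta+1,z^{-1})$ and $u_{\beta}$ obtained from $u_{\alpha}$ by interchanging $\alpha$ and $\beta$. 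When $\gamma-\alpha-\beta=l\in\Z$ one has $\alpha-\gamma+1=1-\beta-l$, and the transformation \cite[15.3.4]{AS}, in the form already recorded above, rewrites $u_{\alpha}$ as $e^{\pi i\alpha}\zeta^{-\alpha}F(\alpha,\alpha+l,\alpha-\beta+1,\zeta^{-1})$ with $\zeta=1-z=e^{\pi i}(z-1)$; this is precisely the left-hand side of the logarithmic connection formula of Proposition \ref{prop9.6} with its variable replaced by $\zeta$. Feeding in that formula (the $a_{-},b_{-}$ version for $l\ge 0$, the $a_{+},b_{+}$ version for $l<0$), and doing the same for $u_{\beta}$, reproduces the two displayed expansions (i), (ii) preceding the statement, which express $u_{\alpha}$ and $u_{\beta}$ as explicit linear combinations of a logarithmic hypergeometric function $F_{\mathrm{log}}$ and a regular hypergeometric function $F$ in the local variable $1-z$, the coefficients being built from $a_{\pm},b_{\pm}$.

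The remaining step is purely formal. Substituting (i)/(ii) into the matrix above and using that this pair $\{F_{\mathrm{log}},F\}$ is a fundamental system of the hypergeometric equation near $z=1$, one factors $\Psi(z)|_{R=1}$ near $z=1$ into $G_{1}(I+O(z-1))\,(1-z)^{(\gamma-\alpha-\beta-1)J/2}(1-z)^{\Delta}\,\tilde C_{1}$ for $l\ge 0$ (and with $\Delta_{-}$ in place of $\Delta$ for $l<0$), where $G_{1}$ is invertible and $\tilde C_{1}$ collects the scalars $a_{\pm}(\alpha,\beta,l),a_{\pm}(\beta,\alpha,l)$ on the diagonal of the right factor and $b_{\pm}(\cdot),1$ in the left factor, the structure being identical to the one producing $\tilde C_{0}$ in Proposition \ref{prop9.6}. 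Finally, converting the powers of $1-z$ to powers of $z-1$ by the identity $(1-z)^{lJ/2}(1-z)^{\Delta}=(-1)^{l}(I+O(z-1))(z-1)^{lJ/2}(z-1)^{\Delta}$ noted just above (and likewise with $\Delta_{-}$), and absorbing $(-1)^{l}$ into $G_{1}$, yields the asserted form $G_{1}(I+O(z-1))(z-1)^{(\gamma-\alpha-\beta-1)J/2}(z-1)^{\Delta\text{ or }\Delta_{-}}\tilde C_{1}$ together with the stated expression for $\tilde C_{1}$.

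The main obstacle is the branch bookkeeping: one must track the single branch $\zeta=1-z=e^{\pi i}(z-1)$ consistently through the prefactor $(z-1)^{-(\gamma-\alpha-\beta-1)/2}$, the factors $\zeta^{-\alpha}$, $\zeta^{-\beta}$ and $\zeta^{l}$, and the normalization $\Psi(z)=(I+O(z^{-1}))z^{(\beta-\alpha)J/2}$ at $z=\infty$, so that the $e^{\pi i(\cdot)}$ and $(-1)^{l}$ factors come out exactly as in the statement, in particular so that the overall exponential prefactor of $\tilde C_{1}$ consolidates into the single $e^{\pi i\alpha}$ appearing there. As always, $\tilde C_{1}$ is determined only up to left multiplication by an invertible diagonal matrix, which is precisely the freedom absorbed into $G_{1}$; modulo this, everything reduces to the substitution and matrix algebra already carried out for Proposition \ref{prop9.6}.
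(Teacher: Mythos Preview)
Your proposal is correct and follows essentially the same route as the paper: apply the Kummer transformation \cite[15.3.4]{AS} to rewrite $u_{\alpha}$ in the variable $\zeta=1-z$, feed in the logarithmic connection formulas already established for Proposition~\ref{prop9.6}, and then pass from powers of $1-z$ to powers of $z-1$ via the identity $(1-z)^{lJ/2}(1-z)^{\Delta}=(-1)^{l}(I+O(z-1))(z-1)^{lJ/2}(z-1)^{\Delta}$, absorbing the sign into $G_{1}$. The paper's argument is precisely this, stated more tersely.
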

For $\alpha\not\in \Z,$ putting $\alpha=\beta$, we have the following:
\par
(i) if $1- \gamma=l =0,1,2, \ldots,$
\begin{align*}
&z^{-\alpha} F(\alpha, \alpha+l, 1, z^{-1})
\\
& = a_-(\alpha, \alpha, l) 
\left(z^l F_{\mathrm{log}}(\alpha+l, \alpha+l ,1+l, z)
+b_-(\alpha, \alpha, l) z^l F(\alpha+l, \alpha+l, 1+l, z) \right );
\end{align*}
\par
(ii) if $1- \gamma=l = -1, -2, -3, \ldots,$
\begin{align*}
&z^{-\alpha} F(\alpha, \alpha+l, 1, z^{-1})
\\
& = a_+(\alpha, \alpha, l)
 \left( F_{\mathrm{log}}(\alpha, \alpha ,1-l, z)
+b_+(\alpha, \alpha, l) F(\alpha, \alpha, 1-l, z)\right).\phantom{-------}
\end{align*}
Combining these formulas with \eqref{9.3}, 
we have another relation in the case where
$\alpha=\beta\not\in \Z,$ $\gamma \in \Z.$ For example, if $1- \gamma= l 
=0, 1, 2, \ldots,$ observing that
$$
\frac 1{\Gamma(\gamma)} F(\alpha, \alpha, \gamma, z) \Bigl |_{\gamma=-l+1}
= \frac{\Gamma(\alpha+ l)^2}{\Gamma(\alpha)^2 l!} z^l F(\alpha +l, \alpha +l,
1+l, z),
$$
we get
\begin{align*}
- z^{-\alpha} F_{\mathrm{log}} (\alpha, & \alpha +l, 1, z^{-1}) 
 = \frac{\Gamma(\alpha+ l)^2 \Gamma(1-\alpha-l)}{\Gamma(\alpha) l!} 
e^{-\pi i\alpha} z^l F(\alpha +l, \alpha +l, 1+l, z)
\\
& - (2\psi(1) -\psi(\alpha) -\psi(1- \alpha -l) -\pi i) 
a_-(\alpha, \alpha, l)
\\
& \times \Bigl( z^l F_{\mathrm{log}} (\alpha+l, \alpha+l, 1+l, z)
+ b_-(\alpha, \alpha, l) z^l F(\alpha+l, \alpha+l, 1+l, z)   \Bigr)
\end{align*}
for $|\arg z- \pi | <\pi.$ On the other hand, in the case where 
$\gamma- 2\alpha \in \Z,$ we have the relations:
\par
(i) if $\gamma-2\alpha=l=0,1,2, \ldots,$
\begin{align*}
&z^{-\alpha} F(\alpha, 1-l-\alpha, 1, z^{-1})
\\
& = e^{\pi i\alpha} a_-(\alpha, \alpha, l)
 \Bigl((1-z)^l F_{\mathrm{log}}(\alpha+l, \alpha+l ,1+l, 1-z)
\\
&\phantom{----------}
+b_-(\alpha, \alpha, l) (1-z)^l F(\alpha+l, \alpha+l, 1+l, 1-z) \Bigr );
\end{align*}
\par
(ii) if $\gamma- 2\alpha =l = -1, -2, -3, \ldots,$
\begin{align*}
&z^{-\alpha} F(\alpha, 1 -l -\alpha, 1, z^{-1})
\\
& = e^{\pi i\alpha} a_+(\alpha, \alpha, l)
 \left( F_{\mathrm{log}}(\alpha, \alpha ,1-l, 1-z)
+b_+(\alpha, \alpha, l) F(\alpha, \alpha, 1-l, 1-z)\right).
\end{align*}
The connection formula for
$ -z^{-\alpha} F_{\mathrm{log}}(\alpha, 1 -l -\alpha, 1, z^{-1})$ is
obtained by an analogous argument, in which we use \eqref{9.4}.
Thus we have
\begin{prop}\label{prop9.8}
$(1)$ Suppose that $\alpha=\beta \not\in \Z$ and $1-\gamma=l \in \Z.$ 
For $|\arg z-\pi| <\pi,$
\begin{align*}
\tilde{C}_0 =& \begin{pmatrix}
b_-(\alpha, \alpha, l) & b_-(\alpha, \alpha, l) 
\\
1 & 1
\end{pmatrix}
 \begin{pmatrix}
a_-(\alpha, \alpha, l) &  0 \\  0  &  \psi_0(\alpha, l) a_-(\alpha, \alpha, l) 
\end{pmatrix}  
\\
& + \frac{\Gamma(\alpha+l)^2 \Gamma(1-\alpha-l)}{\Gamma(\alpha) l!}
e^{-\pi i\alpha} \Delta
 \quad \quad \text{if} \,\,\, l =0,1,2, \ldots,
\\
=& \begin{pmatrix}
1 & 1 \\
b_+(\alpha, \alpha, l) & b_+(\alpha, \alpha, l) 
\end{pmatrix}
 \begin{pmatrix}
a_+(\alpha, \alpha, l) &  0 \\  0  & \psi_0(\alpha, l) a_+(\alpha, \alpha, l) 
\end{pmatrix}  
\\
& + \frac{\Gamma(\alpha) \Gamma(1-\alpha-l)}{ 2\Gamma( 1-l)}
e^{-\pi i\alpha}(I- J) 
 \quad \quad \text{if} \,\,\,  l =-1,-2, -3, \ldots,
\end{align*}
where $\psi_0(\alpha, l) = \psi(\alpha) +\psi(1-\alpha-l)
-2\psi(1) +\pi i.$
\par
$(2)$ Suppose that $\alpha =\beta \not\in \Z$ and $\gamma - 2\alpha=l \in \Z.$ 
For $0<\arg z < \pi,$ $|\arg (z-1)| <\pi,$
\begin{align*}
\tilde{C}_1 =& e^{\pi i\alpha} \begin{pmatrix}
b_-(\alpha, \alpha, l) & b_-(\alpha, \alpha, l) 
\\
1 & 1
\end{pmatrix}
 \begin{pmatrix}
a_-(\alpha, \alpha, l) &  0 \\  0  &  \psi_1(\alpha, l) a_-(\alpha, \alpha, l) 
\end{pmatrix}  
\\
& + \frac{\Gamma(\alpha+l)^2 \Gamma(1-\alpha-l)}{\Gamma(\alpha) l!}
 \Delta
 \quad \quad \quad\text{if} \,\,\, l =0,1,2, \ldots,
\\
=& e^{\pi i\alpha} \begin{pmatrix}
1 & 1 \\
b_+(\alpha, \alpha, l) & b_+(\alpha, \alpha, l) 
\end{pmatrix}
 \begin{pmatrix}
a_+(\alpha, \alpha, l) &  0 \\  0  & \psi_1(\alpha, l) a_+(\alpha, \alpha, l) 
\end{pmatrix}  
\\
& + \frac{\Gamma(\alpha) \Gamma(1-\alpha-l)}{ 2\Gamma( 1-l)}
(I- J) 
 \quad \quad \quad\text{if} \,\,\,  l =-1,-2, -3, \ldots,
\end{align*}
where $\psi_1(\alpha, l) = \psi(\alpha) +\psi(1-\alpha-l)
-2\psi(1) .$
\end{prop}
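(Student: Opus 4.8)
The plan is to mimic the proofs of Propositions \ref{prop9.3} and \ref{prop9.5}, now applied to system \eqref{9.2} and to the matrix solution $\Psi(z)$ of Proposition \ref{prop9.4}, whose two columns are $P^{-1}$ times the vectors built from $u_{\alpha}=z^{-\alpha}F(\alpha,\alpha-\gamma+1,1,z^{-1})$ and $u_{\alpha\,\mathrm{log}}=-z^{-\alpha}F_{\mathrm{log}}(\alpha,\alpha-\gamma+1,1,z^{-1})$, together with the scalar prefactor $z^{-(1-\gamma)/2}(z-1)^{-(\gamma-2\alpha-1)/2}$. The residue matrix of \eqref{9.2} at $z=0$ has eigenvalues $\pm l/2$ with $l=1-\gamma$, so when $l\in\Z$ the local normal form of $\Psi(z)$ near $z=0$ is $G_0(I+O(z))z^{lJ/2}z^{\Delta}$ for $l\ge 0$ and $G_0(I+O(z))z^{lJ/2}z^{\Delta_-}$ for $l<0$, with an invertible $G_0$ (pinned down, up to the harmless diagonal ambiguity of Remark \ref{rem8.1}, by the normalization $\Psi(z)=(I+O(z^{-1}))z^{\Delta}$ at $\infty$); similarly near $z=1$ with $l=\gamma-2\alpha$, with $G_1$, and with $(z-1)^{lJ/2}(z-1)^{\Delta}$ or $(z-1)^{lJ/2}(z-1)^{\Delta_-}$. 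Thus the proof reduces to substituting the appropriate connection formulas, factoring out the gauge and exponent matrices, and reading off $\tilde C_0$ and $\tilde C_1$.

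For part (1), put $\alpha-\gamma+1=\alpha+l$ with $l=1-\gamma\in\Z$ and substitute into $u_{\alpha}$ and $u_{\alpha\,\mathrm{log}}$ the two connection formulas displayed just above the statement of the proposition: the one expressing $z^{-\alpha}F(\alpha,\alpha+l,1,z^{-1})$ through $z^{l}F(\alpha+l,\alpha+l,1+l,z)$ and $z^{l}F_{\mathrm{log}}(\alpha+l,\alpha+l,1+l,z)$ (with coefficients $a_-,b_-$ when $l\ge 0$, and through $F(\alpha,\alpha,1-l,z)$, $F_{\mathrm{log}}(\alpha,\alpha,1-l,z)$ with $a_+,b_+$ when $l<0$), and the companion formula for $-z^{-\alpha}F_{\mathrm{log}}(\alpha,\alpha+l,1,z^{-1})$ obtained from these by combining with \eqref{9.3} and the identity $\Gamma(\gamma)^{-1}F(\alpha,\alpha,\gamma,z)|_{\gamma=1-l}=\frac{\Gamma(\alpha+l)^2}{\Gamma(\alpha)^2 l!}z^{l}F(\alpha+l,\alpha+l,1+l,z)$ (and its analogue for $l<0$). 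Using $z^{\Delta}=I+(\log z)\Delta$ one separates, column by column, the $\log z$--free part from the coefficient of $\log z$, factors out $G_0$ and $z^{lJ/2}z^{\Delta}$ (resp.\ $z^{lJ/2}z^{\Delta_-}$), and obtains $\tilde C_0$: the term of $-z^{-\alpha}F_{\mathrm{log}}$ proportional to the plain series $F(\alpha,\alpha,\gamma,z)$ --- the first summand on the right of \eqref{9.3} --- produces the additive matrix $\frac{\Gamma(\alpha+l)^2\Gamma(1-\alpha-l)}{\Gamma(\alpha)l!}e^{-\pi i\alpha}\Delta$ (resp.\ $\frac{\Gamma(\alpha)\Gamma(1-\alpha-l)}{2\Gamma(1-l)}e^{-\pi i\alpha}(I-J)$ when $l<0$), while the rest reproduces the product of the $b_{\pm}$--matrix with the diagonal $a_{\pm}$--matrix, with $\psi_0(\alpha,l)=\psi(\alpha)+\psi(1-\alpha-l)-2\psi(1)+\pi i$ arising as the coefficient $-(2\psi(1)-\psi(\alpha)-\psi(1-\alpha-l)-\pi i)$ in \eqref{9.3}.

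For part (2), expand $\Psi(z)$ near $z=1$ instead: now $\gamma-2\alpha=l\in\Z$ gives $1-l-\alpha=\alpha-\gamma+1$, and the relations displayed above the statement express $z^{-\alpha}F(\alpha,1-l-\alpha,1,z^{-1})$ and its logarithmic partner (the latter by the analogue of the previous step, with \eqref{9.4} in place of \eqref{9.3}) through $(1-z)^{l}F(\alpha+l,\alpha+l,1+l,1-z)$, $(1-z)^{l}F_{\mathrm{log}}(\alpha+l,\alpha+l,1+l,1-z)$ for $l\ge 0$, and through $F(\alpha,\alpha,1-l,1-z)$, $F_{\mathrm{log}}(\alpha,\alpha,1-l,1-z)$ for $l<0$. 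Absorbing the sign and the holomorphic correction via $(1-z)^{lJ/2}(1-z)^{\Delta}=(-1)^{l}(I+O(z-1))(z-1)^{lJ/2}(z-1)^{\Delta}$ (and likewise with $\Delta_-$) into $G_1$, and collecting the overall factor $e^{\pi i\alpha}$ coming from $F(\alpha,\beta,\gamma,z)=(1-z)^{-\alpha}F(\alpha,\gamma-\beta,\gamma,z/(z-1))$, one reads off $\tilde C_1$ in the stated form; here the additive $\Delta$-- resp.\ $(I-J)$--correction carries no exponential prefactor because it now originates from \eqref{9.4}, whereas the $b_{\pm}$--part carries the $e^{\pi i\alpha}$.

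The main obstacle will be the logarithmic bookkeeping: isolating the coefficient of $\log z$ (resp.\ $\log(z-1)$) so that the Jordan normal form $z^{lJ/2}z^{\Delta}$ (resp.\ $(z-1)^{lJ/2}(z-1)^{\Delta}$) emerges with an invertible, $O(z)$-- (resp.\ $O(z-1)$--) holomorphic gauge factor, and cleanly distinguishing the two sources of the second column of $\tilde C_0$ and of $\tilde C_1$ --- the part carrying $b_{\pm}$, coming from the logarithmic series, and the additive $\Delta$-- or $(I-J)$--part, coming from the plain hypergeometric series in \eqref{9.3} resp.\ \eqref{9.4}. Once the gauge matrices are fixed unambiguously, the stated formulas follow by direct substitution and regrouping.
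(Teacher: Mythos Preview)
Your proposal is correct and follows essentially the same route as the paper: set $\alpha=\beta$ in the connection formulas preceding Proposition \ref{prop9.6}, combine them with \eqref{9.3} to obtain the expansion of $-z^{-\alpha}F_{\mathrm{log}}(\alpha,\alpha+l,1,z^{-1})$ around $z=0$ (and with \eqref{9.4} for the expansion around $z=1$), then read off $\tilde C_0$ and $\tilde C_1$ from the matrix solution of Proposition \ref{prop9.4}. Your description of the logarithmic bookkeeping and of how $\psi_0$, $\psi_1$ and the additive $\Delta$-- or $(I-J)$--corrections arise from the two summands in \eqref{9.3} and \eqref{9.4} is accurate and in fact more explicit than the paper's own sketch.
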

\section{Proofs of Theorems \ref{thm2.12} through \ref{thm2.15}}\label{sc10}
\subsection{Proofs of Theorems \ref{thm2.12} and \ref{thm2.13}}\label{ssc10.1}
In \eqref{9.1} and Proposition \ref{prop9.2} set $(1-\gamma)/2 =\theta_0/2,$
$(\gamma-\alpha-\beta-1)/2=\theta_x/2,$ $(\beta-\alpha)/2=\sigma/2,$ that is,
$\alpha= -(\sigma +\theta_0+\theta_x)/2,$ $\beta=(\sigma-\theta_0-\theta_x)/2,$
$\gamma=1-\theta_0,$ and $R=\beta(\beta -\gamma +1)/(\alpha-\beta) =
(\theta_0^2-(\sigma-\theta_x)^2)/(4\sigma).$ Then we have 
$T B_0 T^{-1}=\Lambda_0,$
$T B_1 T^{-1}=\Lambda_x,$ $T(\sigma J/2) T^{-1}=\Lambda,$ where $\Lambda_0,$
$\Lambda_x,$ $T$, $\Lambda$ are as in Lemma \ref{lem4.1}, and $Z=T \Psi(\lambda)
$ satisfies
$$
\frac{dZ}{d\lambda} =\Bigl( \frac{\Lambda_0}{\lambda}+\frac{\Lambda_x}
{\lambda-1} \Bigr) Z.
$$
Furthermore, by
$$
Z=\tilde{\rho}^{-\Lambda} Y = (\rho_*^{1/\sigma} )^{-T(\sigma J/2) T^{-1} }
Y = T \rho_* ^{-J/2} T^{-1}Y
$$
this is changed into a system of the form \eqref{8.2}
$$
\frac{dY}{d\lambda} =\Bigl( \frac{\tilde{\Lambda}_0}{\lambda}
+\frac{\tilde{\Lambda}_x}{\lambda-1} \Bigr) Y,
$$
which has a matrix solution such that
\begin{align*}
Y &= T\rho_*^{J/2} T^{-1} Z = T\rho_*^{J/2} \Psi(\lambda) = T \rho_*^{J/2}
(I +O(\lambda^{-1}) )\lambda^{\sigma J/2}
\\
&= T(I+O(\lambda^{-1}))\lambda^{\sigma J/2}\rho_*^{J/2}
=(I+O(\lambda^{-1}) ) \lambda^{\Lambda} T \rho_*^{J/2}
\end{align*}
near $\lambda=\infty.$
Hence the connection matrices $C^{(0)},$ $C^{(x)}$ 
are those for
$$
\tilde{Y} = Y(T\rho_*^{J/2} )^{-1} 
 = T\rho_*^{J/2} \Psi (\lambda) (T\rho_*^{J/2} )^{-1} = (I+O(\lambda^{-1})) \lambda
^{\Lambda}
$$
corresponding to \eqref{8.4}.
By Proposition \ref{prop9.3}, the connection 
matrices for $\Psi(\lambda)$ are
$
 \tilde{C}_{\iota/x} \diag [1,  1/ R]
$
$(\iota =0, x)$
under the supposition $\gamma,$ $\gamma-\alpha-\beta \not\in \Z,$ that is,
$\theta_0,$ $\theta_x \not\in \Z,$ and we may choose $C^{(\iota)} =C_{\iota}
T^{-1}$ in such a way that
\begin{equation}\label{10.1}
C_{\iota} = \rho_*^{I/2}
\tilde{C}_{\iota/x}
\diag[1, 1/ R]
\rho_*^{-J/2}  \quad (\iota =0,x)
\end{equation}
with $\alpha,$ $\beta,$ $\gamma,$ $R$ set as above (cf. Remark \ref{rem8.1}). 
Combining $C$ in Proposition \ref{prop9.1}
with $C^{(0)},$ $C^{(x)}$, from \eqref{8.7} we obtain $M_0,$ $M_x$ in Theorem
\ref{thm2.12}. Theorem \ref{thm2.13} is shown by limiting procedure
$\sigma \to \sigma_0$. In \eqref{10.1}, replacing $\rho_*$ by $\rho$
and letting $\sigma\to \theta_0\pm \theta_x,$ $ \theta_x-\theta_0,$
we obtain $C^*_0$ and $C^*_1$ as in Theorem 2.14 other than $C^*_0$ for
$\sigma_0=\theta_x-\theta_0.$ By Remark \ref{rem8.1},
we may choose $\diag[(\sigma+\theta_0-\theta_x)/2, 
((\sigma+\theta_0-\theta_x)/2)^{-1} ] \tilde{C}_0$ instead of $\tilde{C}_0$.
In \eqref{10.1} with such a matrix, letting $\sigma\to \theta_x-\theta_0$
we derive $C^*_0$ with $\tilde{C}^*_0$ in (iii). 
\subsection{Proof of Theorem \ref{thm2.14}}\label{ssc10.2}
Suppose that $\theta_{\infty}\not=0.$ In \eqref{9.2} and Proposition
\ref{prop9.4} set $1-\gamma =\theta_0,$ $\gamma-2\alpha-1 =\mp\theta_x,$ 
that is, $\alpha=-(\theta_0 \mp \theta_x)/2,$ $\gamma=1-\theta_0.$  
Then, for
$\Lambda_0,$ $\Lambda_x,$ $T$, $\Lambda$ as in Lemma \ref{lem4.2}, we have
$T B_0T^{-1} =\Lambda_0,$ $T B_1T^{-1} =\Lambda_x,$ $T\Delta T^{-1}=\Lambda,$
and, near $\lambda=\infty$,
\begin{align*}
Y&= T\rho^{\Delta} \Psi(\lambda) = T\rho^{\Delta} (I+O(\lambda^{-1}))
\lambda^{\Delta}
\\
& = T (I+O(\lambda^{-1}))\lambda^{\Delta} \rho^{\Delta}
= (I+O(\lambda^{-1}))\lambda^{\Lambda} T \rho^{\Delta}
\end{align*}
solves system \eqref{8.2}. Hence the connection
matrices $C^{(0) \pm},$ $C^{(x) \pm}$ for
$$
\tilde{Y}= Y( T\rho^{\Delta})^{-1}=  T\rho^{\Delta}\Psi(\lambda) 
( T\rho^{\Delta})^{-1} = (I+O(\lambda^{-1}))\lambda^{\Lambda}
$$
are desired ones, which are written as $C_{\iota}^{\pm} T^{-1}$ with
$C^{\pm}_{\iota}= \tilde{C}^{\pm}
_{\iota/x} \rho^{-\Delta} $ $(\iota=0,x)$, where 
$\tilde{C}^{\pm}_{\iota/x}$ are given by Proposition \ref{prop9.5}. 
For $y_{\mathrm{ilog}}(\rho,x)$ and $y^{(l)}_{\mathrm{ilog}}
(\rho,x)$ with $\theta_0^2-\theta_x^2\not=0,$ we restrict to $\alpha=
-(\theta_0+\theta_x)/2$ and replace $\rho$ by $\rho \exp( -2\theta_x 
(\theta_0^2-\theta_x^2)^{-1} )$ according to Proposition \ref{prop5.0}.
In the case
where $\alpha= -(\theta_0 -\theta_x)/2,$ $\tilde{C}^+_1$ should be replaced
by $K^+\tilde{C}^+_1,$ because the local solution of \eqref{9.2}
around $\lambda=1$ has the
form $G_1(I+O(\lambda-1)) (\lambda-1)^{-(\theta_x/2)J}.$
If $\theta_{\infty}=0,$ in the argument above the matrix $T$ is to be replaced 
by those in (2) of Lemma \ref{lem4.2}.
Thus we obtain the monodromy matrices in Theorem \ref{thm2.14}. 
\subsection{Proof of Theorem \ref{thm2.15}}\label{ssc10.3}
The systems corresponding to \eqref{8.1} and \eqref{8.2} are
$$
\frac{d\hat{Y}}{d\lambda} = \frac J 2 \hat{Y}, \qquad
\frac{d\tilde{Y}}{d\lambda} = \Bigl( \frac {\Lambda_0}{\lambda}
-\frac{\Lambda_0}{\lambda-1}\Bigr) \tilde{Y} 
$$
having the matrix solutions $\hat{Y} =\exp ((J/2) \lambda)$ and
\begin{align*}
\tilde{Y} &= T \begin{pmatrix}
\lambda^{\theta_0/2}(\lambda-1)^{-\theta_0/2}  &  0
\\
0  &  \lambda^{- \theta_0/2}(\lambda-1)^{\theta_0/2} 
\end{pmatrix}
T^{-1}    & \quad &\text{if} \,\,\, \theta_0\not= 0,
\\
 &= T \begin{pmatrix}
  1  &  \log (\lambda/(\lambda-1) ) 
\\
  0  &    1 
\end{pmatrix}
T^{-1}    & \quad &\text{if} \,\,\, \theta_0= 0,
\end{align*}
respectively, where $\Lambda_0$ and $T$ are as in Lemma \ref{lem4.3}. 
From these facts Theorem \ref{thm2.15} immediately follows.


\end{document}